

\documentclass[12pt]{article} 

\usepackage[utf8]{inputenc}
\usepackage[T1]{fontenc} 


\usepackage{geometry} 
\geometry{a4paper} 
\geometry{left=2cm,right=2cm,top=3.5cm,bottom=3.5cm} 

\usepackage{graphicx} 

\usepackage{titlesec}

 \usepackage[parfill]{parskip} 

\usepackage[english]{babel}
\usepackage{amsmath}
\usepackage{booktabs} 
\usepackage{array} 
\usepackage{paralist} 
\usepackage{verbatim} 
\usepackage{subfig} 
\usepackage{ucs}
\usepackage{dsfont}
\usepackage{amsfonts}
\usepackage{mathabx}
\usepackage{amsthm}
\usepackage{thmtools}

\usepackage{bm}

\usepackage{tikz}
\usetikzlibrary{decorations.markings}
\usetikzlibrary{decorations.pathreplacing,calligraphy}
\usepackage{float}

\usepackage[colorlinks=true,pdfstartview=FitH,bookmarks=false]{hyperref}
\usepackage[all]{xy}

\usepackage{fancyhdr} 
\pagestyle{fancy} 
\lhead{}\chead{}\rhead{}
\lfoot{}\cfoot{\thepage}\rfoot{}

\usepackage{sectsty}
\allsectionsfont{\sffamily\mdseries\upshape} 

\usepackage[nottoc,notlof,notlot]{tocbibind} 
\usepackage[titles,subfigure]{tocloft} 

\setcounter{tocdepth}{3}

\theoremstyle{plain}
\newtheorem{theorem}{Theorem}[section]
\newtheorem{definition}[theorem]{Definition}
\newtheorem{coro}[theorem]{Corollary}
\newtheorem{lem}[theorem]{Lemma}

\newtheorem{prop}[theorem]{Proposition}

\newtheorem{question}[theorem]{Question}

\theoremstyle{remark}
\newtheorem{rem}[theorem]{Remark}

\newtheorem{ex}[theorem]{Example}

\newcommand\N{\mathbb{N}}
\newcommand\Z{\mathbb{Z}}

\newcommand\R{\mathbb{R}}
\newcommand\C{\mathbb{C}}
\newcommand\D{\mathbb{D}}
\newcommand\Ss{\mathds{S}}

\newcommand\ra{\rightarrow}

\newcommand\FF{\mathcal{F}}

\newcommand\e{\mathrm{e}}

\newcommand{\ti}[1]{\widetilde{#1}}

\newcommand{\Fix}{\mathrm{Fix}}
\newcommand{\ind}{\mathrm{ind}}

\newcommand{\Sing}{\mathrm{Sing}}
\newcommand{\tin}{t\in[0,1]}

\newcommand{\Rec}{\mathrm{Rec}}
\newcommand{\id}{\mathrm{id}}

\newcommand{\gen}{\mathrm{gen}}
\newcommand{\inte}{\mathrm{int}}
\newcommand{\im}{\mathrm{Im}}
\newcommand{\Fr}{\mathrm{Fr}}
\newcommand{\Adh}{\mathrm{Adh}}
\newcommand{\HM}{\mathrm{HM}}
\newcommand{\HF}{\mathrm{HF}}
\newcommand{\CM}{\mathrm{CM}}
\newcommand{\CF}{\mathrm{CF}}

\date{December 2021}

\title{Barcodes for Hamiltonian homeomorphisms of surfaces}
\author{Beno\^it Joly}

\begin{document}

\maketitle

\abstract{In this article, the main goal is to give a dynamical point of view of Floer homology barcodes for Hamiltonian homeomorphisms of surfaces. More specifically, we describe a way to construct barcodes for Hamiltonian homeomorphisms of surfaces from graphs. We will define graphes associated to maximal isotopies of a Hamiltonian homeomorphism using Le Calvez's positively transverse foliation theory and to those graphs we will associate barcodes. In particular, we will prove that for the simplest cases, our constructions coincide with the Floer Homology barcodes.}

\tableofcontents

\section{Introduction}

\subsection{Goals and motivations}

\textbf{Main question}\\

In this article, we will think about the following question.

\begin{question}\label{question1}
Can we construct barcodes for Hamiltonian homeomorphisms of surfaces, equal to the Floer homology barcodes, using dynamical objects as Le Calvez's transverse foliations?
\end{question}

\textbf{Barcodes}\\

A barcode $B = (I_j)_{j\in \N}$ is a countable collection of intervals (or bars) of the form $I_j = (a_j , b_j ], a_j \in \R, b_j \in \R \cup \{+\infty\}$ which satisfy certain finiteness assumptions. \\


Let us desribe a first simple example of a barcode. We consider a Hamiltonian flow $(f_t)_{\tin}$ induced by an autonomous Hamiltonian function $H$. In this case, for $t$ small enough, the barcode of the Hamiltonian diffeomorphism $f_t$ is equal to the filtered Morse Homology $(\HM^t_*(H))_{t\in\R}$ of $H$. \\
To give more details about this example we explain how the filtered homology of $H$ can be interpreted as a collection of bars. The ends of these bars are in correspondence with the critical points of $H$ and can be classified as follows. 

\begin{itemize}
\item There are the \textit{death points} which are the critical points $x$ of $H$ ending some homology, meaning that the dimension of the vector spaces $(\HM^t_*(H))_{t\in\R}$ decreases at $H(x)$.
\item There are the \textit{birth points} which are the critical points $x$ of $H$ generating homology in $\HM_*^{H(x)}(H)$, meaning that the dimension of the vector spaces $(\HM^t_*(H))_{t\in\R}$ increases at $H(x)$. The value $H(x)$ of a birth point $x$ will be the begining of a bar.
\end{itemize}

The bars of a barcode can be described by the following classification of the birth points. 

\begin{itemize}
\item A birth point $x$ can be "homological" and associated to the semi-infinite bar $(H(x),+\infty)$ in the barcode if the homology it generates in $\HM_*^{H(x)}$ persists in the vector spaces  $(\HM_*^{t})_{t\geq H(x)}$. 
\item A birth point $x$ can be "bound to die" and paired wih a death point $y$ and associated to a finite bar $(H(x),H(y)]$ in the barcode if the homology it generates in $\HM_*^{H(x)}$ disapears in $\HM_*^{H(y)}$.
\end{itemize}

More generally filtered homologies are examples of \textit{persistence modules}. In fact, we will see that barcodes use to classify persistence modules up to isomorphisms. Roughly speaking, it is equivalent to consider a barcode as a set of bars or as a filtered homology.\\

\textbf{Barcodes in symplectic geometry}\\

Given a Hamiltonian function $(H_t)_{\tin}$ on a symplectic manifold $(M,\omega)$, we define the \textit{action functionnal} $A_H$ on the space of contractible loops of $M$ by\\
$$A_H(\gamma)=-\int_D u^*\omega +\int_0^1H_t(\gamma(t))dt,$$
where $u$ is an extension to the disk of the contractible loop $\gamma:\Ss^1\ra M$, that is, a map $u: D=\{z\in\mathbb{C} | |z|\leq 1\} \ra M$ such that $u(\e^{2i\pi t})=\gamma(t)$. If we suppose that $\pi_2(M)=0$, the function $A_H$ does not depend on the choice of $u$ and it will always be the case in this article. We will see in the preliminaries that for a Hamiltonian diffeomorphism $f$, the action function $A_H$ does not depend on the choice of the Hamiltonian function $H$ which induces $f$, hence it defines an action function $A_f$ asociated to $f$. Moreover, the critical points of the action functionnal $A_f$ are the trajectories of the contractible fixed points of $f$.\\

One may associate cannonically a barcode $B(\phi)$ to every Hamiltonian diffeomorphism $f$ using Hamiltonian Floer homology, see section \ref{introbarcodes}. The barcode $B(f)$ characterizes the filtered Floer complex associated to $f$ up to a quasi-isomorphism. The end-points of the bars are the critical value of the action functionnal and hence the barcode $B(f)$ contains all the Floer theoretic filtered invariants of $f$.\\

It is proven, see \cite{ROUX1}, \cite{POLT} for examples, that the barcode $B(f)$ depends continuously, with respect to the uniform topology, on $f$. When $f$ is a diffeomorphism $B(f)$ is interprated by the filtered Floer homology of $f$ and when $f$ is a Hamiltonian homeomorphism, by a limiting process we can define a barcode $B(f)$ of $f$. Hence, when $f$ is a Hamiltonian homeomorphism the dynamical information the barcode carries is no clear. Le Roux, Seyfaddini and Viterbo, see \cite{ROUX1}, used Le Calvez's foliation to extract dynamical information from the barcode of a Hamiltonian homeomorphism. The goal in this article will be different as we aim to use the ideas of this theory to construct barcodes instead of extracting information from the Floer homology barcodes.\\

The barcode $B(f)$ is an invariant of conjugacy which gives information about the structure of the set of fixed points and the \textit{spectral invariants} of $f$. The spectral invariants have been introduced by Viterbo \cite{VIT}. They have been used in numerous deep applications and their theory has been developped in many contexts, we can cite for example the work of Schwarz \cite{SCH} and Oh \cite{OH}. They are powerful tools which took an important place in the development of symplectic topology.\\

Recently, the notion of barcodes appears as a great tool to study $C^0$ symplectic geometry, let us cite for example the work of Buhovski, Humilière and Seyfaddini \cite{BUH2}, Jannaud \cite{JAN} and Le Roux Seyfaddini and Viterbo  \cite{ROUX3}.\\

\textbf{Hamiltonian homeomorphisms}\\

In symplectic geometry, we can define the notion of \textit{Hamiltonian homeomorphism} of a surface $\Sigma$ by taking the closure of the Hamiltonian diffeomorphisms of $\Sigma$. This definition comes from the Gromov-Eliashberg theorem \cite{ELIA} which states that if a sequence of symplectomorphisms of a symplectic manifold $(M,\omega)$ converges in the $C^0$ topology to a diffeomorphism then this diffeomorphism is a symplectomorphism as well.\\

For a Hamiltonian homeomorphism of a surface, we are not able to consider directly its Floer Homology as the construction requires at least a $C^2$ setting. Howeover, on surfaces, the barcode $B(f)$ depends continuously, in the uniform topology, on $f$ and moreover, extends to Hamiltonian homeomorphisms, see \cite{ROUX3} for more details.\\

The barcode of a Hamiltonian homeomorphism $f$ is defined by a limiting process and it is natural to wonder if it is possible to describe a direct construction.\\

 Moreover, the notion of Hamiltonian homeomorphism of surfaces is well-known in dynamical systems and has a dynamical interpretation thanks to the notion of rotation vectors. On a symplectic surface $(\Sigma,\omega)$, $\omega$ is an area form which induces a Borel probability measure $\mu$.  We will say that a homeomorphism $f$ of an oriented compact surface is \textit{Hamiltonian} if it is isotopic to the identity and preserves a Borel probability measure $\mu$ whose support is the whole surface and whose \textit{rotation vector} $\rho(\mu)$  is zero.\\

\textbf{Le Calvez's positively transverse foliations}\\

A key motivation for this article is to bring a dynamical interpretation of the barcodes for Hamiltonian homeomorphisms of surfaces. Taking this direction, we will give some constructions of barcodes, inspired by the Floer homology constructions, using Le Calvez's foliation theory.\\

Le Calvez's positively transverse foliations theory has many applications in the study of dynamical systems of surfaces. For example in the study of prime ends by Koropecki, Le Calvez and Nassiri \cite{KOR}, the study of homoclinic orbits for area preserving diffeomorphisms by Sambarino and Le Calvez \cite{SAM} or the results about the forcing theory of Le Calvez and Tal \cite{TAL1,TAL2}.\\ 

Nowadays, Le Calvez'spositively transverse foliations theory \cite{CAL4} represents one of the most important dynamical tool in the study of the dynamics of homeomorphisms of surfaces. This theory already found applications to Barcodes of Hamiltonian homeomorphisms of surfaces. For example, for a homeomorphism $f$ which preserves the area, Le Roux, Seyfaddini and Viterbo in \cite{ROUX3} used Le Calvez's foliations theory to extract dynamical informations of the barcode of $f$ without Kislev-Shelukhin's result \cite{SHE4}.\\

Here are some details about transverse foliations. Let us consider a homeomorphism $f$ on a surface. There are sets $X$ of fixed points of $f$, called \textit{maximal unlinked sets}, such that there exists an isotopy, called \textit{maximal isotopy}, from $\id$ to $f$ fixing all points of $X$ and which are maximal for the inclusion. \\

Le Calvez proved that given a maximal unlinked set $X$ of fixed points of $f$ and an isotopy $I$ fixing all the fixed points of $X$, there exist oriented foliations $\FF$ \textit{positively transverse} to the isotopy $I$. Roughly speaking, this means that, given a point in the complement of $X$, its trajectory along the isotopy $I$ is homotopic in $\Sigma\backslash X$ to a path transverse to $\FF$.\\

Moreover, if we suppose that $f$ is area-preserving, we will see in \ref{foliation} that those foliations are \textit{gradient-like}. To keep it simple, this means that we can see such a foliation as the gradient lines of a function defined on the surface. In particular, every leaf of a gradient-like foliation is an injective path, called a \textit{connexion}, between two singularities of $\FF$ and there is no cycle of connexions. \\
In the particular case where $f$ has finitely many fixed points, by a result of Wang \cite{WANG}, the notion of action function can be extended. A key point is that, for every leaf $\phi$ of $\FF$ the action function $A_f$ of $f$ satifsies $A_f(\alpha(\phi))>A_f(\omega(\phi))$. \\

To give an example, we can consider again a Hamiltonian diffeomorphism $f$ induced by an autonomous Hamiltonian function $H$ on a surface. The induced Hamiltonian flow is a maximal isotopy $I$ of $f$ and the gradient flow of $H$ is a gradient-like foliation positively transverse to $I$. In this case, the only maximal unlinked set of fixed points fixed by $I$ is the set of critical points of $H$.\\

\subsection{Results}

We describe briefly the results of the article. We provide distinct constructions of barcodes for Hamiltonian homeomorphisms of surfaces.\\

\textbf{First construction}\\

We will describe a first construction in section \ref{génériqueF} under some generic hypothesis which is inspired from the Morse and Floer homology constructions. We will consider a Hamiltonian homeomorphism $f$ of an oriented compact surface $\Sigma$ with a finite number of fixed points which are, in a sense, non degenerate and such that the set of fixed points is \textit{unlinked}, meaning that there exists an isotopy $I=(f_t)_{\tin}$ from $\id$ to $f$ fixing all the fixed points of $f$. By Le Calvez's theorem we can consider a gradient-like foliation $\FF$ transverse to $I$. We will suppose that $\FF$ satisfies some "generic" hypothesis which allows us to construct a chain complex inducing a filtered homology and then a barcode denoted $B_{\gen}(\FF)$. \\

We have the following theorem proved in section \ref{equality}.

\begin{theorem}\label{thm1intro}
The Barcode $B_{\gen}(\FF)$ does not depend on the choice of the foliation $\FF\in\FF_{\gen} (I)$.
\end{theorem}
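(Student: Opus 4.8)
The plan is to show invariance by comparing the barcodes attached to two generic gradient-like foliations $\FF_0, \FF_1 \in \FF_{\gen}(I)$ transverse to the same maximal isotopy $I$. The crucial structural fact at our disposal is that for a gradient-like foliation transverse to $I$, the leaves (connexions) run from one singularity to another and, by Wang's result quoted in the excerpt, the action function $A_f$ strictly decreases along each leaf: $A_f(\alpha(\phi)) > A_f(\omega(\phi))$ for every leaf $\phi$. Thus the singularities of $\FF$ play the role of critical points, the action values $A_f$ of the singularities give the filtration, and the connexions give the ``gradient trajectories'' defining the differential of the chain complex $C_*(\FF)$ whose filtered homology is $B_{\gen}(\FF)$. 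Because $A_f$ is attached to $f$ (not to $\FF$), the filtration values of the generators are already foliation-independent; what must be shown is that the filtered homotopy type of the complex does not change when $\FF$ varies.

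\textbf{Reduction via the structure theory of barcodes.} By the classification of persistence modules recalled in the introduction, two barcodes coincide if and only if the associated filtered complexes are filtered quasi-isomorphic (equivalently, the persistence modules $t \mapsto H_*(C_*(\FF)^{\leq t})$ are isomorphic). So it suffices to produce, for any two generic foliations $\FF_0$ and $\FF_1$, a filtered quasi-isomorphism between $C_*(\FF_0)$ and $C_*(\FF_1)$. I would do this by a connectedness/deformation argument: show that $\FF_{\gen}(I)$ can be connected, through a finite chain of ``elementary moves'', in such a way that along a generic one-parameter family $(\FF_s)_{s\in[0,1]}$ the only changes to the combinatorics of singularities and connexions are the codimension-one bifurcations one expects for gradient-like objects — namely a connexion sweeping across a singularity (a ``handle-slide'' type move), and possibly birth/death of a canceling pair of singularities whose action values collide. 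Each such move induces a filtered chain isomorphism (for a handle-slide) or a filtered quasi-isomorphism that is the identity on homology in the relevant filtration window (for a birth/death), exactly as in the classical continuation-map arguments of Morse and Floer theory. Concatenating these gives the desired filtered quasi-isomorphism, hence $B_{\gen}(\FF_0) = B_{\gen}(\FF_1)$.

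\textbf{Alternative: a direct continuation map.} If setting up a controlled generic path in $\FF_{\gen}(I)$ turns out to be awkward (foliations are rigid objects and ``generic path'' needs care), the fallback is to build a continuation morphism directly. Given $\FF_0, \FF_1$, one interpolates through an auxiliary structure — e.g. a family of singular foliations, or a homotopy of the transverse vector fields — and counts the relevant isolated trajectories to define $\Phi\colon C_*(\FF_0) \to C_*(\FF_1)$; the fact that $A_f$ decreases along leaves of both $\FF_0$ and $\FF_1$ forces $\Phi$ to be filtration-non-increasing, and the usual ``square'' and ``triangle'' arguments show $\Phi$ is a filtered chain map inducing an isomorphism on the persistence modules. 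Either way, the key input is that the filtration is governed by the single function $A_f$, which is intrinsic to $f$, so all the analytic content is in checking that the combinatorial moves relating two generic foliations respect that filtration.

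\textbf{Main obstacle.} I expect the hard part to be the bifurcation analysis of one-parameter families of gradient-like foliations transverse to a fixed isotopy $I$: one must show that generically only the two codimension-one phenomena above occur, that they happen at distinct parameter values, and — most delicately — that at a birth/death event the two colliding singularities have \emph{equal} $A_f$-value in the limit, so that no bar is spuriously created or destroyed and the filtered quasi-isomorphism is genuinely filtration-compatible. Controlling how the action function $A_f$ behaves across such a bifurcation (using Wang's construction and the transversality of $\FF_s$ to $I$) is where the real work lies; the chain-level algebra afterwards is the standard persistence-module bookkeeping.
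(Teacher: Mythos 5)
There is a genuine gap: your whole argument rests on a parametric deformation theory for transverse foliations that does not exist in this setting. The foliations in $\FF_{\gen}(I)$ are Le Calvez foliations, purely topological ($C^0$) objects produced from maximal isotopies via brick decompositions; there is no transversality or moduli-space machinery with which to speak of a ``generic one-parameter family'' $(\FF_s)_{s\in[0,1]}$, no result asserting that $\FF_{\gen}(I)$ is connected through such families, and no way to define your fallback continuation map by ``counting isolated trajectories'' of an interpolating structure for a homeomorphism. Note also that part of your bifurcation list cannot occur: every $\FF\in\FF_{\gen}(I)$ has the \emph{same} singular set $\Sing(\FF)=\Sing(I)=\Fix(f)$, with the same indices and the same action values (both are intrinsic to $f$ and $I$), so there are no birth/death events and the filtered chain groups are literally identical for any two foliations; the only thing that varies is which connexions between singularities are present, and a single ``handle slide'' is not known to be realizable along a path inside $\FF_{\gen}(I)$. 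So the Morse–Floer continuation scheme has no foothold here, and the ``main obstacle'' you flag is in fact the whole problem.

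The paper avoids deformations entirely and instead compares $B_{\gen}(\FF)$ with a barcode that is manifestly foliation-independent. It first proves (Theorem \ref{egalité-generique}) that $B_{\gen}(\FF)$ equals the combinatorial barcode $\bm{\beta}_\FF=\mathcal{B}(G(\FF),A_f,\ind(\FF,\cdot))$ built from the graph of leaves of $\FF$, via explicit homological computations in the chain complex using attractive and repulsive basins (Lemmas \ref{scie}, \ref{homologous}, \ref{sommesource}). It then proves (Theorem \ref{egalité-ordre}) that $\bm{\beta}_\FF=\bm{\beta}_>$, where $\bm{\beta}_>$ is defined from the order $x>y$ given by existence of a connexion, i.e.\ a path whose lifts to the universal cover of $\Sigma\backslash X$ are Brouwer lines for $\ti f$; this order depends only on $f$ and $X$, not on $\FF$. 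The key step (Lemma \ref{fundamental} via Lemma \ref{sousfonda}) is dynamical, not deformation-theoretic: one shows that a connexion from $x$ to $y$ forces $x$ and $y$ to lie in the same connected component of the filtered graph $G^-_t(\FF)$, using the strong-intersection lemma for Brouwer lines (Lemma \ref{intersection-positive}, a Jordan-domain argument), local rotation sets, Le Roux's index results and Yan's theorem on periodic orbits near a fixed point of index $1$. If you want to salvage your write-up, this is the missing idea to import: identify a foliation-free invariant of the dynamics that computes the same barcode, rather than trying to interpolate between foliations.
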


In the case of a Hamiltonian diffeomorphism close enough of the identity and generated by an autonomous hamiltonian function we will obtain the following result.

\begin{theorem}\label{thm1'intro}
If we consider a Hamiltonian diffeomorphism $f$ with a finite number of fixed points which is $C^2$-close to the identity and generated by an autonomous Hamiltonian function then the barcode $B_{\gen}(\FF)$ is equal to the Floer homology barcode of $f$.
\end{theorem}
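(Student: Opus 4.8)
The plan is to exhibit a single, explicitly understood foliation in $\FF_{\gen}(I)$, compute $B_{\gen}$ for it, and then combine Theorem \ref{thm1intro} with the classical identification of Floer homology with Morse homology in the $C^2$-small regime.

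First I would fix the autonomous Hamiltonian function $H$ generating $f$ together with a Riemannian metric on $\Sigma$. Since $f$ is $C^2$-close to $\id$, the induced Hamiltonian flow $I=(f_t)_{\tin}$ is a maximal isotopy of $f$ whose maximal unlinked set of fixed points is exactly $\mathrm{Crit}(H)$, as recalled in the preliminaries; moreover the finiteness and non-degeneracy hypotheses force $H$ to be a Morse function. The trajectories of $-\nabla H$, suitably completed at the critical points, form a gradient-like foliation $\FF_0$ that is positively transverse to $I$. After a generic perturbation of the metric — which leaves $\mathrm{Crit}(H)$, the values $H(x)$ and the positive transversality to $I$ unchanged, positive transversality being an open condition — I may assume the pair is Morse–Smale and that $\FF_0$ satisfies the generic hypotheses defining $\FF_{\gen}(I)$.

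Second, I would identify the filtered chain complex that the construction of section \ref{génériqueF} attaches to $\FF_0$ with the filtered Morse complex of $H$. The generators are the singularities of $\FF_0$, i.e. the critical points of $H$; the differential counts the connections of $\FF_0$ joining singularities of consecutive index, i.e. the isolated $-\nabla H$ trajectories, with signs matching (or conjugate to) the Morse ones; and the filtration value of a generator $x$ is $A_f(x)$, which by Wang's extension of the action function \cite{WANG} coincides with $H(x)$ in the global conventions fixed earlier, consistently with $A_f(\alpha(\phi))>A_f(\omega(\phi))$ and with the fact that $H$ strictly decreases along the leaves of $\FF_0$. Hence $B_{\gen}(\FF_0)$ is the barcode of the filtered Morse homology $(\HM^t_*(H))_{t\in\R}$.

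Third, I would invoke the classical fact that for a $C^2$-small autonomous Hamiltonian the Floer differential, after the standard time rescaling, reduces to the Morse differential of $H$, while the action of the constant loop at $x$ equals $H(x)$ in the same conventions, so that the filtered Floer complex of $H$ is isomorphic to the filtered Morse complex of $H$; since the action functional, and hence the filtered Floer homology, depends only on $f$ and not on the chosen autonomous generator, the Floer homology barcode $B(f)$ is the barcode of $(\HM^t_*(H))_{t\in\R}$. Together with the previous step and with Theorem \ref{thm1intro}, which gives $B_{\gen}(\FF)=B_{\gen}(\FF_0)$ for every $\FF\in\FF_{\gen}(I)$, this yields $B_{\gen}(\FF)=B(f)$. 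I expect the genuinely delicate point to be the first two steps: verifying that the gradient-like foliation of a Morse–Smale $H$ lies in the class $\FF_{\gen}(I)$, and that the combinatorial complex built from its connections reproduces the Morse complex with the correct signs and the correct action filtration; the remaining ingredients are either quoted or follow from openness and transversality arguments.
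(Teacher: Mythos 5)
Your proposal is correct and follows essentially the same route as the paper: the gradient lines of $H$ furnish a foliation in $\FF_{\gen}(I)$ whose associated filtered complex is the filtered Morse complex of $H$, the Floer barcode coincides with the Morse barcode in the $C^2$-small autonomous regime, and the foliation-independence result transfers the identity to every $\FF\in\FF_{\gen}(I)$. The only cosmetic difference is that the paper routes the identification through the combinatorial barcode $\bm{\beta}_\FF$ and Theorem \ref{egalité-generique}, whereas you identify $B_{\gen}(\FF_0)$ with the Morse barcode directly at the chain-complex level; your explicit Morse--Smale perturbation of the metric to rule out saddle--saddle connections is a point the paper leaves implicit.
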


Let us give the idea of the construction. Since $f$ is area-preserving, we will see that there are three kinds of singularities for the foliation $\FF$: sinks, sources and saddle points. We will suppose that $\FF$ is in the set $\FF_{\gen}(I)$ of "generic" foliations positively transverse to $I$, meaning that there are finitely many leaves between sources and saddle points and between sinks and saddle points. 
In the Morse Homology theory, the chain complex is defined by counting modulo $2$ the number of trajectories between the critical points of a Morse function $f$. Following the same ideas we will be able to define a chain complex associated to $\FF$ by counting modulo $2$ the number of leaves between singularities of $\FF$ and more precisely the number of leaves between sinks and saddle points and between sources and saddle points. \\

A natural question appears.

\begin{question}
Can we generalize the construction to barcodes for every Hamiltonian homeomorphisms of surfaces?
\end{question}

\textbf{The map $\mathcal{B}$}\\

In general there is no natural way to construct a chain complex from a positively transverse foliation. The difficulties come from geometrical limitations of the foliations. \\

In section \ref{construction} we will construct an application $\beta$ which associates a barcode to triplets $(G,A,i)$ where $G$ is an oriented graph on the set of vertices $X$ equipped with an action function $A$ defined on $X$, meaning that for every edge $e$ of $G$ from $x$ to $y$ we have $A(x)>A(y)$, and an index function $i:X \ra\Z$. \\

All the next constuctions are based on the map $\mathcal{B}$.\\

\textbf{Second construction}\\

Given a Hamiltonian homeomorphism $f$, we will construct barcodes associated to maximal unlinked sets of fixed points of $f$.\\

Let us consider a maximal unlinked set $X$ of fixed points of $f$, a maximal isotopy $I=(f_t)_{\tin}$ fixing all the fixed points of $X$, and a gradient-like foliation $\FF$, positively transverse to $I$. We will begin by associating a graph $G(\FF)$ to the foliation $\FF$ whose set of vertices is equal to $X$ and for every couple $(x,y)$  of vertices there is an edge from $x$ to $y$ if there is a leaf $\phi$ of $\FF$ starting at $x$ and ending at $y$.\\

In section \ref{2barcode}, we will consider the barcode $\beta(G(\FF), A_f,\ind(\FF,\cdot))$, denoted $\bm{\beta}_\FF$, where $A_f$ is the action function of $f$ and $\ind(\FF,\cdot)$ the index function induced by $\FF$ and prove some useful properties. \\

To study the 	independence of the barcodes $\bm{\beta}_\FF$ on the choice of the foliation $\FF$, we will construct a more general barcode associated to $X$.\\

\textbf{Third construction}\\

In section \ref{order}, we will introduce an order on the fixed points of $X$. For two fixed points $x,y\in X$ we will say that $x>y$ if there exists an oriented path $\gamma$ from $x$ to $y$ of which any lift $\ti{\gamma}$ on the universal cover $\D^2$ of $\Sigma\backslash X$ is a \textit{Brouwer line} for the natural lift $\ti{f}$ of $f$, meaning that $\ti{\gamma}$ is the boundary of an attractor of $\ti{f}$. In the same ideas, we associate to this order a graph $G(>)$ whose set of vertices is equal to $X$ and for every couple of vertices $x$ and $y$ there is an edge from $x$ to $y$ if $x>y$.\\

We will consider the barcode $\bm{\beta}_>=(G(>),A_f,\ind(I,\cdot))$ which depends only on $X$.\\

\textbf{Theorems}\\

In the section \ref{equality} we prove the main results of this article about the previous consturctions. It is divided in two parts. \\

In the first part we will prove the following result.

\begin{theorem}\label{thm2intro}
The barcode $\bm{\beta}_\FF$ does not depend on the choice of $\FF$ and only depends on the maximal unlinked set of fixed points $X$.
\end{theorem}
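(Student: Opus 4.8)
The plan is to prove the independence by interpolating the two barcodes $\bm{\beta}_{\FF}$ and $\bm{\beta}_{\FF'}$ through the ``third construction'' barcode $\bm{\beta}_>$, which manifestly depends only on $X$. That is, rather than comparing two arbitrary transverse foliations directly, I would show that for every gradient-like foliation $\FF$ positively transverse to a maximal isotopy $I$ fixing $X$, one has an equality of barcodes
$$\bm{\beta}_{\FF} = \beta(G(\FF),A_f,\ind(\FF,\cdot)) = \beta(G(>),A_f,\ind(I,\cdot)) = \bm{\beta}_>.$$
Since the right-hand side has no reference to $\FF$ at all, this immediately gives Theorem \ref{thm2intro}. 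So the real content reduces to a single comparison: $\beta(G(\FF),A_f,\ind(\FF,\cdot)) = \beta(G(>),A_f,\ind(I,\cdot))$.

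The key steps, in order, would be the following. \emph{Step 1 (index comparison).} First I would check that $\ind(\FF,x) = \ind(I,x)$ for every $x \in X$, i.e.\ that the index attached to a fixed point by any positively transverse foliation agrees with the intrinsic Lefschetz-type index of the isotopy at that point; this should follow from Le Calvez's theory, since the local behaviour of $\FF$ near $x$ (sink, source, saddle, or more degenerate) is constrained by the local rotation behaviour of $I$. \emph{Step 2 (comparing the graphs via the map $\beta$).} The graphs $G(\FF)$ and $G(>)$ are generally different --- a leaf of $\FF$ from $x$ to $y$ gives an edge in $G(\FF)$, while an edge in $G(>)$ records a Brouwer-line relation --- so I would \emph{not} try to show $G(\FF) = G(>)$. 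Instead I would use the properties of the map $\beta$ established in section \ref{construction}: $\beta$ should only depend on the ``reachability'' preorder generated by the edges together with the action filtration, not on the precise edge set. Concretely, the claim to extract from section \ref{construction} is that if $G$ and $G'$ are two action-filtered oriented graphs on the same vertex set $X$ with the same index function, and the transitive closures of $G$ and $G'$ coincide (as relations on $X$), then $\beta(G,A,i) = \beta(G',A,i)$. \emph{Step 3 (reachability coincides).} Then I must show that the transitive closure of $G(\FF)$ equals that of $G(>)$: a chain of leaves of $\FF$ from $x$ down to $y$ exists if and only if $x > y$ in the Brouwer order. One direction uses that a concatenation of leaves of a positively transverse foliation, being transverse to $I$, lifts to a Brouwer line for $\ti f$ (this is essentially Le Calvez's transversality together with the fact that $A_f$ strictly decreases along leaves, ruling out cycles). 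The other direction uses that a Brouwer line from $x$ to $y$ can be pushed to be carried by the foliation, again by transversality, producing an actual chain of connexions of $\FF$.

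The main obstacle I expect is \emph{Step 3}, and within it the direction showing that the Brouwer order is \emph{realized} by chains of leaves of $\FF$ --- i.e.\ that $x > y$ forces the existence of a sequence of $\FF$-connexions from $x$ to $y$. The subtlety is that a given foliation $\FF$ may be ``geometrically poor'' (few leaves between certain singularities), exactly the pathology flagged in the paragraph preceding the map $\beta$, so one cannot simply homotope a Brouwer line onto leaves without care; one has to argue at the level of the dynamics of $\ti f$ on the universal cover $\D^2$ of $\Sigma \setminus X$, using that every such transverse foliation has leaves crossing any prescribed Brouwer-line region. A secondary technical point is making precise, and citing from section \ref{construction}, the exact invariance property of $\beta$ under passing to the transitive closure --- if $\beta$ turns out to be sensitive to more than reachability, the interpolation strategy would need to be replaced by a direct foliation-to-foliation comparison, connecting $\FF$ to $\FF'$ through a path of transverse foliations and tracking how $G(\cdot)$ and hence $\beta$ change, which is messier. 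I would therefore first pin down that property of $\beta$ before committing to the route above.
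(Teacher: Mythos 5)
Your overall strategy is exactly the paper's: Theorem \ref{thm2intro} is deduced as a corollary of the equality $\bm{\beta}_\FF=\bm{\beta}_>$ (Theorem \ref{egalité-ordre}), and your Steps 1 and 2 are essentially correct --- the index function is independent of $\FF$ by the remark in Section \ref{foliation}, and the map $\mathcal{B}$ does depend only on the action, the index, and the partition of $X\cap A_f^{-1}((-\infty,t))$ (resp.\ $((t,+\infty))$) into connected components of the filtered subgraphs. However, your Step 3 contains a genuine gap: you aim to prove that the \emph{transitive closures} of $G(\FF)$ and $G(>)$ coincide, i.e.\ that $x>y$ forces the existence of a \emph{directed} chain of leaves of $\FF$ from $x$ to $y$. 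This is strictly stronger than what is needed and is not what the paper proves; there is no reason a Brouwer-line connexion from $x$ to $y$ should be realized by a descending chain of $\FF$-connexions, precisely because $\FF$ may be geometrically poor. The correct target (Lemma \ref{fundamental}) is only that $x$ and $y$ lie in the same \emph{undirected} connected component of $G^-_t(\FF)$ for $t>A_f(x)$, which suffices by Step 2 but is weaker than directed reachability.

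Moreover, even this weaker statement (Lemma \ref{sousfonda}) is where all the dynamical content sits, and none of the required tools appear in your outline. The paper's proof splits into cases according to whether $x$ is a sink, a saddle, or a source of $\FF$. When $x$ is a sink one must show no connexion can start at $x$ at all; this uses the local rotation set of $I$ at $x$, Le Roux's index comparison, and Yan's theorem producing contractible periodic orbits of type $(p,q)$ accumulating on an index-one fixed point with local rotation set $\{0\}$. When $x$ is a saddle or a source, the argument runs through the structure of the attractive basin $W^s(\mathcal{C}_x^-)$ (Proposition \ref{Propbord}, Lemma \ref{interior}) together with the strong-intersection Lemma \ref{intersection-positive} for Brouwer lines in the universal cover: a hypothetical connexion leaving the basin must strongly intersect a boundary leaf, producing a new connexion that violates the action inequality $A_f(\alpha(\phi))>A_f(\omega(\phi))$. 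Your proposed alternative of ``pushing a Brouwer line onto the foliation'' is exactly the step that fails without this machinery; you should replace Step 3 by the connected-component statement and import the basin/strong-intersection/rotation-set arguments to prove it.
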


Thereom \ref{thm2intro} will appear as a corollary of the following theorem.

\begin{theorem}\label{thm3}
For every foliation $\FF$ positively transverse to the isotopy $I$ we have
$$\bm{\beta}_>=\bm{\beta}_\FF.$$
\end{theorem}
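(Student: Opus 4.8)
The plan is to prove the two barcodes coincide by showing that the graphs $G(>)$ and $G(\FF)$, together with their action and index data, produce the same persistence module under $\beta$. The natural strategy is not to prove $G(>)=G(\FF)$ on the nose — that is presumably false, since a single foliation $\FF$ only records the leaves that happen to exist between singularities — but rather to show the two triplets $(G(>),A_f,\ind(I,\cdot))$ and $(G(\FF),A_f,\ind(\FF,\cdot))$ are equivalent inputs for the map $\beta$, i.e. they differ only by moves under which $\beta$ is invariant. So the first step is to recall precisely from section \ref{construction} which elementary modifications of a triplet $(G,A,i)$ leave $\beta(G,A,i)$ unchanged (adding or removing edges that are "consequences" of existing ones in the sense of the filtered chain homotopy type, subdividing, etc.), since the whole argument will be organized around producing a chain of such moves between the two triplets.

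Next I would establish the comparison at the level of the order relations. The key geometric input is that every leaf $\phi$ of $\FF$ from $x=\alpha(\phi)$ to $y=\omega(\phi)$ yields, by the transversality of $\FF$ to $I$ and the Brouwer-theoretic properties recalled in the introduction (any lift of such a leaf bounds an attractor of $\ti f$, and $A_f(\alpha(\phi))>A_f(\omega(\phi))$), an edge of $G(>)$ from $x$ to $y$; hence $G(\FF)$ is a subgraph of $G(>)$, compatibly with $A_f$. Conversely, given $x>y$ witnessed by a Brouwer line $\gamma$, I would use Le Calvez's foliation to decompose $\gamma$, up to homotopy in $\Sigma\setminus X$, as a transverse path and then follow the dynamics of $\FF$ along it: the transverse trajectory must "descend" through the gradient-like foliation, and using that there are no cycles of connections and that singularities are sinks/sources/saddles, one extracts a sequence of leaves of $\FF$ realizing the passage from a neighborhood of $x$ down to $y$. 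This shows the edge $x\to y$ of $G(>)$ is a composite (in the filtered-homology sense) of edges of $G(\FF)$, so that passing from $G(\FF)$ to $G(>)$ is a sequence of $\beta$-invariant moves. One must also check the index functions match up to the same moves: $\ind(I,\cdot)$ and $\ind(\FF,\cdot)$ agree at $X$ because the local index of a fixed point for a maximal isotopy is computed by the foliation's local structure (Le Calvez–Sauzet), so this is essentially a bookkeeping point, but it needs to be stated.

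The main obstacle, I expect, is the converse direction: turning an abstract Brouwer line $\gamma$ realizing $x>y$ into an explicit concatenation of leaves of $\FF$ whose effect on the chain complex is the same as the single edge $x\to y$. The difficulty is exactly the "geometrical limitations of the foliations" flagged in the text before the construction of $\beta$ — a transverse path need not meet singularities, may spiral, and may cross infinitely many leaves — so one cannot naively read off finitely many connections from it. The resolution should go through the gradient-like structure: one works with the (possibly singular) decomposition of $\Sigma\setminus X$ into basins and uses that $A_f$ is strictly decreasing along leaves to get a finite filtration of values $A_f(x)>a_1>\dots>a_k>A_f(y)$ through which $\gamma$ must pass, then at each level identifies a saddle connection contributing to the differential, invoking the genericity-free counting built into $\beta$ (which, unlike $B_{\gen}$, is designed to absorb non-generic configurations). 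I would also keep in mind the boundary and non-contractibility subtleties on a general compact surface $\Sigma$, handled by passing to the universal cover $\D^2$ of $\Sigma\setminus X$ and arguing with $\ti f$ as in the definition of the order, so that attractors and Brouwer lines are literally available; once the covering-space translation is set up, the argument is local along $\gamma$ and the surface's topology does not re-enter. Finally, Theorem \ref{thm2intro} drops out: since $\bm\beta_>$ is manifestly independent of $\FF$, the equality $\bm\beta_>=\bm\beta_\FF$ forces $\bm\beta_\FF$ to be independent of $\FF$ as well.
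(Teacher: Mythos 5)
Your reduction is in the right spirit: the paper also reduces the theorem to showing that the filtered graphs $G^-_t(\FF)$ and $G^-_t(>)$ induce the same partition of $X\cap A_f^{-1}((-\infty,t))$ into connected components (Lemma \ref{fundamental}), since $\mathcal{B}$ only sees this partition together with the action and index data; and your easy direction ($G(\FF)\subset G(>)$ because every leaf is a connexion) is exactly the paper's. But your treatment of the converse has a genuine gap. A connexion from $x$ to $y$ is merely a path all of whose lifts are Brouwer lines for $\ti f$; it is not a trajectory of the isotopy, and Le Calvez's transversality statement gives no license to replace it, up to homotopy in $\Sigma\backslash X$, by a path positively transverse to $\FF$. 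So the plan of ``following the dynamics of $\FF$ along $\gamma$'' and extracting a descending chain of leaves has no foundation. Worse, such a chain need not exist: when $x$ is a sink of $\FF$, there is no leaf leaving $x$ at all, and the paper's resolution of that case is to show the hypothesis is vacuous --- a sink of a positively transverse foliation admits no outgoing connexion, which is proved via the local rotation set, Le Roux's index comparison, and Yan's theorem on accumulation by periodic orbits of type $(p,q)$. Nothing in your proposal anticipates this.

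The second missing ingredient is the mechanism that handles the saddle and source cases. The paper does not convert $\gamma$ into leaves; instead it argues by contradiction in the universal cover: if $y$ lay outside the attractive basin $W^s(\mathcal{C}_x^-)$, a lift of $\gamma$ would have to strongly intersect a lifted leaf in the frontier of that basin, and Lemma \ref{intersection-positive} (proved with K\'er\'ekj\'art\'o's theorem that components of intersections of Jordan domains are Jordan domains) then manufactures a new connexion from $x$ to a frontier singularity of strictly larger action, contradicting the monotonicity of $A_f$ along connexions. This strong-intersection lemma, together with the description of $\Fr(W^s(\mathcal{C}))$ from Proposition \ref{Propbord}, is the real engine of the proof, and your proposal contains no substitute for it. Note also that the conclusion you would need is only undirected connectedness in $G^-_t(\FF)$, not a directed path of leaves from $x$ to $y$; aiming for the stronger directed statement is both unnecessary and, in general, false.
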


The proof will be based on the geometrical properties of the gradient-like foliations from section \ref{2barcode} and the dynamical properties of the order on $\Sing(I)$ from section \ref{order}.\\

In the second part, we will prove the following result which enlighten the link between the barcode associated to a maximal set of fixed points and the first construction. 

\begin{theorem}\label{thm3'}
Let us consider a Hamiltonian homeomorphism $f$ on a compact surface $\Sigma$ whose set of fixed points is finite, unlinked, and such that each fixed point $x\in\Fix(f)$ is not degenerate.\\
We consider a maximal isotopy $I$ such that $\Sing(I)=\Fix(f)$ then for a foliation $\FF\in\FF_{\gen}(I)$ we have
$$B_{\gen}(\FF)=\bm{\beta}_\FF.$$
\end{theorem}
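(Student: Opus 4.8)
The plan is to show that the two constructions produce literally the same filtered chain complex (up to the usual chain-homotopy equivalence that does not affect the barcode), rather than comparing the resulting barcodes directly. The key observation is that, under the hypotheses of the theorem, the "generic" foliation $\FF\in\FF_{\gen}(I)$ used in the first construction is in particular a gradient-like foliation positively transverse to $I$ with $\Sing(\FF)=X=\Fix(f)$, so it is legitimate to feed it into the second construction and form $\bm{\beta}_\FF=\beta(G(\FF),A_f,\ind(\FF,\cdot))$. So the whole content is to identify the chain complex $(\CM_*(\FF),\partial)$ defining $B_{\gen}(\FF)$ with the chain complex that the map $\beta$ builds out of the triple $(G(\FF),A_f,\ind(\FF,\cdot))$.

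First I would recall both definitions side by side. On the $B_{\gen}$ side, the generators are the singularities of $\FF$ (sinks, saddles, sources), graded by a Morse-type index (sink $=$ min, saddle $=$ index $1$, source $=$ max, matching $\ind(\FF,\cdot)$), filtered by $A_f$, and the differential counts mod $2$ the leaves running from a saddle to a sink and from a source to a saddle; this is well-defined precisely because $\FF_{\gen}(I)$ is the set of foliations with finitely many such connecting leaves, and $A_f(\alpha(\phi))>A_f(\omega(\phi))$ for every leaf $\phi$ (Wang's extension of the action function, cited in the introduction) guarantees the differential strictly decreases the filtration. On the $\beta$ side, the input graph $G(\FF)$ has an edge $x\to y$ exactly when some leaf of $\FF$ goes from $x$ to $y$; one has to check that $\beta$ applied to $(G(\FF),A_f,\ind(\FF,\cdot))$ uses exactly these edges, with the same index grading and the same mod-$2$ count, to assemble its boundary operator. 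Since an edge of $G(\FF)$ between singularities of consecutive index is exactly a connecting leaf of the type counted in the Morse-type differential (a leaf from a saddle necessarily limits onto a sink, and a leaf onto a saddle necessarily emanates from a source — there are no saddle–saddle connections for a gradient-like foliation transverse to $I$, as recalled when gradient-like foliations are introduced), the two differentials agree on the nose. I would also need to check the edge-orientation/index-parity bookkeeping in the definition of $\beta$ matches the convention "count leaves from index $k$ to index $k-1$" used for $B_{\gen}$; this is a direct comparison of the two definitions from sections \ref{construction} and \ref{génériqueF}.

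Once the chain complexes are identified, the barcodes coincide because both $B_{\gen}(\FF)$ and $\bm{\beta}_\FF$ are by definition the barcode of the associated filtered persistence module, and isomorphic filtered complexes have the same barcode. I expect the main obstacle to be purely bookkeeping rather than conceptual: making sure that the "generic" hypothesis defining $\FF_{\gen}(I)$ in section \ref{génériqueF} is exactly the condition under which $G(\FF)$ is a finite graph with the edge set matching the counted leaves, and that the index function $\ind(\FF,\cdot)$ fed to $\beta$ is the same grading used for $\CM_*(\FF)$; there may be a subtlety if $\beta$ allows edges between singularities whose indices differ by more than one (e.g. a source–sink connection), in which case I would argue such leaves contribute nothing to $\beta$'s differential by the index/parity rule, so they do not change the complex. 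A secondary point to verify is that the non-degeneracy of each fixed point, together with $\Sing(I)=\Fix(f)$, forces $\FF$ near each singularity to have the local model (saddle / node) giving the expected local index, so that the two index functions genuinely agree pointwise on $X$.
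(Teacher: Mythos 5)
There is a genuine gap, and it starts with a misreading of what $\bm{\beta}_\FF$ is. You treat $\mathcal{B}(G(\FF),A_f,\ind(\FF,\cdot))$ as if it were the barcode of a chain complex that the map $\mathcal{B}$ ``builds out of'' the graph, and then propose to identify that complex with $(C_i^t,\partial_i^t)$. But the map $\mathcal{B}$ of section \ref{construction} is not defined through any chain complex or differential: it is a purely combinatorial rule that assigns bars of categories $0$--$3$ by looking at the connected components of the filtered subgraphs $G_t^{\pm}$, the maps $j_t,j'_t$, and the index function. There is no ``$\beta$'s differential'' to which source--sink leaves could ``contribute nothing''; on the contrary, those leaves are edges of $G(\FF)$ (which the paper explicitly distinguishes from $G_{\gen}(\FF)$) and they do enter the connected-component structure that $\mathcal{B}$ uses to decide which bars exist. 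So the strategy ``show the two filtered complexes agree, hence the barcodes agree'' is not well-posed here, and the index/parity bookkeeping you defer to is not where the difficulty lies.

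What the theorem actually requires is a comparison between the output of the combinatorial rule and the filtered homology of $(C_i^t,\partial_i^t)$, and this is genuinely nontrivial. The paper first shows that every action value is the endpoint of exactly the same number of bars in both barcodes (Remark \ref{bargeneric} on the $B_{\gen}$ side, Corollary \ref{numberbar} -- which rests on the combinatorial Proposition \ref{connexionsaddle} -- on the $\bm{\beta}_\FF$ side), which reduces the problem to proving one inclusion $\bm{\beta}_\FF\subset B_{\gen}(\FF)$. Then, for each finite bar that the rule $\mathcal{B}$ produces at a saddle point $x$, one must exhibit an explicit homology class in the filtered complex that is born and dies at the prescribed action values: in one case the cycle $\partial_1(x)=y+y'$ together with the fact that sinks in the same connected component of $G_t^-(\FF)$ are homologous (Lemmas \ref{scie} and \ref{homologous}, which in particular dispose of the source--sink edges by replacing them with alternating sink/saddle paths), and in the other case the computation $\partial_2\bigl(\sum_{y\in\mathcal{C}}y\bigr)=\sum_i x_i$ over the sources of a component of $G_t^+(\FF)$ (Lemma \ref{sommesource}). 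None of these steps, nor any substitute for them, appears in your proposal, so the argument as written does not establish the equality.
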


\textbf{Acknowledgments}\\

This parper results from my PhD thesis at the institut Mathématiques de Jussieu - Paris Rive Gauche,  Sorbonne University. and I would like to thanks my advisors Patrice Le Calvez and Frédéric Le Roux for their help and support.
I am also thankful to Rémi Leclercq and Barney Bramham for useful comments and suggestions from my PhD reviews.\\

\section{Preliminaries}\label{preliminaries}

\subsection{Morse homology and Floer homology}

We give a brief review of the Morse and Floer homology theories.  We refer to \cite{AUD} for more details about their constructions.\\

\textbf{Morse homology}\\

We consider a Morse function $H$ on a manifold $M$. The Morse chain complex of $H$, $\CM_*(H)$, is the vector space spanned by $\mathrm{Crit}(H)$ over the ground field $\Z/2\Z$. The boundary map of $\CM_*(H)$ counts certains \emph{broken geodesics} of a chosen adapted Riemaniann metric $g$ on $M$ which can be viewed as gradient lines of the gradient lines of $H$ induced by the metric $g$.\\
For any $t\in \R$, we can define $\CM_*^t:=\{\sum a_z z\in \CM_*(H) \ : \ H(z)<t\}$. One may prove that the Morse boundary map preserves $ \CM_*^t$ and hence we can define its homology $\HM^t_*(H)$. The set $(\HM^t_*)_{t\in\R}$ is called the \emph{filtered Morse homology} of $H$. \\

\textbf{Floer homology}\\

We consider a non degenerate time dependent Hamiltonian function $H$ on a symplectic manifold $(M,\omega)$. Symetrically as the Morse homology theory, the Floer chain complex of $H$, $\CF_*(H)$, is the vector space spanned by $\mathrm{Crit}(A_H)$ over the ground field $\Z/2\Z$ (or more generally over a field $\mathbb{F}$) where $A_H$ is the action function of $H$. The boundary map of $CF_*(H)$ counts certain solutions of a perturbed Cauchy-Riemann equation for a chosen $\omega$-compatible almost complex structure $J$ on $TM$, which can be viewed as isolated gradient flow lines of $A_H$.\\
For any $t\in \R$, we can define $\CF_*^t:=\{\sum a_z z\in \CF_*(H) \ : \ A_H(z)<t\}$. One may prove that the Floer boundary map preserves $\CF_*^t$ and hence we can define its homology $\HF^t_*(H)$. The set $(\HF^t_*)_{t\in\R}$ is called the \emph{filtered Floer homology} of $H$. \\

\subsection{Barcodes and persistence modules}\label{introbarcodes}

The notion of barcodes and persistence modules was used in topological data analysis, see for example G. Carlsson in \cite{CARL} or R. Ghrist in \cite{GHR}. Barannikov already noticed the existence of a filtration of the Morse homology in \cite{BAR} and we can find the notion of persistence modules in Usher's work \cite{USH1,USH2} but the barcodes have been introduced in symplectic topology by Polterovich and Shelukhin \cite{POLT}. The same year, without the terminology of the barcodes Usher and Zang published some results about the persistent homology in \cite{USH3}. 

Most of the following definitions and results are coming from \cite{ROUX3}. One can also refer to Chazal, De Silva, Glisse and Outdot's book \cite{CHAZ} or to \cite{EDEL}.\\

\textbf{Barcodes}\\

Let us consider a special family of intervals $\textbf{B}$  of the form $((a_j,b_j])_{j\in\{1,...,n\}}$, with $-\infty \leq a_j\leq b_j\leq +\infty$, where we allow trivial intervals of the form $(a,a]$. We say that two families are equivalent if removing all intervals of the form $(a,a]$ from them yields the same family.

\begin{definition}
A \emph{barcode} $B$ is an equivalence class of family of intervals $\textbf{B}$.
\end{definition}

By convenience, we will often identify a list of intervals with the corresponding barcode.\\

Let $a\leq b,c\leq d$ be four elements of $\R\cup \pm \{\infty$\}. We set $d((a,b],(c,d])=\max\{|c-a|,|d-b|\}$, with convention that $d(\infty,\infty)=0$. Note that if $c=d=\frac{a+b}{2}$, then $d((a,b],(c,d])=\frac{b-a}{2}$.\\

\begin{definition}
Let $B_1,B_2$ be barcodes and take representatives $\textbf{B}_1=(I_j^1)_{j\in J}$, $\textbf{B}_2=(I_k^2)_{k\in K}$. The \textit{bottleneck distance} between $B_1,B_2$, denoted by $\mathrm{d}_{\mathrm{bot}}(B_1,B_2)$, is the infimum of the set of $\epsilon$ such that there is a bijection $\sigma$ between two subsets $J',K'$ of $J,K$ with the property that for every $j\in J'$, $d(I^1_j,I^2_{\sigma(j)})\leq \epsilon$ and all the remaining intervals $I^1_j,I^2_k$ for $j\in J\backslash J'$, $k\in K\backslash K'$ have length less than $2\epsilon$.
\end{definition}

We will denote $\mathrm{Barcode}$ the set of barcodes in the next sections.\\

\textbf{Persistence module}\\
 
\begin{definition}
A \emph{persistence module} $V$ is a family $(V_t)_{t\in\R}$ of vector spaces equipped with morphisms $i_{s,t}: V_s\ra V_t$, for $s\leq t $, satisfying:
\begin{enumerate}
\item For all $t\in\R$ we have $i_{t,t}=\id$ and for every $s\leq t\leq u$ we have $i_{t,u}\circ i_{s,t}=i_{s,u}$,
\item There exists  a finite subset $F\subset \R$, often referred to the spectrum of $V$, such that $i_{s,t}$ is an isomorphism whenever $s,t$ belong to the same connected component of $\R\backslash F$,
\item For all $t\in\R$, $\underset{s \ra t, s<t}{\lim} V_s =V_t$ ; equivalently, for fixed $t$, $i_{s,t}$ is an isomorphism for $s<t$ sufficiently close to $t$.
\end{enumerate}
\end{definition}

Let us consider a persitence module $(V_t)_{t\in\R}$ equipped with the morphisms  $(i_{s,t})_{s\leq t}$. For any $t\in\R$, there exists $\epsilon$ such that $i_{s,u}:V_s\ra V_u$ is an isomorphism if $s,u\in(t-\epsilon,t]$ or if $s,u\in(t,t+\epsilon)$. Choose $t^-\in(t-\epsilon,t]$ and $t^+\in(t,t+\epsilon)$ and let $j(t)=\dim(\mathrm{Ker}(i_{t^-,t^+}))+\mathrm{codim}(\mathrm{Im}(i_{t^-,t^+}))$. Notice that $j(t)$ is zero except for $t$ in the spectrum of $\textbf{V}$. We say that $\textbf{V}$ is \emph{generic} if $j(t)\leq 1$ for all $t\in\R$.\\
 
\textbf{Functorial relations between the barcodes and the persistence modules}\\

To establish the link between the previous objects we consider two functors as follows.

\begin{enumerate}[(i)]
\item Consider an interval $I$ of the form $(a,b]$ and define $Q_t(I)= \Z/2\Z$, if $t\in I$, and $Q_s(I)=\{0\}$, if $s\notin I$.  $Q_s(I)$ is a persistence module, with $i_{s,t}$ equal to $\id$ if $s,t\in I$ and $0$ otherwise. For a set of intervals $\mathcal{I}$ for each $t\in\R$ we define 
$$Q_t(\mathcal{I})=\bigoplus_{I\in\mathcal{I}} Q_t(I).$$
\item We define a functor $\beta$ from the set of generic persistence modules into the set of barcodes which associate to a generic persistence module $\textbf{V}=(V_s)_{s\in\R}$ a barcode. We denote $(i_{s,t})_{s\leq t}$ the family of morphisms equipped with $\textbf{V}$. Let us consider the set of $t$ in the spectrum of $V$ such that $\mathrm{dim}(\mathrm{Ker}(i_{t^-,t^+}))=1$ and label its elements $b_1,...,b_n$. For each $b_j$, there exists  a unique $a_j\in\R$ with the following property: Let $x\in V_{b_j^-}$ represents a non-zero element in $\mathrm{Ker}(i_{t^-,t^+})$, the element $x$ is in the image of $i_{a^+_j,b^-_j}$ but $x$ is not in the image of $i_{a^-_j,b^-_j}$. We label the remaining elements of the spectrum of $\textbf{V}$ by $\{c_1,...,c_m\}$. The barcode $\beta(\textbf{V})$ consists of the list of intervals: 
$((a_j,b_j],(c_k,+\infty))$, where $1\leq j\leq n$ and $1\leq k\leq m$.\\
One may prove that the functor $\beta$ extends to the set of persistence modules, we refer to $\cite{ROUX3}$ for more details.\\

\end{enumerate}

The following theorem holds.
\begin{theorem}
The functors defined above satisfy the following properties.
\begin{enumerate}
\item $\beta\circ Q= \id_{\mathrm{Barcode}}$.
\item $\beta$ and $Q$ are isometries for the interleaving distance (see next definition).
\end{enumerate}
\end{theorem}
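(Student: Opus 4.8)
The two assertions are the classical \emph{normal form} (or \emph{structure}) theorem and the \emph{isometry} theorem for one‑parameter persistence modules, and my plan is to reduce both of them to the normal form theorem recalled in \cite{ROUX3} (see also \cite{CHAZ,EDEL}), carrying out the elementary bookkeeping by hand and citing only the genuinely hard step. \textbf{For (1)}, I would show $\beta(Q(\mathcal I))=B$ for any representative $\mathcal I=((a_j,b_j])_{1\le j\le n}$ of $B$; since $Q((a,a])$ is the zero module, the functor $Q$ descends to $\mathrm{Barcode}$, so working with one representative suffices. When the finite numbers among $a_1,b_1,\dots,a_n,b_n$ are pairwise distinct, $Q(\mathcal I)$ is a generic persistence module and the claim follows by unwinding the definition of $\beta$: the spectrum is $\{a_j\}\cup\{b_j:b_j<+\infty\}$; at $t=b_j<+\infty$ the morphism $i_{t^-,t^+}$ annihilates exactly the line $Q((a_j,b_j])_{t^-}$, so $\dim(\mathrm{Ker}\,i_{t^-,t^+})=1$, and the non‑zero kernel class lies in $\mathrm{Im}\,i_{s,t^-}$ for $s$ slightly above $a_j$ but not below it, so the construction attaches the left endpoint $a_j$; at $t=a_j$ with $b_j=+\infty$ one has $\dim(\mathrm{Ker}\,i_{t^-,t^+})=0$ and $\mathrm{codim}(\mathrm{Im}\,i_{t^-,t^+})=1$, and these are precisely the spectrum points not consumed by the finite‑bar pairing, i.e. the $c_k$'s, giving the bars $(a_j,+\infty)$. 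Hence $\beta(Q(\mathcal I))=((a_j,b_j])_j=B$. For a non‑generic $\mathcal I$ I would either perturb the endpoints into general position and pass to the limit — colliding endpoints yield trivial bars, which is exactly the equivalence defining barcodes, and $\beta$ is continuous along such a degeneration — or simply invoke that the extension of $\beta$ from \cite{ROUX3} is characterised by $\beta\big(\bigoplus_j Q(I_j)\big)=(I_j)_j$, which is (1). Either way the real input is the normal form theorem (every finite‑spectrum persistence module is isomorphic to a finite direct sum of interval modules $Q(I)$, the multiset of intervals being unique): its existence part is $Q\circ\beta=\mathrm{id}$ and its uniqueness part is $\beta\circ Q=\mathrm{id}$.

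\textbf{For (2)}, by (1) it is equivalent to prove $d_{\mathrm{int}}(Q(B_1),Q(B_2))=d_{\mathrm{bot}}(B_1,B_2)$, where $d_{\mathrm{int}}$ is the interleaving distance. The inequality $d_{\mathrm{int}}(Q(B_1),Q(B_2))\le d_{\mathrm{bot}}(B_1,B_2)$ I would obtain by converting matchings into interleavings: fix $\epsilon>d_{\mathrm{bot}}(B_1,B_2)$ and an $\epsilon$‑matching between representatives; for each matched pair of bars $I,I'$ with $d(I,I')\le\epsilon$, the interval modules $Q(I),Q(I')$ carry the standard $\epsilon$‑interleaving whose shift maps are the identity wherever both modules are non‑zero and $0$ elsewhere (the interleaving triangles commute because the shifted subintervals involved are nested or the relevant maps degenerate to $0$), while each unmatched bar, being of length $\le 2\epsilon$, is $\epsilon$‑interleaved with the zero module; the direct sum of all these interleavings is an $\epsilon$‑interleaving between $Q(B_1)$ and $Q(B_2)$, and letting $\epsilon\downarrow d_{\mathrm{bot}}(B_1,B_2)$ gives the inequality.

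The reverse inequality $d_{\mathrm{bot}}(\beta(V),\beta(W))\le d_{\mathrm{int}}(V,W)$ is the algebraic stability theorem and is the main obstacle; I would not reprove it but recall one of the two standard routes. The interpolation route: an $\epsilon$‑interleaving $(\phi,\psi)$ of $V$ and $W$ yields a one‑parameter family $(U_s)_{s\in[0,\epsilon]}$ with $U_0=V$, $U_\epsilon=W$ such that $U_s$ and $U_t$ are $|s-t|$‑interleaved for all $s,t\in[0,\epsilon]$, after which a finite inclusion--exclusion count of births and deaths along the family produces the required $\epsilon$‑matching of the two barcodes. The induced‑matching route (Bauer and Lesnick): a monomorphism of finite‑spectrum persistence modules induces an injection of barcodes with controlled endpoint shift, dually for epimorphisms, and an $\epsilon$‑interleaving factors through a monomorphism and an epimorphism relating $V$ with its $2\epsilon$‑shift; chaining the induced injections yields the matching. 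Both routes take the normal form theorem as input, and since complete proofs are available in \cite{ROUX3,CHAZ,EDEL} I would cite them for this step. Combining the two inequalities shows that $\beta$ and $Q$ intertwine $d_{\mathrm{bot}}$ on $\mathrm{Barcode}$ with $d_{\mathrm{int}}$ on persistence modules — in particular both are continuous for these distances — which completes the proof.
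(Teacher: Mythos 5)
Your proposal is correct, but there is no in-paper argument to compare it with: the theorem is stated in the preliminaries as a recalled result, imported from \cite{ROUX3} (see also \cite{CHAZ,EDEL}), and the paper gives no proof of either item. Your sketch is the standard route: a direct verification of $\beta\circ Q=\id$ on barcodes with pairwise distinct finite endpoints, an extension to the non-generic case, the matching-to-interleaving construction giving $d_{\mathrm{int}}(Q(B_1),Q(B_2))\le d_{\mathrm{bot}}(B_1,B_2)$, and a citation of the algebraic stability (isometry) theorem for the reverse inequality — which is exactly the level at which the paper itself treats the statement, since it defers everything to the references. Two small points to keep in mind. First, the paper's $\beta$ is defined directly only on \emph{generic} persistence modules and trivial intervals $(a,a]$ are quotiented out of $\mathrm{Barcode}$, so your perturbation/limit step (or, equivalently, invoking the characterisation $\beta\bigl(\bigoplus_j Q(I_j)\bigr)=(I_j)_j$ of the extended functor from \cite{ROUX3}) is genuinely needed to make $\beta\circ Q$ meaningful on an arbitrary barcode; it is not mere bookkeeping. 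Second, the paper's phrase ``the remaining elements of the spectrum'' for the $c_k$ must be read as you read it — the birth values not paired with any finite death — since a literal reading would let left endpoints of finite bars spuriously generate semi-infinite bars; your computation at $t=a_j$ versus $t=b_j$ resolves this correctly.
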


We define the interleaving distance.

\begin{definition}
Let $\textbf{V}=(V_s)_{s\in\R}$ and $\textbf{W}=(W_s)_{s\in\R}$ be two persistence modules, the pseudo-distance $d_{int}(\bf{V},\bf{W})$, called the \textit{interleaving} distance, is defined as the infimum of the set of $\epsilon$ such that there are morphisms $\phi_s:V_s\ra W_{s+\epsilon}$ and $\psi_s:W_s\ra V_{s+\epsilon}$ "compatible" with the $i_{s,t}, j_{s,t}$ in the following sense:
$$\xymatrix{
    V_{s-\epsilon} \ar[r]^{\phi_{s-\epsilon}} \ar[d]_{i_{s-\epsilon,t-\epsilon}}  & W_s  \ar[r]^{\psi_{s}} \ar[d]^{j_{s,t}} & V_{s+\epsilon} \ar[r]^{\phi_{s+\epsilon}} \ar[d]^{i_{s+\epsilon,t+\epsilon}} & W_{s+2\epsilon} \ar[d]^{i_{s+2\epsilon,t+2\epsilon}}\\
    V_{t-\epsilon} \ar[r]^{\phi_{t-\epsilon}}   & W_t  \ar[r]^{\psi_{t}}  & V_{t+\epsilon} \ar[r]^{\phi_{t+\epsilon}} & W_{t+2\epsilon} \\
  }$$
 where $\psi_s\circ\phi_{s-\epsilon}=i_{s-\epsilon,t+\epsilon}$ and $\phi_{s+\epsilon}\circ\psi_s=j_{s,s+2\epsilon}$ s.t. the diagrams commute for all $s\leq t$.
\end{definition}

\textit{The Morse example.}
To give a good idea of what a barcode is, we describe the case of a Morse function. Let $\Sigma$ be a compact surface and $H:\Sigma\ra \R$ a Morse function. The filtered Morse homology $(H_*(\{H<t\}))_{t\in\R}$ is a persistence module where the set $(i_{s,t})_{s\leq t}$ is given by the inclusions $i_{s,t} : H_*(\{H<s\}) \ra H_*(\{H<t\})$. .\\

\subsection{Positively transverse foliation's theory}\label{foliation}

From now we consider a connected, compact and oriented surface $\Sigma$ without boundary. Let $\mathrm{Homeo}(\Sigma)$ be the space of homeomorphisms of $\Sigma$ equipped with the topology of uniform convergence on $\Sigma$. For $f\in\mathrm{Homeo}(\Sigma)$, $\Fix(f)$ represents the set of fixed points of $f$.\\

\textbf{Isotopies and maximal Isotopies}\\

An isotopy is a continuous path $t\mapsto f_t$ from $[0,1]$ to $\mathrm{Homeo}(\Sigma)$. We say that $f\in \mathrm{Homeo}(\Sigma)$ is isotopic to the identity if there exists an isotopy $I=(f_t)_{t\in[0,1]}$ such that $f_0=\id$ and $f_1=f$. We denote by $\mathrm{Homeo}_0(\Sigma)$ the set of those homeomorphisms.\\

Given an isotopy $I=(f_t)_{\tin}$ from $\id$ to $f$, we can extend it to an isotopy defined on $\R$ by the periodic relation $f_{t+1}=f_t\circ f_1$. We define the set of singularities $\Sing(I)$ of $I$ as follows.
$$\Sing(I)=\{x\in\Sigma | \ \forall t\in [0,1], \ f_t(x)=x\}.$$
The complement of $\Sing(I)$ in $\Sigma$ is called the domain of $I$ and denoted $\mathrm{Dom}(I)$.\\

For a point $z\in\Sigma$, the arc $\gamma: [0,1] \ra \Sigma$ where for each t$\in[0,1]$, $\gamma(t)=f_t(z)$ is called the \textit{trajectory} of $z$ along the isotopy $I$. For every $n\geq 0$, we denote by $\gamma_n(z)$ the concatenation of the trajectories of $z,f(z),...,f^{n-1}(z)$.\\

We fix a homeomorphism $f\in\mathrm{Homeo}_0(\Sigma)$. A set $X\subset \Fix(f)$ is say to be \textit{unlinked} if there exists an isotopy $I=(f_t)_{t\in[0,1]}$ from $\id$ to $f$ such that $X$ is included in the set of singularities of $I$. \\
We denote by $\mathcal{I}(f)$ the set of couples $(X,I)$ such that $I$ is an isotopy from $\id$ to $f$ and $X\subset \Sing(I)$. The set $\mathcal{I}(f)$ is naturally equipped with a pre-order $\leq$, where 
$$(X,I)\leq (X',I'),$$ 
if $X\subset X'$ and for each $z\in \Sigma\backslash X$, its trajectory along $I'$ and $I$ are homotopic in $\Sigma\backslash  X$. The couple $(X',I)$ is called \textit{an extension} of $(X,I)$. An isotopy $I\in\mathcal{I}$ is called a \emph{maximal isotopy} in $\mathcal{I}$ if the couple $(\Sing(I),I)$ is a maximal element of $(\mathcal{I},\leq)$.\\

A recent result by Béguin, Crovisier and Le Roux \cite{ROUX4} asserts that for a homeomorphism $f\in\mathrm{Homeo}_0(\Sigma)$ isotopic to the identity there always exists a maximal isotopy (a weaker result was previously proved by O. Jaulent \cite{JAU}). We will often use Corollary 1.3 of \cite{ROUX4} which we write as the following theorem.

\begin{theorem}
Let us consider $f\in\mathrm{Homeo}_0(\Sigma)$. For each element $(X,I)\in \mathcal{I}(f)$ there is a maximal element $(X',I')\in \mathcal{I}(f)$ such that $(X',I')$ is an extension of $(X,I)$.
\end{theorem}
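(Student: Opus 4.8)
The plan is to deduce this from Zorn's lemma applied inside the sub-family
$\mathcal{I}_{(X,I)}(f)=\{(X'',I'')\in\mathcal{I}(f) : (X,I)\le(X'',I'')\}$ of $(\mathcal{I}(f),\le)$. First I would normalize the datum: since $X\subseteq\Sing(I)$ and the trajectories of $I$ are (trivially) homotopic to themselves, one has $(X,I)\le(\Sing(I),I)\in\mathcal{I}(f)$, so we may assume $X=\Sing(I)$, i.e. every singular set under consideration is closed in $\Sigma$. Because $\le$ is only a pre-order, I would then pass to the quotient by the relation "$(X_1,I_1)\sim(X_2,I_2)$ iff $X_1=X_2$ and, for every $z\notin X_1$, the $I_1$- and $I_2$-trajectories of $z$ are homotopic in $\Sigma\setminus X_1$"; on the quotient, $\le$ becomes a genuine partial order, and a maximal class lifts to a couple $(X',I')\in\mathcal{I}(f)$ that is maximal (up to $\sim$) and satisfies $(X,I)\le(X',I')$, which is what the theorem asks for.

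The heart of the matter is the chain condition: every totally ordered family $\{(X_\lambda,I_\lambda)\}_{\lambda\in\Lambda}$ in $\mathcal{I}_{(X,I)}(f)$ must have an upper bound in that family. I would set $X_\infty:=\bigcup_{\lambda}X_\lambda\subseteq\Fix(f)$ and build an isotopy $I_\infty$ from $\id$ to $f$ with $X_\infty\subseteq\Sing(I_\infty)$ (note that $\Sing(I_\infty)$ is automatically closed, hence contains $\overline{X_\infty}$ for free, so no unlinkedness is lost in the limit) such that, for every $\lambda$ and every $z\notin X_\lambda$, the $I_\infty$-trajectory of $z$ is homotopic in $\Sigma\setminus X_\lambda$ to its $I_\lambda$-trajectory. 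Constructing $I_\infty$ is the delicate point: it is done by a limiting/diagonal procedure, exploiting the compactness of $\Sigma$ and of $\Fix(f)$ together with the fact that the homotopy constraints coming from the various $\lambda$ are nested, hence pairwise consistent; and one invokes the structure theory of the complement of a closed unlinked fixed-point set (Brouwer theory and its surface refinements) to certify that the extracted limit genuinely fixes all of $X_\infty$ and dominates the whole chain, not merely a cofinal part of it.

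I expect this last step — producing $I_\infty$ and verifying that it is an upper bound for the entire chain — to be the main obstacle; it is precisely the technical core of \cite{ROUX4}, and the point at which the earlier approach of Jaulent \cite{JAU} stopped short of the full conclusion. Granting the chain condition, Zorn's lemma furnishes a maximal element of $\mathcal{I}_{(X,I)}(f)$ (equivalently of its quotient); undoing the quotient and the normalization then yields the maximal couple $(X',I')\in\mathcal{I}(f)$ extending $(X,I)$ asserted by the theorem. (This is Corollary 1.3 of \cite{ROUX4}, to which we refer for the full argument.)
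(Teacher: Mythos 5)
The paper does not prove this statement at all: it is imported verbatim as Corollary 1.3 of \cite{ROUX4} (Béguin--Crovisier--Le Roux), so there is no internal proof to compare yours against. Your outline — normalize to $X=\Sing(I)$, quotient the pre-order to a partial order, apply Zorn's lemma, and isolate the chain condition as the genuine difficulty — is a faithful description of the strategy behind that reference, and like the paper you ultimately defer the substantive content to \cite{ROUX4}, which is legitimate here. One caveat: your suggestion that the upper bound $I_\infty$ is obtained "by a limiting/diagonal procedure, exploiting the compactness of $\Sigma$" undersells the actual difficulty; isotopies do not form a compact family in any useful sense, and the core of \cite{ROUX4} is the theorem that an increasing union of unlinked sets has unlinked closure, proved by combinatorial/hyperbolic-geometry arguments rather than by extracting a convergent subsequence of isotopies. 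Since you explicitly flag this step as the technical core and cite the correct source for it, the proposal is acceptable as a proof by reference, but it would be misleading to present the limiting step as a routine compactness argument.
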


\textbf{Lefschetz index}\\

For a homeomorphism $f\in \mathrm{Homeo}(\Sigma)$ and an isolated fixed point $x$ of $f$, we define the Lefschetz index $\ind(f,x)$ of $x$ as follows. let $U$ be a chart centered at $x$ and we denote by $\Gamma$ a small oriented circle in $U$ around $x$. For $\Gamma$ sufficiently small, the map 
$$z\mapsto \frac{f(z)-z}{||f(z)-z||},$$
is well defined on $\Gamma$ and we denote by $\ind(f,x)$ the degree of this map.\\

\textbf{Rotation vectors}\\

Let $f\in \mathrm{Homeo}_0(\Sigma)$ be the time one map of an isotopy $I=(f_t)_{t\in[0,1]}$ from the identity to $f$. Among the many ways to define the \textit{rotation vector}, we restrict ourselves to \textit{positively recurrent points}. A point $z\in\Sigma$ is a \textit{positively recurrent point} of $f$ if for each neighborhood $U\subset \Sigma$ of $z$ there exists an integer $n\in\N$ such that $f^n(z)\in U$. The integer $n>0$ which is minimal for the previous property is called the \textit{first return time} and is denoted by $\tau(z)$. The set of positively recurrent points is denoted by $\Rec^+(f)$.\\

Let $z\in\Sigma$ be a positively recurrent point. Fix a $2$-ball $U\subset \Sigma$ containing $z$ and let $(f^{n_k}(z))_{k\geq 0}$ be a subsequence of the positive orbit of $z$ obtained by keeping the iterates of $z$ by $f$ that are in $U$. For any $k\geq0$, choose an arc $\gamma_k$ in $U$ from $f^{n_k}(z)$ to $z$. The homology class $[\Gamma_k]\in H_1(\Sigma,\Z)$ where $\Gamma_k$ is the concatenation of $\gamma_{n_k-1}(z)$ and $\gamma_k$ do not depend on the choice of $\gamma_k$. We say that $z$ has a rotation vector $\rho(z)\in \mathrm{H}_1(\Sigma,\R)$ if 
$$\lim_{l\ra +\infty} \frac{1}{n_{k_l}}[\Gamma_{k_l}]=\rho(z),$$
for any subsequence $(f^{n_{k_l}}(z))_{l\geq 0}$ which converges to $z$. Notice that the linking number of a periodic point $z^*$ of an orientation preserving homeomorphism of the plane is equal to the rotation number of $z^*$ in $\R^2\backslash \{z\}.$\\

In the case where $f$ preserves a Borel probability measure $\mu$, one applies Birkhoff's ergodic theorem to the first return map in $U$ and proves that $\mu$-a.e. point $z$ is positively recurrent and has a rotation vector $\rho(z)$. Moreover, the measurable map $\rho$ is bounded, and one may define the rotation vector of the measure
$$\rho(\mu)= \int_\Sigma \rho \ d\mu \in H_1(\Sigma,\R).$$

We say that $f\in\mathrm{Homeo}_0(\Sigma)$ is a \textit{Hamiltonian homeomorphism} if it preserves a Borel probability measure whose support is the whole surface and rotation vector is zero. We denote by $\mathrm{Ham}(\Sigma)$ the set of Hamiltonians on $\Sigma$.\\

\textbf{Action function of a Hamiltonian homeomorphism}\\

In this section we define dynamically the action function of a Hamiltonian homeomorphism $f$ of a compact surface $\Sigma$ with a finite number of fixed points. Notice that this definition extends the notion of action function defined for Hamiltonian diffeomprhisms. \\

Let us consider two unlinked fixed points $x,y\in\Fix(f)$ of $f$ and an isotopy $I=(f_t)_{t\in[0,1]}$ from $\id$ to $f$ such that $x,y\in \Sing(I)$.
Let $\gamma$ be a simple path from $x$ to $y$ and define the map $\rho_{f,\gamma}$ on $\Sigma$ by $\rho_{f,\gamma}(z)=\gamma\wedge I(z)$ where $I(z)$ is the trajectory of $z$ under the isotopy $I$ and $\gamma\wedge I(z)$ is the intersection number between $\gamma$ and $I(z)$. We define the difference of action between $y$ and $x$ by

\begin{equation}\label{action}
A_f(x)-A_f(y)= \int_\Sigma \rho_{f,\gamma}(z)dz,
\end{equation}
which does not depend on the choice of $\gamma$. Notice that in general for a homeomorphism $f$, the map $\rho_{f,\gamma}$ is not integrable. In our case, $f$ admits a finite number of fixed points and one may prove that the previous integral exists, see \cite{CAL4}.\\

Unfortunately, if we consider two fixed points $x,y\in\Fix(f)$ they may not be unlinked and the previous arguments fail and to define the action difference between $y$ and $x$ we have to consider the universal cover of $\Sigma$.  We refer to Wang's work \cite{WANG} for more details about the general definition.\\

\textbf{Positively transverse foliations} \\

Let us consider an oriented topological foliation $\FF$ on the complement of a compact set $X$ of a surface $\Sigma$. The set $X$ will be called the set of \textit{singularities} of $\FF$. An open flow box of $\FF$ is a couple $(V,h)$, where $V$ is an open set of $\Sigma$ and $h:V\ra (-1,1)^2$ is an orientation-preserving homeomorphism that sends the foliation $\FF|_V$ on the vertical foliation oriented with $y$ decreasing. Writing $p_1: \R^2\ra \R$ for the first projection, we say that an arc $\gamma:I\ra \Sigma$ is \emph{positively transverse} to the foliation $\FF$ if for every $t_0\in I$, there exists an open flow-box $(V,h)$ such that $\gamma(t_0)\in V$ and the map $t\mapsto p_1(h(\gamma(t)))$ defined in a neighborhood of $t_0$ is strictly increasing. We draw in Figure \ref{flow} an example of a flow box. \\

 \begin{figure}[H]
 \begin{center}
\begin{tikzpicture}[scale=0.8]

\draw [decoration={markings, mark=at position 0.75 with {\arrow{>}}}, postaction={decorate}] (-2,0) -- (-2,-3);
\draw [decoration={markings, mark=at position 0.75 with {\arrow{>}}}, postaction={decorate}] (-1.8,0) -- (-1.8,-3);
\draw [decoration={markings, mark=at position 0.75 with {\arrow{>}}}, postaction={decorate}] (-1.6,0) -- (-1.6,-3);
\draw [decoration={markings, mark=at position 0.75 with {\arrow{>}}}, postaction={decorate}] (-1.4,0) -- (-1.4,-3);
\draw [decoration={markings, mark=at position 0.75 with {\arrow{>}}}, postaction={decorate}] (-1.2,0) -- (-1.2,-3);
\draw [decoration={markings, mark=at position 0.75 with {\arrow{>}}}, postaction={decorate}] (-1,0) -- (-1,-3);
\draw [decoration={markings, mark=at position 0.75 with {\arrow{>}}}, postaction={decorate}] (-0.8,0) -- (-0.8,-3);
\draw [decoration={markings, mark=at position 0.75 with {\arrow{>}}}, postaction={decorate}] (-0.6,0) -- (-0.6,-3);
\draw [decoration={markings, mark=at position 0.75 with {\arrow{>}}}, postaction={decorate}] (-0.4,0) -- (-0.4,-3);
\draw [decoration={markings, mark=at position 0.75 with {\arrow{>}}}, postaction={decorate}] (-0.2,0) -- (-0.2,-3);
\draw [decoration={markings, mark=at position 0.75 with {\arrow{>}}}, postaction={decorate}] (0,0) -- (0,-3);
\draw [decoration={markings, mark=at position 0.75 with {\arrow{>}}}, postaction={decorate}] (1,0) -- (1,-3);
\draw [decoration={markings, mark=at position 0.75 with {\arrow{>}}}, postaction={decorate}] (0.8,0) -- (0.8,-3);
\draw [decoration={markings, mark=at position 0.75 with {\arrow{>}}}, postaction={decorate}] (0.6,0) -- (0.6,-3);
\draw [decoration={markings, mark=at position 0.75 with {\arrow{>}}}, postaction={decorate}] (0.4,0) -- (0.4,-3);
\draw [decoration={markings, mark=at position 0.75 with {\arrow{>}}}, postaction={decorate}] (0.2,0) -- (0.2,-3);

\draw[left,red] (-2,-2) node {$\gamma$};
\draw[->,color=red] (-2,-2) ..controls +(5,0.2) and +(-6,0.2).. (1,-1);

\draw[->] (1.5,-1.5) -- (2.5,-1.5);

\draw [decoration={markings, mark=at position 0.75 with {\arrow{>}}}, postaction={decorate}] (3,0) -- (3,-3);
\draw [decoration={markings, mark=at position 0.75 with {\arrow{>}}}, postaction={decorate}] (3.2,0) -- (3.2,-3);
\draw [decoration={markings, mark=at position 0.75 with {\arrow{>}}}, postaction={decorate}] (3.4,0) -- (3.4,-3);
\draw [decoration={markings, mark=at position 0.75 with {\arrow{>}}}, postaction={decorate}] (3.6,0) -- (3.6,-3);
\draw [decoration={markings, mark=at position 0.75 with {\arrow{>}}}, postaction={decorate}] (3.8,0) -- (3.8,-3);
\draw [decoration={markings, mark=at position 0.75 with {\arrow{>}}}, postaction={decorate}] (4,0) -- (4,-3);
\draw [decoration={markings, mark=at position 0.75 with {\arrow{>}}}, postaction={decorate}] (4.2,0) -- (4.2,-3);
\draw [decoration={markings, mark=at position 0.75 with {\arrow{>}}}, postaction={decorate}] (4.4,0) -- (4.4,-3);
\draw [decoration={markings, mark=at position 0.75 with {\arrow{>}}}, postaction={decorate}] (4.6,0) -- (4.6,-3);
\draw [decoration={markings, mark=at position 0.75 with {\arrow{>}}}, postaction={decorate}] (4.8,0) -- (4.8,-3);
\draw [decoration={markings, mark=at position 0.75 with {\arrow{>}}}, postaction={decorate}] (5,0) -- (5,-3);
\draw [decoration={markings, mark=at position 0.75 with {\arrow{>}}}, postaction={decorate}] (5.2,0) -- (5.2,-3);
\draw [decoration={markings, mark=at position 0.75 with {\arrow{>}}}, postaction={decorate}] (5.4,0) -- (5.4,-3);
\draw [decoration={markings, mark=at position 0.75 with {\arrow{>}}}, postaction={decorate}] (5.6,0) -- (5.6,-3);
\draw [decoration={markings, mark=at position 0.75 with {\arrow{>}}}, postaction={decorate}] (5.8,0) -- (5.8,-3);
\draw [decoration={markings, mark=at position 0.75 with {\arrow{>}}}, postaction={decorate}] (6,0) -- (6,-3);

\draw[->,color=red] (3,-2) ..controls +(1,0.2) and +(-0.5,0).. (6,-1);

\end{tikzpicture}
\end{center}
\caption{ }
\label{flow}
\end{figure}

For $z\in\Sigma$, we write $\phi_z$ the leaf passing through $z$ and $\phi_z^+$ for the positive half-leaf from $z$. We consider an isolated singularity $x$ of the foliation $\FF$, we can define the index $\ind(\FF,x)$ of $x$ for the foliation $\FF$ as follows. We consider a sufficiently small open chart $U$ containing $x$ and an orientation preserving homeomorphism $h: U\ra D\backslash\{0\}$ which sends $x$ to $0$. We denote $\FF_h$ the image of the foliation $\FF|_U$ by $h$ and we consider a simple loop $\Gamma: \Ss^1 \ra D\backslash\{x\}$, one may cover $\Gamma$ by a finite family $(V_i)_{i\in J}$ of flow-boxes of the foliation $\FF_h$ included in $D\backslash \{0\}$. We denote by $\phi^+_{V_i,z}$ the positive half-leaf from $z$ of the restricted foliation $\FF_h|_{V_i}$. We can find a continuous map $\psi$ defined from the loop $\Gamma$ to $D_1\backslash\{x\}$ such that $\psi(z)\in\phi_{V_i,z}^+$ for every $i\in J$ and any $z\in V_i$. The map 
$$\theta\ra \frac{\psi(\Gamma(\theta))-\Gamma(\theta)}{||\psi(\Gamma(\theta))-\Gamma(\theta)||},$$
is well defined on $\Gamma$ and $\ind(\FF,x)$ is the degree of this map.\\

We say that a singularity $x$ of an oriented foliation $\FF$ is a sink (resp. source) if there is a neighborhood $V$ of $x$ such that the omega-limit point (resp. the alpha-limit point) of each leaf $\phi$ of $\FF$ which is passing through $V$ is equal to $x$. The sinks and sources of a foliation $\FF$ have an index equal to $1$.\\

We say that a singularity $x$ of an oriented foliation $\FF$ is a saddle point of the foliation $\FF$ if the foliation is locally homeomorphic to a foliation as the one on Figure \ref{saddlesaddle}, we refer to \cite{ROUX1} for more details on saddle points. A maximal connected union of leaves such that their alpha (resp. omega)  limit is equal to $x$ is called an unstable (resp. stable) cone of $x$. A saddle point has $1-\ind(\FF,x)$ unstable cones and three stable cones which are alternated in the cyclic order. In Figure \ref{saddlesaddle} we draw an example of a foliation near a saddle point of index $-2$. 

\begin{figure}[htp]
\begin{center}
\begin{tikzpicture}[scale=0.5]

\draw [decoration={markings, mark=at position 0.5 with {\arrow{<}}}, postaction={decorate},red] (0,-3) -- (0,0);
\draw [decoration={markings, mark=at position 0.5 with {\arrow{<}}}, postaction={decorate},red] (0.3,-3) -- (0,0);
\draw [decoration={markings, mark=at position 0.5 with {\arrow{<}}}, postaction={decorate},red] (-0.3,-3) -- (0,0);

\draw [decoration={markings, mark=at position 0.5 with {\arrow{>}}}, postaction={decorate},blue] (0,3) -- (0,0);
\draw [decoration={markings, mark=at position 0.5 with {\arrow{>}}}, postaction={decorate},blue] (0.3,3) -- (0,0);
\draw [decoration={markings, mark=at position 0.5 with {\arrow{>}}}, postaction={decorate},blue] (-0.3,3) -- (0,0);

\draw [decoration={markings, mark=at position 0.5 with {\arrow{<}}}, postaction={decorate},blue] (0,0) -- (-3,-3);
\draw [decoration={markings, mark=at position 0.5 with {\arrow{<}}}, postaction={decorate},blue] (0,0) -- (3,-3);

\draw [decoration={markings, mark=at position 0.5 with {\arrow{<}}}, postaction={decorate},red] (-3,0.3) -- (0,0);
\draw [decoration={markings, mark=at position 0.5 with {\arrow{<}}}, postaction={decorate},red] (-3,0) -- (0,0);
\draw [decoration={markings, mark=at position 0.5 with {\arrow{<}}}, postaction={decorate},red] (-3,-0.3) -- (0,0);

\draw [decoration={markings, mark=at position 0.5 with {\arrow{<}}}, postaction={decorate},red] (3,0) -- (0,0);

\draw [decoration={markings, mark=at position 0.5 with {\arrow{>}}}, postaction={decorate}]  (0.5,3) ..controls +(-0.4,-3) and +(-3,0).. (3,0.2)  ;

\draw [decoration={markings, mark=at position 0.5 with {\arrow{<}}}, postaction={decorate}] (-3,0.5) ..controls +(3,-0.4) and +(0.4,-3).. (-0.5,3)  ;

\draw [decoration={markings, mark=at position 0.5 with {\arrow{<}}}, postaction={decorate}] (-3,-0.5) ..controls +(3,0.4) and +(3,3).. (-3,-2.7)  ;
\draw [decoration={markings, mark=at position 0.5 with {\arrow{<}}}, postaction={decorate}] (-0.5,-3) ..controls +(0.4,3) and +(3,3).. (-2.7,-3)  ;
\draw [decoration={markings, mark=at position 0.5 with {\arrow{<}}}, postaction={decorate}] (3,-0.2) ..controls +(-3,0) and +(-3,3).. (3,-2.7)  ;
\draw [decoration={markings, mark=at position 0.5 with {\arrow{<}}}, postaction={decorate}] (0.5,-3) ..controls +(-0.4,3) and +(-3,3).. (2.7,-3)  ;
\end{tikzpicture}
\end{center}
\caption{ }
\label{saddlesaddle}
\end{figure}

A leaf of an oriented foliation $\FF$ whose alpha-limit point and omega-limit point are distinct singularities of $\FF$ will be called a \textit{connexion}.\\

Let us consider $f\in \mathrm{Homeo}_0(\Sigma)$ and a maximal isotopy $I=(f_t)_{t\in[0,1]}$ from the identity to $f$. A foliation $\FF$ is said to be \textit{positively transverse} to the isotopy $I$ if $\Sing(I)=\Sing(\FF)$ and for every $z\in \mathrm{Dom}(I)$, the trajectory $\gamma(z)$ of $z$ is homotopic in $\mathrm{Dom}(I)$, relatively to its endpoints, to a path $\gamma$ which is positively transverse to the foliation $\FF$. The following fondamental result of Le Calvez \cite{CAL4} asserts that for each maximal isotopy $I$ there exists a positively transverse foliation to the isotopy $I$.

\begin{theorem}    
Let us consider a homeomorphism $f\in \mathrm{Homeo}_0(\Sigma)$ and an isotopy $I=(f_t)_{t\in[0,1]}$ from $\id$ to $f$, such that $\Sing(I)$ is a maximal unlinked set of fixed points of $f$. There exists a foliation $\FF$ which is positively transverse to the isotopy $I$.\\
\end{theorem}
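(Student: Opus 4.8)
The plan is to deduce the statement --- which is Le Calvez's foliated equivariant version of Brouwer's plane translation theorem --- by lifting the dynamics to the universal cover of the domain of the isotopy, reducing it to a purely planar, equivariant construction. Set $U=\mathrm{Dom}(I)=\Sigma\backslash\Sing(I)$, so that $I$ restricts to an isotopy $I|_U$ from $\id_U$ to $f|_U$, and let $\pi\colon\ti{U}\to U$ be the universal cover. Since $U$ is a connected orientable surface and $U\neq\Ss^2$ (on the sphere $f$ has a fixed point, necessarily in $\Sing(I)$ by maximality, so $U\cong\Ss^2$ does not occur; the only closed instance is $\Sigma=\Tt^2$ with $\Sing(I)=\emptyset$, treated identically), its universal cover $\ti{U}$ is homeomorphic to $\R^2$. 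The isotopy lifts to an isotopy $\ti{I}$ from $\id_{\ti{U}}$ to a preferred lift $\ti{f}$ of $f|_U$, which is orientation preserving and commutes with the deck group $G\cong\pi_1(U)$, acting freely and properly discontinuously on $\ti{U}$.

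The first substantive step is to check that \emph{$\ti{f}$ is fixed-point free}. A fixed point of $\ti{f}$ would be the endpoint of a lifted $\ti{I}$-trajectory that is a loop; projecting down, this yields a point $z\in U$ with $f(z)=z$ whose $I$-trajectory is null-homotopic in $U$, i.e.\ a contractible fixed point of $f$ outside $\Sing(I)$. By the extension theory of maximal isotopies (Jaulent; B\'eguin--Crovisier--Le Roux, cf.\ the theorem above) one could then strictly enlarge $\Sing(I)$ by adjoining $z$ while staying inside $\mathcal{I}(f)$, contradicting the maximality of $(\Sing(I),I)$. Hence $\ti{f}$ is a Brouwer homeomorphism commuting with $G$.

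It then remains to produce a $G$-invariant oriented topological foliation $\ti{\FF}$ of $\ti{U}\cong\R^2$ by lines such that the $\ti{I}$-trajectory of every point is homotopic rel endpoints to an arc positively transverse to $\ti{\FF}$; the foliation $\FF:=\pi(\ti{\FF})$ is then an oriented foliation of $U$, i.e.\ a singular foliation of $\Sigma$ with $\Sing(\FF)=\Sing(I)$, and transversality descends, so $\FF$ is positively transverse to $I$. To build $\ti{\FF}$ I would follow Le Calvez's argument: from Brouwer's translation theorem (every point lies in a free topological open disk) construct a $G$-equivariant, locally finite, \emph{free} brick decomposition of $\ti{U}$ --- a cellulation into topological closed disks --- and pass to a maximal such decomposition; on a maximal free brick decomposition the dynamics sorts the bricks into attracting and repelling regions, and orienting the edges of the $1$-skeleton according to the direction in which $\ti{f}$ crosses them, then straightening the resulting oriented graph, produces the leaves of $\ti{\FF}$, each a Brouwer line for $\ti{f}$. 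The free and properly discontinuous action of $G$ lets every choice be made equivariantly.

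The hard part is concentrated entirely in the planar construction: extracting a genuine foliation (rather than merely a transverse brick structure) from a maximal free brick decomposition, and doing so equivariantly, is the technical core of \cite{CAL4} and rests on the combinatorics of maximal free decompositions together with Brouwer theory. The only other delicate point is the equivalence, used in the second step, between maximality of $I$ and the absence of contractible fixed points in $\mathrm{Dom}(I)$, which is exactly what the B\'eguin--Crovisier--Le Roux extension theorem supplies.
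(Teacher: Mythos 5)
The paper does not actually prove this statement: it is quoted verbatim as Le Calvez's theorem and attributed to \cite{CAL4}, so there is no in-paper argument to compare yours against. Your outline is a faithful reconstruction of the strategy of the cited proof. The reduction to the universal cover of $\mathrm{Dom}(I)$, the observation that a fixed point of the preferred lift $\ti{f}$ would project to a contractible fixed point of $f$ in $\mathrm{Dom}(I)$ and hence contradict maximality via the Jaulent/B\'eguin--Crovisier--Le Roux extension property, and the passage to an equivariant Brouwer-theoretic construction on $\R^2$ are all correctly placed, and you identify the right two load-bearing steps. The caveat is that, as written, this is a roadmap rather than a proof: the step you yourself flag as the hard part --- producing, $G$-equivariantly, an oriented foliation of the plane by Brouwer lines positively met by every lifted trajectory, starting from a maximal free brick decomposition --- is the entire technical content of Le Calvez's paper and is left wholly to the reference. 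Since the paper under review also treats the theorem as a black box, your proposal is at the same level of rigor as the source text; just be aware that it does not constitute an independent or self-contained proof, and that the peripheral reductions (e.g.\ ruling out $\ti{U}\cong\Ss^2$, handling $\Sing(I)=\emptyset$ on the torus) are the easy part.
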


We denote by $\FF(I)$ the set of foliations positively transverse to $I$. \\

\textbf{Gradient-like foliations}\\

We will use the following definition of gradient-like foliations.\\

\begin{definition}\label{gradient-like}
A foliation $\FF$ is said to be \emph{gradient-like} if
\begin{itemize}
\item The number of singularities is finite.
\item Every leaf defines a connexion.
\item There is no closed leaf.
\item There is no family $(\phi_i)_{i\in \Z/k\Z}$, $k\geq 1$ of leaves such that $\omega(\phi_i)=\alpha(\phi_{i+1})$, $i\in\Z/k\Z$.
\end{itemize}
\end{definition}

For the remaining, most of the positively transverse foliations we will meet will be gradient-like foliations. The notion of connexion will be generalized in section \ref{order}, but until this section, a connexion will always refer to a leaf of a gradient-like foliation. \\

We refer to \cite{CAL4} for the proof of the following important properties.

\begin{prop}\label{gradient-like}
Consider a Hamiltonian homeomorphism of a surface $\Sigma$ with a finite number of fixed points then for each maximal isotopy $I$ from $\id$ to $f$, a foliation $\FF$ positively transverse to $I$ is gradient-like. Moreover we have
\begin{itemize}
\item $\ind(\FF,x)\leq 1$ for every point $x\in \Sing(I)$. 
\item $\ind(\FF,x)=1$ for every sink or source $x\in \Sing(I)$
\item $\ind(\FF,x)=\ind(f,x)$ for every saddle point $x\in \Sing(I)$, where $\ind(f,\cdot)$ is the Lefschetz index.
\item For every leaf $\phi\in\FF$, the action function $A_f$ of $f$ satisfies $A_f(\alpha(\phi))>A_f(\omega(\phi))$.
\end{itemize}
\end{prop}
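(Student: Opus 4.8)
Although the proposition is proved in \cite{CAL4}, here is how I would organise the argument. The existence of $\FF$ with $\Sing(\FF)=\Sing(I)$ is Le Calvez's theorem quoted above, so the remaining content is to upgrade $\FF$ to a gradient-like foliation and to pin down its indices and the behaviour of $A_f$ along its leaves; the Hamiltonian hypothesis --- a preserved Borel probability measure $\mu$ with $\mathrm{supp}\,\mu=\Sigma$ and $\rho(\mu)=0$ --- enters through Poincar\'e recurrence in the first step and, for the strict action inequality, again through full support of $\mu$ in the third. For the \emph{gradient-like structure}, the set $\Sing(\FF)=\Sing(I)\subset\Fix(f)$ is finite by hypothesis, so what remains is to exclude closed leaves, leaves whose $\alpha$- or $\omega$-limit set meets $\mathrm{Dom}(I)$, leaves $\phi$ with $\alpha(\phi)=\omega(\phi)$, and cyclic chains $(\phi_i)_{i\in\Z/k\Z}$ of connexions with $\omega(\phi_i)=\alpha(\phi_{i+1})$. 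The plan is to show that each such configuration produces a loop $\Gamma$ positively transverse to $\FF$ that is met by trajectories of $I$: for a closed leaf by the standard flow-box argument, for a cyclic chain by replacing each corner near a singularity by a short positively transverse arc, and for a leaf accumulating on a regular point by the closing-type construction of Le Calvez's foliated forcing theory, which uses that $\mu$-almost every point is positively recurrent (Poincar\'e recurrence, available since $\mathrm{supp}\,\mu=\Sigma$). One then invokes the equivariant Brouwer / Poincar\'e--Birkhoff theory for such transverse loops: a positively transverse loop met by trajectories of $I$ forces either a periodic point of $f$ in $\mathrm{Dom}(I)$ with non-zero rotation --- impossible since $\rho(\mu)=0$ and $\mu$ has full support --- or, for a nullhomotopic $\Gamma$, an extra fixed point of $f$ in the disk it bounds, which would enlarge the unlinked set and contradict maximality of $(\Sing(I),I)$; full support of $\mu$ is exactly what turns ``a region is pushed strictly into itself'' into a contradiction in a probability space. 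Once no such configuration survives, finiteness of $\Sing(\FF)$ forces $\alpha(\phi)$ and $\omega(\phi)$ to be single distinct singularities for every leaf, i.e.\ every leaf is a connexion with no cyclic chain of connexions, which is the definition of a gradient-like foliation.

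\emph{Indices.} Fix $x\in\Sing(I)$. By Le Calvez's construction the singularity is isolated and of finite type, so near $x$ the foliation $\FF$ is a finite cyclic alternation of parabolic and hyperbolic sectors; elliptic sectors cannot occur, since by the previous step no leaf has both its limits at $x$. Writing $h_x$ for the number of hyperbolic sectors, Bendixson's index formula gives $\ind(\FF,x)=1-h_x/2\le 1$, with equality exactly when $h_x=0$ --- that is, when every nearby leaf limits onto $x$ forward (resp.\ backward) in time, i.e.\ when $x$ is a sink or a source, which therefore have index $1$. When $x$ is a saddle point ($h_x\ge 2$), the identity $\ind(\FF,x)=\ind(f,x)$ is Le Calvez's index formula (see \cite{ROUX1,CAL4}): since the isotopy pushes points forward along the leaves, $f$ moves points towards $x$ along a stable cone and away from $x$ along an unstable cone, so each cone contributes to the Lefschetz index of $f$ exactly as it does to the foliation index, while each hyperbolic sector contributes $-1/2$ to both.

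\emph{Monotonicity of $A_f$.} Let $\phi$ be a leaf, a connexion from $x=\alpha(\phi)$ to $y=\omega(\phi)$. When $x$ and $y$ are unlinked, $A_f(x)-A_f(y)=\int_\Sigma \gamma\wedge I(z)\,dz$ for any simple path $\gamma$ from $x$ to $y$, and I would take $\gamma$ homotopic to $\phi$ (or a transverse arc pushed just off $\phi$). Since $\FF$ is positively transverse to $I$, the trajectory of every $z\in\mathrm{Dom}(I)$ is homotopic to a path that meets the leaf $\phi$ only from its negative to its positive side, whence $\gamma\wedge I(z)\ge 0$ for all $z$. The remaining, substantive point --- that this integral is strictly positive --- is Wang's theorem \cite{WANG}; the idea is that full support of $\mu$ forces a positive-measure set of points whose trajectory genuinely crosses $\phi$, so $A_f(x)-A_f(y)>0$, i.e.\ $A_f(\alpha(\phi))>A_f(\omega(\phi))$. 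When $x$ and $y$ are not unlinked, the same scheme is run on the universal cover of $\Sigma\setminus\{x,y\}$ following Wang: positive transversality lifts, so every lifted intersection keeps the same sign, and the strict inequality follows as before.

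The crux is the first step: extracting positively transverse loops from the forbidden configurations and deriving contradictions from them is the heart of Le Calvez's foliation theory, and is the only place where maximality of the isotopy and the dynamics of $f$ are genuinely needed. The index statements are then local model computations via Bendixson's formula and Le Calvez's index formula, and the action inequality is a homotopy-invariant intersection count whose only non-formal ingredient --- the strict positivity of the action drop --- is imported from Wang's work.
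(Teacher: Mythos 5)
Your proposal is correct and follows essentially the same route as the paper, which itself defers the gradient-like structure and the three index statements to Le Calvez's work \cite{CAL4} (and Le Roux \cite{ROUX1} for the saddle index identity) and only writes out a proof of the last bullet. Your argument for that last point --- positive transversality forces every crossing of the leaf to have the same sign, and strict positivity of the action drop comes from Wang \cite{WANG} together with the full support of the invariant measure --- is the same computation as the paper's Lemma \ref{action-decrease}, merely phrased by intersecting trajectories with a path homotopic to $\phi$ rather than intersecting the lifted leaf (a Brouwer line) with lifts of the first-return loops.
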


For the remaining, we can keep in mind that, for a gradient-like foliation, there are three kinds of singularities: sinks, sources and saddle points. \\

\begin{rem}
For a maximal isotopy $I$ of a Hamiltonian homeomorphism $f$ of a surface $\Sigma$ and a foliation $\FF\in\FF(I)$, if $\Sigma$ is not the sphere, then the index function $\ind(\FF,\cdot)$ defined on $\Sing(I)$ does not depend on the choice of $\FF\in \FF(I)$ and can be denoted $\ind(I,\cdot)$.
\end{rem}

Let us consider a gradient-like foliation $\FF$ of a surface $\Sigma$ and a leaf $\phi$ of $\FF$. By definition, the omega-limit set (resp. the alpha-limit set) of $\phi$ exists and is equal to a singleton $\{x\}$. To simplify the notations, $x$ will be called \emph{the omega-limit point} also denoted $\omega(\phi)$ (resp. \emph{the alpha-limit point} also denoted $\alpha(\phi)$) of $\phi$.\\

\begin{rem}
An autonomous Hamiltonian function $H$ on a surface $\Sigma$ induces a Hamiltonian isotopy $I=(f_t)_{\tin}$ from $\id$ to a Hamiltonian diffeomorphism $f$ which is a maximal isotopy for $f$.  Moreocer, the gradient lines of $H$ induce a gradient-like foliation $\FF$ which is positively transverse to $I$. 
\end{rem}

\textbf{Action function along a leaf of a transverse foliation}\\

We consider a maximal isotopy $I=(f_t)_{\tin}$ from $\id$ to $f$ and a foliation $\FF$ positively transverse to $I$ we can give a short proof of the last point of Proposition \ref{gradient-like} stated as the following lemma.

\begin{lem}\label{action-decrease}
For every leaf $\phi\in\FF$ we have $A_f(\alpha(\phi))>A_f(\omega(\phi))$.
\end{lem}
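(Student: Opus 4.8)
The plan is to use the defining formula \eqref{action} for the action difference together with the positive transversality of $\FF$ to $I$. Recall that for a simple path $\gamma$ from a fixed point $x$ to a fixed point $y$ one has
$$A_f(x)-A_f(y)=\int_\Sigma \gamma\wedge I(z)\,dz,$$
where $\gamma\wedge I(z)$ is the algebraic intersection number of $\gamma$ with the trajectory $I(z)$ of $z$ under the isotopy, and this quantity is independent of the choice of $\gamma$. So to prove $A_f(\alpha(\phi))>A_f(\omega(\phi))$ it suffices to exhibit, for each leaf $\phi$ with $\alpha(\phi)=x$ and $\omega(\phi)=y$, a suitable path $\gamma$ from $x$ to $y$ for which the integrand is nonnegative everywhere and strictly positive on a set of positive measure.

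First I would take $\gamma$ to be (a simple path homotopic to) the leaf $\phi$ itself, oriented from $x$ to $y$; since $\FF$ is gradient-like, $\phi$ is an injective path, a connexion between the two singularities, so this is legitimate. The key point is that $\gamma$ is then a curve positively transverse to $\FF$ in the trivial sense that it \emph{is} a leaf: more precisely, I will use that the trajectory $I(z)$ of any non-singular point $z$ is homotopic, rel endpoints in $\mathrm{Dom}(I)$, to a path positively transverse to $\FF$, hence crosses each leaf always in the same direction (from the side where leaves exit to the side where they enter). Therefore the intersection number $\gamma\wedge I(z)$ — with $\gamma$ a leaf and $I(z)$ this transverse representative — is a sum of intersections all of the same sign, so $\gamma\wedge I(z)\geq 0$ for every $z$. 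This already gives $A_f(x)-A_f(y)\geq 0$.

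For strict positivity I would argue that the set of $z$ whose trajectory actually crosses $\phi$ has positive $\mu$-measure (equivalently positive Lebesgue measure, $\mu$ being the area form). Here is where I would invoke that $f$ is Hamiltonian: $f$ preserves a measure of full support, so recurrence (Poincaré/Birkhoff) forces nontrivial dynamics near $\phi$; in particular a neighborhood of the leaf $\phi$ is swept by trajectories crossing $\phi$, and since $\phi$ is not reduced to a point there is a flow box along $\phi$ whose points all have trajectories meeting $\phi$ once positively. The measure of this flow box is positive, and on it $\gamma\wedge I(z)\geq 1$, so the integral is strictly positive, giving $A_f(x)>A_f(y)$ as claimed.

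The main obstacle is the strict inequality: ensuring that the "transverse crossings'' contribute on a set of positive measure rather than being concentrated on a measure-zero set, and handling the technical point that the integrand $\rho_{f,\gamma}$ need not be everywhere defined or continuous — one must use the finiteness of $\Fix(f)$ to guarantee integrability (as noted after \eqref{action}, citing \cite{CAL4}) and then locate a genuine flow box along $\phi$ transverse to the isotopy. I expect the sign-of-crossings bookkeeping (orienting $\phi$, the flow box, and $I(z)$ compatibly so that all intersections are positive) to be the delicate part to write carefully, whereas the positive-measure statement follows cleanly from full support of $\mu$ together with the existence of a transverse flow box around the leaf.
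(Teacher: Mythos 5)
Your argument is sound and reaches the conclusion by a genuinely more direct route than the paper. The paper does not estimate $\rho_{f,\gamma}(z)=\gamma\wedge I(z)$ for a single iterate; instead it invokes Wang's machinery: it forms the first-return loop $\Gamma(z)$ of a point of a small disk $U$, sets $\delta(z)=\Gamma(z)\wedge\phi$, and uses Birkhoff averages $\delta^*/\tau^*$ to re-express the action difference, reducing everything to the sign of $\delta(z)$, which is then controlled in the universal cover by the fact that any lift $\ti{\phi}$ is a Brouwer line for $\ti{f}$. You instead take $\gamma=\phi$ in formula (1) directly and bound the one-iterate intersection number $\phi\wedge I(z)$, using that $I(z)$ is homotopic rel endpoints in $\mathrm{Dom}(I)$ to a positively transverse arc, so that all crossings with the properly embedded line $\phi$ carry the same sign. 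Both proofs ultimately rest on the same fact (positive transversality makes every crossing of $\phi$ positive, i.e.\ the Brouwer-line property of lifted leaves); what Wang's formulation buys the paper is that one only ever intersects \emph{closed loops} with $\phi$ and lets the ergodic theorem absorb the integrability issues near the singularities, whereas your route must lean on the cited integrability of $\rho_{f,\gamma}$ from \cite{CAL4}, which you do acknowledge. Granting that, your scheme is legitimate and arguably more transparent.

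The one step whose justification is off is the strict positivity. Recurrence of $\mu$-a.e.\ point does not by itself force trajectories to cross $\phi$: a recurrent point could a priori return to its neighborhood without its one-step trajectory ever meeting the leaf. The correct mechanism is again the Brouwer-line property: in the universal cover $\ti{f}$ sends $\ti{\phi}$ strictly into its left component $L(\ti{\phi})$, so for any $\ti{w}\in\ti{\phi}$ continuity gives a neighborhood $V$ with $\ti{f}(V)\subset L(\ti{\phi})$; every $\ti{z}\in V\cap R(\ti{\phi})$ then has a trajectory joining $R(\ti{\phi})$ to $L(\ti{\phi})$, hence crossing $\ti{\phi}$ with total index $+1$, and projecting down yields a nonempty open set on which $\phi\wedge I(z)\geq 1$. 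The Hamiltonian hypothesis enters only through the full support of $\mu$, which gives this open set positive measure; recurrence plays no role. With that substitution your proof is complete.
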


\begin{proof}
We set $x=\alpha(\phi)$ and $y=\omega(\phi)$. In this particular case, it is enough to prove that the difference of action function given by equation \ref{action} is positive. We give the ideas of the proof using Wang's work.\\

Let us consider a small open disk $U\subset \Sigma\backslash X$. For almost every point $z\in U$ we can define $\tau(z)$ the first return map. Meaning that $\tau(z)$ is the first integer $n>0$ such that $f^n(z)\in U$. We consider the loop $\Gamma(z)= I^{\tau(z)-1}(z) \gamma(z)$ where $\gamma(x)\subset U$ is a path which joins $f^{\tau(z)}(z)$ to $z$. The algebraic intersection of $\Gamma(z)$ and $\phi$ does not depend on the choice of $\gamma$ and is well defined. We will denote $\delta(z)=\Gamma(z)\wedge \phi$ the algebraic intersection number defined on $U$. \\

There is a finite number of fixed points, so Wang, \cite{WANG}, proved that $\delta/\tau$ is bounded and then integrable. We can define the limit of Birkhoff's average functions $\delta^*$ and $\tau^*$ of $\delta$ and $\tau$. One may prove that the function $\eta= \delta^*/\tau^*$ is defined almost everywhere on $U$ and does not depend on the choice of $U$. One may prove that we obtain a function defined almost everywhere on $\Sigma\backslash X$ such that its integral, by Birkhoff's theorem, is equal to the action difference between $x$ and $y$ given by equation \ref{action}. Thus, it is enough to prove that $\delta(z)$ is positive and not zero to obtain the result.\\

We consider the universal cover $\ti{\Sigma\backslash X}$ of $\Sigma\backslash X$, which can be identify as the open disk $\D$ and we denote by $\ti f$ the lift of $f$.

We fix a lift $\ti \phi$ of $\phi$ on $\D$, hence $\delta(z)$ is equal to the finite sum of the algebraic intersection numbers of $\ti\phi$ and the lifts of $\Gamma(z)$. Let us consider a lift $\ti\Gamma(z)$ of $\Gamma(z)$ whose algebraic intersection number with $\ti\phi$ is not zero. Roughly speaking, $\ti\Gamma(z)$ is an oriented path going from one side of $\D\backslash \ti\phi$ to another. Moreover, $\ti \phi$ is a Brouwer line for $\ti f$, so $\ti\Gamma$ is going from the right hand side of $\ti\phi$ to the left hand side of $\ti\phi$. Hence the intersection number $\ti \Gamma(z)\wedge \ti \phi>0$ and then we have $\delta(z)>0$. \\
\end{proof}

		\section{The simplest case of barcode for Hamiltonian homeomorphisms}\label{génériqueF}
		
In this section, we consider a Hamiltonian homeomorphism $f$ on a closed surface $\Sigma$ which satisfies the following assumptions. 

\begin{enumerate}
\item The set of fixed points is finite and unlinked, in particular every fixed point is contractible.
\item The fixed points have distinct action values.
\item For every $x\in \Fix(f)$, $\ind(f,x)$ is either $1$ or $-1$.
\end{enumerate}

Let $I=(f_t)_{t\in[0,1]}$ be a maximal isotopy from identity to $f$. Let $\FF\in \FF(I)$ be a positively transverse foliation associated to $I$ which satisfies the following assumptions.
\begin{enumerate}
\item There is no leaf joining two saddles points.
\item For every saddle point $x\in \Fix(f)$, there are exactly two unstable cones composed of one leaf whose alpha-limit point is $x$ and two stable cones composed of one leaf whose omega-limit point is $x$.
\end{enumerate}

We denote $A_f$ the action functional of $f$ defined on $\Fix(f)$. \\

We have the following result from \cite{CAL4}.

\begin{lem}
For every $x\in\Fix(f)$ and every $\FF\in\FF(I)$ we have $\ind(f,x)=\ind(\FF,x)$. So if $\FF\in\FF_{\mathrm{gen}}(I)$ then we have
\begin{itemize}
\item $\ind(f,x)=1$ if $x$ is a source or a sink of $\FF$.
\item $\ind(f,x)=-1$ if $x$ is a saddle pont of $\FF$.
\end{itemize}
\end{lem}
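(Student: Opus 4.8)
The plan is to deduce the index equality from the three facts already recorded in Proposition \ref{gradient-like} — namely $\ind(\FF,\cdot)\le 1$ on $\Sing(I)$, $\ind(\FF,x)=1$ for sinks and sources, and $\ind(\FF,x)=\ind(f,x)$ for saddle points — together with a single global index count on the closed surface $\Sigma$. Since $\FF\in\FF(I)$ is gradient-like (Proposition \ref{gradient-like}), every point of $\Sing(\FF)=\Sing(I)=\Fix(f)$ is a sink, a source, or a saddle point, and at saddle points there is nothing left to prove. So it suffices to show that $\ind(f,x)=1$ whenever $x$ is a sink or a source of $\FF$.

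First I would apply the Lefschetz--Hopf fixed point theorem: $f$ is isotopic to the identity and $\Fix(f)$ is finite, so $\sum_{x\in\Fix(f)}\ind(f,x)=L(f)=\chi(\Sigma)$. On the other hand, $\FF$ is an oriented foliation of the closed oriented surface $\Sigma$ with finitely many singularities, so the Poincar\'e--Hopf formula for foliations gives $\sum_{x\in\Sing(\FF)}\ind(\FF,x)=\chi(\Sigma)$. Subtracting these two identities, using $\Sing(\FF)=\Fix(f)$ and the fact that $\ind(f,x)=\ind(\FF,x)$ at every saddle point, all saddle contributions cancel and one is left with
\[
\sum_{x\ \text{sink or source of}\ \FF}\bigl(\ind(f,x)-1\bigr)=0 .
\]
By the third assumption on $f$ in this section, $\ind(f,x)\in\{1,-1\}$, hence each summand is $\le 0$; a sum of non-positive terms that vanishes forces every term to be $0$, i.e.\ $\ind(f,x)=1$ for each sink or source $x$. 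Combined with Proposition \ref{gradient-like}, this yields $\ind(f,x)=\ind(\FF,x)$ for all $x\in\Fix(f)$, and the displayed dichotomy for $\FF\in\FF_{\mathrm{gen}}(I)$ follows at once, since such a foliation has only sinks, sources and saddle points.

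The delicate points are purely ones of bookkeeping: one must check that $\Sing(\FF)$ is exactly $\Fix(f)$ (which holds because $\Sing(I)\subset\Fix(f)$ always, $\Fix(f)\subset\Sing(I)$ by the unlinkedness hypothesis of the section, and $\Sing(\FF)=\Sing(I)$ for a positively transverse foliation), and that the Poincar\'e--Hopf count is legitimate for a merely topological oriented foliation with isolated singularities on a closed oriented surface — both are classical and, as the statement indicates, can be quoted from \cite{CAL4}. No local study of the dynamics of $f$ near a sink or a source is required; the global Euler characteristic constraint does all the work.
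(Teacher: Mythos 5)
Your argument is correct for the statement as it appears in this section, but it is a genuinely different route from the paper's: the paper offers no proof at all and simply imports the lemma from \cite{CAL4}, where the equality $\ind(f,x)=\ind(\FF,x)$ is a local result (closely related to Le Roux's theorem, quoted later in the proof of Lemma \ref{sousfonda}, that the foliation and the homeomorphism have the same index at a singularity). Your replacement is a global bookkeeping argument: Lefschetz--Hopf gives $\sum_x\ind(f,x)=\chi(\Sigma)$, Poincar\'e--Hopf for the oriented singular foliation gives $\sum_x\ind(\FF,x)=\chi(\Sigma)$, the saddle terms cancel by the third bullet of Proposition \ref{gradient-like}, and the standing hypothesis $\ind(f,x)\in\{1,-1\}$ forces each remaining non-positive summand $\ind(f,x)-1$ to vanish. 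What this buys is a short, essentially combinatorial proof that avoids any local analysis of the dynamics near a sink or source; what it costs is generality: the argument leans irreducibly on the assumption $\ind(f,x)\in\{\pm 1\}$ (without it a sink could a priori have $\ind(f,x)=3$ compensated by another with $\ind(f,x)=-1$), whereas the cited result of \cite{CAL4} holds for any Hamiltonian homeomorphism with finitely many fixed points. Since the lemma is stated under the section's hypotheses, this limitation is harmless here, but you should flag explicitly that your proof does not recover the general statement. The two bookkeeping points you isolate ($\Sing(\FF)=\Sing(I)=\Fix(f)$ by maximality and unlinkedness, and the validity of Poincar\'e--Hopf for topological oriented foliations with isolated singularities) are indeed the only external inputs and are handled correctly.
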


For the remainder of the section, we consider a foliation $\FF\in\FF_{\gen}(f)$. Recall that the foliation $\FF$ is gradient-like and we will use the analogy between Morse Theory and gradient-like foliations to construct a filtered homology from the foliation $\FF$. We define a graph associated to the foliation $\FF$ and we associate to this graph a chain complex in order to compute its filtered homology and obtain a persistence module.\\

Remember that for a fixed point $x$ of $f$, being a sink or a source of the foliation $\FF$ does not depend on the choice of $\FF\in\FF(I)$. We define the index $\ind_{CZ}(f,\cdot)$ on the set of fixed points of $f$ as follows. For $x\in\Fix(f)$ we set 
\begin{itemize}
\item $\ind_{CZ}(f,x)=0$ if $x$ is a sink of $\FF$,
\item $\ind_{CZ}(f,x)=1$ if $x$ is a saddle point of $\FF$,
\item $\ind_{CZ}(f,x)=2$ if $x$ is a source of $\FF$.
\end{itemize}

The notation $\ind_{CZ}$ of the index function refers to the Conley-Zehnder index function as they are equals under these assumptions. 

\begin{definition}\label{graphgeneric}
Let $G(\FF)$ be the graph whose set of vertices is the set $\Fix(f)$ and whose set of edges corresponds to the set of leaves $\phi$ of $\FF$ such that $\ind_{CZ}(f,\alpha(\phi))=\ind_{CZ}(f,\omega(\phi))-1$.
\end{definition}

For $i\in\N$ we consider the set  $\Fix_i(f)$ of fixed points $x\in\Fix(f)$ which satisfy $\ind_{CZ}(f,x)=i$. Note that $\Fix_i(f)=\emptyset$ if $i\geq 3$. We define a chain complex associated to the graph $G(\FF)$ following the ideas from Morse homology .\\

\begin{definition}\label{complexchain}
For $t\in\R$ and $i\in \N$, we define the chain complex 
$$C_i^t= \bigoplus_{\substack{z\in \Fix_i (f)\\ A_f(z)< t}}  \Z/2\Z \cdot z,$$ 
and the maps $\partial_i^t: C_i^t \ra C_{i-1}^t$ such that for every $z\in C_i^t$ 
$$\partial_i^t(z)=\sum_{b\in\Fix_{i-1}(f)} n(z,b)b,$$
where $n(z,b)$ is the number modulo $2$ of edges from $z$ to $b$ in $G(\FF)$. If $i$ is distinct from $1$ or $2$ then $\partial_i^t $ is equal to $0$ for every $t\in\R$.
\end{definition}

\begin{rem}
For a fixed point $z\in\Fix(f)$, if there exists an edge from $z$ to $b$ in $G(\FF)$ then by Proposition \ref{gradient-like} we have $A_f(z)>A_f(b)$ and for every $t>A_f(z)$, the element $\partial_i^t(z)$ belongs to $C_{ i-1}^t$. So the map $\partial_i^t$ is well defined.
\end{rem}

We obtain that, for every $t\in\R$, $(C_i^t,\partial_i^t)$ is a chain complex thanks to the following property.\\

\begin{prop}\label{partialnul}
For each $t\in\R$ and every $i\in\N$ the maps $\partial_i^t$ satisfy $\partial_i^t\circ \partial_{i+1}^t=0$.
\end{prop}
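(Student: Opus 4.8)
The statement $\partial_i^t\circ\partial_{i+1}^t=0$ is non-trivial only for $i=1$, since $\partial_i^t=0$ for $i\notin\{1,2\}$; so the whole content is that $\partial_1^t\circ\partial_2^t=0$, i.e. for every source $s\in\Fix_2(f)$ and every sink $q\in\Fix_0(f)$ with $A_f(q)<A_f(s)<t$, the number modulo $2$ of saddle points $p$ such that there is a leaf from $s$ to $p$ and a leaf from $p$ to $q$ is even. The plan is to fix such a pair $(s,q)$ and count, with signs/parity, the "broken connexions" $s\to p\to q$ through saddles, exactly as one proves $\partial^2=0$ in Morse homology: one exhibits these broken connexions as the two endpoints of the boundary of a $1$-dimensional "moduli space" of leaves that accumulate on them.

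The key geometric input is the local model of a saddle point from Figure~\ref{saddlesaddle} together with the genericity hypotheses on $\FF\in\FF_{\gen}(I)$: each saddle $p$ appearing here has index $-1$, hence exactly two unstable cones and three stable cones, and by the second genericity assumption each unstable (resp. stable) cone that we use is reduced to a single leaf. First I would analyze, for a fixed source $s$, the family $L_s$ of leaves with $\alpha$-limit $s$: near $s$ these leaves are parametrized by a circle, and as one follows them outward each either limits onto a sink, or onto a saddle $p$ (necessarily along one of the two ``extremal'' leaves entering the stable cones of $p$ — here one uses that there is no leaf joining two saddles, so the only singularities that leaves from $s$ can hit are sinks and saddles). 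The leaves of $L_s$ that do hit a given saddle $p$ cut the circle of directions at $s$ into finitely many open arcs; on each such arc the leaves sweep out an open region whose boundary behavior on the two sides is controlled by the saddle's local model. The second step is to do the symmetric analysis at a sink $q$, looking at leaves with $\omega$-limit $q$, and then to observe that a leaf from $s$ to a saddle $p$, followed by a leaf from $p$ to $q$, is precisely a ``corner'' where such a one-parameter family degenerates. The heart of the argument is then a gluing/continuation statement: each broken connexion $s\to p\to q$ is the limit of a unique one-parameter family of leaves from $s$ that, on the other end, converge either to another broken connexion $s\to p'\to q$ or to a genuine connexion $s\to q$ (if such existed) — and since $s$ is a source and $q$ a sink with $\ind_{CZ}$ differing by $2$, direct leaves $s\to q$ cannot occur for the same reason $\partial$ decreases the index by exactly $1$ on each edge, so the one-parameter families have both endpoints among broken connexions; pairing endpoints of the components of this $1$-manifold then gives evenness.

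More concretely, I would package this as follows. Consider the (open) union $W$ of all leaves $\phi$ with $\alpha(\phi)=s$ and $\omega(\phi)=q$ — generically this is empty, but the point is to take its closure $\overline{W}$ inside $\Sigma$ and understand $\overline{W}\setminus W$. By the gradient-like structure (no cycles of connexions, Definition~\ref{gradient-like}) and the action inequality of Lemma~\ref{action-decrease}, any leaf in $\overline{W}\setminus W$ that is not a single connexion $s\to q$ must be a concatenation of connexions all of whose intermediate singularities have action strictly between $A_f(q)$ and $A_f(s)$; the index count $\ind_{CZ}(s)-\ind_{CZ}(q)=2$ plus the fact that each connexion drops $\ind_{CZ}$ by exactly $1$ (which is where Definition~\ref{graphgeneric}'s edge condition and the sink/saddle/source index values $0/1/2$ enter) forces exactly one intermediate singularity, necessarily a saddle $p$. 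Thus $\overline W\setminus W$ is exactly the set of broken connexions $s\to p\to q$, and one shows using the local saddle model that $\overline W$ is a compact $1$-manifold with boundary whose boundary is precisely this set — so its cardinality is even, which is the claim.

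The main obstacle I expect is the gluing/continuation step — establishing that near a broken connexion $s\to p\to q$ the space of nearby leaves from $s$ to $q$ is a half-open arc with that broken connexion as its single boundary point, i.e. that the broken connexion can be ``resolved'' in exactly one way and that no extra leaves appear. This requires a careful local analysis at the saddle $p$ using the explicit picture in Figure~\ref{saddlesaddle}: a leaf entering the stable cone of $p$ adjacent to the unstable leaf of $p$ that flows to $q$ gets deflected, for a small perturbation of its starting direction at $s$, to follow alongside that unstable leaf and hence down to $q$; controlling that this deflection is a local homeomorphism onto a one-sided neighborhood (and does not, on the other side, produce a second family) is the delicate part, and is the topological-dynamics analogue of the standard gluing theorem in Morse/Floer theory. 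Everything else — the index bookkeeping, the action monotonicity ruling out longer broken connexions or cycles, and the compactness of $\overline W$ — follows from Proposition~\ref{gradient-like}, Lemma~\ref{action-decrease}, and the genericity assumptions already in force.
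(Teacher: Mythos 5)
Your overall strategy (compactify the one--parameter family of leaves from a source $s$ to a sink $q$ and pair up the broken connexions $s\to p\to q$ appearing as its ends) is the standard Morse--theoretic route and could be made to work, but as written it contains a wrong justification at a key point and leaves the central step unproved. The wrong justification is the index bookkeeping: you assert that ``each connexion drops $\ind_{CZ}$ by exactly $1$'' and use this both to exclude direct leaves $s\to q$ and to bound the length of broken chains. This is false for leaves of $\FF$: a gradient--like foliation has an abundance of leaves going directly from a source to a sink (they sweep out the open sectors $U_k$ of the repulsive basin in Lemma \ref{bordbassin}), and these drop $\ind_{CZ}$ by $2$; only the \emph{edges of $G(\FF)$} are restricted to index drop $1$ by Definition \ref{graphgeneric}. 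In particular your set $W$ is generically a nonempty open $2$--dimensional region, not empty, and direct leaves $s\to q$ must not be excluded --- they \emph{are} the interior of the family you are compactifying. What actually rules out longer broken chains $s\to p_1\to p_2\to\cdots\to q$ is the genericity hypothesis that no leaf joins two saddle points, not an index count. Also, $\overline{W}$ is a closed $2$--dimensional subset of $\Sigma$, not a compact $1$--manifold; the object that is a $1$--manifold with boundary is the closure of the set of directions at $s$ whose leaves end at $q$.

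The second, and acknowledged, gap is the gluing/continuation step: that each broken connexion $s\to p\to q$ is the endpoint of exactly one arc of the family, and that each arc has both endpoints of this form. This is precisely what the paper establishes, in a form that makes the whole computation immediate, as Lemma \ref{bordbassin}: the repulsive basin $W^u(s)$ is the continuous image of a filled polygon $P_n$ whose boundary vertices alternate in cyclic order between saddles $x_k$ and sinks $s_k$, and whose edges are the closures of the leaves $\psi_k$ (from $x_k$ to $s_k$) and $\phi_k$ (from $x_{k+1}$ to $s_k$). Granting this, the paper's proof does not need to fix a sink $q$ and pair broken connexions at all: the leaves from $s$ to saddles are exactly the $\phi_{\theta_k}$ with $\omega(\phi_{\theta_k})=x_k$, so $\partial_2^t(s)=\sum_k x_k$, and the only leaves emanating from $x_k$ are $\phi_{k-1}$ and $\psi_k$, so $\partial_1^t(x_k)=s_{k-1}+s_k$; summing over $k\in\Z/n\Z$ each sink appears twice and the composition vanishes mod $2$. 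Your pairing of broken connexions per sink $q$ is the same geometry reorganized (the arcs of directions at $s$ ending at $q$ are the intervals $(\theta_k,\theta_{k+1})$ with $s_k=q$), so if you replace the index argument by the no--saddle--connexion hypothesis and prove the polygon structure of $\partial W^u(s)$ --- which is where all the local analysis at the saddles lives --- your argument closes; without that, the proof is not complete.
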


We prove Proposition \ref{partialnul} after the following definition and lemma.

\begin{definition}
Let $x\in\Fix(f)$ be a source of the foliation $\FF$, the subset $\bigcup \{ \phi\in\FF \ | \ \alpha(\phi)= x\}\cup\{x\}$ of $\Sigma$ will be called the repulsive basin of $x$ and denoted $W^u(x)$.
\end{definition}

For a source $x$ of $\FF$, we want to describe $W^u(x)$. Let $P_n\subset \C$ be the filled regular polygon of vertices $\e^{\frac{i\pi k}{n}}$, with $k\in \{0,...,2n-1\}$. We have the following lemma.\\

\begin{lem}\label{bordbassin}
Let $x\in\Fix(f)$ be a source of the foliation $\FF$. There exist $n\geq 1$ and a continuous map $d:P_n\ra \Sigma$ such that 
\begin{itemize}
\item $d(\inte(P_n))$ is the repulsive basin of $x$.
\item $d(\e^{\frac{i\pi k}{2n}})$ is a sink of $\FF$ if $k$ is even and a saddle point of $\FF$ if $k$ is odd.
\item The image of a side of $P_n$ is the closure of a leaf of $\FF$.
\end{itemize}
\end{lem}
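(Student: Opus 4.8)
\emph{Proof strategy.} The plan is, in three steps: (i) show that $W^u(x)$ is an open topological disk; (ii) identify its frontier $\partial W^u(x):=\overline{W^u(x)}\setminus W^u(x)$ as a finite cyclic concatenation of closures of leaves, whose consecutive singular endpoints alternate between sinks and saddles; and (iii) realise this picture by a continuous map $d:P_n\to\Sigma$, where $n$ will be the number of ``singular directions'' around $x$. Throughout one exploits that $\FF$ is gradient-like (Proposition \ref{gradient-like}) and the generic hypotheses of this section. For (i): since $\FF$ is gradient-like, fix a neighbourhood $V\ni x$ on which $\FF$ is conjugate to the radial foliation and such that every leaf meeting $V$ has alpha-limit $x$, together with a circle $\Gamma\cong\Ss^1$ in $V$ transverse to $\FF$ and bounding a disk around $x$. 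Every leaf with alpha-limit $x$ crosses $\Gamma$ exactly once, so $W^u(x)=\{x\}\cup\bigcup_{\theta\in\Ss^1}\phi_\theta$, where $\phi_\theta$ is the leaf through $\Gamma(\theta)$. Openness of $W^u(x)$ is the usual flow-box chaining argument: pushing a short transversal through a point of $\phi_z^-\cap V$ forward along $\FF$ slightly past a given $z\in W^u(x)$ sweeps out an open neighbourhood of $z$ whose leaves all meet $V$ (the relevant compact sub-arc of $\phi_z$ avoids $\Sing(\FF)$). Parametrising each leaf by $\R$, the continuous injection $(\theta,s)\mapsto\phi_\theta(s)$ is open by invariance of domain, so $W^u(x)\setminus\{x\}$ is an open cylinder; re-adjoining $x$, where the foliation is radial, gives $W^u(x)\cong\R^2$.

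\emph{Step (ii).} Put $T(\theta)=\omega(\phi_\theta)$; by the local models $T(\theta)$ is never a source, the set $\{\theta:T(\theta)\text{ a sink}\}$ is open (nearby leaves enter the same attracting neighbourhood), and, since each saddle carries exactly two stable separatrices, the set of $\theta$ with $T(\theta)$ a saddle is finite, say $\{\theta_1<\dots<\theta_m\}$ with $p_j:=T(\theta_j)$. On each arc $J_j=(\theta_j,\theta_{j+1})$ the map $T$ is constant, equal to a sink $q_j$; with ``no leaf joins two saddles'' this forces $\omega(\psi)$ to be a sink for every unstable separatrix $\psi$ of each $p_j$. The crux is the limit as $\theta\to\theta_j^{\pm}$: using the hyperbolic normal form at $p_j$ (two stable and two unstable separatrices, four alternating quadrants) together with continuity of the leaves in their base point, one shows that for $\theta$ just above $\theta_j$ the leaf $\phi_\theta$ shadows $\phi_{\theta_j}$ into a small neighbourhood of $p_j$, crosses the quadrant bounded by the stable separatrix $\phi_{\theta_j}$ and one unstable separatrix $\psi_j^+$, and then shadows $\psi_j^+$ into $q_j=\omega(\psi_j^+)$; hence $\overline{\phi_\theta}\to\overline{\phi_{\theta_j}}\cup\overline{\psi_j^+}$ in the Hausdorff topology, and symmetrically for $\theta\to\theta_j^-$ with the other unstable separatrix $\psi_j^-$, where $\omega(\psi_j^-)=q_{j-1}$. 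Using that in a gradient-like foliation every leaf is proper, $\overline{\phi}=\phi\cup\{\alpha(\phi),\omega(\phi)\}$, together with a ``shadow back to the source'' argument excluding any further saddle from the closure, one obtains
$$\overline{W^u(x)}=W^u(x)\cup\{q_1,\dots,q_m\}\cup\bigcup_{j=1}^{m}\bigl(\overline{\psi_j^-}\cup\overline{\psi_j^+}\bigr),$$
so that $\partial W^u(x)$, traversed with $\theta$ increasing, is the cyclic concatenation of the $2m$ leaf-closures $\dots,\overline{\psi_j^+},\overline{\psi_{j+1}^-},\overline{\psi_{j+1}^+},\dots$, whose successive singular endpoints run $q_1,p_2,q_2,p_3,\dots$, alternating sinks and saddles (consecutive vertices cannot both be sinks, since a leaf never joins two sinks, nor both saddles, by hypothesis).

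\emph{Step (iii).} Now set $n=m$, realise $P_n$ with radial/angular coordinates $(r,\theta)$, $r\in[0,1]$, its $2n$ marked vertices on $r=1$, and pick a homeomorphism $\sigma:\Ss^1\to\Ss^1$ sending the vertices successively to $\theta_1,\mathrm{mid}(J_1),\theta_2,\mathrm{mid}(J_2),\dots$. Define $d(\text{centre})=x$ and, for $0<r<1$, $d(r,\theta)=$ the point at suitably reparametrised position $r$ along $\phi_{\sigma(\theta)}$; by Step (i) this is a homeomorphism of $\inte P_n$ onto $W^u(x)$. If the leaf-reparametrisation is chosen compatibly with the hyperbolic normal form near each $p_j$, then $d$ extends continuously to $r=1$, sending each side of $P_n$ homeomorphically onto one of the leaf-closures $\overline{\psi_j^{\pm}}$ of Step (ii) and the $2n$ vertices, in cyclic order, alternately onto the sinks $q_j$ and the saddles $p_j$ — which is the assertion of the lemma (the degenerate case $m=0$, which forces $\Sigma$ to be the sphere with $\Fix(f)=\{x,q\}$, is set aside). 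The main difficulty, beyond the shadowing analysis of Step (ii), is establishing this continuity at the boundary: near each $p_j$ one must control how fast $\theta$ may be allowed to run into $\theta_j$ relative to $r\to1$ so that $d(r,\theta)$ tends to the prescribed point of $\psi_j^{\pm}$, which is a local estimate in the hyperbolic neighbourhood of $p_j$ glued to elementary flow-box estimates along $\phi_{\theta_j}$ and $\psi_j^{\pm}$. A cleaner, more intrinsic route to the same $d$ is via the prime-end compactification of the disk $W^u(x)$: its ideal boundary is a circle that maps continuously onto $\partial W^u(x)$ (the frontier being a finite union of connexions, hence locally connected), realising the cyclic pattern of Step (ii), after which that closed disk is identified with $P_n$.
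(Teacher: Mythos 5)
Your proposal is correct and follows essentially the same route as the paper: parametrise the leaves emanating from the source by a circle, isolate the finitely many directions whose leaves terminate at saddles, observe that the omega-limit is a constant sink on each complementary arc, and describe the frontier of $W^u(x)$ as an alternating cyclic concatenation of closures of (un)stable separatrices, which is then realised by the map $d$ on the slices of $P_n$. You are in fact somewhat more explicit than the paper on the two points it glosses over (that $W^u(x)$ is an open disk, and the continuity of $d$ at $\partial P_n$), and the degenerate case with no saddle on the frontier that you set aside is likewise left implicit in the paper.
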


\begin{proof}[Proof of Lemma \ref{bordbassin}]
Let us consider a source $x\in\Fix(f)$ of $\FF$. There exists a homeomorphism $h:\D\ra W^u(x)$ such that $h(0)=x$ and such that the leaves from $x$ are the images by $h$ of the segments $t\e^{i\theta}$, $\tin$. For $\theta\in [0,2\pi)$ we will denote $\phi_\theta$ the image by $h$ of the segment $t\e^{i\theta}$, $\tin$.\\

There are a finite number of angles $(\theta_k)_{k\in\Z/n\Z}$ such that the omega-limit point of $\phi_{\theta_k}$ is a saddle point $x_k$ of $\FF$.\\

Moreover the \textit{attractive basin} of a sink $x$ of a foliation is the union of $x$ and the  leaves whose omega-limit point is equal to $x$. The attractive basin of a sink is an open set. So, by connectedness, for every $k\in\Z/n\Z$ there exists a sink of $\FF$, denoted  $s_k$, such that for every leaf $\phi_\theta$ of angle $\theta\in (\theta_k,\theta_{k+1})$, the omega-limit point of $\phi_\theta$ is equal to $s_k$. We denote $U_k$ the union of the leaves $\phi_\theta$,  with $\theta\in (\theta_k,\theta_{k+1})$. We draw an example of such a set in Figure \ref{U}. The set $U_k$ is a topological open disk on $\Sigma$ whose boundary is the closure of four distinct leaves of $\FF$: the leaves $\phi_{\theta_k}$ and $\phi_{\theta_{k+1}}$, a leaf $\psi_k$ from $x_k$ to $s_k$ and a leaf $\phi_k$ from $x_{k+1}$ to $s_k$.  The existence of the leaves $\phi_k$ and $\psi_k$ is deduced from the dynamic of the foliation near the saddle points $x_k$ and $x_{k+1}$.

\begin{figure}[h]
\begin{center}
\begin{tikzpicture}
\newdimen\r
   \r=2.7cm
   \draw[red,decoration={markings, mark=at position 0.5 with {\arrow{>}}}, postaction={decorate}]  (0:\r)  -- (60:\r) ;
   \draw[red,decoration={markings, mark=at position 0.5 with {\arrow{<}}}, postaction={decorate}]  (60:\r)  -- (120:\r) ;
    
    \draw[red,decoration={markings, mark=at position 0.5 with {\arrow{<}}}, postaction={decorate}]  (0:\r)  -- (0,0) ;
   \draw[red,decoration={markings, mark=at position 0.5 with {\arrow{<}}}, postaction={decorate}]  (60:\r)  -- (0,0) ;
    \draw[red, decoration={markings, mark=at position 0.5 with {\arrow{<}}}, postaction={decorate}]  (120:\r)  -- (0,0) ; 
   \draw[red] (1.6,1) node{$U_k$};

   \draw[below] (0,-0.2) node{$x$};
   \draw (0,2.8) node{$\phi_k$};
   \draw (2.3,1.5) node{$\psi_k$};
    \draw[below] (1.5,0) node{$\phi_{\theta_k}$};
    \draw(-1,1) node{$\phi_{\theta_{k+1}}$};
    
   \foreach \x/\l/\p in
     { 60/{$s_k$}/above,
      120/{$x_{k+1}$}/above,
      360/{$x_k$}/right
     }
     \node[inner sep=1pt,circle,draw,fill,label={\p:\l}] at (\x:\r) {};
     
     \draw[red,decoration={markings, mark=at position 0.5 with {\arrow{>}}}, postaction={decorate}]  (0,0) ..controls +(2.5,0.2)  and +(1,-2).. (60:\r) ; 
     \draw[red,decoration={markings, mark=at position 0.5 with {\arrow{>}}}, postaction={decorate}]  (0,0) ..controls +(1.5,1)  and +(0,-1).. (60:\r) ; 
     
     \draw[red,decoration={markings, mark=at position 0.5 with {\arrow{>}}}, postaction={decorate}]  (0,0) ..controls +(-1,2.5)  and +(-2,-0.5).. (60:\r) ; 
     \draw[red,decoration={markings, mark=at position 0.5 with {\arrow{>}}}, postaction={decorate}]  (0,0) ..controls +(0,0.8)  and +(-1.5,-1).. (60:\r) ; 
\end{tikzpicture}
\end{center}
\caption{ }
\label{U}
\end{figure}

We obtain that the repulsive basin of $x$ is equal to the union $[\phi_{\theta_0}\cup U_0 \cup \phi_{\theta_1}\cup U_1 \cup ... \cup U_{n-1}]\cup \{x\}$. We define the map $d:P_n\ra \Sigma$ given by Lemma \ref{U} as follows. For every $k\in\{0,...,n-1\}$ we set 
\begin{itemize}
\item $d(\e^{\frac{i2k\pi}{2n}})=x_k$,
\item $d(\e^{\frac{i(2k+1)\pi}{2n}})=s_k$.
\end{itemize} 
The map $d$ naturally extends to $\partial P_n$ by sending the edges of the boundary of $\partial P_n$ alternatively, in cyclic order, to the leaves $\phi_k$, $k\in\Z/n\Z$ and the leaves $\psi_k$, $k\in\Z/n\Z$. Finally, the map $d$ extends naturally on the interior of $P_n$ as follows.\\

For $k\in\Z/n\Z$, we consider the slice $S_k$ of the polygon $P_n$ defined as the set of points of $P_n$ whose angle $\theta$ in polar coordinates satisfies $\theta\in [\frac{ik\pi}{n},\frac{i(k+1)\pi }{n}]$.  We extend $d$ by sending the slice $S_k$, $k\in\Z/n\Z$, of $P_n$ on the closure of the set $U_k$ defined previously. The map $d$ is well-defined and continuous. \\

Let us draw a repulsive basin of a source $x$ of the foliation $\FF$ in Figure \ref{basin-sink}. We represent the leaves of $U_0$ and its boundary in red in Figure \ref{basin-sink}.

\begin{figure}[h]
\begin{center}
\begin{tikzpicture}[scale=0.7]
\newdimen\r
   \r=2.7cm
   \draw[red,decoration={markings, mark=at position 0.5 with {\arrow{>}}}, postaction={decorate}]  (0:\r)  -- (60:\r) ;
   \draw[red,decoration={markings, mark=at position 0.5 with {\arrow{<}}}, postaction={decorate}]  (60:\r)  -- (120:\r) ;
   \draw[ decoration={markings, mark=at position 0.5 with {\arrow{>}}}, postaction={decorate}]  (120:\r)  -- (180:\r) ;
   \draw[ decoration={markings, mark=at position 0.5 with {\arrow{<}}}, postaction={decorate}]  (180:\r)  -- (240:\r) ;
   \draw[ decoration={markings, mark=at position 0.5 with {\arrow{>}}}, postaction={decorate}]  (240:\r)  -- (300:\r) ;
   \draw[ decoration={markings, mark=at position 0.5 with {\arrow{<}}}, postaction={decorate}]  (300:\r)  -- (360:\r) ;
   
    \draw[red,decoration={markings, mark=at position 0.5 with {\arrow{<}}}, postaction={decorate}]  (0:\r)  -- (0,0) ;
   \draw[red,decoration={markings, mark=at position 0.5 with {\arrow{<}}}, postaction={decorate}]  (60:\r)  -- (0,0) ;
   \draw[red, decoration={markings, mark=at position 0.5 with {\arrow{<}}}, postaction={decorate}]  (120:\r)  -- (0,0) ;
   \draw[ decoration={markings, mark=at position 0.5 with {\arrow{<}}}, postaction={decorate}]  (180:\r)  -- (0,0) ;
   \draw[ decoration={markings, mark=at position 0.5 with {\arrow{<}}}, postaction={decorate}]  (240:\r)  -- (0,0) ;
   \draw[ decoration={markings, mark=at position 0.5 with {\arrow{<}}}, postaction={decorate}]  (300:\r)  -- (0,0) ;
   
   \draw (-1.3,1) node{$U_1$};
   \draw[red] (1.6,1) node{$U_0$};

   \draw[below] (0,-0.2) node{$x$};
   \draw (0,2.8) node{$\phi_0$};
   \draw (2.3,1.5) node{$\psi_0$};
    \draw (-2.3,1.5) node{$\psi_1$};
   \foreach \x/\l/\p in
     { 60/{$s_0$}/above,
      120/{$x_1$}/above,
      180/{ }/left,
      240/{ }/below,
      300/{ }/below,
      360/{$x_0$}/right
     }
     \node[inner sep=1pt,circle,draw,fill,label={\p:\l}] at (\x:\r) {};
     
     \draw[red,decoration={markings, mark=at position 0.5 with {\arrow{>}}}, postaction={decorate}]  (0,0) ..controls +(2.5,0.2)  and +(1,-2).. (60:\r) ; 
     \draw[red,decoration={markings, mark=at position 0.5 with {\arrow{>}}}, postaction={decorate}]  (0,0) ..controls +(1.5,1)  and +(0,-1).. (60:\r) ; 
     
     \draw[red,decoration={markings, mark=at position 0.5 with {\arrow{>}}}, postaction={decorate}]  (0,0) ..controls +(-1,2.5)  and +(-2,-0.5).. (60:\r) ; 
     \draw[red,decoration={markings, mark=at position 0.5 with {\arrow{>}}}, postaction={decorate}]  (0,0) ..controls +(0,0.8)  and +(-1.5,-1).. (60:\r) ; 
\end{tikzpicture}
\end{center}
\caption{ }
\label{basin-sink}
\end{figure}
\end{proof}

\begin{rem}
The map $d$ may not be injective. 
\end{rem}

Now, we can give the proof of proposition \ref{partialnul}.

\begin{proof}[Proof of proposition \ref{partialnul}]
We consider $t\in\R$  since $\partial_i^t$ is $0$ for $i$ distinct from $1$ or $2$, it is enough to prove that for every source $s\in\Fix(f)$ we have $\partial_1^t\circ \partial_{2}^t(s)=0$.\\
Let $x\in\Fix(f)$ be a source of the foliation $\FF$. Using the same notations of the proof of Lemma \ref{bordbassin}, there exists an integer $n>0$ and $n$ leaves which were denoted $(\phi_{\theta_k})_{k\in\Z/n\Z}$ whose omega-limit points, denoted $(x_k)_{k\in\Z/n\Z}$ are exactly the saddle points of the foliation $\FF$ which are connected to $x$. So we have $\partial_{2}^t(s)=\sum_{k=0}^{n-1} \omega(\phi_{\theta_k})$. \\

Moreover, for every $k\in\Z/n\Z$ the leaves $\phi_{k-1}$ and $\psi_k$ of the proof of Lemma \ref{bordbassin} are exactly the leaves of the foliation $\FF$ whose alpha-limit point is $x_k$. So we can compute

\begin{align*}
\partial_1^t\circ \partial_{2}^t(s) &= \partial_1^t(\sum_{k=0}^{n-1}\omega(\phi_{\theta_k})), \\
 &= \sum_{k=0}^{n-1}\partial_1^t(\omega(\phi_{\theta_k})),\\
 &=  \sum_{k=0}^{n-1} \left( \omega(\phi_{k-1})+\omega(\psi_{k}) \right),\\
 &= \sum _{k=0}^{n-1} \omega(\phi_{k})+\omega(\psi_{k}),\\
 &=\sum _{k=0}^{n-1}  2 s_k,\\
 &=0.
\end{align*}

Hence we obtain the result of Proposition \ref{partialnul}. 
\end{proof}

\begin{definition}
The image of the persistence module $H_*((C_i^t,\partial^t_i)_{i,t})$ under the functor $\beta$  is called the barcode of $f$ for the foliation $\FF$ and we will denote it $B_{\gen}(f,\FF)$.
\end{definition}

\begin{rem}\label{bargeneric}
For a foliation $\FF\in\FF_{\gen}(I)$, each value $b$ of the action function $A_f$ is the end of a unique bar of the barcode $B_{\gen}(f,\FF)$.
\end{rem}

\begin{rem}[Similarities with the autonomous example] \label{MorseExample}
Let us consider an autonomous Hamiltonian function $H$ on the $2$-sphere. The function $H$ induces a Hamiltonian isotopy $I=(f_t)_{\tin}$. Let us suppose that the set of fixed points of the time one map $f$ of $I$ is equal to the set of critical points of $H$. Hence, in this case $\Fix(f)$ is unlinked. If we consider an adapted Riemannian structure on $\Sigma$, the gradient lines of $H$ induced by the Riemannian metric defines a foliation $\FF$ positively transverse to $I$. Moreover the action function $A_f$ is given by $A_f(x)=H(x)$ for every $x\in \Sing(\FF)$. \\
In this example $f$ has six fixed points, two sinks $p_1,p_2$ two saddle points $x_1,x_1$ and two sources $s_1,s_2$. We draw the graph $G(\FF)$ on the left side of the figure and the barcode $B_{\gen}(f,\FF)$, as intervals of $\R$, on the right side.

\begin{figure}[h]
\begin{center}
\begin{tikzpicture}[scale=0.7]
\draw[green,decoration={markings, mark=at position 0.5 with {\arrow{>}}}, postaction={decorate}] (0,-1) ..controls +(0.5,0) and +(-0.5,0).. (1,-2);
\draw[green,decoration={markings, mark=at position 0.5 with {\arrow{<}}}, postaction={decorate}] (1,-2) ..controls +(1,0) and +(0,-0.5).. (0.8,0);
\draw[green,decoration={markings, mark=at position 0.5 with {\arrow{<}}}, postaction={decorate}] (0.8,0) ..controls +(0,0.5) and +(1,0).. (1,2);
\draw [green,decoration={markings, mark=at position 0.5 with {\arrow{>}}}, postaction={decorate}](1,2) ..controls +(-0.5,0) and +(0.5,0).. (0,1);

\draw[green,decoration={markings, mark=at position 0.5 with {\arrow{>}}}, postaction={decorate}] (0,-1) ..controls +(-0.5,0) and +(0.5,0).. (-1,-2.5);
\draw[green,decoration={markings, mark=at position 0.5 with {\arrow{<}}}, postaction={decorate}] (-1,-2.5) ..controls +(-1,0) and +(0,-0.5).. (-0.8,0);
\draw[green,decoration={markings, mark=at position 0.5 with {\arrow{<}}}, postaction={decorate}] (-0.8,0) ..controls +(0,0.5) and +(-1,0).. (-1,2.5);
\draw[green,decoration={markings, mark=at position 0.5 with {\arrow{>}}}, postaction={decorate}] (-1,2.5) ..controls +(0.5,0) and +(-0.5,0).. (0,1);

\draw[->] (2,-3) -- (2, 3);
\draw[left] (2,3)  node{$H$};
\draw[dotted] (-1,-2.5) -- (2.5,-2.5);
\draw[dotted] (0,-1) -- (3,-1);
\draw[dotted] (1,-2) -- (3,-2);
\draw[dotted] (0,1) -- (3.5,1);
\draw[dotted] (1,2) -- (3.5,2);
\draw[dotted] (-1,2.5) -- (4,2.5);

\draw[below] (-1,-2.5)  node{$p_1$};
\draw[below] (0,-1)  node{$x_1$};
\draw[below] (1,-2)  node{$p_2$};
\draw[above] (-1,2.5)  node{$s_1$};
\draw[above] (0,1)  node{$x_2$};
\draw[above] (1,2)  node{$s_2$};

\draw (0,-3.5) node{$\FF$};
\draw[green,->] (-1,2.5) ..controls +(0,-1.5) and +(0,0.5).. (-0.5,0);
\draw[green] (-0.5,0) ..controls +(0,-0.5) and +(0,1.5).. (-1,-2.5);
\draw[green,->] (1,2) ..controls +(0,-1.5) and +(0,0.5).. (0.5,0);
\draw[green] (0.5,0) ..controls +(0,-0.5) and +(0,1.5).. (1,-2);
\draw[green,decoration={markings, mark=at position 0.5 with {\arrow{>}}}, postaction={decorate}] (0,1) ..controls +(0,-0.5) and +(-0.5,1).. (1,-2);
\draw[green,dotted,decoration={markings, mark=at position 0.5 with {\arrow{>}}}, postaction={decorate}] (0,1) ..controls +(0,-0.5) and +(0.5,1).. (-1,-2);

\draw (3.5,-3.5) node{$B_{\gen}(f,\FF)$};

\draw (2.5,-2.5) -- (2.5, 3);
\draw (2.5,-2.5) node{$\bullet$};

\draw (3,-2) -- (3,-1);
\draw (3,-2) node{$\bullet$};
\draw (3,-1) node{$\bullet$};

\draw (3.5,1) -- (3.5,2);
\draw (3.5,2) node{$\bullet$};
\draw (3.5,1) node{$\bullet$};

\draw (4,2.5) -- (4,3);
\draw (4,2.5) node{$\bullet$};

\draw (-3,-3.5) node{$G(\FF)$};
\draw[below] (-4,-2.5)  node{$p_1$};
\draw[below] (-3,-1)  node{$x_1$};
\draw[below] (-2,-2)  node{$p_2$};
\draw[above] (-4,2.5)  node{$s_1$};
\draw[left] (-3,1)  node{$x_2$};
\draw[above] (-2,2)  node{$s_2$};
\draw[below] (-4,-2.5)  node{$p_1$};
\draw[below] (-3,-1)  node{$x_1$};
\draw[below] (-2,-2)  node{$p_2$};
\draw[above] (-4,2.5)  node{$s_1$};

\draw (-4,-2.5)  node{$\bullet$};
\draw (-3,-1)  node{$\bullet$};
\draw (-2,-2)  node{$\bullet$};
\draw (-4,2.5)  node{$\bullet$};
\draw (-3,1)  node{$\bullet$};
\draw (-2,2)  node{$\bullet$};
\draw (-4,-2.5)  node{$\bullet$};
\draw (-3,-1)  node{$\bullet$};
\draw (-2,-2)  node{$\bullet$};
\draw (-4,2.5)  node{$\bullet$};
\draw (-3,1)  node{$\bullet$};
\draw (-2,2)  node{$\bullet$};

\draw [decoration={markings, mark=at position 0.5 with {\arrow{>}}}, postaction={decorate}](-4,2.5) -- (-3,1);

\draw[decoration={markings, mark=at position 0.5 with {\arrow{>}}}, postaction={decorate}] (-2,2) -- (-3,1);

\draw[decoration={markings, mark=at position 0.5 with {\arrow{>}}}, postaction={decorate}] (-3,1) -- (-4,-2.5);
\draw[decoration={markings, mark=at position 0.5 with {\arrow{>}}}, postaction={decorate}] (-3,1) -- (-2,-2);
\draw[decoration={markings, mark=at position 0.5 with {\arrow{>}}}, postaction={decorate}] (-3,-1) -- (-4,-2.5);
\draw[decoration={markings, mark=at position 0.5 with {\arrow{>}}}, postaction={decorate}] (-3,-1) -- (-2,-2);

\draw [decoration={markings, mark=at position 0.5 with {\arrow{>}}}, postaction={decorate}](-4,2.5) -- (-3,-1);

\draw[decoration={markings, mark=at position 0.5 with {\arrow{>}}}, postaction={decorate}] (-2,2) -- (-3,-1);

\end{tikzpicture}
\end{center}
\caption{ }
\label{Morse2}
\end{figure}

In this example the barcode $B_{\gen}(f,\FF)$ is equal to the filtered Morse homology of the function $H$.
\end{rem}

In section \ref{equality} we will prove the following two results.\\

\begin{prop}\label{independance-generique}
The barcode $B_{\gen}(f,\FF)$ defined for a foliation $\FF$ does not depend on the choice of $\FF\in\FF_{\gen}(f)$.
\end{prop}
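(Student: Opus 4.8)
The plan is to recognise $B_{\gen}(f,\FF)$ as a barcode that refers only to $f$ and its fixed‑point set, by feeding it through the comparison theorems of Section \ref{equality}. Observe first that the chain groups $(C_i^t)_{i,t}$ of Definition \ref{complexchain} already do not see $\FF$: they are the free $\Z/2\Z$‑modules on the fixed points of action $<t$, graded by $\ind_{CZ}(f,\cdot)$, and under the present hypotheses $\ind_{CZ}$ is the Conley--Zehnder index (Section \ref{génériqueF}), a dynamical invariant of $f$ at each fixed point. So only the differential $\partial_i^t$, the mod‑$2$ connexion count, can a priori depend on $\FF$, and the task is to show that the associated filtered homology does not.

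I would then invoke Theorem \ref{thm3'}: under the generic hypotheses of this section $B_{\gen}(f,\FF)=\bm{\beta}_\FF$, the barcode assigned by the map $\beta$ of Section \ref{construction} to the triple $(G(\FF),A_f,\ind(\FF,\cdot))$ built from the full connexion graph of $\FF$; and then Theorem \ref{thm3}, which gives $\bm{\beta}_\FF=\bm{\beta}_>$, the barcode $\beta(G(>),A_f,\ind(I,\cdot))$ attached to the Brouwer‑line order $>$ on $\Sing(I)$. Since the order $>$, the graph $G(>)$, the action $A_f$ and the index $\ind(I,\cdot)$ are built out of $f$ and $X:=\Fix(f)$ only, and since $\bm{\beta}_>$ depends only on $X$ (Section \ref{order}), this yields $B_{\gen}(f,\FF)=\bm{\beta}_>$ for every $\FF\in\FF_{\gen}(I)$. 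The one point of book‑keeping is that $\FF_{\gen}(f)$ ranges over foliations positively transverse to possibly different maximal isotopies $I$; but $\Fix(f)$ being finite and unlinked forces $\Sing(I)=\Fix(f)$ for every maximal isotopy $I$ fixing $\Fix(f)$ (one always has $\Sing(I)\subset\Fix(f)$, and a maximal extension of a given isotopy fixing $\Fix(f)$ exists), so Theorems \ref{thm3'} and \ref{thm3} apply to each such $I$ and all produce the same $\bm{\beta}_>$. Hence $B_{\gen}(f,\FF)$ is the same for all $\FF\in\FF_{\gen}(f)$.

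In this route the Proposition is a corollary, and all the real work lives in Theorems \ref{thm3'} and \ref{thm3}; there the delicate point is to trade the fragile quantity ``number mod $2$ of leaves of $\FF$ from a source to a saddle'' for the robust dynamical relation ``$s>\sigma$'': concretely one must show, using the basin description of Lemma \ref{bordbassin}, that the chain‑level construction of this section computes the same persistence module as the graph barcode $\bm{\beta}_\FF$, and that $\bm{\beta}_\FF$ is insensitive to enlarging $\FF$'s connexion graph to the Brouwer‑order graph $G(>)$. A self‑contained alternative would fix $I$ and connect $\FF_0,\FF_1\in\FF_{\gen}(I)$ by a path of positively transverse foliations, generic outside finitely many bifurcation times, checking that each elementary bifurcation alters $\partial_i^t$ by an action‑non‑increasing chain homotopy equivalence; I expect this to be the main obstacle, and the reason the $\bm{\beta}_>$ machinery is introduced, since positively transverse foliations are far less flexible than Morse functions and neither the existence of such a generic path nor the control of the connexion counts along it is routine.
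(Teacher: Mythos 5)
Your proposal follows exactly the paper's own route: Proposition \ref{independance-generique} is deduced by combining Theorem \ref{egalité-generique} (i.e.\ $B_{\gen}(f,\FF)=\bm{\beta}_\FF$ for $\FF\in\FF_{\gen}(I)$) with Theorem \ref{egalité-ordre} (i.e.\ $\bm{\beta}_\FF=\bm{\beta}_>$, hence independence of $\FF$), which is precisely the two-line argument the paper gives. Your extra bookkeeping about maximal isotopies and the sketched alternative via a path of foliations are not needed for (and not part of) the paper's proof, but they do not affect the correctness of the argument.
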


Hence we can denote $B_{\gen}(f)=B_{\gen}(f,\FF)$ for any choice of $\FF\in\FF_{\gen}(f)$. With this notation, we have the following theorem.\\

\begin{theorem}\label{C2close}
If we consider a Hamiltonian diffeomorphism $f$ with a finite number of fixed points which is $C^2$-close to the identity and generated by an autonomous Hamiltonian function then the barcode $B_{\gen}(f)$ is equal to the Floer homology barcode of $f$.
\end{theorem}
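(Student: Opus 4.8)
\textbf{Proof strategy for Theorem \ref{C2close}.}

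The plan is to reduce the statement to the Morse-theoretic picture described in Remark \ref{MorseExample} by exploiting the fact that, for a $C^2$-small autonomous Hamiltonian, Floer theory degenerates to Morse theory. First I would recall the classical fact (due to Floer, see \cite{AUD}) that if $H$ is a $C^2$-small autonomous Hamiltonian on a closed symplectic surface, then all contractible $1$-periodic orbits of $H$ are constant, i.e.\ they are exactly the critical points of $H$, and for a suitable choice of almost complex structure the Floer trajectories between critical points coincide with the (negative) gradient flow lines of $H$; hence the Floer complex $\CF_*(H)$ is canonically identified with the Morse complex $\CM_*(H)$. Moreover the action of a constant orbit at a critical point $x$ is simply $A_H(x) = H(x)$, so the action filtration on $\CF_*(H)$ matches the sublevel filtration of $H$. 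Consequently the Floer homology barcode of $f$ equals the barcode of the filtered Morse homology $(\HM^t_*(H))_{t\in\R}$.

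Next I would identify $B_{\gen}(f)$ with this same filtered Morse homology. By Proposition \ref{independance-generique} it suffices to compute $B_{\gen}(f,\FF)$ for one convenient choice of $\FF \in \FF_{\gen}(f)$. The Hamiltonian flow $I=(f_t)_{\tin}$ of $H$ is a maximal isotopy of $f$ with $\Sing(I)=\Fix(f)=\mathrm{Crit}(H)$ (this uses the $C^2$-smallness to rule out extra fixed points and to guarantee that the flow is maximal, as noted in the last Remark of \S\ref{foliation}), and after fixing an adapted Riemannian metric the negative gradient lines of $H$ form an oriented foliation $\FF$ positively transverse to $I$. Under the genericity assumptions on $f$ — finitely many fixed points, distinct action values, Lefschetz indices $\pm 1$, so $H$ is a Morse function whose critical values $H(x)$ are pairwise distinct — a small perturbation inside the space of adapted metrics makes $\FF$ lie in $\FF_{\gen}(f)$: the stable/unstable manifolds of $H$ are in general position, so there is no gradient line connecting two saddles, and each saddle has exactly two unstable and two stable separatrices. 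One then checks directly from Definitions \ref{graphgeneric} and \ref{complexchain} that, because $\ind_{CZ}(f,x)$ equals the Morse index of $x$ as a critical point of $H$ (sinks $\leftrightarrow$ index $0$, saddles $\leftrightarrow$ index $1$, sources $\leftrightarrow$ index $2$), the graph $G(\FF)$ records exactly the index-difference-one gradient connections, and $n(z,b)$ is precisely the mod-$2$ count of Morse trajectories from $z$ to $b$. Hence $(C_i^t,\partial_i^t)$ is literally the filtered Morse complex $(\CM^t_*(H),\partial^{\mathrm{Morse}})$, and taking homology and applying $\beta$ gives $B_{\gen}(f) = \beta\big((\HM^t_*(H))_{t\in\R}\big)$.

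Combining the two identifications yields $B_{\gen}(f) = \beta\big((\HM^t_*(H))_{t}\big) = \beta\big((\HF^t_*(H))_{t}\big) = B(f)$, which is the claim. The main obstacle I anticipate is the careful justification that the gradient-like foliation arising from the Hamiltonian flow can be taken in $\FF_{\gen}(f)$ and that its associated chain complex coincides on the nose with the Morse complex: this requires a transversality argument for the stable and unstable separatrices of $H$ (so that no saddle-to-saddle connection occurs and the cones are exactly one leaf each) together with the comparison of the boundary operator $\partial_i^t$ with the Morse differential, i.e.\ matching the mod-$2$ counts of leaves with the mod-$2$ counts of gradient trajectories. A secondary point requiring care is checking that the action function $A_f$ defined dynamically via equation \eqref{action} indeed agrees with $H$ (up to an additive constant) on $\Fix(f)$ in the $C^2$-small autonomous case, so that the filtrations match; this follows from computing the intersection-number integrand $\rho_{f,\gamma}$ explicitly along the gradient trajectory joining two critical points and comparing with the classical formula $A_H(x)-A_H(y)=H(x)-H(y)$ for constant orbits.
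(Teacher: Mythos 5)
Your argument is correct, and its overall architecture --- identify the Floer barcode with the filtered Morse homology barcode of $H$, and identify $B_{\gen}(f)$ with the same object via the gradient foliation of $H$ --- is exactly the paper's. The difference is in how the second identification is carried out. The paper first invokes Theorem \ref{egalité-generique} to replace $B_{\gen}(\FF)$ by the combinatorial barcode $\bm{\beta}_\FF=\mathcal{B}(G(\FF),A_f,\ind(\FF,\cdot))$ of section \ref{2barcode}, and then asserts that $\bm{\beta}_\FF$ agrees with the Morse barcode because the map $\mathcal{B}$ of section \ref{construction} is modelled on Morse theory. You instead work at the chain level: for the gradient foliation of a Morse--Smale pair $(H,g)$ the complex $(C_i^t,\partial_i^t)$ of Definition \ref{complexchain} is literally the filtered Morse complex of $H$, so $B_{\gen}(f,\FF)=\beta\bigl((\HM_*^t(H))_{t\in\R}\bigr)$ without passing through $\bm{\beta}_\FF$ or Theorem \ref{egalité-generique}. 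Your route is more self-contained and more elementary for this step (it needs only Proposition \ref{independance-generique} to fix the choice of $\FF$), whereas the paper's route reuses the general machinery it has already built. You are in fact more careful than the paper on two points it glosses over: the need to perturb the adapted metric so that no saddle-to-saddle gradient connection survives (so that $\FF\in\FF_{\gen}(I)$), and the checks that positive transversality persists under this perturbation and that $A_f=H$ on $\Fix(f)$; both go through, the first because $\omega(X_H,\nabla_g H)=|\nabla_g H|_g^2>0$ for every metric $g$, the second as recorded in Remark \ref{MorseExample}.
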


We would like to prove in a near future the more general result.

\begin{question}
Is the result of Theorem \ref{C2close} holds if only consider a Hamiltonian homeormorphism $f$ whose set of fixed points is finite and unlinked? 
\end{question}

				\section{Construction of the map $\mathcal{B}$}\label{construction}

We give an algorithmic way to determine the barcode of certain type of finite graphs. This section is independent from the rest of the article and give a general construction of barcodes without having to be in a symplectic context. We consider the set $\mathcal{G}$ of elements $(G,A,\ind)$ such that $G$ is a finite oriented and connected graph equipped with a function, called \emph{action function}, $A:V \ra\R$ decreasing along the edges and a map $\ind:V\ra\Z$ where $V$ is the set of vertices of $G$. We construct a map 
$$\mathcal{B}:\mathcal{G}\ra \mathrm{Barcode}.$$

For an element $(G,A,\ind)\in\mathcal{G}$, and a vertex $x$ of $G$, we will say that $x$ is a \textit{sink} (resp. a \textit{source}) of the graph $G$ if there is no edge which begins with $x$ (resp. if there is no edge which ends with $x$). For any other vertex $x$ of $G$, we will say that $x$ is a \emph{saddle point} of the graph $G$. \\

For an element $(G,A,\ind)\in\mathcal{G}$, we could suppose that for a vertex $x$ of $G$ $\ind(x)$ is non positive if $x$ is a saddle point of $G$ and $\ind(x)$ is equal to $1$ if $x$ is a sink or a source of $G$ as it will always be the case in our future applications. Howeover, we do not need to make these assumptions to construct the map $\mathcal{B}$.\\

\begin{definition}
Let us consider an element $(G,A,\ind)\in\mathcal{G}$. For a subgraph $G'$ of $G$ we define 
\begin{align*}
L(G')&=\min\{ A(x) |x\in V\cap G'\}, \\
D(G')&=\max\{ A(x) |x\in V\cap G'\}.
\end{align*}
\end{definition}

Let us consider an element $(G,A,\ind)\in\mathcal{G}$ and let us denote by $V$  the set of vertices of $G$. For $t\in\R$, we define two subgraphs $G^-_t$ and $G^+_t$ as follows.

\begin{definition}
For $t\in\R$ we denote by $G^-_t$ the maximal subgraph  of $G$ whose set of vertices is $V\cap A^{-1}((-\infty,t))$. \\
Symmetrically, for $t\in\R$ we denote by $G^+_t$ the maximal subgraph  of $G$ whose set of vertices is $V\cap A^{-1}((t,+\infty))$.
\end{definition}

Let us consider $t\in\R$ such that there exists $x\in V$ satisfying $A(x)=t$. Since $V$ is finite, we can define the graphs $G_{t^+}^-=G^-_{t+\epsilon}$ and $G_{t^-}^+=G^+_{t-\epsilon}$ where $\epsilon>0$ satisfies $\im(A) \cap ((t-\epsilon,t+\epsilon))=\{t\}$.

\begin{definition}
Let us considet $t\in\R$.\\
We denote by $\mathcal{C}^-_t$ (resp. $\mathcal{C}^+_t$) the set of connected components of  $G_{t}^-$ (resp. of connected components of $G_{t}^+$). \\
We denote by $\mathcal{C}^-_{t^+}$ the set of connected components of $G^-_{t^+}$ and we denote by $\mathcal{C}^+_{t^-}$ the set of connected components of $G^+_{t^-}$.
\end{definition}

\begin{definition}
The inclusions $G_t^-\subset G_{t^+}^-$ and $G_t^+\subset G_{t^-}^+$ induce natural maps $j_t:\mathcal{C}^-_t\ra\mathcal{C}^-_{t^+}$ and $j'_t:\mathcal{C}^+_t\ra \mathcal{C}^+_{t^-}$ where for $C\in\mathcal{C}^-_t$, $j_t(C)$ is the connected component of $G_{t^+}^-$ which contains $C$ and for $C'\in\mathcal{C}^+_t$, $j_t'(C')$ is the connected component of $G_{t^-}^+$ which contains $C'$.
\end{definition}

Now, we can give the definition of the map $\mathcal{B}$. Given an element $(G,A,\ind)$ of $\mathcal{G}$ we describe the bars of $\mathcal{B}(G,A,\ind)$.\\

\textbf{The map $\mathcal{B}$}\\

The barcode $\mathcal{B}(G,A,\ind)$ is composed of the bars of the following four categories. \\

\textit{Category 0.}  The bars $(L(G),+\infty)$ and $(D(G),+\infty)$ are bars of $\mathcal{B}(G,A,\ind)$.\\

For every $t\in \im(A)$ there are three categories of bars
\textit{Category 1.} For each element $C$ of $\mathcal{C}^-_{t^+}$ such that $j^{-1}_t(C)$ is not empty, the barcode $\mathcal{B}(G,A,\ind)$ contains $\# j^{-1}_t(C)-1$ bars as follows.\\
We label $C_1,...,C_n$ the elements of $j^{-1}_t(C)$ and we choose $i_0\in[1,n]$ an integer such that $L(C_{i_0})=\min_{i\in[1,n]} L(C_i)$. \\
The bars of category $1$ associated to $t$ are the bars $(L(C_i),t]$ for $i\neq i_0$.\\

\textit{Category 2.} For each element $C'$ of $\mathcal{C}^+_{t^-}$ such that $j'^{-1}_t(C')$ is not empty, the barcode $\mathcal{B}(G,A,\ind)$ contains $\# j'^{-1}_t(C')-1$ bars as follows.\\
We label $C'_1,...,C'_n$ the elements of $j'^{-1}_t(C')$ and we choose $i_0\in [1,n]$ an integer such that $D(C'_{i_0})=\max_{i\in[1,n]} D(C'_i)$.\\
The bars of category $2$ associated to $t$ are the bars $(t,D(C'_i)]$ for $i\neq i_0$.\\

\textit{Category 3.} We define $k=\sum\{|\ind(x)| \ | \ x \text{ saddle point}, \  A(x)=t \}$.
Let us denote $k'$ equal to $k$ minus the number of bars of categories $1$ and $2$ associated to $t$. If $k'> 0$ then the bars of category $3$ associated to $t$ are $k'$ bars $(t,+\infty)$ and if $k'\leq 0$ there is no bar of category $3$ associated to $t$.

\begin{rem}
We refer to Proposition \ref{connexionsaddle} to enlight the definition of the bars of category $3$.
\end{rem}

\begin{rem}
By construction for every bar $I=(a,b]$ or $J=(c,\infty)$ in the barcode $\mathcal{B}(G,A,\ind)$ we have that $a,b$ and $c$ are values of the action function $A$.
\end{rem}

\begin{rem}
Let us consider a Morse function $H$ on a surface $\Sigma$. One may choose an adapted Riemanian metric on $M$ such that the gradient lines of $H$, defined for this metric, satisfy the Smale conditions and hence the Homology can be define using broken gradient trajectories as described in the section \ref{preliminaries}. Using these gradient lines, we can define a finite and oriented graph $G$ such that the critical points of $H$ are the vertices of $G$ and such that there exists an edge from $x$ to $y$ if there exists a gradient line from $x$ to $y$. Notice that the function $H$ decreased along the edges of $G$. One may prove that the barcode $\mathcal{B}(G,H,\ind_{Morse})$, where $\ind_{Morse}$ is the Morse index defined on the citical points of $H$, is equal to the Morse Homology barcode and does not depend on the adapted Riemanian metric. We will use this idea to construct barcodes associated to Hamiltonian homeomorphisms in the next sections.
\end{rem}

\textbf{Example} We compute the barcode of one example. We consider $(G,A,\ind)\in \mathcal{G}$ as follows.

\begin{figure}[h]
\begin{center}
\begin{tikzpicture}

\draw (0,0) node{$\bullet$};
\draw[left] (0,0) node{$x$};
\draw (0,1) node{$\bullet$};
\draw[left] (0,1) node{$z$};
\draw (0.5,-1.5) node{$\bullet$};
\draw[left] (0.5,-1.5) node{$y_2$};
\draw (-0.5,-1) node{$\bullet$};
\draw[left] (-0.5,-1) node{$y_1$};

\draw[decoration={markings, mark=at position 0.5 with {\arrow{>}}}, postaction={decorate}] (0,0) -- (0.5,-1.5);
\draw[decoration={markings, mark=at position 0.5 with {\arrow{>}}}, postaction={decorate}] (0,0) -- (-0.5,-1);
\draw[decoration={markings, mark=at position 0.5 with {\arrow{<}}}, postaction={decorate}] (0,0) -- (0,1);

\draw[->] (1,-1.7) -- (1,1.7);
\draw[dotted] (0.5,-1.5) -- (1,-1.5);
\draw[dotted] (-0.5,-1) -- (1,-1);
\draw[dotted] (0,0) -- (1,0);
\draw[dotted] (0,1) -- (1,1);
\draw[right] (1,-1.5) node{$A(y_2)$};
\draw[right] (1,-1) node{$A(y_1)$};
\draw[right] (1,0) node{$A(x)$};
\draw[right] (1,1) node{$A(z)$};

\draw[below] (0,-1.6) node{$G$};
\end{tikzpicture}
\end{center}
\end{figure}

The map $\ind$ satisfies $\ind(x)=-1$ and $\ind(y_1)=\ind(y_2)=\ind(z)=1$. The values of the map $A$ are represented on the vertical line on the right of the graph.\\

The bars of category $0$ are $(A(y_2),+\infty)$ and $(A(z),+\infty)$.\\

The vertex $x$ is the unique saddle point of the graph $G$. We describe the bars associated to $A(x)$ as follows.\\

The subgraph $G^-_{A(x)^+}$ has only one connected component $\mathcal{C^-}$ and $j_{A(x)}^{-1}(\{\mathcal{C}^-\})=G^-_{A(x)^-}$ has two connected components $\mathcal{C}=\{y_1\}$ and $\mathcal{C}'=\{y_2\}$. In this example we have $L(\mathcal{C})=A(y_1)> L(\mathcal{C}')=A(y_2)$ so by construction the bar $(A(y_1),A(x)]$ is the only bar of category $1$ of the barcode $\mathcal{B}(G,A,\ind)$.\\

The subgraph that $G^+_{A(x)^-}$ has only one connected component $\mathcal{C}^+$ and ${j'}_{A(x)}^{-1}(\{\mathcal{C}^+\})=G^+_{A(x)^+}$ has one connected component $\mathcal{C}=\{z\}$. So by construction there is no bar of category $2$ in the barcode  $\mathcal{B}(G,A,\ind)$.\\

The index of $x$ is equal to $-1$ and there is one bar of category $1$ and zero bar of category $2$ thus there is no bars of category $3$ in the barcode $\mathcal{B}(G,A,\ind)$. \\

Finally we obtain the barcode
$$\mathcal{B}(G,A,\ind)=\{(A(y_2),+\infty), \ (A(y_1),A(x)], \ (A(z),+\infty)\}.$$

\begin{rem}
This example corresponds to the barcode of a Morse function $H$ defined on the sphere and with exactly one source $z$, one saddle point $x$ and two sinks $y_1$ and $y_2$ where the graph corresponds to the connexions of the gradient lines of $H$ as in remark \ref{MorseExample}. 
\end{rem}

				\section{The barcode of a gradient-like foliation}\label{2barcode}

Let us consider a gradient-like foliation $\FF$, whose set of singularities $X$ is finite, defined on the complement of $X$ in a compact surface $\Sigma$. Recall that a gradient-like foliation is a foliation such that every leaf is a connexion and where there is no cycle of connexions, see section \ref{foliation} of the preliminaries for more details. In particular, the singularities of $\FF$ are isolated and are classified in three categories: the sinks, the sources and the saddle points. We suppose that the set of singularities $X$ of $\FF$ is equipped with an action function $A:X \ra \R$ such that for each leaf $\phi$ we have $A(\alpha(\phi))>A(\omega(\phi))$. \\

We will consider the oriented graph $G(\FF)$ of the foliation $\FF$ whose set of vertices is $X$ and for every couple of vertices $x$ and $y$ of $G(\FF)$ there exists an edge from $x$ to $y$ if and only if there exists a leaf $\phi$ in $\FF$ such that $\alpha(\phi)=x$ and $\omega(\phi)=y$. We want to study the barcode $\mathcal{B}(G(\FF),A,\ind(\FF,\cdot))$ associated to $\FF$ defined in section \ref{construction}.\\

Notice that the graph $G(\FF)$ is not constructed as the graph of a generic foliation as in section \ref{génériqueF} but it remains a finite oriented graph. Moreover. Moerover, for an edge between from a singularity $x$ to another singularity $y$, we don't have to suppose that $\ind(\FF,x)-\ind(\FF,y)=1$. The differences will be enlightened in section \ref{equality}.\\

In a first section we give some geometrical properties of the foliation $\FF$ and in a second section we prove some results about the barcode $\mathcal{B}(G(\FF),A,\ind(\FF,\cdot))$.

		\subsection{Geometric properties of a gradient-like foliation}\label{geometricsprop}

We introduce some useful definitions and notation.\\

\textbf{Saturated set.} A subset of $\Sigma\backslash X$ is said to be \emph{saturated} if it is equal to a union of leaves of $\FF$. We will use the fact that the closure in $\Sigma\backslash X$ of a saturated set is saturated.\\

\textbf{Chain of connexions.} A \emph{chain of connexions} in $\Sigma$ is a finite union of the closure of leaves $\psi_1,...,\psi_k$ of $\FF$ such that $\alpha(\psi_1)=x$,  $\omega(\psi_i)=\alpha(\psi_{i+1})$ for every $i\in[1,k-1]$ and $\omega(\psi_k)=y$. We will say that a chain of connexions is associated to the leaves $\psi_1,...,\psi_k$. If we consider two singularities $x$ and $y$ of $\FF$, we say that there is a chain of connexions from $x$ to $y$ if there exists a chain of connexions, associated to leaves $\psi_1,...,\psi_k$ such that $\alpha(\phi_1)=x$ and $\omega(\phi_k)=y$. In this case, $x$ will be called the \textit{starting point} of the chain and $y$ its \textit{ending point}.\\

\textbf{Trivialization.} Let us consider a saddle point $x\in X$ of $\FF$. We denote by $\Sigma^-_x$ the union of leaves $\phi\in\FF$ whose alpha-limit point is equal to $x$ and by $\Sigma^+_x$ the union of leaves $\psi\in\FF$ whose omega-limit point is equal to $x$. We will call a \emph{trivialization}  of  $\FF$ at $x$ a couple $(h,V)$ where $V$ is a neighborhood of $x$ such that $X\cap V=\{x\}$ and $h:V\ra \D$ is a map that sends the foliation $\FF|_V$ to the model foliation described in the appendix of \cite{ROUX1} proposition B.5.4. which we now describe. To simplify the notations we set $n=1-\ind(\FF,x)$. In this model foliation, for each leaf $\phi$ of $\FF$, $\phi\cap V$ is connected and $h$ sends $\Sigma^-_x$ to $n$ cones centered around the angles $\frac{2\pi (2k)}{2n}$ where $0\leq k\leq n-1$ and sends $\Sigma^+_x$ to $n$ cones centered around the angles $\frac{2\pi (2k+1)}{2n}$ where $0\leq k\leq n-1$. We have\\ 
\begin{itemize}
\item The two sets $\Sigma^-_x$ and $\Sigma^+_x$ are composed of $n$ connected components. Using the map $h$ we can label these connected components $(\sigma^-_k)_{0\leq k\leq n-1}$ and $(\sigma^+_k)_{0\leq k \leq n-1}$ in cyclic order around $x$.
\item The connected components $(\sigma^-_k)_{0\leq k\leq n-1}$ will be called the \emph{unstable cones} of $x $ and the connected components $(\sigma^+_k)_{0\leq k \leq n-1}$ will be called the \emph{stable cones} of $x$.
\item  Every stable cone and an unstable cone which are consecutive, in cyclic order, are separated by \emph{hyperbolic sectors} of $x$. We denote $U_k$, $0\leq k\leq 2n-1$ the hyperbolic sectors such that we have in cyclic order $V=\sigma^-_0\cup U_0 \cup \sigma^+_0\cup U_1 \cup...\cup U_{2n-1}$.\\
\end{itemize}

Notice that a stable or unstable cone of $x$ can be composed of a unique leaf, see Figure \ref{saddlesaddle} for example.\\

\textbf{Local model.} Let us consider a leaf $\phi$ of $\FF$. We describe in $\Sigma$ the leaves of $\FF$ which are close to $\phi$. It will be called the \emph{local model} near $\phi$. We parametrize the leaves of $\FF$ near $\phi$ by a small arc $\gamma: (-1,1)\ra \Sigma\backslash X$ transversal to $\FF$ such that $\gamma(0)\in \phi$. For every $t\in(-1,1)$ we denote $\phi_t$ the leaf of $\FF$ passing through $\gamma(t)$. \\

We prove the following property.\\ 

\begin{prop}
We have that the sets
$$\Gamma_\phi^+= \bigcap_{\epsilon>0}\overline{\bigcup_{t\in(0,\epsilon)} \phi_t}, \ \text{and } \Gamma_\phi^-= \bigcap_{\epsilon>0}\overline{\bigcup_{t\in(-\epsilon,0)} \phi_t},$$
are chains of connexions containing the leaf $\phi$ and there exists $s\in(0,1)$ such that 
\begin{itemize}
\item Every leaf $\phi_t$, $t\in(0,s)$ satisfies: $\alpha(\phi_t)$ is equal to the start of $\Gamma^+_\phi$ and $\omega(\phi_t)$ is equal to its end.
\item Every leaf $\phi_t$, $t\in(-s,0)$ satisfies: $\alpha(\phi_t)$ is equal to the start of $\Gamma^-_\phi$ and $\omega(\phi_t)$ is equal to its end.
\end{itemize} 
\end{prop}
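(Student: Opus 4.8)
The statement asserts that when we parametrize the leaves of a gradient-like foliation $\FF$ near a leaf $\phi$ by a small transverse arc $\gamma$, the "one-sided limit sets" $\Gamma^+_\phi$ and $\Gamma^-_\phi$ are chains of connexions, and that on a small enough one-sided interval the leaves $\phi_t$ all have the same alpha-limit and omega-limit, namely the endpoints of the corresponding chain. I would set up the proof by first showing $\Gamma^+_\phi$ is saturated (it is an intersection of closures of saturated sets, each of which is saturated since the closure of a saturated set is saturated, as recalled in the "Saturated set" paragraph), and then analyzing its structure locally near singularities using the trivialization at saddle points.

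\textbf{Step 1: $\Gamma^+_\phi$ is a saturated compact set containing $\phi$.} Each set $\overline{\bigcup_{t\in(0,\epsilon)}\phi_t}$ is a closed saturated subset of $\Sigma$ (its intersection with $\Sigma\backslash X$ is saturated), and the family is nested and decreasing in $\epsilon$, with $\phi$ in every member because $\gamma(0)\in\phi$ and $\phi$ is a leaf. Hence $\Gamma^+_\phi$ is compact, contains $\overline{\phi}$, and $\Gamma^+_\phi\cap(\Sigma\backslash X)$ is a union of leaves. Moreover every leaf in $\Gamma^+_\phi$ is a connexion (a property of $\FF$), so its closure adds only two singularities; this will let me conclude finiteness.

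\textbf{Step 2: Local description near the singularities of $\Gamma^+_\phi$.} Let $y=\omega(\phi)$. Near $y$ the leaf $\phi$ enters one stable cone $\sigma^+_k$ of the trivialization $(h,V)$ at $y$; for $t>0$ small the leaves $\phi_t$ also cross $V$ (since $\gamma$ is transverse and $\gamma(0)\in\phi$), and by the local model they all pass through $V$ close to $\phi$ and then leave $V$ through one of the two hyperbolic sectors $U_k$ or $U_{k+1}$ adjacent to $\sigma^+_k$ — which one is determined by the side of $\phi$, i.e. by the sign of $t$. So "following the positive side" leads the leaves $\phi_t$, $t\in(0,\epsilon)$, to travel along a leaf forming one side of that hyperbolic sector toward the next singularity. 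Repeating this at each singularity encountered, and using the no-cycle-of-connexions condition in the definition of gradient-like foliation, the itinerary of leaves reached this way must terminate at a sink after finitely many steps; call the corresponding leaves $\psi_1,\dots,\psi_m$ with $\psi_1=\phi$. The same argument run on the alpha side (starting at $x=\alpha(\phi)$, a source or saddle) produces leaves $\psi_0,\psi_{-1},\dots$ terminating at a source. Concatenating, we get a finite chain of connexions $\Gamma$, and I claim $\Gamma=\Gamma^+_\phi$: $\Gamma\subseteq\Gamma^+_\phi$ because each $\psi_i$ is a limit of leaves $\phi_t$, $t\downarrow 0$, and conversely any leaf in $\Gamma^+_\phi$ must lie in this itinerary by the local uniqueness of the sectors in the trivialization (a leaf $\phi_t$ for small $t>0$ is confined, sector by sector, to the hyperbolic sectors on the positive side). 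Finiteness of the chain follows from the acyclicity and finiteness of $X$. The existence of $s\in(0,1)$ such that every $\phi_t$ with $t\in(0,s)$ has alpha-limit equal to the start of $\Gamma^+_\phi$ and omega-limit equal to its end then comes from choosing $s$ small enough that $\phi_s$ already "commits" to every hyperbolic sector along the chain; transversality of $\gamma$ guarantees such an $s$ exists, and the set of $t$ for which $\phi_t$ has this property is easily seen to be an interval containing an initial segment $(0,s)$. The argument for $\Gamma^-_\phi$ and negative $t$ is identical with sides reversed.

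\textbf{Main obstacle.} The delicate point is Step 2: controlling the global itinerary of the nearby leaves $\phi_t$ and showing that the "limit" $\Gamma^+_\phi$ is exactly the finite chain of connexions traced out sector by sector, rather than something larger (accumulation on further leaves) or the chain being infinite. This requires combining the local normal form near each saddle (the model foliation from \cite{ROUX1}, Proposition B.5.4, recalled in the "Trivialization" paragraph) with the two structural hypotheses of a gradient-like foliation — every leaf is a connexion, and there is no cycle of connexions — to bound the length of the itinerary and to rule out accumulation. One should also be careful that, although each individual leaf $\phi_t$ reaches a sink, the chain $\Gamma^+_\phi$ may pass through several saddles, and the choice of adjacent hyperbolic sector at each saddle is forced by the orientation of the foliation and the side of $\phi$, so the chain is well defined; making this "forcing" precise in terms of the map $h$ is the technical heart of the proof.
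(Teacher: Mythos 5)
Your proposal follows essentially the same route as the paper: track the itinerary of the nearby leaves $\phi_t$ sector by sector through the trivializations at the saddle points encountered, use finiteness of $X$ and the absence of cycles of connexions to terminate, and run the symmetric argument on the alpha side before concatenating the two half-chains.

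One inaccuracy is worth flagging. You assert that at each saddle the nearby leaves necessarily escape into one of the two adjacent hyperbolic sectors, and hence that the future half of the itinerary "must terminate at a sink." This is only correct when $\phi$ (or the current leaf of the chain) lies in the \emph{boundary} of a stable cone of the saddle. If it lies in the \emph{interior} of a stable cone, the leaves $\phi_t$ for small $t>0$ still have that saddle as their omega-limit point, the process stops there, and the ending point of $\Gamma^+_\phi$ is a saddle, not a sink. The paper's proof separates exactly these three terminal/continuation cases (sink; interior of a stable cone; boundary of a stable cone), and your argument needs the same trichotomy: the set of leaves sharing a given omega-limit is open around a leaf in the interior of a stable cone just as it is around a leaf ending at a sink. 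This does not affect the overall structure of your proof, but as written the claimed description of the endpoints of the chains is too strong.
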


\begin{proof}
We prove the result for the leaves $\phi_t$, $t>0$ as it is the same proof for the leaves $\phi_t$, $t<0$. \\

We start by studying the "future" of the connexions $(\phi_t)_{t\in(0,1)}$ and then, symmetrically, we study the "past" of these connexions.\\

We will prove that there exists a chain of connexions contained in $\Gamma_\phi^+$ passing through $\phi$ from $x$ to a singularity $y^+$ such that for every $t>0$ small enough we have $\omega(\phi_t)=y$. The omega-limit point $x_1$ of $\phi$ is a sink of $\FF$ or a saddle point. If $x_1$ is a saddle point then $\phi$ is in the interior of a stable cone of $x_1$ or in its boundary. We split the discussion into three cases.\\

\textit{Case 1.} Suppose that $x_1$ is a sink of $\FF$, since the set of leaves of which $x_1$ is the ending point is open, there exists $t_1\in(0,1]$ such that for each $t\in [0,t_1)$, $\omega(\phi_t)$ is equal to $x_1$ and the chain of connexions we are looking for is associated to the leaf $\phi$.  \\

\textit{Case 2.} We suppose that $x_1$ is a saddle point and the leaf $\phi$ is in the interior of a stable cone $\sigma^+$ of $x_1$.  There exists $t_1\in(0,1]$ such that for each $t\in [0,t_1)$, the leaf $\phi_t$ satisfies $\omega(\phi_t)=x_1$. The chain of connexions we are looking for is associated to the leaf $\phi$.\\

\textit{Case 3.} We suppose that $x_1$ is a saddle point and the leaf $\phi$ is in the boundary of a stable cone $\sigma^+$ of $x_1$. In this case, the leaf $\phi$ is in the boundary of the hyperbolic sector $U$ of $x_1$ preceding $\sigma^-$. We can consider a trivialisation of $\FF$ at $x_1$ and $t_1\in(0,1)$ such that each leaf $\phi_t$, $t\in(0,t_1]$, is a leaf of the sector $U$. The closure of the union of the leaves $(\phi_t)_{t\in(0,t_1]}$ contains a leaf $\phi_1$ of the unstable cone of $x$ which is adjacent to $U$. Notice that we have a chain of connexions associated to the leaves $\phi$ and $\phi_1$.\\

We do the same discussion about the omega limit point of $\phi_1$. If the omega limit point of $\phi_1$ corresponds to the case $1$ or the case $2$, then we stop the process and if we are in case $3$ then, we do the same discussion with the leaf $\phi_2$ provided by case $3$. If so, a chain of connexions is associated to the leaves $\phi,\phi_1,\phi_2$. Since the number of singularities of $\FF$ is finite, the process stops after a finite number of steps and we finally obtain a chain of connexions $\Gamma_{\text{future}}$ associated to a finite number of leaves $\phi,\phi_1,...,\phi_n$. We denote $y$ the omega-limit point of the leave $\phi_n$ and there exists $t_n\in(0,1)$ such that each leaf $\phi_t$ with $t\in(0,t_n]$ satisfies $\omega(\phi_t)=y^+$.\\

Let us draw an example of a chain of connexions provided by the previous process. In Figure \ref{local-model} the horizontal line represents the chain of connexions from $x$ to $\omega(\phi_n)$ and each line above represents a leaf $\phi_t$ where $t\in (0,1]$. It is enough to re-parametrize the trivialization $h$ to obtain the result we are looking for.\\
\begin{figure}[H]
\begin{center}
\begin{tikzpicture}[scale=2]\label{figure 1}
\draw [ 
           decoration={markings, mark=at position 0.5 with {\arrow{>}}},
           postaction={decorate}
           ]
           (0,0) -- (1,0) ;
 \draw [ 
           decoration={markings, mark=at position 0.5 with {\arrow{>}}},
           postaction={decorate}
           ]
           (1,0) -- (2,0) ;
\draw [ 
           decoration={markings, mark=at position 0.5 with {\arrow{>}}},
           postaction={decorate}
           ]
           (2,0) -- (3,0) ;
         
\draw (0,0) -- (0,1) ;

\draw [ 
           decoration={markings, mark=at position 0.5 with {\arrow{>}}},
           postaction={decorate}
           ]
           (0,1) -- (3,0) ;
\draw [ 
           decoration={markings, mark=at position 0.5 with {\arrow{>}}},
           postaction={decorate}
           ]
           (0,2/3) -- (3,0) ;
\draw [ 
           decoration={markings, mark=at position 0.5 with {\arrow{>}}},
           postaction={decorate}
           ]
           (0,1/3) -- (3,0) ;

\draw (0,0) node {$\bullet$};
    \node[below left] at (0,0) {$x$} ;
    \node[below] at (0.5,0) {$\phi$};
 \draw (3,0) node {$\bullet$};
    \node[below right] at (3,0) {$y^+$} ;
    \node[below] at (2.5,0) {$\phi_n$};	
    \node[left] at (0,1) {$t_n$} ;
    
\draw (1,0) node {$\bullet$} ;
\node[below right] at (1,0) {$x_1$};

\draw (2,0) node {$\bullet$} ;

\end{tikzpicture}
\end{center}
\caption{ }
\label{local-model}   
\end{figure}

By symmetrical arguments there is another chain of connexions $\Gamma_{\text{past}}$ from a singularity $z^+$ to $\omega(\phi)$, passing through $\phi$, such that every leaf $\phi_t$, $t>0$, satisfies $\alpha(\phi_t)=z^+$.\\

Moreover, by the previous construction, we obtain that $\Gamma_\phi^+$ is a chain of connexions and is equal to the union of $\Gamma_{\text{past}}$ and $\Gamma_{\text{future}}$. So $z^+$ is its starting point and $y^+$ its ending point.\\
\end{proof}

\begin{rem}
The space of leaves of a gradient-like foliation $\FF$ is a non-Hausdorff manifold. The chains of connexions correspond to the set of non separated leaves. 
\end{rem}
Let us draw two examples of a local model of a leaf $\phi$.\\
 
We consider a first example in Figure \ref{locallocal}. The leaves above $\phi$ represent the leaves $\phi_t$ with $t\in(0,1)$ and the leaves below $\phi$ represent the leaves $\phi_t$ with $t\in(-1,0)$. The chain of connexions $\Gamma^+_\phi$ is a chain from $z^+$ to $y^+$ passing through $\phi$ and the chain of connexions $\Gamma_\phi^-$ is a chain of connexions from $z^-$ to $y^-$ passing through $\phi$.

\begin{figure}[H]
\begin{center}
\begin{tikzpicture}[scale=2]
\draw [ red,
           decoration={markings, mark=at position 0.5 with {\arrow{>}}},
           postaction={decorate}
           ]
           (0,0) -- (1,0) ;
 \draw [ 
           decoration={markings, mark=at position 0.5 with {\arrow{>}}},
           postaction={decorate}
           ]
           (1,0) -- (2,0) ;
\draw [ 
           decoration={markings, mark=at position 0.5 with {\arrow{>}}},
           postaction={decorate}
           ]
           (2,0) -- (3,0.5) ;
   \draw [ 
           decoration={markings, mark=at position 0.5 with {\arrow{>}}},
           postaction={decorate}
           ]
           (1,0) -- (2,-0.5) ;      
   \draw [ 
           decoration={markings, mark=at position 0.5 with {\arrow{>}}},
           postaction={decorate}
           ]
           (-1,0) -- (0,0) ; 
  \draw [ 
           decoration={markings, mark=at position 0.5 with {\arrow{>}}},
           postaction={decorate}
           ]
           (-2,0.5) -- (-1,0) ; 
       
\draw [ 
           decoration={markings, mark=at position 0.5 with {\arrow{>}}},
           postaction={decorate}
           ]
           (-2,-0.5) -- (-1,0) ; 
           
\draw [ 
           decoration={markings, mark=at position 0.5 with {\arrow{>}}},
           postaction={decorate}
           ]
           (0,0.2) ..controls +(1,0) and +(-1,-0.5).. (3,0.5) ;
\draw [ 
           decoration={markings, mark=at position 0.5 with {\arrow{>}}},
           postaction={decorate}
           ]
           (0,-0.2)..controls +(1,0) and +(-1,0.3).. (2,-0.5) ;
\draw [ 
           decoration={markings, mark=at position 0.5 with {\arrow{>}}},
           postaction={decorate}
           ]
           (-2,0.5) ..controls +(1,-0.3) and +(-1,0).. (0,0.2);
          
\draw [ 
           decoration={markings, mark=at position 0.5 with {\arrow{>}}},
           postaction={decorate}
           ]
           (-2,-0.5) ..controls +(1,0.3) and +(-1,0).. (0,-0.2);

\draw (0,0) node {$\bullet$};
    \node[below left] at (0,0) {$x$} ;
    \node[red,below] at (0.5,0.05) {$\phi$};
 \draw (3,0.5) node {$\bullet$};
    \node[below right] at (3,0.5) {$y^+$} ;

\draw (1,0) node {$\bullet$} ;
\node[below right] at (1,0) {$x_1$};

\draw (2,0) node {$\bullet$} ;

\draw (-1,0) node {$\bullet$};
\draw (-2,0.5) node {$\bullet$};
\node[below] at (-2,0.5) {$z^+$};
\draw (-2,-0.5) node {$\bullet$};
\node[below] at (-2,-0.5) {$z^-$};

\draw (2,-0.5) node {$\bullet$};
\node[below] at (2,-0.5) {$y^-$};

\end{tikzpicture}
\end{center}
\caption{ }
\label{locallocal}    
\end{figure}

We draw a second example of a leaf $\phi$ in Figure \ref{local2}. We consider $\gamma$ in blue and the chains of connexions $\Gamma_\phi^-$ and $\Gamma_\phi^+$ are both equal to $\overline{\phi}$.

\begin{figure}[H]
\begin{center}
\begin{tikzpicture}[scale=2]\label{figure 1}
\draw [ red,
           decoration={markings, mark=at position 0.5 with {\arrow{>}}},
           postaction={decorate}
           ]
           (0,0) -- (1,0) ;
 \draw [ 
           decoration={markings, mark=at position 0.5 with {\arrow{>}}},
           postaction={decorate}
           ]
           (1,0) -- (2,0.5) ;

   \draw [ 
           decoration={markings, mark=at position 0.5 with {\arrow{>}}},
           postaction={decorate}
           ]
           (1,0) -- (2,-0.5) ;      

  \draw [ 
           decoration={markings, mark=at position 0.5 with {\arrow{>}}},
           postaction={decorate}
           ]
           (-1,0.5) -- (0,0) ; 
       
\draw [ 
           decoration={markings, mark=at position 0.5 with {\arrow{>}}},
           postaction={decorate}
           ]
           (-1,-0.5) -- (0,0) ; 
           
\draw[decoration={markings, mark=at position 0.5 with {\arrow{>}}}, postaction={decorate},blue] (0,0) ..controls +(0.2,0.2) and +(-0.2,0.2).. (1,0);
\draw[decoration={markings, mark=at position 0.5 with {\arrow{>}}}, postaction={decorate},blue] (0,0) ..controls +(0.2,-0.2) and +(-0.2,-0.2).. (1,0);
\draw[decoration={markings, mark=at position 0.5 with {\arrow{>}}}, postaction={decorate}] (0,0) ..controls +(0.5,0.5) and +(-0.5,0.5).. (1,0);
\draw[decoration={markings, mark=at position 0.5 with {\arrow{>}}}, postaction={decorate}] (0,0) ..controls +(0.5,-0.5) and +(-0.5,-0.5).. (1,0);

\draw[decoration={markings, mark=at position 0.5 with {\arrow{>}}}, postaction={decorate}] (-1,0.6) ..controls +(1,-0.5) and +(-0.5,0).. (0.5,0.5);
\draw[decoration={markings, mark=at position 0.5 with {\arrow{<}}}, postaction={decorate}] (2,0.6) ..controls +(-1,-0.5) and +(0.5,0).. (0.5,0.5);
\draw[decoration={markings, mark=at position 0.5 with {\arrow{>}}}, postaction={decorate}] (-1,-0.6) ..controls +(1,0.5) and +(-0.5,0).. (0.5,-0.5);
\draw[decoration={markings, mark=at position 0.5 with {\arrow{<}}}, postaction={decorate}] (2,-0.6) ..controls +(-1,0.5) and +(0.5,0).. (0.5,-0.5);

\draw[blue] (0.5,-0.15) -- (0.5,0.15);
\draw[blue,below] (0.5,-0.15) node{$\gamma$};

\draw (0,0) node {$\bullet$};
    \node[left] at (0,0) {$x$} ;
    \node[red,below] at (0.7,0.05) {$\phi$};

\draw (1,0) node {$\bullet$} ;
\node[right] at (1,0) {$x_1$};

\end{tikzpicture}
\end{center}
\caption{ }
\label{local2}    
\end{figure}

\textbf{The graph of the foliation $\FF$.} We consider the oriented graph $G(\FF)$ whose set of vertices is equal to $\Sing(\FF)$  and for every couple of vertices $x$ and $y$ there exists an edge from $x$ to $y$ if and only if there exists a leaf $\phi$ of $\FF$ such that $\alpha(\phi)=x$ and $\omega(\phi)=y$. For $t\in \R$ we consider the subgraphs $G^-_t(\FF)$ (resp. $G^+_t\FF)$) which is the maximal subgraph of $G(\FF)$ whose 
set of vertices is $X\cap A^{-1}((-\infty,t))$ (resp. $X\cap A^{-1}((t,+\infty))$). \\

\textbf{Attractive basin.} Let us consider $t\in\R$ and a connected component $\mathcal{C}$ of $G^-_t(\FF)$. We define the \emph{attractive basin} of $\mathcal{C}$, denoted $W^s(\mathcal{C})$, as the union of the leaves of $\FF$ whose omega-limit point is a singularity of $\mathcal{C}$. Notice that it is a subset of $\Sigma$. \\
In particular we have
$$W^s(\mathcal{C})=\bigcup_{x\in X\cap \mathcal{C}} W^s(x).$$

\begin{lem}\label{bordhyperbolic}
Let us consider a saddle point $x$ in the frontier of $W^s(\mathcal{C})$. There exists a neighborhood $V$ of $x$ such that each hyperbolic sector $U$ of $x$ in $V$ is either included in $W^s(\mathcal{C})$ or disjoint of it.
\end{lem}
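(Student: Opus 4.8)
The plan is to localize the whole question near $x$. Fix a trivialization $(h,V)$ of $\FF$ at $x$ in the sense defined above, so that $V$ contains no other singularity and $\FF|_V$ is conjugate to the $2n$-pronged saddle model with $n=1-\ind(\FF,x)$; write $c\in\R$ for the real number such that $\mathcal{C}$ is a connected component of $G^-_c(\FF)$. Fix one hyperbolic sector $U=U_k$ of $x$ inside $V$, together with a small arc $\gamma:(0,1)\to U$ positively transverse to $\FF$ which meets every leaf of $\FF$ passing through $U$; for $t\in(0,1)$ let $\phi_t$ be the leaf of $\FF$ through $\gamma(t)$. A point of $U$ belongs to $W^s(\mathcal{C})$ exactly when the leaf through it has its $\omega$-limit point in $X\cap\mathcal{C}$, so the set
\[
T=\{\,t\in(0,1)\ :\ \omega(\phi_t)\in X\cap\mathcal{C}\,\}
\]
satisfies: $U\subset W^s(\mathcal{C})$ iff $T=(0,1)$, and $U\cap W^s(\mathcal{C})=\emptyset$ iff $T=\emptyset$. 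Hence it suffices to prove that $T$ is both open and closed in the connected interval $(0,1)$.

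The main tool is the local description of the leaves near a given leaf established just above: for each $t_0\in(0,1)$ there is $s>0$ such that $\omega(\phi_t)$ equals the ending point $y^+$ of the chain of connexions $\Gamma^+_{\phi_{t_0}}$ for all $t\in(t_0,t_0+s)$, and equals the ending point $y^-$ of $\Gamma^-_{\phi_{t_0}}$ for all $t\in(t_0-s,t_0)$. Since $\Gamma^{\pm}_{\phi_{t_0}}$ is a chain of connexions containing $\phi_{t_0}$, there is an oriented path in $G(\FF)$ from $\omega(\phi_{t_0})$ to $y^{\pm}$ along which, by Lemma~\ref{action-decrease}, the action $A$ strictly decreases, so every vertex of that path has action at most $A(\omega(\phi_{t_0}))$. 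Openness of $T$ follows at once: if $t_0\in T$ then $A(\omega(\phi_{t_0}))<c$, hence every vertex of the path from $\omega(\phi_{t_0})$ to $y^{\pm}$ lies in $G^-_c(\FF)$ and is connected there to $\omega(\phi_{t_0})\in\mathcal{C}$, so $y^{\pm}\in X\cap\mathcal{C}$ and $(t_0-s,t_0+s)\subset T$.

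For closedness, take $t_0\notin T$. If $y^+=y^-=\omega(\phi_{t_0})$, then $\omega(\phi_t)=\omega(\phi_{t_0})\notin X\cap\mathcal{C}$ for all $t$ in the punctured interval $(t_0-s,t_0+s)$ and we are done. Otherwise $\omega(\phi_{t_0})$ is a saddle point and $\phi_{t_0}$ lies on the boundary of one of its stable cones; the point to establish is that then $y^{\pm}\notin\mathcal{C}$ too, which again yields $(t_0-s,t_0+s)\cap T=\emptyset$. Since $\mathcal{C}$ is a connected component of $G^-_c(\FF)$, it is downward closed for $A$ among vertices of action $<c$; thus, were $y^+\in\mathcal{C}$ while $\omega(\phi_{t_0})\notin\mathcal{C}$, the oriented action-decreasing path from $\omega(\phi_{t_0})$ to $y^+$ would force $A(\omega(\phi_{t_0}))\geq c$. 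To exclude this I would compare $\phi_{t_0}$ with the leaves of $U$ lying near the corner of $U$ at $x$: for those leaves the trivialization and the local model force the $\omega$-limit to be the ending point of a chain of connexions issued from $x$, hence of action $<A(x)$, and using the hypothesis that $x$ belongs to the frontier of $W^s(\mathcal{C})$ one propagates, through the connected sector $U$, the fact that any leaf of $U$ whose $\omega$-limit can reach $\mathcal{C}$ has $\omega$-limit of action $<c$ — contradicting $A(\omega(\phi_{t_0}))\geq c$. Combining the two, $T$ is clopen in $(0,1)$, hence empty or all of $(0,1)$.

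The main obstacle is exactly this last step: controlling how the $\omega$-limit point of a leaf can jump as one sweeps across a hyperbolic sector of $x$, and in particular ruling out a jump that crosses the action level $c$ while the leaf still runs through $U$. Everything else is a soft clopen argument resting on the local-model proposition and on the monotonicity of the action along connexions (Lemma~\ref{action-decrease}) together with the maximality of $\mathcal{C}$ as a component of $G^-_c(\FF)$.
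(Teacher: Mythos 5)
Your proposal has a genuine gap, and it comes from a misreading of the quantifier in the statement. You fix the neighbourhood $V$ once and for all and then try to establish the dichotomy for the \emph{whole} sector $U\subset V$ by showing that $T=\{t\in(0,1):\omega(\phi_t)\in X\cap\mathcal{C}\}$ is clopen. The openness half is fine (local model plus monotonicity of the action along a chain of connexions). But the closedness half, which you concede you cannot complete, is not merely the hard step: the claim it would establish is false in general. If $t_0\notin T$ because $A(\omega(\phi_{t_0}))\geq c$ (so that $\omega(\phi_{t_0})$ is not even a vertex of $G^-_c(\FF)$), nothing prevents the ending point $y^+$ of $\Gamma^+_{\phi_{t_0}}$ from lying in $\mathcal{C}$ while $y^-$ lies in a different component; then $T$ is a nonempty proper open subset of $(0,1)$. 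This is the same phenomenon as in the remark following Lemma~\ref{Sigma-}: the conclusion of that lemma fails for components of $G^-_t(\FF)$ with $t<A(x)$, and the same failure occurs inside a hyperbolic sector. So the statement you are actually trying to prove, the dichotomy for the sector in the \emph{given} $V$, is strictly stronger than the lemma and does not hold; your "main obstacle" cannot be overcome.

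The intended argument uses precisely the freedom to shrink $V$, and is much softer. Let $\sigma^-$ be the unstable cone adjacent to $U$ and let $\phi=\sigma^-\cap\overline{U}$ be its boundary leaf; parametrize the leaves of the sector by a transversal $\gamma:[0,1)\to U$ with $\gamma(0)\in\phi$. The local model applied to $\phi$ gives a single $s>0$ such that \emph{every} leaf $\phi_t$ with $t\in(0,s)$ has the same omega-limit point, namely the ending point $y^+$ of $\Gamma^+_\phi$. Shrinking $V$ to a neighbourhood $V'$ of $x$ so small that the leaves of $U$ meeting $V'$ are exactly those with $t\in(0,s)$, the sector $U\cap V'$ is contained in $W^s(\mathcal{C})$ if $y^+\in\mathcal{C}$ and disjoint from it otherwise. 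Since $x$ has finitely many hyperbolic sectors, one neighbourhood works for all of them. No clopen argument, and no control over how $\omega(\phi_t)$ jumps across the interior of the sector, is needed.
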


\begin{proof}
Let us fix a neighborhood $V$ of $x$. We consider an unstable cone $\sigma^-$ of $x$ and $U$ a hyperbolic sector of $x$ in $V$ adjacent to $\sigma^-$. We denote $\phi$ the leaf of $\FF$ such that $\phi=\sigma^-\cap \overline{U}$. Let $\gamma:[0,1)\ra U$ be a small arc transverse to the foliation $\FF$ and such that $\gamma(0)\in\phi$. For every $t\in[0,1)$ we denote $\phi_t$ the leaf of $\FF$ passing through $\gamma(t)$.\\

We denote by $y^+$ the ending point of $\Gamma^+_\phi= \bigcap_{\epsilon>0}\overline{\bigcup_{t\in(0,\epsilon)} \phi_t}$.\\

By the local model there exists $s\in(0,1)$ such that every leaf $\phi_t$, $t\in(0,s)$, satisfies $\omega(\phi_t)=y^+$.\\

So if $y^+$ is in $\mathcal{C}$ then every leaf $\phi_t$, $t\in(0,s)$, is in $W^s(\mathcal{C})$ and if $y^+$ is not in $\mathcal{C}$ then no leaf $\phi_t$, $t\in(0,s)$, is in $W^s(\mathcal{C})$. Hence, up to a smaller neighborhood $V'$ of $x$ we can suppose that every leaf of the hyperbolic sector $U$ in $V'$ is either in $W^s(\mathcal{C})$ or disjoint of it. Moreover, $x$ has finitely many hyperbolic sectors so we can suppose that we have the same property for each one of them and we obtain the result.
\end{proof}

In the next section we will need a precise description of $W^s(\mathcal{C})$. We describe it with the following proposition. Let us consider a stable or unstable cone $\sigma$ of a saddle point $x$ in the frontier of $W^s(\mathcal{C})$, we say that $\sigma$ is \emph{adjacent} to $W^s(\mathcal{C})$ if one and only one of the two hyperbolic sectors adjacent to $\sigma$ is in $W^s(\mathcal{C})$ and we say that $\sigma$ is \emph{surrounded} by $W^s(\mathcal{C})$ if the two hyperbolic sectors of $x$ which are adjacent to $\sigma$ is $W^s(\mathcal{C})$.\\

\begin{prop}\label{Propbord}
The set $W^s(\mathcal{C})$, is open, connected and its frontier is the closure of a finite union of leaves of $\FF$ contained in stable or unstable cones of saddle points.\\
More precisely, for each saddle point $x$ in the frontier of $W^s(\mathcal{C})$, the stable and unstable cones satisfy the following properties.
\begin{enumerate}
\item Let us consider a stable cone $\sigma^+$ of $x$. If $\sigma^+$ is surrounded by $W^s(\mathcal{C})$ then the leaves of $\partial\sigma^+$ are the only leaves of $\sigma^+$ in the frontier of $W^s(\mathcal{C})$. If $\sigma^+$ is adjacent to $W^s(\mathcal{C})$ then the leaf of $\partial \sigma^+\cap \partial U$, where $U$ is the adjacent hyperbolic sector of $x$ in $W^s(\mathcal{C})$ adjacent to $\sigma^+$, is the only leaf of $\sigma^+$ in the frontier of $W^s(\mathcal{C})$. If none of the previous situation holds then $\sigma^+$ is disjoint from the frontier of $W^s(\mathcal{C})$.\\

\item Let us consider an unstable cone $\sigma^-$ of $x$. There is a finite set of leaves of $\sigma^-$, possibly empty if $\sigma^-$ is not adjacent to $W^s(\mathcal{C})$, in the frontier of $W^s(\mathcal{C})$.
\end{enumerate}
\end{prop}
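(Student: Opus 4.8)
The plan is to reduce all three assertions — that $W^s(\mathcal{C})$ is open, that it is connected, and the stated description of its frontier — to local statements near single leaves and single singularities, which are then settled using the local model of a leaf proved above (the chains of connexions $\Gamma^+_\phi$, $\Gamma^-_\phi$), Lemma~\ref{bordhyperbolic}, and the structure of the repulsive basin of a source (Lemma~\ref{bordbassin}); the only global input is the strict decrease of $A$ along leaves together with the fact that $\mathcal{C}$ is a connected component of $G^-_t(\FF)$. \textbf{Openness.} The key point is that if a leaf $\phi$ has $\omega(\phi)\in\mathcal{C}$, then both ends of $\Gamma^+_\phi$ and of $\Gamma^-_\phi$ also lie in $\mathcal{C}$: writing the ``future'' part of $\Gamma^+_\phi$ as a chain from $\alpha(\phi)$ to $y^+$ through leaves $\phi,\phi_1,\dots,\phi_n$, the values $A(\omega(\phi))>A(\omega(\phi_1))>\cdots>A(y^+)$ strictly decrease, hence stay below $t$ since $A(\omega(\phi))<t$, so each $\phi_i$ defines an edge of $G^-_t(\FF)$ and $\omega(\phi),\dots,y^+$ lie in the same component $\mathcal{C}$ (symmetrically for the ``past'' part and for $\Gamma^-_\phi$). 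Hence: at a non-singular $p$ with $\omega(\phi_p)\in\mathcal{C}$, the local model shows all nearby leaves have $\omega$-limit in $\{\omega(\phi_p),y^+,y^-\}\subset\mathcal{C}$, so a flow box around $p$ lies in $W^s(\mathcal{C})$; at a sink $x\in\mathcal{C}$ the attractive basin is an open neighbourhood inside $W^s(\mathcal{C})$; at a saddle $x\in\mathcal{C}$, every leaf of every unstable cone has $\omega$-limit of action $<A(x)<t$ joined to $x$ by an edge of $G^-_t(\FF)$, hence in $\mathcal{C}$, so those leaves lie in $W^s(\mathcal{C})$ and Lemma~\ref{bordhyperbolic} then forces every hyperbolic sector of $x$ (in a small neighbourhood $V$) into $W^s(\mathcal{C})$ too, giving $V\setminus\{x\}\subset W^s(\mathcal{C})$. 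Thus $W^s(\mathcal{C})\cup(X\cap\mathcal{C})$ is open in $\Sigma$, and so is $W^s(\mathcal{C})$.

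\textbf{Connectedness and the frontier.} For $x\in X\cap\mathcal{C}$ the set $W^s(x)\cup\{x\}$ is connected (a disc for a sink, the stable cones glued at $x$ for a saddle), and an edge $x\to y$ of $G^-_t(\FF)$ gives a leaf $\phi$ with $\overline{\phi}=\phi\cup\{x,y\}$ joining the pieces at $x$ and $y$; since $\mathcal{C}$ is connected as a graph, $W^s(\mathcal{C})\cup(X\cap\mathcal{C})=\bigcup_{x\in X\cap\mathcal{C}}(W^s(x)\cup\{x\})$ is connected, and since it is an open surface, removing the finitely many points $X\cap\mathcal{C}$ leaves a connected set, so $W^s(\mathcal{C})$ is connected. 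For the frontier: $W^s(\mathcal{C})\setminus X$ is saturated, hence so is its closure in $\Sigma\setminus X$, so $\Fr(W^s(\mathcal{C}))$ is, off $X$, a union of leaves. If a leaf $\phi$ is in the frontier, then $\omega(\phi)\notin\mathcal{C}$ (else $\phi\subset W^s(\mathcal{C})$, open) while $\phi$ is a limit of leaves of $W^s(\mathcal{C})$; by the local model the leaves just to one side of $\phi$ all have $\omega$-limit one end ($y^+$ or $y^-$) of $\Gamma^\pm_\phi$, which is therefore in $\mathcal{C}$; as this end differs from $\omega(\phi)$, the relevant chain is in ``case $3$'', i.e. $\omega(\phi)$ is a saddle and $\phi$ lies on the boundary of a stable cone of $\omega(\phi)$. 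So every frontier leaf lies on the boundary of a stable cone of a saddle (and, if $\alpha(\phi)$ is a saddle, also in an unstable cone of that saddle); with finitely many saddles and at most two such boundary leaves per stable cone, only finitely many leaves occur and $\Fr(W^s(\mathcal{C}))$ is the closure of their union. (The only additional frontier points are isolated sources $p$ with $A(p)\ge t$ all of whose emanating leaves reach $\mathcal{C}$; these carry no leaf and are immaterial below.)

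\textbf{The local picture at a saddle of the frontier.} Fix a saddle $x$ in the frontier, so $x\notin\mathcal{C}$. Every leaf of a stable cone $\sigma^+$ of $x$ has $\omega$-limit $x\notin\mathcal{C}$, hence lies outside $W^s(\mathcal{C})$; an interior leaf of $\sigma^+$ has a flow-box neighbourhood inside $\sigma^+$, so is not a limit of $W^s(\mathcal{C})$ and not in the frontier; a boundary leaf $\phi=\partial\sigma^+\cap\overline{U}$, with $U$ the flanking hyperbolic sector on that side, has on its $U$-side the leaves of $U$ near $\phi$, which by Lemma~\ref{bordhyperbolic} are in $W^s(\mathcal{C})$ iff $U\subset W^s(\mathcal{C})$; hence $\phi$ is in the frontier iff $U\subset W^s(\mathcal{C})$. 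This yields item~1: if both flanking sectors of $\sigma^+$ are in $W^s(\mathcal{C})$ ($\sigma^+$ surrounded), the two boundary leaves of $\sigma^+$, and only they, are in the frontier; if exactly one is ($\sigma^+$ adjacent), that boundary leaf, and only it, is; otherwise $\sigma^+$ meets the frontier at most at $x$. For an unstable cone $\sigma^-$ of $x$, parametrising its leaves by a transverse arc, $\phi\mapsto\omega(\phi)$ is locally constant off a finite set, so $\sigma^-$ splits into finitely many sub-fans each mapped to one singularity, the separating leaves going to saddles (hence, as above, lying on the boundary of a stable cone of that saddle); an interior leaf of a sub-fan has a flow-box neighbourhood inside it, so is in $W^s(\mathcal{C})$ or outside it but never in the frontier. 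Therefore the only leaves of $\sigma^-$ that can be in the frontier are the finitely many separating leaves — precisely those going to a saddle outside $\mathcal{C}$ while bounding a sub-fan whose target lies in $\mathcal{C}$ — which is item~2.

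\textbf{Main difficulty.} The delicate part is this last case analysis: at each saddle of the frontier one must handle the chains $\Gamma^\pm_\phi$, Lemma~\ref{bordhyperbolic}, and the sub-fan decomposition of the stable and unstable cones simultaneously, and keep track of the isolated source points noted above. The openness step is the other place needing care, as it is where the strict decrease of $A$ along leaves and the definition of $\mathcal{C}$ as a connected component of $G^-_t(\FF)$ are genuinely used; everything else is routine surface topology.
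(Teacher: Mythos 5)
Your proof is correct and follows essentially the same route as the paper: openness via the local model together with the strict decrease of the action along chains of connexions (the paper's Lemmas \ref{interior} and \ref{bordfini}), connectedness by gluing the basins $W^s(x)$ along the leaves realizing the edges of $\mathcal{C}$, and the frontier description by the case analysis at saddles using Lemma \ref{bordhyperbolic} (the paper treats points 1 and 2 as immediate and only proves finiteness, in Lemma \ref{fini}, exactly as you do). Two harmless slips: the \emph{starting} points $z^{\pm}$ of the chains $\Gamma^{+}_\phi$, $\Gamma^{-}_\phi$ need not lie in $\mathcal{C}$ (their action may exceed $t$) — only the ending points $y^{\pm}$ do, and only those are used in your argument; and in the openness step you omit the case of a source $x\in\mathcal{C}$, for which every emanating leaf has $\omega$-limit of action $<A(x)<t$ joined to $x$ by an edge of $G^-_t(\FF)$, hence in $\mathcal{C}$.
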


Every property of Proposition \ref{Propbord} is obvious except the finiteness property which is deduced from Lemma \ref{fini}. \\

The first point of Proposition \ref{Propbord} is a straigthforward consequence of the definition of $W^s(\mathcal{C})$. Unfortunately, the second point of Proposition \ref{Propbord} can not be more precise and we draw three examples to illustrate this. After these examples, we prove that $W^s(\mathcal{C})$ is open in $\Sigma$ and that there are finitely many leaves of $\FF$ in its frontier.\\
 
\emph{First example.} Let us consider a sink $y$ of a gradient-like foliation $\FF$ such that there exists a saddle point $x$ in the frontier of $W^s(y)$ as in the Figure \ref{figurebasin}. In this example we draw in red the leaves of $W^s(y)$ and in blue the leaves in its frontier whose omega-limit point equals to $x$. There exists an unstable cone $\sigma^-$ of $x$ surrounded by $W^s(y)$. The sectors $U$ and $U'$ of $x$ which are adjacent to $\sigma^-$ are in $W^s(y)$ and by the first point of Proposition \ref{Propbord} there are two leaves $\phi$ and $\phi'$ in the stable cones of $x$ which are adjacent to $U$ and $U'$ and in the frontier of $W^s(y)$. In our example there exists a leaf $\psi$ of the cone $\sigma^-$ which is in the frontier of $W^s(y)$. It is enough to suppose that $A(x)>A(z)>A(y)$ to obtain this example. In our example we choose to draw $\psi$ in the boundary of the unstable cone $\sigma^-$ but it could be any leaf of $\sigma^-$.\\

\begin{figure}[H]
\begin{center}
\begin{tikzpicture}[scale=0.7]

\draw[below] (-0.2,0) node{$x$};

\draw [decoration={markings, mark=at position 0.5 with {\arrow{>}}}, postaction={decorate}] (0,-3) -- (0,0);

\draw [blue,decoration={markings, mark=at position 0.5 with {\arrow{>}}}, postaction={decorate}] (0.3,-3) -- (0,0);
\draw[blue,below] (0.3,-3) node{$\phi'$};
\draw [decoration={markings, mark=at position 0.5 with {\arrow{>}}}, postaction={decorate}] (-0.3,-3) -- (0,0);

\draw [decoration={markings, mark=at position 0.5 with {\arrow{>}}}, postaction={decorate}] (0,3) -- (0,0);

\draw [blue,decoration={markings, mark=at position 0.5 with {\arrow{>}}}, postaction={decorate}] (0.3,3) -- (0,0);
\draw[blue,above] (0.3,3) node{$\phi$};
\draw [decoration={markings, mark=at position 0.5 with {\arrow{>}}}, postaction={decorate}] (-0.3,3) -- (0,0);

\draw [decoration={markings, mark=at position 0.5 with {\arrow{<}}}, postaction={decorate}] (-3,0.3) -- (0,0);
\draw [decoration={markings, mark=at position 0.5 with {\arrow{<}}}, postaction={decorate}] (-3,0) -- (0,0);
\draw [decoration={markings, mark=at position 0.5 with {\arrow{<}}}, postaction={decorate}] (-3,-0.3) -- (0,0);

\draw [red,decoration={markings, mark=at position 0.5 with {\arrow{<}}}, postaction={decorate}] (3,0.3) -- (0,0);
\draw [red,decoration={markings, mark=at position 0.5 with {\arrow{<}}}, postaction={decorate}] (3,0) -- (0,0);
\draw [blue,decoration={markings, mark=at position 0.5 with {\arrow{<}}}, postaction={decorate}] (2,-0.2) -- (0,0);
\draw [red,decoration={markings, mark=at position 0.5 with {\arrow{<}}}, postaction={decorate}]  (3,-0.3)-- (2,-0.2);
\draw (3.3,0) node{$\sigma^-$};

\draw [red,decoration={markings, mark=at position 0.5 with {\arrow{>}}}, postaction={decorate}] (3.6,0) -- (5,0);
\draw [red,decoration={markings, mark=at position 0.5 with {\arrow{>}}}, postaction={decorate}]  (3,-0.3)-- (5,0);
\draw [red,decoration={markings, mark=at position 0.5 with {\arrow{>}}}, postaction={decorate}] (3,0.3) -- (5,0);
\draw [red,decoration={markings, mark=at position 0.5 with {\arrow{>}}}, postaction={decorate}] (3,0.5) -- (5,0);
\draw [red,decoration={markings, mark=at position 0.5 with {\arrow{>}}}, postaction={decorate}] (3,-0.5) -- (5,0);

\draw[red,right] (5,0) node{$y$};

\draw [red,decoration={markings, mark=at position 0.5 with {\arrow{>}}}, postaction={decorate}]  (0.5,-3) ..controls +(-0.4,3) and +(-3,+0.4)..(3,-0.5)  ;
\draw[red] (1.5,1.5) node{$U$};

\draw [red,decoration={markings, mark=at position 0.5 with {\arrow{>}}}, postaction={decorate}]  (0.5,3) ..controls +(-0.4,-3) and +(-3,-0.4).. (3,0.5)  ;
\draw[red] (1.5,-1.5) node{$U'$};

\draw [decoration={markings, mark=at position 0.5 with {\arrow{<}}}, postaction={decorate}] (-3,0.5) ..controls +(3,-0.4) and +(0.4,-3).. (-0.5,3)  ;

\draw [decoration={markings, mark=at position 0.5 with {\arrow{<}}}, postaction={decorate}] (-3,-0.5) ..controls +(3,0.4) and +(0.4,3).. (-0.5,-3)  ;

\draw[blue,below] (1,-0.1) node{$\psi$};
\draw (2,-0.2) node{$\bullet$};
\draw[below] (2,-0.2) node{$z$};

\end{tikzpicture}
\end{center}
\caption{ }
\label{figurebasin}
\end{figure}

\emph{Second example.} Let us consider a sink $y$ of a gradient-like foliation $\FF$ such that there exists a saddle point $x$ in the frontier of $W^s(y)$ as in the Figure \ref{figurebasin2}. In this example we draw in red the leaves of $W^s(y)$ and in blue the leaves of its frontier whose alpha-limit point is equal to $x$. The unstable cone $\sigma^-$ of $x$ is adjacent to $W^s(y)$. In this example there exist two leaves $\psi$ and $\psi'$  of the unstable cone $\sigma^-$ from $x$ to two singularities $z$ and $z'$ which are in the frontier of $W^s(y)$.

\begin{figure}[H]
\begin{center}
\begin{tikzpicture}[scale=0.7]

\draw (-0.2,0.2) node{$x$};

\draw [decoration={markings, mark=at position 0.5 with {\arrow{>}}}, postaction={decorate}] (0,-3) -- (0,0);
\draw [blue,decoration={markings, mark=at position 0.5 with {\arrow{>}}}, postaction={decorate}] (0.3,-3) -- (0,0);
\draw [decoration={markings, mark=at position 0.5 with {\arrow{>}}}, postaction={decorate}] (-0.3,-3) -- (0,0);

\draw [decoration={markings, mark=at position 0.5 with {\arrow{>}}}, postaction={decorate}] (0,1) -- (0,0);

\draw [decoration={markings, mark=at position 0.5 with {\arrow{<}}}, postaction={decorate}] (-1,0) -- (0,0);

\draw [decoration={markings, mark=at position 0.5 with {\arrow{<}}}, postaction={decorate}] (3,0.5) -- (0,0);
\draw [blue,decoration={markings, mark=at position 0.5 with {\arrow{<}}}, postaction={decorate}]  (3,0.3)-- (0,0);
\draw [red,decoration={markings, mark=at position 0.5 with {\arrow{<}}}, postaction={decorate}] (3,0) -- (0,0);
\draw [blue,decoration={markings, mark=at position 0.5 with {\arrow{<}}}, postaction={decorate}] (2,-0.2) -- (0,0);
\draw [red,decoration={markings, mark=at position 0.5 with {\arrow{<}}}, postaction={decorate}]  (3,-0.3)-- (2,-0.2);
\draw[blue] (2,-0.2) node{$\cdot$};
\draw (3.3,0) node{$\sigma^-$};
\draw[blue,below] (1,-0.1) node{$\psi'$};
\draw (2,-0.2) node{$\bullet$};
\draw[below] (2,-0.2) node{$z'$};
\draw[blue,right] (3,0.5) node{$\psi$};

\draw [red,decoration={markings, mark=at position 0.5 with {\arrow{>}}}, postaction={decorate}]  (0.5,-3) ..controls +(-0.1,1) and +(-2,0)..(3,-0.5)  ;

\draw [decoration={markings, mark=at position 0.5 with {\arrow{>}}}, postaction={decorate}]  (0.2,1) ..controls +(0,-1) and +(-3,-0.4).. (3,0.7)  ;
\draw[red] (1.5,-1.5) node{$U$};

\draw [red,decoration={markings, mark=at position 0.5 with {\arrow{>}}}, postaction={decorate}] (3.6,0) -- (5,0);
\draw [red,decoration={markings, mark=at position 0.5 with {\arrow{>}}}, postaction={decorate}]  (3,-0.3)-- (5,0);
\draw [blue,decoration={markings, mark=at position 0.5 with {\arrow{>}}}, postaction={decorate}] (3,0.3) -- (5,0.5);
\draw [red,decoration={markings, mark=at position 0.5 with {\arrow{>}}}, postaction={decorate}] (5,0.5) -- (5,0) ;
\draw [decoration={markings, mark=at position 0.5 with {\arrow{>}}}, postaction={decorate}] (3,0.5) -- (5,0.7);
\draw [decoration={markings, mark=at position 0.5 with {\arrow{>}}}, postaction={decorate}] (3,0.7) -- (5,0.9);
\draw [red,decoration={markings, mark=at position 0.5 with {\arrow{>}}}, postaction={decorate}] (3,-0.5) -- (5,0);

\draw[red,right] (5,0) node{$y$};
\draw (5,0.5) node{$\bullet$};
\draw[right] (5,0.5) node{$z$};
\end{tikzpicture}
\end{center}
\caption{ }
\label{figurebasin2}
\end{figure}

\emph{Third example.} Let us consider  a sink $y$ of a gradient foliation $\FF$ such that there exists a saddle point $x$ in the frontier of $W^s(y)$ as in Figure \ref{example3}. In this example we draw in red leaves of $W^s(y)$ and in blue the leaves on its frontier. The unstable cone $\sigma^-$ is not adjacent to $W^s(y)$. In this example there exist leaves in the interior of $\sigma^-$ whose omega-limit point is $y$ and two leaves $\psi$ of $\partial \sigma^-$ and $\psi'$ in the interior of $\sigma^-$ are in the frontier of $W^s(y)$ The omega limit-points of $\psi$ and $\psi'$  are two saddle points $z$ and $z'$ of $\FF$.

\begin{figure}[H]
\begin{center}
\begin{tikzpicture}[scale=0.7]

\draw[below] (-0.2,0) node{$x$};

\draw [decoration={markings, mark=at position 0.5 with {\arrow{>}}}, postaction={decorate}] (0,-3) -- (0,0);

\draw [decoration={markings, mark=at position 0.5 with {\arrow{>}}}, postaction={decorate}] (0.3,-3) -- (0,0);
\draw [decoration={markings, mark=at position 0.5 with {\arrow{>}}}, postaction={decorate}] (-0.3,-3) -- (0,0);

\draw [decoration={markings, mark=at position 0.5 with {\arrow{>}}}, postaction={decorate}] (0,3) -- (0,0);

\draw [decoration={markings, mark=at position 0.5 with {\arrow{>}}}, postaction={decorate}] (0.3,3) -- (0,0);
\draw [decoration={markings, mark=at position 0.5 with {\arrow{>}}}, postaction={decorate}] (-0.3,3) -- (0,0);

\draw [decoration={markings, mark=at position 0.5 with {\arrow{<}}}, postaction={decorate}] (-3,0.3) -- (0,0);
\draw [decoration={markings, mark=at position 0.5 with {\arrow{<}}}, postaction={decorate}] (-3,0) -- (0,0);
\draw [decoration={markings, mark=at position 0.5 with {\arrow{<}}}, postaction={decorate}] (-3,-0.3) -- (0,0);

\draw [blue,decoration={markings, mark=at position 0.5 with {\arrow{<}}}, postaction={decorate}] (3,0.3) -- (0,0);
\draw [red,decoration={markings, mark=at position 0.5 with {\arrow{<}}}, postaction={decorate}] (3,0) -- (0,0);
\draw [blue,decoration={markings, mark=at position 0.5 with {\arrow{<}}}, postaction={decorate}] (3,-0.3) -- (0,0);
\draw (3.3,0) node{$\sigma^-$};

\draw[red,right] (5,0) node{$y$};
\draw [red,decoration={markings, mark=at position 0.5 with {\arrow{>}}}, postaction={decorate}] (3.6,0) -- (5,0);
\draw [red,decoration={markings, mark=at position 0.5 with {\arrow{>}}}, postaction={decorate}]  (3,-0.3)-- (5,0);
\draw [red,decoration={markings, mark=at position 0.5 with {\arrow{>}}}, postaction={decorate}] (3,0.3) -- (5,0);

\draw[blue,decoration={markings, mark=at position 0.5 with {\arrow{>}}}, postaction={decorate}] (4,1) -- (3,0.3);
\draw[blue,decoration={markings, mark=at position 0.5 with {\arrow{>}}}, postaction={decorate}] (4,-1) -- (3,-0.3);
\draw[decoration={markings, mark=at position 0.5 with {\arrow{<}}}, postaction={decorate}] (3.5,3) -- (3,0.3);
\draw[decoration={markings, mark=at position 0.5 with {\arrow{<}}}, postaction={decorate}] (3.5,-3) -- (3,-0.3);

\draw[red,decoration={markings, mark=at position 0.5 with {\arrow{>}}}, postaction={decorate}] (4,0.8) ..controls +(-0.5,-0.5) and +(-0.5,0.2).. (5,0);
\draw[red,decoration={markings, mark=at position 0.5 with {\arrow{>}}}, postaction={decorate}] (4,-0.8) ..controls +(-0.5,0.5) and +(-0.5,-0.2).. (5,0);

\draw[decoration={markings, mark=at position 0.5 with {\arrow{>}}}, postaction={decorate}]  (0.5,-3) ..controls +(-0.4,2) and +(-2,0.4).. (2,-0.3)  ;
\draw[decoration={markings, mark=at position 0.5 with {\arrow{>}}}, postaction={decorate}] (2,-0.3) ..controls +(1,-0.4) and +(-1,+3).. (3.5,-3.3);
\draw (1.5,1.5) node{$U$};

\draw[decoration={markings, mark=at position 0.5 with {\arrow{>}}}, postaction={decorate}]  (0.5,3) ..controls +(-0.4,-2) and +(-2,-0.4).. (2,0.8)  ;
\draw[decoration={markings, mark=at position 0.5 with {\arrow{>}}}, postaction={decorate}] (2,0.8) ..controls +(1,0.4) and +(-1,-3).. (3.5,3.6);
\draw[decoration={markings, mark=at position 0.5 with {\arrow{>}}}, postaction={decorate}] (0,0) ..controls +(3.5,0.5) and +(-1,-3).. (3.5,3.3);
\draw (1.5,-1.5) node{$U'$};

\draw [decoration={markings, mark=at position 0.5 with {\arrow{<}}}, postaction={decorate}] (-3,0.5) ..controls +(2,-0.4) and +(0.4,-3).. (-0.5,3)  ;

\draw [decoration={markings, mark=at position 0.5 with {\arrow{<}}}, postaction={decorate}] (-3,-0.5) ..controls +(3,0.4) and +(0.4,3).. (-0.5,-3)  ;

\draw[blue,below] (1,-0.1) node{$\psi$};
\draw (3,-0.3) node{$\bullet$};
\draw[below] (3,-0.3) node{$z$};
\draw[blue,above] (1,0.1) node{$\psi'$};
\draw (3,0.3) node{$\bullet$};
\draw[above] (3,0.3) node{$z'$};
\end{tikzpicture}
\end{center}
\caption{ }
\label{example3}
\end{figure}

The remaining part of this section aims to give the proof of Proposition \ref{Propbord}. The proof is divided into several lemmas. Notice that the following lemma will be useful in the next section.

\begin{lem}\label{Sigma-}
Let us consider a saddle point $x$ of $\FF$. For each unstable cone $\sigma^-$ of $x$ there exists a unique connected component $\mathcal{C}$ of $G_{A(x)}^-(\FF)$ such that each leaf $\phi$ of the cone $\sigma^-$ is in the attractive basin of $\mathcal{C}$.
\end{lem}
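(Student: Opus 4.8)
The plan is to transport the connected-component structure of $G^-_{A(x)}(\FF)$ onto the unstable cone $\sigma^-$ by means of the local model of a leaf, and then to exploit the connectedness of $\sigma^-$.

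First I would record that the statement really concerns vertices and connected components of $G^-_{A(x)}(\FF)$. For every leaf $\phi$ of $\sigma^-$ one has $\alpha(\phi)=x$; since $\FF$ is gradient-like every leaf is a connexion, so $\omega(\phi)\neq\alpha(\phi)=x$, and because $A$ decreases strictly along $\phi$ we get $A(\omega(\phi))<A(x)$, so $\omega(\phi)$ is a vertex of $G^-_{A(x)}(\FF)$ and $\phi\subset W^s(\mathcal{C}_\phi)$, where $\mathcal{C}_\phi$ denotes the connected component of $G^-_{A(x)}(\FF)$ containing $\omega(\phi)$. Thus the content of the lemma is that $\mathcal{C}_\phi$ does not depend on the leaf $\phi$ of $\sigma^-$; uniqueness is then immediate, since two components having the required property would share the vertex $\omega(\phi)$ for any fixed $\phi\subset\sigma^-$.

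For the existence, to each connected component $\mathcal{C}$ of $G^-_{A(x)}(\FF)$ I would attach the saturated set $S_\mathcal{C}=\bigcup\{\phi\subset\sigma^-\mid \omega(\phi)\in\mathcal{C}\}$. As each leaf of $\sigma^-$ has a single $\omega$-limit point lying in exactly one component, the sets $S_\mathcal{C}$ partition $\sigma^-$, and $\sigma^-$ is non-empty and connected by definition of an unstable cone; hence it suffices to show that each $S_\mathcal{C}$ is open in $\sigma^-$, for then exactly one $S_\mathcal{C}$ is non-empty and coincides with $\sigma^-$, which is the assertion. To prove openness I would fix $w\in S_\mathcal{C}$, lying on a leaf $\phi\subset\sigma^-$ with $\omega(\phi)\in\mathcal{C}$, pick a small flow box $V\ni w$ and a transverse arc $\gamma\subset V$ through $w$ parametrizing the nearby leaves $\phi_t$ (with $\phi_0=\phi$), and apply the local model of a leaf established above: shrinking $V$ if necessary, $\omega(\phi_t)$ equals the ending point $y^+$ of $\Gamma^+_\phi$ for all $t>0$ and the ending point $y^-$ of $\Gamma^-_\phi$ for all $t<0$, with $\Gamma^{\pm}_\phi$ chains of connexions containing $\phi$. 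The key point is that $y^+$ and $y^-$ lie in the same component $\mathcal{C}$ of $G^-_{A(x)}(\FF)$ as $\omega(\phi)$: writing $\Gamma^+_\phi$ as a chain of connexions $\psi_1,\dots,\psi_k$ with $\phi=\psi_j$ and $\omega(\psi_k)=y^+$, all endpoints of $\psi_{j+1},\dots,\psi_k$ have action strictly below $A(\omega(\phi))<A(x)$ because $A$ decreases strictly along connexions, so these leaves are edges of $G^-_{A(x)}(\FF)$ joining $\omega(\phi)=\alpha(\psi_{j+1})$ to $y^+=\omega(\psi_k)$ within one component (and $y^+=\omega(\phi)$ if $j=k$); the argument for $\Gamma^-_\phi$ is identical. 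Consequently every leaf $\phi_t$ that lies in $\sigma^-$ satisfies $\omega(\phi_t)\in\mathcal{C}$, hence $\phi_t\subset S_\mathcal{C}$; and since for $V$ small the leaves of $\FF$ meeting $V$ are exactly the $\phi_t$ and $\sigma^-$ is saturated, $V\cap\sigma^-\subset S_\mathcal{C}$, providing a neighbourhood of $w$ in $\sigma^-$ contained in $S_\mathcal{C}$.

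The main obstacle is this openness step, and more precisely the fact that passing to an infinitesimally close leaf can only move the $\omega$-limit within one connected component of $G^-_{A(x)}(\FF)$; this is exactly where the local model of a leaf and the strict decrease of $A$ along connexions are used, and it may be clarifying to indicate, as in Figures~\ref{locallocal} and \ref{local2}, that the chains $\Gamma^{\pm}_\phi$ stay below the level $A(x)$ from the vertex $\omega(\phi)$ onward. The remaining steps are routine point-set topology.
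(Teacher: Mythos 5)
Your proof is correct and follows essentially the same route as the paper: both fix the component via $\omega(\phi)$, use the local model (the chains $\Gamma^{\pm}_\phi$ containing $\phi$, whose portion beyond $\omega(\phi)$ stays below the level $A(x)$) to show the relevant subset of $\sigma^-$ is open, and conclude by connectedness of the cone. Your "partition into open saturated sets" formulation is just a repackaging of the paper's "open and closed in $\sigma^-$" argument.
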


\begin{proof}
Let us consider an unstable cone $\sigma^-$ of $x$ and $\phi$ a leaf of $\sigma^-$. We denote by $\mathcal{C}$ the connected component of $G_{A(x)}^-(\FF)$ which contains the singularity $\omega(\phi)$. By the local model there exist two chains of connexions $\Gamma_\phi^+$ and $\Gamma_\phi^-$, whose ending points are denoted $y^+$ and $y^-$, containing $\phi$ such that each leaf $\psi$ of $\sigma^-$ close enough to $\phi$ satisfies $\omega(\psi)=y^+$ or $\omega(\psi)=y^-$. We have that $\Gamma_\phi^+$ and $\Gamma_\phi^-$ contain $\phi$, so by definition $y^+$ and $y^-$ are also in $\mathcal{C}$ and we deduce that every leaf $\psi$ of $\sigma^-$ close enough to $\phi$ is in $W^s(\mathcal{C})$. \\

Moreover, if we consider a leaf $\phi'\in\sigma^-$ in the closure of a sequence $(\phi'_n)_{n\in\N}$ of leaves in $W^s(\mathcal{C})$ then by the local model, there is a chain of connexions $\Gamma$ in the closure of the sequence $(\phi'_n)_{n\in\N}$ which contains $\phi'$. Notice that $A(\omega(\phi'))<A(x)$ so $\omega(\phi')$ and the ending point of the chain $\Gamma$ are in the same connected component of $G_{A(x)}^-(\FF)$. So $\omega(\phi')$ is in $\mathcal{C}$ and then $\phi'$ is in $W^s(\mathcal{C})$.\\

Thus we obtain that the intersection of $\sigma^-$ and $W^s(\mathcal{C})$ is open and closed in $\sigma^-$. So, since $\sigma^-$ is connected we have that every leaf of $\sigma^-$ is in $W^s(\mathcal{C})$.
\end{proof}

\begin{rem}
Lemma \ref{Sigma-} does not hold if $A(x)$ is replaced by some $t<A(x)$. Indeed if we only suppose that $\mathcal{C}$ is a connected component of $G_{t}^-(\FF)$ with $t<A(x)$ then the unstable cone $\sigma^-$ is not necessarily included in the attractive basin of $\mathcal{C}$. See example \ref{figurebasin2} above where the leaf $\psi'$ is not a leaf of the attractive basin of the connected component $\{y\}$ of $G_{A(y)^+}^-(\FF)$.
\end{rem}

We prove that the attractive basin of a connected component of $G_t^-(\FF)$ with $t\in\R$ is open with the two following lemmas.

\begin{lem}\label{interior}
Let us consider $t\in\R$ and $\mathcal{C}$ a connected component of $G_t^-(\FF)$. Each singularity $x$ of $\mathcal{C}$ is in the interior of $W^s(\mathcal{C})$.
\end{lem}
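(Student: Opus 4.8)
The plan is to produce, for each singularity $x$ of $\mathcal{C}$, an open neighborhood $V$ of $x$ entirely contained in $W^s(\mathcal{C})$, treating the three types of singularity separately. The case of a sink $x$ is immediate: the attractive basin $W^s(x)$ of a sink is open (this was already used in the proof of Lemma \ref{bordbassin}), and $W^s(x)\subset W^s(\mathcal{C})$. If $x$ is a source, every sufficiently small punctured neighborhood of $x$ consists of initial segments of leaves $\phi$ with $\alpha(\phi)=x$; since $A(\omega(\phi))<A(\alpha(\phi))=A(x)<t$, the point $\omega(\phi)$ lies in a connected component of $G^-_t(\FF)$, and because $x$ itself is joined to $\omega(\phi)$ by the edge corresponding to $\phi$ and $A(x)<t$ forces $x\in\mathcal{C}$ as well, we get $\omega(\phi)\in\mathcal{C}$; hence the whole punctured neighborhood, together with $x$, lies in $W^s(\mathcal{C})$, which is therefore a neighborhood of $x$.

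The remaining case, $x$ a saddle point of $\FF$ with $A(x)<t$, is the one requiring care and is the main obstacle. Fix a trivialization $(h,V)$ of $\FF$ at $x$ as in Section \ref{geometricsprop}, so that $V$ decomposes in cyclic order into unstable cones $\sigma^-_k$, stable cones $\sigma^+_k$ and hyperbolic sectors $U_j$. I would argue sector by sector. For a stable cone $\sigma^+_k$: its leaves have omega-limit point $x$, and $A(x)<t$ puts $x\in\mathcal{C}$, so these leaves are in $W^s(\mathcal{C})$. For an unstable cone $\sigma^-_k$: by Lemma \ref{Sigma-} there is a connected component $\mathcal{C}'$ of $G^-_{A(x)}(\FF)$ containing $\omega(\phi)$ for every leaf $\phi$ of $\sigma^-_k$; since $A(x)<t$, the component $\mathcal{C}'$ is contained in a connected component of $G^-_t(\FF)$, and that component also contains $x$ (via the edge $x\to\omega(\phi)$, using $A(x)<t$), hence equals $\mathcal{C}$; so every leaf of $\sigma^-_k$ lies in $W^s(\mathcal{C})$. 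For a hyperbolic sector $U_j$ bounded by a leaf $\phi$ of an adjacent unstable cone: by the local model near $\phi$ there is $s>0$ such that all leaves $\phi_t$, $0<t<s$, of $U_j$ share the same omega-limit point $y^+$, the endpoint of the chain of connexions $\Gamma^+_\phi$; since $\Gamma^+_\phi$ contains $\phi$ whose omega-limit point lies in $\mathcal{C}$ (by the unstable-cone case) and $A$ strictly decreases along every connexion in the chain, every singularity of $\Gamma^+_\phi$ — in particular $y^+$ — has action $<A(x)<t$ and is connected to $x$, hence lies in $\mathcal{C}$; so a whole neighborhood of the $\phi$-boundary of $U_j$ inside $U_j$ is in $W^s(\mathcal{C})$. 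Shrinking $V$ finitely many times (once per cone and per sector, using that $x$ has finitely many of each), I obtain a neighborhood of $x$ all of whose leaves lie in $W^s(\mathcal{C})$; together with $x$ itself this is an open neighborhood of $x$ contained in $W^s(\mathcal{C})$, as desired.

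Throughout I would lean on two facts established earlier: the local-model proposition of Section \ref{geometricsprop}, which controls the omega-limit behaviour of leaves near a given leaf and produces the chains $\Gamma^\pm_\phi$, and the monotonicity $A(\alpha(\phi))>A(\omega(\phi))$ along every leaf, which is what lets me propagate the "$<t$" and "$\in\mathcal{C}$" conditions along chains of connexions. The one subtlety to state carefully is why $\omega(\phi)\in\mathcal{C}$ rather than merely in some component of $G^-_t(\FF)$: this is precisely because the pair $x,\omega(\phi)$ is joined by an edge of $G(\FF)$ and both have action $<t$, so they lie in the same connected component of $G^-_t(\FF)$, which by definition is $\mathcal{C}$ once we know $x\in\mathcal{C}$.
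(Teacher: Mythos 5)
Your proof is correct and follows essentially the same route as the paper: a case analysis on the type of singularity, with the saddle case handled via a trivialization, the stable/unstable cones treated directly (the paper says "by definition" where you invoke Lemma \ref{Sigma-} and the edge-connectivity argument), and the hyperbolic sectors controlled by the local model and the chain of connexions $\Gamma^+_\phi$ exactly as in the paper's appeal to Lemma \ref{bordhyperbolic}. If anything, you spell out more carefully than the paper why the endpoint $y^+$ lies in $\mathcal{C}$.
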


\begin{proof}
Let us consider a singularity $x$ of $\FF$ in $\mathcal{C}$. It is either a sink, a source or saddle. We separate those three cases. \\

1. If $x$ is a sink, then each leaf in a neighborhood of $x$ is in $W^s(\mathcal{C})$. \\

2. If $x$ is a source, then each leaf $\phi$ in a neighborhood of $x$ satisfies $A(\omega(\phi))<A(x)$ and is in $W^s(\mathcal{C})$.\\

3. If $x$ is a saddle point, we consider a trivialization $(h,V)$ of $\FF$ at $x$ on a neighborhood $V$ given by Lemma \ref{bordhyperbolic}. By definition of $W^s(\mathcal{C})$, each leaf of $\Sigma^-_x$ and $\Sigma^+_x$ is in $W^s(\mathcal{C})$. Moreover, let us consider a leaf $\phi$ in the boundary of an unstable cone $\sigma^-$ of $x$ and $U$ the hyperbolic sector adjacent to $\phi$, by the local model there exists a chain of connexions from $x$ to a singularity $y$ such that every leaf of the hyperbolic sector $U$ admits $y$ as its omega limit. By definition, the singularity $y$ is in $\mathcal{C}$ hence the leaves of the hyperbolic sector $U$ are in $W^s(\mathcal{C})$. Since it holds for each hyperbolic sector of $x$ we obtain the result.

\end{proof}

Let us consider $t\in\R$ and $\mathcal{C}$ a connected component of $G^-_t(\FF)$. We describe the attractive basin of $\mathcal{C}$ and its frontier.

\begin{lem}\label{bordfini}
The intersection of $W^s(\mathcal{C})$ and $\Sigma\backslash X$ is open. 
\end{lem}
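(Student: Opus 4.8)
The plan is to show that $W^s(\mathcal{C})\cap(\Sigma\setminus X)$ is a neighbourhood of each of its points. Fix $z\in W^s(\mathcal{C})\cap(\Sigma\setminus X)$ and let $\phi$ be the leaf of $\FF$ through $z$, so that $x_0:=\omega(\phi)$ is a singularity of $\mathcal{C}$. I would then invoke the local model near $\phi$, choosing the transversal arc $\gamma\colon(-1,1)\to\Sigma\setminus X$ to be a parametrization of the horizontal slice $h^{-1}((-1,1)\times\{0\})$ of a flow box $(V,h)$ centred at $z$, with $h(z)=0$ and $\gamma(0)=z$; write $\phi_t$ for the leaf through $\gamma(t)$, so that $\phi_0=\phi$. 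The local model provides $s\in(0,1)$ and chains of connexions $\Gamma_\phi^+,\Gamma_\phi^-$ containing $\phi$, with ending points $y^+$ and $y^-$ respectively, such that $\omega(\phi_t)=y^+$ for every $t\in(0,s)$ and $\omega(\phi_t)=y^-$ for every $t\in(-s,0)$.

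The key step is to check that $y^+$ and $y^-$ both lie in $\mathcal{C}$. Consider $\Gamma_\phi^+$, associated to leaves $\psi_1,\dots,\psi_k$. Since $\phi$ is a leaf whose closure is contained in $\Gamma_\phi^+$, and distinct leaves of $\FF$ are disjoint so that their closures meet only at singularities, we must have $\phi=\psi_j$ for some $j$. Setting $x_i:=\omega(\psi_{j+i})$ for $0\le i\le k-j$, we get $x_0=\omega(\phi)$, $x_{k-j}=\omega(\psi_k)=y^+$, and $\alpha(\psi_{j+i})=\omega(\psi_{j+i-1})=x_{i-1}$ for $i\ge 1$. As each $\psi_{j+i}$ is a connexion, the action strictly decreases: $A(x_0)>A(x_1)>\dots>A(y^+)$. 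Since $x_0\in\mathcal{C}$ and $\mathcal{C}$ is a connected component of $G^-_t(\FF)$ we have $A(x_0)<t$, hence every $x_i$ has action $<t$ and is therefore a vertex of $G^-_t(\FF)$; moreover each $\psi_{j+i}$ with $i\ge 1$ yields an edge $x_{i-1}\to x_i$ of $G^-_t(\FF)$. Thus $x_0,\dots,y^+$ all lie in one connected component of $G^-_t(\FF)$, which, containing $x_0$, is $\mathcal{C}$; so $y^+\in\mathcal{C}$. The same argument applied to $\Gamma_\phi^-$ gives $y^-\in\mathcal{C}$.

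To conclude, I would observe that $N:=h^{-1}\big((-s,s)\times(-1,1)\big)$ is an open neighbourhood of $z$ in $\Sigma\setminus X$, and that for each $t\in(-s,s)$ the vertical segment $h^{-1}(\{t\}\times(-1,1))$ is a connected subset of the leaf through $\gamma(t)$, hence is contained in $\phi_t$; therefore $N\subseteq\bigcup_{t\in(-s,s)}\phi_t$. For every such $t$ we have $\omega(\phi_t)\in\{x_0,y^+,y^-\}\subseteq\mathcal{C}$ (with $\phi_0=\phi$), so $\phi_t\subseteq W^s(\mathcal{C})$, and consequently $N\subseteq W^s(\mathcal{C})$. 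This shows that $W^s(\mathcal{C})\cap(\Sigma\setminus X)$ is open.

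I expect the only delicate points to be purely bookkeeping: justifying that a leaf contained in a chain of connexions is one of the constituent leaves (disjointness of leaves), and that the flow box around $z$ may be chosen so that its horizontal slice serves as the transversal arc of the local model. Everything else is a direct combination of the local model with the fact that the action strictly decreases along connexions; in particular, note that Lemma~\ref{interior} is not needed here, although it would give an alternative, slightly less explicit proof via the fact that $W^s(\mathcal{C})$ is a saturated set whose singularities lie in its interior.
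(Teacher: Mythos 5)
Your proof is correct and follows essentially the same route as the paper: apply the local model at a leaf of $W^s(\mathcal{C})$, observe that the ending points $y^{\pm}$ of the chains $\Gamma_\phi^{\pm}$ lie in $\mathcal{C}$ (you justify this via the strict decrease of $A$ along the chain starting from $\omega(\phi)$, which the paper leaves as "by construction"), and conclude that all nearby leaves lie in $W^s(\mathcal{C})$. The only cosmetic difference is that you work with a flow box around a point $z$ rather than a neighborhood of the whole leaf $\phi$, which changes nothing essential.
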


\begin{proof}
We consider a leaf $\phi$ in $W^s(\mathcal{C})$, by the local model there exists a small neighborhood $V$ of $\phi$ and two chains of connexions $\phi_1,...,\phi_k$ and $\phi'_1,...,\phi'_{k'}$ of $\FF$ which contain $\phi$ such that each leaf $\psi$ passing through $V$ satisfies $\omega(\psi)=\omega(\phi_k)$ or $\omega(\psi)=\omega(\phi'_{k'})$. By construction $\omega(\phi_k)$ and $\omega(\phi'_{k'})$ are in the same connected component of $G_t^-(\FF)$ and so every leaf passing through $V$ is in $W^s(\mathcal{C})$.
\end{proof}

The following corollary is a consequence of Lemmas \ref{interior} and \ref{bordfini}.

\begin{coro}
The attractive basin of $\mathcal{C}$ is an open surface in $\Sigma$.
\end{coro}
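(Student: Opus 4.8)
The plan is to obtain the corollary as a direct consequence of Lemmas~\ref{interior} and~\ref{bordfini}; essentially no new argument is needed, and the only point deserving attention is to keep track of which singularities belong to $W^s(\mathcal{C})$.

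First I would record the decomposition
$$W^s(\mathcal{C})=\bigl(W^s(\mathcal{C})\cap(\Sigma\backslash X)\bigr)\cup(X\cap\mathcal{C}).$$
Since no leaf of $\FF$ meets $X$, the singularities lying in $W^s(\mathcal{C})$ are precisely those appearing in the identity $W^s(\mathcal{C})=\bigcup_{x\in X\cap\mathcal{C}}W^s(x)$, that is, the vertices of $\mathcal{C}$; this justifies the displayed decomposition. Now take a point $p\in W^s(\mathcal{C})$. If $p\notin X$, then Lemma~\ref{bordfini} gives a neighborhood of $p$ in $\Sigma\backslash X$ entirely contained in $W^s(\mathcal{C})$, and since $\Sigma\backslash X$ is open in $\Sigma$ this neighborhood is also open in $\Sigma$; hence $p$ is interior to $W^s(\mathcal{C})$. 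If $p\in X$, then by the previous remark $p\in X\cap\mathcal{C}$, and Lemma~\ref{interior} states exactly that $p$ is interior to $W^s(\mathcal{C})$. Therefore $W^s(\mathcal{C})$ is open in $\Sigma$, and as an open subset of the surface $\Sigma$ it is itself a topological surface.

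Should one wish to incorporate into the word ``surface'' the connectedness that is also asserted in Proposition~\ref{Propbord}, I would append the following remark: each edge of the graph $\mathcal{C}$ is realized by a leaf $\phi$ of $\FF$, and the closure $\phi\cup\{\alpha(\phi),\omega(\phi)\}$ is an arc contained in $W^s(\mathcal{C})$ joining the two endpoints of the edge, so the connectedness of $\mathcal{C}$ forces $X\cap\mathcal{C}$ to lie in a single connected component of $W^s(\mathcal{C})$; moreover any leaf $\psi$ of $W^s(\mathcal{C})$ is joined to the vertex $\omega(\psi)\in\mathcal{C}$ by the half-open arc $\psi\cup\{\omega(\psi)\}$, so all of $W^s(\mathcal{C})$ sits in that component. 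I do not anticipate a genuine obstacle here: the geometric content has already been extracted in Lemmas~\ref{Sigma-}, \ref{interior} and~\ref{bordfini}, and this corollary is merely their packaging.
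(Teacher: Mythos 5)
Your argument is exactly the one the paper intends: the paper states this corollary without proof as "a consequence of Lemmas \ref{interior} and \ref{bordfini}", and your splitting of $W^s(\mathcal{C})$ into its part in $\Sigma\backslash X$ (handled by Lemma \ref{bordfini}) and the singularities of $\mathcal{C}$ (handled by Lemma \ref{interior}) is the correct and intended packaging. Your optional remark on connectedness also matches the paper's separate lemma and proof of that fact, so nothing is missing.
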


We deduce the following lemma.

\begin{lem}
The attractive basin of $\mathcal{C}$ is connected.
\end{lem}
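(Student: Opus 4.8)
The statement to prove is that $W^s(\mathcal{C})$ is connected, for $\mathcal{C}$ a connected component of $G_t^-(\FF)$. The natural strategy is to reduce connectedness of $W^s(\mathcal{C})$ to connectedness of $\mathcal{C}$ itself inside $\Sigma$, using that each singularity $x\in\mathcal{C}$ lies in $W^s(\mathcal{C})$ (trivially, $x\in W^s(x)$) and, by Lemma~\ref{interior}, in its interior, and then filling in the leaves. Concretely, I would first observe that $W^s(\mathcal{C})=\bigcup_{x\in X\cap\mathcal{C}} W^s(x)$ by definition, so it suffices to show (i) that each single-singularity attractive basin $W^s(x)\cup\{x\}$ is connected, and (ii) that the union over $x\in\mathcal{C}$ of these basins is connected because $\mathcal{C}$ is a connected graph.

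For step (i): if $x$ is a sink, then by definition every leaf $\phi$ with $\omega(\phi)=x$ accumulates onto $x$, so $\phi\cup\{x\}$ is connected and hence $W^s(x)=\{x\}\cup\bigcup\{\phi:\omega(\phi)=x\}$ is a union of connected sets all containing $x$, thus connected. If $x$ is a source, $W^s(x)=\{x\}$, which is connected. If $x$ is a saddle point, then $W^s(x)=\Sigma^+_x\cup\{x\}$ is, in the language of the trivialization, the union of the (at most) $n$ stable cones $\sigma^+_k$ together with $x$; each stable cone is a union of leaves whose $\omega$-limit is $x$, each such leaf together with $x$ is connected, and each $\sigma^+_k$ accumulates at $x$, so again $W^s(x)$ is a union of connected sets sharing the point $x$, hence connected. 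So for every singularity $x$, $W^s(x)$ is connected and contains $x$.

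For step (ii): since $\mathcal{C}$ is a connected component of $G_t^-(\FF)$, for any two vertices $x,y\in X\cap\mathcal{C}$ there is a path in the graph $x=x_0,x_1,\dots,x_m=y$, where consecutive vertices are joined by an edge, i.e.\ there is a leaf $\phi$ with $\{\alpha(\phi),\omega(\phi)\}=\{x_j,x_{j+1}\}$. Such a leaf $\phi$ has $\omega(\phi)\in\{x_j,x_{j+1}\}\subset X\cap\mathcal{C}$, hence $\phi\subset W^s(\mathcal{C})$, and its closure $\overline{\phi}$ contains both $x_j$ and $x_{j+1}$; therefore $W^s(x_j)\cup W^s(x_{j+1})\cup\overline\phi$ is connected (each piece meets $\overline\phi$ in a singularity). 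Chaining along the graph-path shows $W^s(x)$ and $W^s(y)$ lie in the same connected component of $W^s(\mathcal{C})$ for all $x,y\in X\cap\mathcal{C}$, and since $W^s(\mathcal{C})=\bigcup_{x}W^s(x)$ this forces $W^s(\mathcal{C})$ to be connected.

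The only subtlety — and the step I would be most careful about — is the claim that an edge of $G(\FF)$ between $x_j$ and $x_{j+1}$ really is realized by a leaf whose \emph{closure} joins the two singularities, so that the two basins are genuinely glued; this is immediate from the fact that $\FF$ is gradient-like (every leaf is a connexion with well-defined, distinct $\alpha$- and $\omega$-limit points, both singularities), so $\overline\phi=\phi\cup\{\alpha(\phi),\omega(\phi)\}$. One must also check that the orientation of the edge does not matter: whether $x_j=\alpha(\phi),x_{j+1}=\omega(\phi)$ or vice versa, in both cases $\omega(\phi)$ is one of the two vertices, hence lies in $\mathcal{C}$, hence $\phi\in W^s(\mathcal C)$. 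With these observations in place the argument is essentially a connected-union-of-connected-sets bookkeeping and the proof is short.
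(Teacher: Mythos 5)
Your proof is correct and follows essentially the same route as the paper: show each individual basin $W^s(x)$ is connected and contains $x$, then use the leaves realizing the edges of the connected graph $\mathcal{C}$ (whose closures join the two endpoint singularities and lie in $W^s(\alpha(\phi))\cup W^s(\omega(\phi))$) to glue the basins together. The paper's version is terser — it simply asserts each $W^s(x)$ is arc-connected and invokes Lemma~\ref{interior} for the gluing — but the structure is identical.
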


\begin{proof}
For every $x\in X\cap \mathcal{C}$, $W^s(\mathcal{x})$ is connected as it is arc-connected. Let us consider two singularities $x$ and $y$ of $\mathcal{C}$ such that there exists an edge from $x$ to $y$. So there is a leaf $\phi$ such that $\alpha(\phi)=x$ and $\omega(\phi)=y$. The point $x$ is in the frontier of $W^s(y)$ and is either a saddle point or a source. Since we proved previously that $x$ is in the interior of $W^s(\mathcal{C})$ we deduce that $\overline{\phi}\subset W^s(x)\cup W^s(y)$. \\
We deduce easily that $W^s(\mathcal{C})=\bigcup_{x\in X\cap \mathcal{C}} W^s(x)$ is connected.
\end{proof}

We prove a last result which concludes the proof of Proposition \ref{Propbord}.  

\begin{lem}\label{fini}
The number of leaves included in the frontier of the attractive basin of $\mathcal{C}$ is finite.
\end{lem}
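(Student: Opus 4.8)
The plan is to prove that every leaf contained in $\Fr(W^s(\mathcal{C}))$ is a boundary leaf of a stable cone of a saddle point of $\FF$; since $\FF$ has only finitely many saddle points, each with finitely many stable cones, and each stable cone has at most two boundary leaves (the leaves it shares with its two adjacent hyperbolic sectors), the finiteness will follow at once.

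First I would record the elementary reductions. By definition $W^s(\mathcal{C})\cap(\Sigma\setminus X)$ is the union of all leaves whose omega-limit point lies in $\mathcal{C}$, so it is saturated, and by the Corollary above it is open in $\Sigma\setminus X$; hence a given leaf is either entirely contained in $W^s(\mathcal{C})$ or disjoint from it. Since the closure of a saturated set is saturated, $\Fr(W^s(\mathcal{C}))\cap(\Sigma\setminus X)$ is itself a union of leaves, so it suffices to bound the number of leaves $\phi$ with $\overline\phi\subset\overline{W^s(\mathcal{C})}$ and $\phi\cap W^s(\mathcal{C})=\emptyset$. In particular, every such $\phi$ satisfies $\omega(\phi)\notin\mathcal{C}$.

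The heart of the argument is the claim that, for such a frontier leaf $\phi$, the point $\omega(\phi)$ is a saddle point and $\phi$ lies in the boundary of one of its stable cones. To see this, I would fix a point of $\phi$ and a transversal $\gamma$ through it. Since $\phi$ lies in the closure of the open set $W^s(\mathcal{C})$ but not in it, there is a sequence of parameters $t_n\to 0$ with $\phi_{t_n}\subset W^s(\mathcal{C})$; after extraction one may assume $t_n\to 0^+$ (the case $t_n\to 0^-$ being identical with $\Gamma^+_\phi$ replaced by $\Gamma^-_\phi$). By the proposition describing the local model near a leaf, there is $s>0$ such that $\omega(\phi_t)$ equals the ending point $y^+$ of $\Gamma^+_\phi$ for every $t\in(0,s)$; choosing $n$ with $t_n\in(0,s)$ and using $\phi_{t_n}\subset W^s(\mathcal{C})$ gives $y^+\in\mathcal{C}$, hence $y^+\neq\omega(\phi)$. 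Inspecting the case analysis in the proof of that proposition, the chain $\Gamma^+_\phi$ can extend strictly beyond $\omega(\phi)$ only in Case $3$, that is, when $\omega(\phi)$ is a saddle point and $\phi$ belongs to the boundary of a stable cone of $\omega(\phi)$ (in Cases $1$ and $2$ the chain terminates at $\omega(\phi)$, which would force $y^+=\omega(\phi)$, a contradiction). This proves the claim, and the counting from the first paragraph then finishes the proof of Lemma \ref{fini}, and with it that of Proposition \ref{Propbord}.

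The main obstacle is precisely this middle step, where one must exclude that a frontier leaf be interior to a stable cone or limit onto a sink; this relies on the sharp form of the local model, namely that all leaves on one side of $\phi$ and sufficiently close to it share the common omega-limit point $y^{\pm}$, equal to the ending point of the chain $\Gamma^{\pm}_\phi$. Everything else is bookkeeping about cones and hyperbolic sectors.
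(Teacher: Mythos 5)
Your proof is correct and follows essentially the same route as the paper's: show that every frontier leaf must end at a saddle point and lie in one of its stable cones, then count. The only difference is that you re-derive from the local model the fact that such a leaf must be a \emph{boundary} leaf of its stable cone, where the paper simply invokes the first point of Proposition \ref{Propbord}; this makes your version a bit more self-contained (and avoids the slight awkwardness of citing part of the very proposition whose proof Lemma \ref{fini} is completing), but the underlying argument is the same.
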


\begin{proof}[Proof of Lemma \ref{fini}]
Let us consider a leaf $\phi$ of the frontier of $W^s(\mathcal{C})$. The singularity $\omega(\phi)$ can not be a sink nor a source so it is a saddle point. So $\phi$ is a leaf of a stable cone of $\omega(\phi)$. So by the first point of Proposition \ref{Propbord} and the fact that the number of singularities of $\FF$ is finite, there exists a finite number of such leaves and we obtain the result.
\end{proof}

\begin{rem}
Each result about a connected components of $G_t^-(\FF)$, where $t\in \R$, has its symmetrical result for a connected component of $G_t^+(\FF)$.
\end{rem}

			\subsection{Some properties of the Barcode $\mathcal{B}(G(\FF),A,\ind(\FF,\cdot))$}

Let us consider a gradient-like foliation $\FF$ defined on the complement of a finite set $X$ of a compact surface $\Sigma$. We suppose that $X$ is equipped with an action function $A:X \ra \R$ such that for each leaf $\phi$ we have $A(\alpha(\phi))>A(\omega(\phi))$. We prove some properties about the Barcode $\mathcal{B}(G(\FF),A,\ind(\FF,\cdot))$, which will be denoted $\bm{\beta}_\FF$. \\

We set $A_m$ the minimum value of the action function $A$ on $X$ and $A_M$ its maximum value.\\

Notice that, by construction of the map $\mathcal{B}$, for every singularity $z$ of $\FF$ of index zero there is no bar in $\mathcal{B}(G(\FF),A,\ind(\FF,\cdot))$ one end of whose is equal to $A(z)$. Indeed, the point $z$ does not connect distinct connected components of the graphs $G^-_{A_f(z)}(\FF)$ and $G_{A_f(z}^+(\FF)$ so if we follow the construction in section \ref{construction}, there is no bar of category $1$ or $2$ induced by $z$. So, since $\ind(\FF,z)=0$ there is no bar of category $3$ induced by $z$ neither.\\

We consider the graph $G(\FF)$ associated to $\FF$ and the subgraphs $(G^-_t(\FF))_{t\in \R}$ and $(G^+_t(\FF))_{t\in\R}$. We will use some notations from section \ref{construction}. For $t\in\R$ we denote $\mathcal{C}^-_t$ and $\mathcal{C}^+_t$ the set of connected components of $G^-_t(\FF)$ and $G^+_t(\FF)$.\\
We set the maps $j_t:\mathcal{C}^-_t\ra \mathcal{C}^-_{t^+}$ and $j'_t:\mathcal{C}^+_t\ra \mathcal{C}^+_{t^-}$ where for $C\in\mathcal{C}^-_t$, $j_t(C)$ is the connected component of $G_{t^+}^-$ which contains $C$ and for $C'\in\mathcal{C}^+_t$ $j_t'(C')$ is the connected component of $G_{t^-}^+$ which contains $C'$. For a connected component $\mathcal{C}$ of $G^-_t(\FF)$ we consider $L(\mathcal{C})=\min\{A(y)\ | \ y \text{ vertex of } \mathcal{C}\}$.

\begin{lem}\label{sink-source}
For each sink $y$ of $\FF$ satisfying $A_m<A(y)$ there exists at least one saddle point $x$ of $\FF$ such that the barcode $\bm{\beta}_\FF$ contains a bar $b=(A(y),A(x)]$.\\
For each source $y$ of $\FF$ satisfying $A(y)<A_M$ there exists at least one saddle point $x$ of $\FF$ such that the barcode $\bm{\beta}_\FF$ contains a bar $b=(A(x),A(y)]$.
\end{lem}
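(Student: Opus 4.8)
We prove the first assertion; the second follows by the symmetric argument that exchanges $G^-_t$, $\mathcal{C}^-_t$, $j_t$, $L$, sinks and $\omega$-limit points with $G^+_t$, $\mathcal{C}^+_t$, $j'_t$, $D$, sources and $\alpha$-limit points, and uses the dual of the claim below (that the neighbours of a sink all lie in a single connected component of $G^+_{A(\mathrm{sink})}(\FF)$). So fix a sink $y$ of $\FF$ with $A_m<A(y)$, and for $t>A(y)$ let $C(t)$ be the connected component of $y$ in $G^-_t(\FF)$. The plan is to follow the growth of $C(t)$ as $t$ increases, to locate the first level $t_0$ at which $C(t)$ acquires a vertex of action smaller than $A(y)$, to read off a Category~$1$ bar of the form $(A(y),t_0]$ of $\bm{\beta}_\FF$, and finally to show that $t_0$ is the action value of a saddle point. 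The only non-formal input will be that inserting a source into a sublevel graph $G^-_t(\FF)$ never merges two connected components.

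First I would record two facts. (i) Since $y$ is a sink, every edge of $G(\FF)$ incident to $y$ comes from a leaf $\phi$ with $\omega(\phi)=y$, so its other endpoint has action $>A(y)$ and, by the choice of $\epsilon$, does not lie in $G^-_{A(y)^+}(\FF)$; hence $C(A(y)^+)=\{y\}$ and $L(C(A(y)^+))=A(y)$. (ii) Since $G(\FF)$ is connected (it belongs to $\mathcal{G}$), we have $G^-_t(\FF)=G(\FF)$ for $t>A_M$, and this graph contains a vertex of action $A_m<A(y)$, so $L(C(t))<A(y)$ for $t$ large. As $t\mapsto L(C(t))$ is a step function with jumps only at values of $A$, there is a smallest value $t_0\in\im(A)$ with $t_0>A(y)$, $L(C(t_0))=A(y)$ and $L(C(t_0^+))<A(y)$.

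Next I would extract the bar. Let $\widehat C\in\mathcal{C}^-_{t_0^+}$ be the component of $y$ in $G^-_{t_0^+}(\FF)$; then $\widehat C=j_{t_0}(C(t_0))$ and $L(\widehat C)=L(C(t_0^+))<A(y)$. The vertices of $\widehat C$ are those of the members of $j_{t_0}^{-1}(\widehat C)$ together with some vertices of action $t_0$; since $L(\widehat C)<A(y)<t_0$, there is $C'\in j_{t_0}^{-1}(\widehat C)$ with $L(C')=L(\widehat C)<A(y)$. In particular $\#j_{t_0}^{-1}(\widehat C)\geq 2$, and since $L(C(t_0))=A(y)>L(C')$, the component $C(t_0)$ is not the member of $j_{t_0}^{-1}(\widehat C)$ of least $L$. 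By the definition of the bars of Category~$1$ attached to $t_0$ in Section~\ref{construction}, the barcode $\bm{\beta}_\FF$ therefore contains the bar $(L(C(t_0)),t_0]=(A(y),t_0]$.

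It remains to see that $t_0=A(x)$ for a saddle point $x$. Because $\#j_{t_0}^{-1}(\widehat C)\geq 2$, passing from $G^-_{t_0}(\FF)$ to $G^-_{t_0^+}(\FF)$ merges at least two connected components, so some vertex $v$ with $A(v)=t_0$ is adjacent in $G(\FF)$ to vertices lying in two distinct components of $G^-_{t_0}(\FF)$; since no edge joins two vertices of equal action and the neighbours of $v$ of action $>t_0$ are absent from $G^-_{t_0^+}(\FF)$, these neighbours of $v$ have action $<t_0$. A sink has no neighbour of smaller action, so $v$ is not a sink; and $v$ is not a source by the following claim, which therefore forces $v$ to be a saddle point and finishes the proof with $x=v$. \emph{Claim: for a source $v$, the points $\omega(\phi)$ over all leaves $\phi$ with $\alpha(\phi)=v$ lie in a single connected component of $G^-_{A(v)}(\FF)$.} To prove it, note that near $v$ the foliation is radial, so the leaves with $\alpha$-limit $v$ are parametrized by a circle; write $\phi_\theta$ for them and $s_\theta=\omega(\phi_\theta)$, which satisfies $A(s_\theta)<A(v)$. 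It suffices to show that $\theta\mapsto[s_\theta]$, valued in the finite set of connected components of $G^-_{A(v)}(\FF)$, is locally constant, since the circle is connected. Fixing $\theta_0$ and parametrizing the leaves $\phi_\theta$ near $\phi_{\theta_0}$ by a small transversal, the local model of a leaf from Section~\ref{geometricsprop} gives $\delta>0$ such that $\omega(\phi_\theta)$ equals the endpoint of the chain of connexions $\Gamma^+_{\phi_{\theta_0}}$ for $\theta\in(\theta_0,\theta_0+\delta)$ and the endpoint of $\Gamma^-_{\phi_{\theta_0}}$ for $\theta\in(\theta_0-\delta,\theta_0)$. Since every $\phi_\theta$ has $\alpha$-limit $v$, the ``past'' part of each of these chains reduces to $\phi_{\theta_0}$, so $\Gamma^{\pm}_{\phi_{\theta_0}}$ is a chain of connexions issued from $v$ through $\phi_{\theta_0}$; as the action strictly decreases along a chain of connexions, all of its singularities other than $v$ — in particular $s_{\theta_0}$ and the two endpoints, which are joined to $s_{\theta_0}$ by sub-chains of edges between vertices of action $<A(v)$ — lie in $G^-_{A(v)}(\FF)$. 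Hence $[s_\theta]=[s_{\theta_0}]$ for $\theta$ near $\theta_0$, proving local constancy and hence the Claim. The genuinely delicate step of this argument is the Claim about sources; everything else is bookkeeping inside the definition of $\mathcal{B}$.
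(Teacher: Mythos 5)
Your proof is correct and follows essentially the same route as the paper: track the connected component of the sink $y$ in $G^-_t(\FF)$ as $t$ grows, locate the first action value at which it merges with a component of smaller $L$, read off the resulting Category~$1$ bar, and rule out the merging vertex being a source. Your ``Claim'' about sources is exactly the fact the paper extracts from the description of the frontier of a repulsive basin (Lemma~\ref{bordbassin}/Proposition~\ref{Propbord}), which you simply re-derive from the local model.
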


In the situation of Lemma \ref{sink-source} we say that the pair $(x,y)$ is associated to the finite bar $b$ of $\bm{\beta}_\FF$.

\begin{proof}
Let us fix a value $t>A_m$ of the action function $A$. We label $y_1,...,y_n$ the sinks of $X$ whose action value is $t$. We prove that for every $i\in\{1,...,n\}$ there exists a saddle point $x_i$ of $\FF$ such that the barcode $\bm{\beta}_\FF$ contains a bar $(A(y_i),A(x_i)]$.\\

For $i\in[1,n]$ there is a map $c_i:s\mapsto \mathcal{C}_i^s$ defined for $s>t$ where $\mathcal{C}_i^s$ is the connected component of the graph $G^-_s(\FF)$ which contains $y_i$.\\

For a value $s>t$ such that $A^{-1}((t,s))= \emptyset$, the connected components $\mathcal{C}_1^s,...,\mathcal{C}_n^s$ are distinct. \\

For $s>t$ close to $t$, the elements $c_i(s)$, $i\in [1,n]$, are distincts and for $s>A_M$, we have that $c_i(s)=G(\FF)$ for every $i\in [1,n]$, hence we have $L(c_i(s))=t$ for $s>0$ close to $t$ 	and $L(c_i(s))=A_m$ for $t>A_M$.\\

Thus, for every $i\in [1,n]$, there exists an action value, denoted $s_i$, such that $A(y_i)=L(c_i(s_i))$ and $t>L(j_s(c_i(s_i)))$. In other words, there exists a saddle point $x_i$ of $\FF$ of action $s_i$ such that 
\begin{itemize}
\item $L(c_i(s_i))=t=A(y_i)$
\item $x_i$ connects the connected component $c_i(s_i)$ and another connected component $\mathcal{C}$ of $G_{s}^-(\FF)$ such that $L(\mathcal{C})<t$.
\end{itemize}
By construction of the barcode $\bm{\beta}_\FF$ there exists a bar $(A(y_i),s_i]$ of category $1$ (see section \ref{construction}).\\

Notice that the points $x_i$, $i\in\{1,...,n\}$ are not sources. Indeed, by contradiction we suppose that a source $z$ of action $s$ connects two distinct connected components $\mathcal{C}_1$ and $\mathcal{C}_2$ of the graph $G^-_{s}(\FF)$. Thus, by hypothesis, there exist two singularities $x_1\in \mathcal{C}_1$ and $x_2\in\mathcal{C}_2$ in the frontier of $W^u(z)$. By Proposition \ref{Propbord}, there exists a path of connexions $\Gamma$ between $x_1$ and $x_2$ such that every singularity $y$ in $\Gamma$ satisfies $A(y)<s$, so $x_1$ and $x_2$ are in the same connected component of $G^-_{s}(\FF)$ and we have a contradiction.\\

We obtain the symmetrical results for sources of $\FF$ by considering bars of category $2$.
\end{proof}

\begin{rem}
For a singularity $y$ of $\FF$ of index $1$, there may exist two saddle points $x$ and $x'$ with $A(x)=A(x')$ such that the couples $(x,y)$ and $(x',y)$ are associated to the same bar $b$. 
\end{rem}

\begin{lem}\label{sink-source-mM}
We label $y_1,...,y_n$ the sinks of $\FF$ of action $A_m$. There exist $n-1$ finite bars $J_1,...,J_{n-1}$ whose lower bound is equal to $A_m$ and upper bound is the action value of a saddle point of $\FF$ and one semi-infinite bar $(A_m,\infty)$ in the barcode $\bm{\beta}_\FF$.
\end{lem}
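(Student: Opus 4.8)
The plan is to read off every bar of $\bm{\beta}_\FF$ with lower endpoint $A_m$ directly from the definition of the map $\mathcal{B}$ in Section~\ref{construction}, and to count them via a monotone quantity attached to the filtration $(G^-_t(\FF))_{t\in\R}$. First I would observe that every singularity of $\FF$ of action $A_m$ is a sink: a source or a saddle point carries at least one unstable cone, hence a leaf $\phi$ with $\alpha(\phi)$ that singularity, and then Lemma~\ref{action-decrease} gives $A_m=A(\alpha(\phi))>A(\omega(\phi))$, contradicting the minimality of $A_m$. So $y_1,\dots,y_n$ are exactly the vertices of $G(\FF)$ of action $A_m$, and since edges strictly decrease $A$ there is no edge between two of them; hence in $G^-_{A_m^+}(\FF)$ the points $y_1,\dots,y_n$ form $n$ distinct singleton connected components.

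Next I would attach to each $i$ the component map $c_i\colon s\mapsto \mathcal{C}^s_i$, where $\mathcal{C}^s_i$ is the connected component of $G^-_s(\FF)$ containing $y_i$, and set $\nu(s)=\#\{c_i(s)\ :\ 1\le i\le n\}$, the number of connected components of $G^-_s(\FF)$ meeting $\{y_1,\dots,y_n\}$. Since $y_i\in c_i(s)$ and $A_m=\min A$, one has $L(c_i(s))=A_m$ for all $s$. For $s$ just above $A_m$ the $c_i(s)$ are the $n$ singletons $\{y_i\}$, so $\nu=n$; and since $G(\FF)$ is connected, for $s>A_M$ we have $c_i(s)=G(\FF)$ for every $i$, so $\nu=1$. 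The function $\nu$ is non-increasing and, as recalled in the proof of Lemma~\ref{sink-source}, can change only at the action value of a saddle point, since neither a sink nor a source of $\FF$ connects two distinct connected components of $G^-_s(\FF)$.

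The key step is the bookkeeping at a saddle value $t$. For a connected component $C$ of $G^-_{t^+}(\FF)$, write $j_t^{-1}(C)=\{C_1,\dots,C_k\}$ for the components of $G^-_t(\FF)$ it contains; then $\mathcal{B}$ outputs the $k-1$ bars $(L(C_j),t]$ with $j\neq j_0$, where $L(C_{j_0})=\min_j L(C_j)$. If exactly $m$ of the $C_j$ meet $\{y_1,\dots,y_n\}$ (these sets are pairwise disjoint, so $m\le n$), each such $C_j$ has $L(C_j)=A_m$, hence $L(C_{j_0})=A_m$ as soon as $m\ge 1$ because $A_m$ is the global minimum; so exactly $m-1$ of the output bars are of the form $(A_m,t]$, a bar of category $1$ with lower endpoint $A_m$, and the drop of $\nu$ at $t$ contributed by $C$ is $m-1$. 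Summing over all $C$ at all saddle values and telescoping $\nu$ from $n$ down to $1$, the total number of category $1$ bars of $\bm{\beta}_\FF$ with lower endpoint $A_m$ equals $n-1$, and each has upper endpoint the action value of a saddle point. Finally the bar $(A_m,+\infty)$ is precisely the category $0$ bar $(L(G),+\infty)$ since $L(G)=A_m$; inspecting the four categories shows no other bar has lower endpoint $A_m$ — the other category $0$ bar is $(D(G),+\infty)=(A_M,+\infty)$ with $A_M>A_m$ (as $\FF$ has at least one leaf), while categories $2$ and $3$ only produce bars whose lower endpoint is a saddle value, hence $>A_m$. This gives the desired $n-1$ finite bars $J_1,\dots,J_{n-1}$ together with the single semi-infinite bar $(A_m,+\infty)$.

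The step I expect to require the most care is the matching between the bars produced by $\mathcal{B}$ at a level $t$ and the total drop of $\nu$ at $t$ when several saddles share the action value $t$, so that $G^-_t(\FF)\to G^-_{t^+}(\FF)$ is a composition of elementary merges; the point that makes this go through cleanly is that $L(C_{j_0})=A_m$ for any block $C_{j_0}$ meeting $\{y_1,\dots,y_n\}$, which is exactly where minimality of $A_m$ enters. A secondary point relying on the earlier machinery is that sources never merge connected components of $G^-_s(\FF)$, which comes from Proposition~\ref{Propbord} as used in the proof of Lemma~\ref{sink-source}.
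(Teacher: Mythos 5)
Your proof is correct and follows essentially the same route as the paper: both arguments track the connected components $c_i(s)$ containing each $y_i$ through the filtration $(G^-_s(\FF))_s$, use that $L(c_i(s))=A_m$ by minimality to identify the category~$1$ bars $(A_m,t]$ produced at each merge, and count the merges to get $n-1$, with $(A_m,+\infty)$ coming from category~$0$. Your telescoping via $\nu(s)$ is a slightly more systematic bookkeeping of the same count (and handles simultaneous merges cleanly), but it is not a different method.
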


\begin{proof}
For $s>A_m$ such that $A^{-1}((A_m,s))= \emptyset$, we denote $\mathcal{C}_i^s$ the connected component of $G_s^-(\FF)$ which contains the singularity $y_i$.\\

For $i\in[1,n]$ there is a map $c_i:s\mapsto \mathcal{C}_i^s$ defined for $s>t$ where $\mathcal{C}_i^s$ is the connected component of the graph $G^-_s(\FF)$ which contains $y_i$.\\

For $s>A_m$ close to $A_m$, we have $\mathcal{C}_i^s=\{y_i\}$, $i\in [1,n]$.\\

Let us consider $s>A_m$, such that there exists a subset $K$ of $\{1,...,n\}$ of cardinal at least two such that the connected components $c_i(s)$, $i\in K$ are distinct but the connected component $j_s(c_i(s))$, $i\in K$, are equal. We have $L(c_i(s))=A_m$ for $i\in K$ so, by construction, it induces the existence of $\#K-1$ bars $(A_m,s]$ in the barcode $\bm{\beta}_\FF$.\\

Moreover, for each $i\neq j$ in $\{1,...,n\}$ there exists $s>A_m$ such that $j_s(c_i(s))=j_s(c_j(s))$ and $c_i(s)\neq c_j(s)$. Hence we obtain $n-1$ finite bars such that $A_m$ is the lower bound of the bar.\\

The existence of a semi-infinite bar $(A_m,\infty)$ in the barcode $\mathcal{B}(G(\FF),A,\ind(\FF,\cdot))$ is provided by the construction of $\mathcal{B}$ : it is a bar of category $0$.
\end{proof}

Let us consider a sink $y_m$ of the foliation $\FF$ such that $A(y_m)=A_m$ and a source $y_M$ of the foliation $\FF$ such that $A(y_M)=A_M$. We denote $X_{<0}\subset X$ the set of negative index singularities of $\FF$ and $X_1\subset X$ the set of singularities of index $1$.\\

By Lemmas \ref{sink-source} and \ref{sink-source-mM} there exists a map, which may be not unique,
 \begin{equation}\label{iota}
 \iota:X_{1}\backslash \{y_m,y_M\}\ra X_{<0},
 \end{equation}
where $\iota(y)=x$ is a saddle point $x$ of the foliation $\FF$ given by Lemmas \ref{sink-source} and \ref{sink-source-mM} such that the couple $(x,y)$ is associated to a finite bar of $\bm{\beta}_\FF$.\\

In particular, if $A$ is injective and each singularity $x$ of $X_{<0}$ has an index equal to $-1$ then, every bar $(a,b]$ of the barcode is naturally associated to a unique couple $x,y\in X$ by Lemma \ref{sink-source}. In this case, $\iota$ is unique and an injection. Indeed, by construction, for every saddle point $x$ of $X_{<0}$ there exists a unique bar one end of which is equal to $A(x)$. notice that this bar is either a finite interval whose infimum is $A(x)$, maximum is $A(x)$ or a semi-infinite interval whose infimum is $A(x)$.\\

We have the following result.

\begin{prop}\label{connexionsaddle}
Let $x$ be a saddle point of $\FF$ of action value $t\in\R$ such that $A^{-1}(t)=\{x\}$. We have
$$\# j^{-1}_t(\mathcal{C}_x)+\# j'^{-1}_t(\mathcal{C}'_x)\leq |\ind(\FF,x)|+2,$$
where $\mathcal{C}_x$ (resp. $\mathcal{C}'_x$) is the connected component  of $G_{t^+}^-(\FF)$ (resp. $G_{t^-}^+(\FF)$) which contains $x$.\\
\end{prop}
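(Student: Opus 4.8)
The plan is to analyze the saddle point $x$ through a trivialization $(h,V)$ of $\FF$ at $x$ and count how the stable and unstable cones can merge connected components of the graphs $G^-_t(\FF)$ and $G^+_t(\FF)$. Set $n=1-\ind(\FF,x)$, so $x$ has $n$ unstable cones $\sigma^-_0,\dots,\sigma^-_{n-1}$ and $n$ stable cones $\sigma^+_0,\dots,\sigma^+_{n-1}$, alternating in cyclic order with the $2n$ hyperbolic sectors. Since $A^{-1}(t)=\{x\}$, every singularity other than $x$ has action strictly above or strictly below $t$; the vertices of $G^-_{t^+}(\FF)$ are $x$ together with all singularities of action $<t$, and the vertices of $G^-_{t^-}(\FF)$ are exactly those of action $<t$. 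The key point is that $\# j^{-1}_t(\mathcal{C}_x)$ counts the number of distinct connected components of $G^-_{t^-}(\FF)$ that get glued together inside $\mathcal{C}_x$ once $x$ is added, and these can only be the components meeting the attractive basins of the unstable cones of $x$.

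First I would apply Lemma \ref{Sigma-}: for each unstable cone $\sigma^-_k$ there is a \emph{unique} connected component $\mathcal{C}_k$ of $G^-_{A(x)}(\FF) = G^-_{t^-}(\FF)$ such that every leaf of $\sigma^-_k$ lies in $W^s(\mathcal{C}_k)$. All the omega-limit points of leaves in $\sigma^-_k$ (which lie in $\mathcal{C}_k$) become, after adding $x$, part of the same connected component of $G^-_{t^+}(\FF)$, namely $\mathcal{C}_x$; conversely, adding the single vertex $x$ can only connect components that are reached by edges emanating from $x$, i.e.\ by leaves whose alpha-limit point is $x$ — these are precisely the leaves in the unstable cones. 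Hence $j^{-1}_t(\mathcal{C}_x)$ is contained in $\{\mathcal{C}_0,\dots,\mathcal{C}_{n-1}\}$, which gives $\# j^{-1}_t(\mathcal{C}_x)\leq n = 1-\ind(\FF,x) = 1+|\ind(\FF,x)|$ (using $\ind(\FF,x)\leq 0$ for a saddle, by Proposition \ref{gradient-like}). Symmetrically, using the mirror version of Lemma \ref{Sigma-} for $G^+_t(\FF)$ and the stable cones, I get $\# j'^{-1}_t(\mathcal{C}'_x)\leq n = 1+|\ind(\FF,x)|$. Adding the two inequalities would only give the bound $2|\ind(\FF,x)|+2$, which is weaker than claimed, so a refinement is needed.

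The refinement — and the main obstacle — is to show that the unstable cones and the stable cones cannot \emph{both} achieve the maximal count simultaneously, so that the sum saves $|\ind(\FF,x)|$ rather than being the naive sum of the two bounds. The idea is to use the alternating cyclic structure together with the local model (the Proposition describing $\Gamma^+_\phi$, $\Gamma^-_\phi$): consider the $2n$ cones in cyclic order $\sigma^-_0,\sigma^+_0,\sigma^-_1,\sigma^+_1,\dots$. If two consecutive cones, say $\sigma^-_k$ and $\sigma^+_k$ (separated by one hyperbolic sector $U$), have the property that a leaf $\phi^-$ in the boundary of $\sigma^-_k$ adjacent to $U$ and a leaf $\phi^+$ in the boundary of $\sigma^+_k$ adjacent to $U$ both lie in the relevant frontier, then by the local model the hyperbolic sector $U$ carries a chain of connexions from $x$ whose endpoints tie together the component reached through $\sigma^-_k$ with the one "reached backward" through $\sigma^+_k$. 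Making this precise, one shows that among the $2n$ cones, the number of \emph{new} components glued by the unstable cones plus the number of new components glued by the stable cones is at most $n+1$: each hyperbolic sector of $x$ "identifies" the down-component on one side with the up-component on the other side, and a cycle-counting argument (there are $2n$ cones and $2n$ hyperbolic sectors arranged in a circle, and each identification reduces the count of independent merges by one, with one unit of slack coming from the circle) yields $\# j^{-1}_t(\mathcal{C}_x) + \# j'^{-1}_t(\mathcal{C}'_x) \leq n+1 = |\ind(\FF,x)|+2$. I would carry out this cycle-counting carefully as a small combinatorial lemma about the bipartite incidence between cones and hyperbolic sectors, invoking Lemma \ref{bordhyperbolic} to know that each hyperbolic sector lies entirely in or entirely out of a given basin, and Proposition \ref{Propbord} to control which leaves appear in the frontiers; this is where the bulk of the work lies.
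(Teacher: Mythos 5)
Your setup coincides with the paper's first step: after Lemma \ref{Sigma-} one obtains well-defined maps $\omega$ from the unstable cones of $x$ to the connected components of $G^-_t(\FF)$ and $\alpha$ from the stable cones to the components of $G^+_t(\FF)$, one identifies $j^{-1}_t(\mathcal{C}_x)$ with $\im(\omega)$ and $j'^{-1}_t(\mathcal{C}'_x)$ with $\im(\alpha)$, and the statement becomes $\#\im(\omega)+\#\im(\alpha)\leq n+1$ where $n=1-\ind(\FF,x)$ is the number of cones of each type. You also correctly see that the naive bound $2n$ is too weak. The gap is in your refinement. A hyperbolic sector $U$ lying between an unstable cone $\sigma^-$ and a stable cone $\sigma^+$ only tells you (via Lemma \ref{bordhyperbolic} and the local model) that every leaf of $U$ runs from a singularity of $\alpha(\sigma^+)$ to a singularity of $\omega(\sigma^-)$; this is a cross-incidence between a component of $G^+_t(\FF)$ and a component of $G^-_t(\FF)$, and such incidences never force two elements of $\im(\omega)$, or two elements of $\im(\alpha)$, to coincide, so they cannot lower either count. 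Concretely, for $\ind(\FF,x)=-1$ the cyclic arrangement $\sigma^-_0,U_0,\sigma^+_0,U_1,\sigma^-_1,U_2,\sigma^+_1,U_3$ with $\omega(\sigma^-_0)\neq\omega(\sigma^-_1)$ and $\alpha(\sigma^+_0)\neq\alpha(\sigma^+_1)$ is perfectly consistent with every local incidence you describe (the incidence graph is just a $4$-cycle), yet it would give $2+2=4>3$; hence no counting argument based only on the cyclic adjacency of cones and hyperbolic sectors can prove the proposition.

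What actually excludes such configurations is a global statement: if a stable cone $\sigma^+$ is adjacent to the attractive basin $W^s(\mathcal{C})$ of some $\mathcal{C}\in\im(\omega)$, then there is another stable cone $\sigma'^+\neq\sigma^+$, also adjacent to $W^s(\mathcal{C})$, with $\alpha(\sigma'^+)=\alpha(\sigma^+)$. The paper obtains this by cutting the surface along the frontier of $W^s(\mathcal{C})$ (which Proposition \ref{Propbord} shows is a finite union of closures of leaves), turning that frontier into a union of boundary circles made of chains of connexions; following the circle from the boundary leaf of $\sigma^+$, every singularity met other than a lift of $x$ has action $>t$ (this is precisely where the hypothesis $A^{-1}(t)=\{x\}$ is used), so one arrives at another stable cone adjacent to the basin whose $\alpha$-value agrees. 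Even granting this pairing, converting it into the additive bound $\#\im(\omega)+\#\im(\alpha)\leq n+1$ is not a one-line cycle count: the paper proves it as a separate purely combinatorial lemma by induction on $n$, collapsing one fiber $\omega^{-1}(c)$ at a time and verifying, through a delicate case analysis, that the pairing hypothesis survives the collapse. Both ingredients are absent from your sketch, and they are exactly where the bulk of the work lies, so the proof as proposed does not go through.
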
 

In general, Proposition \ref{connexionsaddle} certifies that for a saddle point $x$ of $\FF$ such that $A^{-1}(t)=\{x\}$, there are $|\ind(f,x)|$ bars of which $t$ is an end. We can link this result to Proposition $28$ of \cite{ROUX3} which asserts that for a Hamiltonian function $H$ on a manifold the number of bars of which $t\in\R$ is an ending value is given by the dimension of the local Floer Homology at $t$.

\begin{proof}[Proof of Proposition \ref{connexionsaddle}]
We set $t=A(x)$ and $i=\ind(\FF,x)$. We label $\Sigma^-_x$ the set of the unstable cones of $x$ and $\Sigma^+_x$ the set of the stable cones of $x$. Both $\Sigma^-_x$ and $\Sigma^+_x$ are equipped with a cyclic order and a natural \textit{cyclic transformation} $\tau_x :\Sigma^-_x\cup\Sigma^+_x \ra \Sigma^-_x\cup\Sigma^+_x$ which sends $\Sigma^-_x$ into $\Sigma^+_x$ and $\Sigma^+_x$ into $\Sigma^-_x$ such that every element $\sigma^-\in \Sigma^-_x$ (resp. $\sigma^+\in \Sigma^+_x$) is sent to the element of $\Sigma^+_x$ (resp. $\Sigma^-_x$)  right after it in cyclic order. \\

By Lemma \ref{Sigma-}, for every $\sigma^-\in\Sigma^-_x$ there is a connected component $\mathcal{C}^-$of $G_{t}^-(\FF)$ such that $\omega(\phi)$ is a vertex of $\mathcal{C}^-$ for every leaf $\phi\in \sigma^-$. So we can define a map $\omega:\Sigma^-_x\ra \mathcal{C}^-_t(\FF)$ given by $\omega(\sigma^-)=\mathcal{C}^-$.  

Symmetrically, for every $\sigma^+\in\Sigma^+_x$ there is a connected component $\mathcal{C}^+$of $G_{t}^+(\FF)$ such that $\alpha(\phi)$ is a vertex of $\mathcal{C}^+$ for every leaf $\phi\in \sigma^+$. So we can define a map $\alpha:\Sigma^+_x\ra \mathcal{C}^+_t(\FF)$ given by $\alpha(\sigma^+)=\mathcal{C}^+$. \\

We will denote by $\im(\alpha)$ and $\im(\omega)$ the image sets of $\alpha$ and $\omega$ and the result consists in establishing the inequality 
\begin{equation}\label{inegalitéco}
\#\im(\alpha)+\#\im(\omega)\leq i+2.
\end{equation}

We introduce a combinatorial context to facilitate the proof.\\

Let us consider two sets $E^+$ and $E^-$ and a couple of maps $\alpha:\Sigma^+\ra E^+$ and $\omega\Sigma^- \ra E^-$ defined on sets $\Sigma^+\subset \Sigma^+_x$, $\Sigma^-\subset \Sigma^-_x$ such that there is, in cyclic order, alternatively an element of $\Sigma^+$ and an element of $\Sigma^-$ in $\Sigma^-_x\cup\Sigma^+_x$. In particular, $\Sigma^+$ and $\Sigma^-$ have the same cardinal and are naturally equipped with a cyclic transformation $\tau$. We define some useful notation.\\

Let $c$ be in the image of $\omega$. We set $J=\omega^{-1}(c)$ and we will say that 
\begin{itemize}
\item An element $\sigma^+\in\Sigma^+$ is \emph{adjacent to $J$ in $\Sigma^+$} if one and only one of the two elements $\tau(\sigma^+)$ and $\tau^{-1}(\sigma^+)$ is in $J$. More precisely we will say that an element $\sigma^+\in\Sigma^+$ is \emph{adjacent and before $J$ in $\Sigma^+$} if
$\tau(\sigma^+)$ is in $J$ and $\tau^{-1}(\sigma^+)$ is not. Symmetrically we will say that an element $\sigma^+\in\Sigma^+$ is \emph{adjacent and after $J$ in $\Sigma^+$} if $\tau^{-1}(\sigma^+)$ is in $J$ and $\tau(\sigma^+)$ is not.\\
\item An element $\sigma^+\in\Sigma^+$ is \emph{surrounded by $J$ in $\Sigma^+$} if the two elements $\tau(\sigma^+)$ and $\tau^{-1}(\sigma^+)$ are in $J$. 
\end{itemize}
A maximal set of consecutive elements of $J$ in $\Sigma^-$ will be called a maximal interval of $J$.\\

We prove the following lemma about the maps $\alpha$ and $\omega$.

\begin{lem}\label{combi}
Suppose that for every $c\in \im(\omega)$ and every $\sigma^+\in\Sigma^+$ adjacent to $\omega^{-1}(c)$ in $\Sigma^+$, there exists $\sigma'^+\in\Sigma^+\backslash\{\sigma^+\}$ adjacent to $\omega^{-1}(c)$ in $\Sigma^+$ such that $\alpha(\sigma'^+)=\alpha(\sigma^+)$. Then we have
$$\#\im(\alpha)+\#\im(\omega)\leq i+2,$$
where $i+1$ is the cardinal of the sets $\Sigma^+$ and $\Sigma^-$.
\end{lem}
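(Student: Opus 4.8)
The plan is to argue entirely within the combinatorial setup of the statement; no result from earlier in the paper is needed, only elementary graph theory. Write $n=i+1$ for the common cardinality of $\Sigma^+$ and $\Sigma^-$, so that $\Sigma^-_x\cup\Sigma^+_x$ is a single alternating cyclic chain of $2n$ elements on which $\tau$ is the shift by one, and put $p=\#\im(\omega)$. First I would dispose of $p=1$: then $\omega^{-1}(c)=\Sigma^-$ for the unique value $c$, so no element of $\Sigma^+$ is adjacent to $\omega^{-1}(c)$, the hypothesis is vacuous, $\#\im(\alpha)\le n$, and $\#\im(\alpha)+\#\im(\omega)\le n+1=i+2$. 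So from now on assume $p\ge 2$.

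Next I would partition $\Sigma^+$ according to $\omega$. Call $\sigma^+\in\Sigma^+$ a \emph{boundary} element if $\omega(\tau^{-1}(\sigma^+))\neq\omega(\tau(\sigma^+))$ and an \emph{inner} element otherwise, and let $q$ be the number of boundary elements (equivalently, the number of ``colour changes'' of $\omega$ around the cyclic set $\Sigma^-$). A direct check from the definitions gives the clean dichotomy: an inner element is adjacent to no fibre $\omega^{-1}(c)$, while a boundary element is adjacent to exactly the two fibres indexed by the colours of its two $\Sigma^-$-neighbours. Hence the hypothesis constrains $\alpha$ only on the $q$ boundary elements, and we may bound $\alpha$ on the $n-q$ inner elements trivially by $n-q$. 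The proof then reduces to the claim that $\alpha$ takes at most $q-p+1$ values on the boundary elements: granting this, $\#\im(\alpha)\le(q-p+1)+(n-q)=n-p+1$, and therefore $\#\im(\alpha)+\#\im(\omega)\le(n-p+1)+p=n+1=i+2$, as required.

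The heart of the argument is the bound on the boundary elements, which I would extract from a small lemma on multigraphs. Form a multigraph $H$ whose vertex set is $\im(\omega)$ and whose edge set is the set of boundary elements of $\Sigma^+$, each boundary element $\sigma^+$ joining the two colours $\omega(\tau^{-1}(\sigma^+))$ and $\omega(\tau(\sigma^+))$ and carrying the label $\alpha(\sigma^+)$. Reading off the cyclic sequence of maximal monochromatic intervals of $\omega$ shows that $H$ is connected, with $p$ vertices and $q$ edges, so its first Betti number over $\mathbb{F}_2$ is $b_1(H)=q-p+1$. The hypothesis translates exactly into: at every vertex $\gamma$ of $H$, any label occurring on an edge incident to $\gamma$ occurs on at least two edges incident to $\gamma$. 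Consequently, for each label $\lambda$ the subgraph $H_\lambda$ of $H$ formed by the $\lambda$-labelled edges together with their endpoints has minimum degree $\ge 2$ at every vertex it meets, so each of its components contains a cycle and $b_1(H_\lambda)\ge 1$. Since the subgraphs $H_\lambda$ have pairwise disjoint edge sets, their $\mathbb{F}_2$-cycle spaces are linearly independent inside the cycle space of $H$, whence the number of distinct labels is at most $\sum_\lambda b_1(H_\lambda)\le b_1(H)=q-p+1$. That number is exactly the cardinality of $\alpha$ restricted to the boundary elements, proving the claim.

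The step I expect to be the main obstacle is the careful bookkeeping translating the hypothesis into the statement about $H$ — namely the dictionary ``boundary element $\leftrightarrow$ edge of $H$, adjacency to $\omega^{-1}(c)$ $\leftrightarrow$ incidence at the vertex $c$'' — together with a couple of degenerate configurations that must be checked (a fibre whose unique maximal interval wraps around $\Sigma^-$, $H$ having parallel edges, or isolated vertices in some $H_\lambda$); once the dictionary is fixed, the cycle-space independence estimate is routine. After Lemma~\ref{combi} is in hand, Proposition~\ref{connexionsaddle} follows by verifying that the maps $\omega:\Sigma^-_x\to\mathcal{C}^-_t(\FF)$ and $\alpha:\Sigma^+_x\to\mathcal{C}^+_t(\FF)$ satisfy the hypothesis of the lemma, using the description of $W^s(\mathcal{C})$ from Proposition~\ref{Propbord} (each hyperbolic sector adjacent to the frontier forces the required duplicate value).
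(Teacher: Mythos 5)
Your argument is correct, and it takes a genuinely different route from the paper. The paper proves Lemma \ref{combi} by induction on the cardinality: it picks a fibre $J=\omega^{-1}(c)$, deletes $J$ from $\Sigma^-$ and $\tau(J)$ from $\Sigma^+$, identifies the $\alpha$-values of the elements adjacent to $J$, and then goes through a rather delicate case analysis to check that the reduced couple $(\alpha_1,\omega_1)$ still satisfies the hypothesis before doing the cardinality count. You instead give a direct, non-inductive proof: after observing that the hypothesis only constrains the $q$ ``boundary'' elements of $\Sigma^+$ (those sitting at a colour change of $\omega$), you encode these as edges of a connected multigraph $H$ on the $p=\#\im(\omega)$ colours, translate the hypothesis into the condition that each label class $H_\lambda$ has minimum degree at least $2$ wherever it appears, and bound the number of labels by $b_1(H)=q-p+1$ via the independence of the edge-disjoint cycle spaces. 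All the individual steps check out (the dichotomy inner/boundary is exactly right given the paper's definition of adjacency via $\tau^{\pm1}$, the edges of $H$ are never loops since a boundary element has two distinct neighbour colours, and the $p=1$ case is indeed vacuous). What your approach buys is a shorter and more transparent proof that also localizes where the inequality can be strict (extra independent cycles in $H$, or labels shared between boundary and inner elements); what the paper's induction buys is that the reduction step mirrors the geometric desingularization used immediately afterwards to verify the hypothesis for the foliation, so the two halves of the proof of Proposition \ref{connexionsaddle} are written in the same language. Your closing remark about how the hypothesis is verified for $\omega:\Sigma^-_x\to\mathcal{C}^-_t(\FF)$ and $\alpha:\Sigma^+_x\to\mathcal{C}^+_t(\FF)$ is consistent with what the paper does.
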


\begin{proof} Let $i=0$, for a couple $(\alpha,\omega)$ defined on sets $\Sigma^+$ and $\Sigma^-$ of cardinal equal to $1$, the inequality is trivial.\\ 

Let $i\geq 1$, we suppose by induction that couples defined on sets of cardinal less than $i$ satisfying the hypothesis of Lemma \ref{combi} satisfy the result. \

We consider a couple $(\alpha,\omega)$ defined on sets $\Sigma^+$ and $\Sigma^-$ of cardinal equals to $i+1$ satisfying the hypothesis of Lemma \ref{combi}. We prove that $\#\im(\alpha)+\#\im(\omega)\leq i+2.$ \\

We divide the proof of the inequality into three cases.\\

\textit{Case 1.} Suppose that $\omega$ is constant. In this case we have by definition  $\# \im(\omega)=1$ and $\# \im(\alpha)\leq i+1$ so the result is trivial.\\

\textit{Case 2.} Suppose that for each $c\in \im(\omega)$ we have $\# \omega^{-1}(c)=1$. By hypothesis, we obtain that $\alpha$ is constant on $\Sigma^+$. So we compute
$$\#\im(\alpha)+\#\im(\omega)=1+(i+1),$$
and we obtain the result.\\

\textit{Case 3.} Suppose that there exists $c\in \im(\omega)$ such that $i+1>\#\omega^{-1}(c)>1$. We will modify the couple $(\alpha,\omega)$ into another couple $(\alpha_1,\omega_1)$ defined on subsets of $\Sigma^+$ and $\Sigma^-$ and we prove that the couple $(\alpha_1,\omega_1)$ satisfies the hypothesis of Lemma \ref{combi}. \\

We denote $J=\omega^{-1}(c)$ and we define new maps on $\Sigma^-_1=\Sigma^-\backslash J$ and $\Sigma^+_1=\Sigma^+\backslash \tau(J)$.\\
We set $\omega_1:\Sigma^-_1\ra E^-\backslash\{c\},$
defined as the restriction of $\omega$.\\
We define $\alpha_1:\Sigma^+_1  \ra E^+/ \alpha(\tau(J) \Delta  \tau^{-1}(J)),$
where we have 
\begin{itemize}
\item $\tau(J) \Delta  \tau^{-1}(J)$ is the symmetric difference of $\tau(J)$ and $\tau^{-1}(J)$. 
\item $E^+/ \alpha(\tau(J) \Delta  \tau^{-1}(J))$ is the set obtained by identifiyng the elements of $\alpha(\tau(J) \Delta  \tau^{-1}(J))$.
\item $\alpha_1$ is the natural map induced by $\alpha$.
\end{itemize}
Notice that the set $\tau(J) \Delta  \tau^{-1}(J)$ is composed of the elements of $\Sigma^+$ which are adjacent to $J$. Moreover, the sets $\Sigma^-_1$ and $\Sigma^+_1$ are not empty.\\

We prove that $(\alpha_1,\omega_1)$ satisfies the hypothesis of Lemma \ref{combi}.\\

Let us consider $c_1\in\im(\omega_1)$. We set $J_1=\omega_1^{-1}(c_1)$ and we consider $\sigma_1^+\in \Sigma^+_1 $ adjacent to $J_1$ in $\Sigma^+_1$. Our goal is to find an element of $\Sigma_1^+$ distinct from $\sigma^+_1$ and adjacent to $J_1$ in $\Sigma_1^+$ whose $\alpha_1$ value is equal to the $\alpha_1$ value of $\sigma^+_1$.\\
The element $\sigma_1^+$ may not be adjacent to $J_1$ in $\Sigma^+$. There are two possibilities.\\
1) $\sigma_1^+$  is adjacent to $J_1$ in $\Sigma^+$.\\
2) $\sigma_1^+$ is adjacent and before a maximal interval $K$ of $J$ in $\Sigma^+$ and there exists ${\sigma'}^+_1\in J$ which is adjacent and after the interval $K$ of $J$ in $\Sigma^+$ and adjacent and before  $J_1$ in $\Sigma^+$.\\

We separate these two cases.\\

In case 1), by hypothesis, there exists $\sigma^+_2\in\Sigma^+\backslash\{\sigma^+_1\}$ adjacent to $J_1$ in $\Sigma^+$ such that $\alpha(\sigma_1^+)=\alpha(\sigma_2^+)$. Again, there are two possibilities.\\

i) If $\sigma_2^+\notin \tau(J)$ then $\sigma_2^+$ is adjacent to $J_1$ in $\Sigma_1^+$ and $\alpha_1(\sigma_1^+)=\alpha_1(\sigma_2^+)$ by definition. We obtain the result.\\

ii) If $\sigma_2^+\in \tau(J)$ then $\sigma_2^+$ is adjacent and after a maximal interval $K_2$ of $J$ in $\Sigma^+$. So $\sigma_2^+$ is before an interval $K_1$ of $J_1$ in $\Sigma^+$ and after the interval $K_2$ of $J$ in $\Sigma^+$.
 We consider ${\sigma'}_2^+$ in $\Sigma^+$ just before the interval $K_2$ in $\Sigma^+$. We have that ${\sigma'}_2^+\notin\tau(J)$ and ${\sigma'}_2^+\neq \sigma_1^+$.
  Indeed, if we supposed that $\sigma^+_1={\sigma'}^+_2$ then since $\sigma^+_1$ is adjacent to $J_1$ in $\Sigma^+$ it is after an interval $K_1'$ of $J_1$ in $\Sigma^+$ and just before the interval $K_2$ of $J$ in $\Sigma^+$. Hence in the cyclic order we have in $\Sigma^+$ 
  $$K_1',\sigma_1^+={\sigma'}_2^+, K_2, \sigma^+_2 , K_1.$$ 
 So in $\Sigma_1^+$ we obtain in the cyclic order 
$$ K_1', \sigma_1^+,K_1.$$ 
Meaning that $\sigma_1^+$ is surrounded by $J_1$ in $\Sigma_1^+$ and so it contradicts the fact that $\sigma_1^+$ is adjacent to $J_1$ in $\Sigma_1^+$.\\
Moreover, by hyothesis, $\alpha(\sigma_1^+)=\alpha(\sigma_2^+)\in \alpha(\tau(J) \Delta  \tau^{-1}(J))$ so by construction $\alpha_1(\sigma_1^+)=\alpha_1({\sigma'}_2^+)$ and we obtain the result.\\

In case 2), ${\sigma'}^+_1$ is just before a maximal interval $K'_1$ of $J_1$ in $\Sigma^+$. Moreover, by hypothesis, there exists $\sigma^+_2\in\Sigma^+\backslash\{{\sigma'}^+_1\}$ adjacent to $J_1$ in $\Sigma^+$ such that $\alpha({\sigma'}_1^+)=\alpha(\sigma_2^+)$.
Again, there are two possibilities.\\

i) If $\sigma_2^+\notin \tau(J)$ then $\sigma_2^+$ is adjacent to $J_1$ in $\Sigma_1^+$ and distinct from $\sigma^+_1$. Indeed, if we suppose that $\sigma_2^+=\sigma^+_1$ then $\sigma^+_1$ is before the maximal interval $K$ of $J$ in $\Sigma^+$ and after a maximal interval $K_1$ of $J_1$ in $\Sigma^+$.
Hence, in $\Sigma^+$, we have and in the cyclic order
$$K_1,\sigma^+_1=\sigma_2^+,K,{\sigma'}^+_1,K'_1.$$
So in $\Sigma_1^+$ we obtain in the cyclic order
$$K_1,\sigma^+_1,K'_1.$$
Meaning that $\sigma_1^+$ is surrounded by $J_1$ in $\Sigma_1^+$ and it contradicts the fact that $\sigma_1^+$ is adjacent to $J_1$ in $\Sigma_1^+$.\\
Moreover, by hypothesis, $\alpha({\sigma'}^+_1)=\alpha(\sigma^+_2)\in \alpha(\tau(J) \Delta  \tau^{-1}(J))$ and by definition $\alpha(\sigma^+_1)\in\alpha(\tau(J) \Delta  \tau^{-1}(J))$. So, by construction, we have $\alpha_1(\sigma_1^+)=\alpha_1(\sigma_2^+)$ and we obtain the result.\\

ii) If $\sigma_2^+\in \tau(J)$ then $\sigma_2^+$ is adjacent and after a maximal interval $K_2$ of $J$ in $\Sigma^+$. We consider ${\sigma'}_2^+$ in $\Sigma^+$ just before the interval $K_2$ in $\Sigma^+$. We have $K\neq K_2$ then ${\sigma'}_2^+\neq \sigma^+_1$ and $\alpha({\sigma'}_2^+)= \alpha( \sigma^+_1)$. Indeed, if we suppose that $K_2=K$ then it contradicts the fact that $\sigma^+_2\neq {\sigma'}_2^+$ and we obtain the result.\\

Now, we prove that $\#\im(\alpha)+\#\im(\omega)\leq i+2$.\\

The image of $\omega$ is the union of the image of $\omega_1$ and the singleton $\{c\}$. So we have $\# \im(\omega)=\#\im(\omega_1)+1$.\\

Let us consider the natural projection $p:E^+\ra E^+/\alpha(\tau(J) \Delta  \tau^{-1}(J))$, we have the following diagram

$$\xymatrix{
    \Sigma_1^+  \ar[r]^\alpha \ar[rd]^{\alpha_1}  & E^+ \ar[d]^p\\
     & E^+/\alpha(\tau(J) \Delta  \tau^{-1}(J))
}$$
We denote $e\in E^+/\alpha(\tau(J)\Delta \tau^{-1}(J))$ such that $p(\alpha(\sigma))=e$, for every $p(c)\in \tau(J)\Delta \tau^{-1}(J)$.\\

The image of $\alpha$ satisfies 
$$\im(\alpha)=\alpha({\tau(J)\cup  \tau^{-1}(J)})\cup \alpha(\Sigma^+\backslash \{\tau(J)\cup  \tau^{-1}(J)\}).$$
By definition, we have 
$$\tau(J)\cup  \tau^{-1}(J)= (\tau(J) \Delta  \tau^{-1}(J) ) \cup ( \tau(J)\cap  \tau^{-1}(J)).$$
So, we deduce
 $$\alpha(\tau(J)\cup  \tau^{-1}(J))= \alpha(\tau(J) \Delta  \tau^{-1}(J))\cup \alpha(\tau(J)\cap  \tau^{-1}(J)).$$
We denote by $K$ the number of maximum intervals of $J$.Since $J\neq \Sigma^+$, we have $\#\tau(J) \Delta  \tau^{-1}(J)= 2K$ and $\# (\tau(J)\cap  \tau^{-1}(J))= \#J -K$. Indeed, the set $\tau(J) \Delta  \tau^{-1}(J)$ is the set of elements of $\Sigma^+$ which are adjacent to $J$ and the set $\tau(J)\cap  \tau^{-1}(J)$ is the set of elements of $\Sigma^+$ which are surrounded by $J$.\\

By hypothesis, for every element $\sigma^+\in \tau(J) \Delta  \tau^{-1}(J)$ there exists another element ${\sigma'}^+\in\tau(J) \Delta  \tau^{-1}(J)$ such that $\alpha({\sigma}^+)=\alpha({\sigma'}^+)$. So we obtain
$$\#\alpha(\tau(J) \Delta  \tau^{-1}(J)) \leq \frac{2K}{2}\leq K.$$
Hence we compute
\begin{align*}
\# \alpha(\tau(J)\cup  \tau^{-1}(J)) & \leq \#  \alpha(\tau(J) \Delta  \tau^{-1}(J)) +\# \alpha(\tau(J)\cap  \tau^{-1}(J)) \\
&\leq K +\#J -K\\
&\leq \# J.
\end{align*}

It remains to estimate the cardinal of $C=\im( \alpha)\backslash \{\alpha(\tau(J)\cup  \tau^{-1}(J)\})\}$. For every $c\in C$ and $\sigma \in \alpha^{-1}(c)$, we have $\sigma \notin \alpha(\tau(J)\cup  \tau^{-1}(J))$. So in particular, $\sigma\in \Sigma_1^+$ and $p(c)\neq e$. We deduce that 
$$p(c)=\alpha_1(\sigma).$$
It implies that 
$$p(C)\subset \im (\alpha_1)\backslash \{e\}.$$
Moreover, by definition $C \cap p^{-1}(e)=\emptyset$, so $p|_C$ is a bijection and, since $\{e\}\in \im(\alpha_1) $, we obtain
$$\# C\leq \# \im(\alpha_1) -1.$$
Thus we have
$$\im(\alpha) \leq \#J+\# \im(\alpha_1)-1.$$
The couple $(\alpha_1,\omega_1)$ is defined on a set of cardinal $i-\#J+1$ and satisfies the hypothesis of Lemma \ref{combi}, so we compute 

\begin{align*}
\# \im(\alpha)+\#\im(\omega)&\leq (\#J+\#\im(\alpha_1)-1)+\#\im(\omega_1)+1\\
& \leq \#\im(\alpha_1)+\#\im(\omega_1)+\# J\\
& \leq (i-\#J+2) +\#J\\
&\leq i+2,
\end{align*}
where the third inequality is given by the induction step.\\
 
\end{proof}

Notice that if $J$ is composed of at least $2$ maximal intervals, the previous inequality is strict.\\

To complete the proof of Proposition \ref{connexionsaddle} we will prove that the couple of maps $\alpha:\Sigma_x^+\ra \mathcal{C}_t^+(\FF)$ and $\omega:\Sigma^-_x\ra \mathcal{C}_t^-(\FF)$ defined at the begining of the proof satisfies the hypothesis of Lemma \ref{combi}.\\

Let $\mathcal{C}$ be a connected component of $G_t^-(\FF)$ in the image of $\omega$. We denote $B=W^s(\mathcal{C})$ and we want to desingularize its frontier $\Fr(B)$ as follows.\\

\emph{Desingularization.} We cut the surface $\Sigma$ along $\Fr(B)$ (see \cite{Bou08} for example) to obtain a manifold with boundary $\hat{B}$ and a natural projection $\pi :\hat{B} \ra \overline{B}$ such that 
\begin{itemize}
\item $\pi(\partial\hat{B})=\Fr(B)$.
\item $\pi(\hat{B}\backslash\partial\hat{B})\simeq B$.
\end{itemize} 

Let us draw simple examples to explain what we are doing. \\

\textit{First example.} In Figure \ref{desing}, we consider on the left a saddle point $y$ in $\Fr(B)$ such that
\begin{itemize}
\item There are two hyperbolic sectors $U'$ and $U''$ separated by a cone $\sigma$ in $B$. We denote $\phi'$ and $\phi''$ the leaves of $U'$ and $U''$ in $\Fr(B)$.
\item There exists a leaf $\psi$ in $\sigma\cap \Fr(B)$. 
\end{itemize} 
We draw in red the leaves in $B$ and in blue the leaves in $\Fr(B)$. We cut along leaves of $\Fr(B)$ in blue to obtain $\hat{B}$ on the right and we have
\begin{itemize}
\item $\pi^{-1}(y)=\{\hat{y}_1,\hat{y}_2\}$.
\item $\pi^{-1}(\psi)=\{\hat\psi_1,\hat\psi_2\}$.
\item $\pi^{-1}(\phi'')=\{\hat\phi'\}$.
\item $\pi^{-1}(\phi')=\{\hat\phi''\}$.
\end{itemize}

\begin{figure}[H]
\begin{center}
\begin{tikzpicture}[scale=1]

\draw [decoration={markings, mark=at position 0.5 with {\arrow{>}}}, postaction={decorate}] (-2,0.3) -- (0,0);
\draw [decoration={markings, mark=at position 0.5 with {\arrow{<}}}, postaction={decorate}]  (0.3,2) ..controls +(0,-2) and +(-2,+0)..(2,0.3);
\draw [decoration={markings, mark=at position 0.5 with {\arrow{>}}}, postaction={decorate}]  (-2,0.5) ..controls +(2,-0.3) and +(0,-2)..(-0.3,2)  ;

\draw [decoration={markings, mark=at position 0.5 with {\arrow{<}}}, postaction={decorate}] (0,2) -- (0,0);
\draw [decoration={markings, mark=at position 0.5 with {\arrow{>}}}, postaction={decorate}, blue] (-2,0) -- (0,0);
\draw [decoration={markings, mark=at position 0.5 with {\arrow{<}}}, postaction={decorate}, blue] (0,0) -- (2,0);
\draw [decoration={markings, mark=at position 0.5 with {\arrow{>}}}, postaction={decorate}, blue] (0,0) -- (0,-1.5);
\draw [decoration={markings, mark=at position 0.5 with {\arrow{>}}}, postaction={decorate}, red] (0,-1.5) -- (0,-2);
\draw [decoration={markings, mark=at position 0.5 with {\arrow{>}}}, postaction={decorate}, red] (0,0) -- (0.3,-2);
\draw [decoration={markings, mark=at position 0.5 with {\arrow{>}}}, postaction={decorate}, red] (0,0) -- (-0.3,-2);
\draw [decoration={markings, mark=at position 0.5 with {\arrow{>}}}, postaction={decorate}]  (-2,0.5) ..controls +(2,-0.3) and +(0,-2)..(-0.3,2)  ;
\draw [decoration={markings, mark=at position 0.5 with {\arrow{>}}}, postaction={decorate},red]  (-2,-0.3) ..controls +(2,0) and +(0.1,2)..(-0.5,-2)  ;
\draw [decoration={markings, mark=at position 0.5 with {\arrow{<}}}, postaction={decorate},red]  (2,-0.3) ..controls +(-2,0) and +(-0.1,2)..(0.5,-2)  ;

\draw [decorate, decoration = {brace}]  (0.3,-2.1) -- (-0.3,-2.1);
\draw[below] (0,-2.1) node{$\sigma$};
\draw (0,-1.5) node{$\bullet$};
\draw[right,blue] (0,-1.5) node{$\psi$};
\draw (0,0) node{$\bullet$};
\draw[above] (0,0.1) node{$y$};
\draw[red] (-1,-1) node{$U'$};
\draw[red] (1,-1) node{$U''$};
\draw[left,blue] (-2,0) node{$\phi'$};
\draw[right,blue] (2,0) node{$\phi''$};

\draw (3,0) node{$\leadsto$};

\draw [decoration={markings, mark=at position 0.5 with {\arrow{>}}}, postaction={decorate},blue] (4,-0.2) -- (5.5,-0.2);
\draw [decoration={markings, mark=at position 0.5 with {\arrow{>}}}, postaction={decorate},blue] (5.5,-0.2) -- (6,-1.5);
\draw [decoration={markings, mark=at position 0.5 with {\arrow{<}}}, postaction={decorate},blue] (6.5,-0.2) -- (8,-0.2);
\draw [decoration={markings, mark=at position 0.5 with {\arrow{>}}}, postaction={decorate},blue] (6.5,-0.2) -- (6,-1.5);

\draw [decoration={markings, mark=at position 0.5 with {\arrow{>}}}, postaction={decorate},red] (6,-1.5) -- (6,-2);
\draw [decoration={markings, mark=at position 0.5 with {\arrow{>}}}, postaction={decorate},red] (5.5,-0.2) -- (5.2,-2);
\draw [decoration={markings, mark=at position 0.5 with {\arrow{>}}}, postaction={decorate},red] (6.5,-0.2) -- (6.8,-2);
\draw [decoration={markings, mark=at position 0.5 with {\arrow{>}}}, postaction={decorate},red]  (4,-0.4) ..controls +(1,0) and +(0.1,1)..(5,-2)  ;
\draw [decoration={markings, mark=at position 0.5 with {\arrow{>}}}, postaction={decorate},red]  (8,-0.4) ..controls +(-1,0) and +(-0.1,1)..(7,-2)  ;

\draw [decorate, decoration = {brace}]  (6.8,-2.1) -- (5.2,-2.1);
\draw[below] (6,-2.1) node{$\hat\sigma$};
\draw (6,-1.5) node{$\bullet$};
\draw (5.5,-0.2) node{$\bullet$};
\draw (6.5,-0.2) node{$\bullet$};
\draw[left,blue] (4,-0.2) node{$\hat\phi'$};
\draw[right,blue] (8,-0.2) node{$\hat\phi''$};
\draw[blue] (6.4,-1.2) node{$\hat\psi_2$};
\draw[blue] (5.7,-1.2) node{$\hat\psi_1$};
\draw[above] (5.6,-0.2) node{$\hat y_1$};
\draw[above] (6.4,-0.2) node{$\hat y_2$};
\draw[red] (4.5,-1) node{$U'$};
\draw[red] (7.5,-1) node{$U''$};

\end{tikzpicture}
\end{center}
\caption{ }
\label{desing}
\end{figure}

\textit{Second example.} In Figure \ref{desing2} we consider a saddle point $y$ in $\Fr(B)$ such that 
\begin{itemize} 
\item There is a stable cone surrounded by $B$ which contains two leaves $\phi_1$ and $\phi_2$ in $\Fr(B)$.
\item There are two leaves $\phi'$ and $\phi''$ of stable cones of $y$ in $\Fr(B)$.
\end{itemize}
We draw in red the leaves in $B$, in blue the leaves in $\Fr(B)$. We cut along the leaves of $\Fr(B)$ to obtain $\hat{B}$ on the right and we have
\begin{itemize}
\item $\pi^{-1}(y)=\{\hat{y}_1,\hat{y}_2\}$.
\item $\pi^{-1}(\psi_1)=\{\hat\psi_1\}$.
\item $\pi^{-1}(\psi_2)=\{\hat\psi_2\}$
\item $\pi^{-1}(\phi'')=\{\hat\phi'\}$.
\item $\pi^{-1}(\phi')=\{\hat\phi''\}$.
\end{itemize}

 \begin{figure}[H]
\begin{center}
\begin{tikzpicture}[scale=1]

\draw [decoration={markings, mark=at position 0.5 with {\arrow{>}}}, postaction={decorate}] (-2,0.3) -- (0,0);
\draw [decoration={markings, mark=at position 0.5 with {\arrow{<}}}, postaction={decorate}]  (0.3,2) ..controls +(0,-2) and +(-2,+0)..(2,0.3);
\draw [decoration={markings, mark=at position 0.5 with {\arrow{>}}}, postaction={decorate}]  (-2,0.5) ..controls +(2,-0.3) and +(0,-2)..(-0.3,2)  ;

\draw [decoration={markings, mark=at position 0.5 with {\arrow{<}}}, postaction={decorate}] (0,2) -- (0,0);
\draw [decoration={markings, mark=at position 0.5 with {\arrow{>}}}, postaction={decorate}, blue] (-2,0) -- (0,0);
\draw [decoration={markings, mark=at position 0.5 with {\arrow{<}}}, postaction={decorate}, blue] (0,0) -- (2,0);
\draw [decoration={markings, mark=at position 0.5 with {\arrow{<}}}, postaction={decorate}] (0,0) -- (0,-1.5);
\draw [decoration={markings, mark=at position 0.5 with {\arrow{<}}}, postaction={decorate}, blue] (0,0) ..controls +(-0.2,-0.2) and +(-0.2,0.2).. (0,-1.5);
\draw [decoration={markings, mark=at position 0.5 with {\arrow{<}}}, postaction={decorate}, blue] (0,0) ..controls +(0.2,-0.2) and +(0.2,0.2).. (0,-1.5);
\draw [decoration={markings, mark=at position 0.5 with {\arrow{>}}}, postaction={decorate}, red] (0,-1.5) -- (0,-2);
\draw [decoration={markings, mark=at position 0.5 with {\arrow{>}}}, postaction={decorate}, red] (0,0) -- (-1.5,-2);
\draw [decoration={markings, mark=at position 0.5 with {\arrow{>}}}, postaction={decorate}, red] (0,0) -- (1.5,-2);
\draw [decoration={markings, mark=at position 0.5 with {\arrow{>}}}, postaction={decorate}]  (-2,0.5) ..controls +(2,-0.3) and +(0,-2)..(-0.3,2)  ;

\draw [decoration={markings, mark=at position 0.5 with {\arrow{>}}}, postaction={decorate},red]  (-2,-0.3) ..controls +(2,0) and +(1,2)..(-1.7,-2)  ;
\draw [decoration={markings, mark=at position 0.5 with {\arrow{>}}}, postaction={decorate},red]  (2,-0.3) ..controls +(-2,0) and +(-1,2)..(1.7,-2)  ;
\draw [decoration={markings, mark=at position 0.5 with {\arrow{>}}}, postaction={decorate}, red] (0,-1.5) ..controls +(-0.2,0.2) and +(0.2,0.2).. (-0.5,-2);
\draw [decoration={markings, mark=at position 0.5 with {\arrow{>}}}, postaction={decorate}, red] (0,-1.5) ..controls +(0.2,0.2) and +(-0.2,0.2).. (0.5,-2);

\draw (0,-1.5) node{$\bullet$};
\draw[right,blue] (0.1,-1.2) node{$\psi_2$};
\draw[left,blue] (-0.1,-1.2) node{$\psi_1$};
\draw (0,0) node{$\bullet$};
\draw[above] (0,0.1) node{$y$};

\draw[left,blue] (-2,0) node{$\phi'$};
\draw[right,blue] (2,0) node{$\phi''$};

\draw (3,0) node{$\leadsto$};

\draw [decoration={markings, mark=at position 0.5 with {\arrow{>}}}, postaction={decorate},blue] (4,-0.2) -- (5.5,-0.2);
\draw [decoration={markings, mark=at position 0.5 with {\arrow{<}}}, postaction={decorate},blue] (5.5,-0.2) -- (6,-1.5);
\draw [decoration={markings, mark=at position 0.5 with {\arrow{<}}}, postaction={decorate},blue] (6.5,-0.2) -- (8,-0.2);
\draw [decoration={markings, mark=at position 0.5 with {\arrow{<}}}, postaction={decorate},blue] (6.5,-0.2) -- (6,-1.5);

\draw [decoration={markings, mark=at position 0.5 with {\arrow{>}}}, postaction={decorate},red] (6,-1.5) -- (6,-2);
\draw [decoration={markings, mark=at position 0.5 with {\arrow{>}}}, postaction={decorate},red] (6,-1.5) ..controls +(-0.2,0.2) and +(0.2,0.2).. (5.5,-2);
\draw [decoration={markings, mark=at position 0.5 with {\arrow{>}}}, postaction={decorate},red] (6,-1.5) ..controls +(0.2,0.2) and +(-0.2,0.2).. (6.5,-2);
\draw [decoration={markings, mark=at position 0.5 with {\arrow{>}}}, postaction={decorate},red] (5.5,-0.2) -- (5.2,-2);
\draw [decoration={markings, mark=at position 0.5 with {\arrow{>}}}, postaction={decorate},red] (6.5,-0.2) -- (6.8,-2);
\draw [decoration={markings, mark=at position 0.5 with {\arrow{>}}}, postaction={decorate},red]  (4,-0.4) ..controls +(1,0) and +(0.1,1)..(5,-2)  ;
\draw [decoration={markings, mark=at position 0.5 with {\arrow{>}}}, postaction={decorate},red]  (8,-0.4) ..controls +(-1,0) and +(-0.1,1)..(7,-2)  ;

\draw (6,-1.5) node{$\bullet$};
\draw (5.5,-0.2) node{$\bullet$};
\draw (6.5,-0.2) node{$\bullet$};
\draw[left,blue] (4,-0.2) node{$\hat\phi'$};
\draw[right,blue] (8,-0.2) node{$\hat\phi''$};
\draw[blue] (6.4,-1.2) node{$\hat\psi_2$};
\draw[blue] (5.7,-1.2) node{$\hat\psi_1$};
\draw[above] (5.6,-0.2) node{$\hat y_1$};
\draw[above] (6.4,-0.2) node{$\hat y_2$};

\end{tikzpicture}
\end{center}
\caption{ }
\label{desing2}
\end{figure}

We have that $\hat{B}$ is a manifold with boundary whose boundary is a union of disjoint circles. By Proposition \ref{Propbord}, each circle of $\partial \hat B$ is composed of chains of connexions of the foliation $\FF$ such that for every leaf $\phi$ in $\Fr(B)$ we have
\begin{itemize}
\item $\#\pi^{-1}(\phi)=1$ if $\phi$ is adjacent to $B$.
\item $\#\pi^{-1}(\phi)=2$ if $\phi$ is in the interior of $\overline{B}$.
\end{itemize}

\begin{rem}
As we saw in the second example with the leaves $\psi_1$ and $\psi_2$, a leaf $\phi$ can be adjacent to $B$ and in a stable cone of a saddle point of $\Fr(B)$ which is surrounded by $B$
\end{rem}

We set $J=\omega^{-1}(\mathcal{C})$ and recall that we set $t=A(x)$ at the begining. By definition, the saddle point $x$ is in the boundary of $B$. We consider a stable cone $\sigma^+\in\Sigma^+$ of $x$ adjacent to $J$ in $\Sigma^+$. To prove that the applications $\alpha$ and $\omega$ satisfy the hypothesis of Lemma \ref{combi} it is sufficient to prove that there exists another stable cone ${\sigma'}^+$ of $x$ adjacent to $J$ in $\Sigma^+$ such that $\alpha({\sigma'}^+)=\alpha({\sigma}^+)$.\\

The cone $\sigma^+$ is adjacent to $B$ so, by Proposition \ref{Propbord}, there is a unique leaf $\phi^+_0$ in $\sigma^+\cap \Fr(B)$. The cone $\sigma^+$ is adjacent to $J$ in $\Sigma^+$, so we can denote $\pi^{-1}(\phi^+_0)=\{\hat\phi_0^+\}\subset \partial \hat{B}$.\\

Moreover, $x$ is the only singularity of action $t$, so each singularity $\hat y$ of $\partial \hat B\backslash \pi^{-1}(x)$ satisfies $A_f(\pi(\hat y))>t$. So, if we consider a circle $\gamma$ of $\partial \hat B$, the singularities in a same connected component of $\gamma\backslash \pi^{-1}(x)$ are the lift of singularities which are in the same connected component of $G_t^+(\FF)$. Notice that $\gamma\backslash \pi^{-1}(x)$ may be composed of more than one connected components. \\

We consider the circle $\gamma_0$ of $\partial \hat B$ containing $\hat\phi_0^+$. The connected component of $\gamma_0\backslash\pi^{-1}(x)$ containing $\hat\phi_0^+$ contains a leaf $\hat{\phi'}_0^+$ distinct from $\hat \phi_0^+$ such that $\omega(\pi(\hat{\phi'}_0))=x$.  There are two cases.\\

1) If $\pi(\hat{\phi'}_0)$ is in a stable cone of $x$ adjacent to $J$ in $\Sigma^+$ then we obtain the result.\\

2) Suppose that $\pi(\hat{\phi'}_0)$ is in a stable cone $\sigma^+_1$ of $x$ surrounded by $J$ in $\Sigma^+$. By construction, there exists another leaf $\hat\phi_1^+\in \partial\hat B$ distinct from $\hat{\phi'}_0^+$ such that $\pi(\hat{\phi}_1^+)$, which may be equal to $\pi(\hat{\phi'}_0^+)$, is in $\sigma^+_1$.

By Lemma \ref{Sigma-}, we have that $\alpha(\pi({\hat\phi}_1^+))=\alpha(\pi(\hat{\phi'}_0^+))$. So we apply the same arguments to ${\hat\phi}_1^+$ and then there exists a leaf $\hat{\phi'}_1^+\subset\partial\hat B$ such that $\hat{\phi'}_1^+$ and ${\phi}_1^+$ are in the same connected component of $\gamma_1\backslash\{\pi^{-1}(x)\}$ and $\omega(\pi(\hat{\phi'}_1^+))=x$. If ${\phi'}_1^+$ is in a stable cone of $x$ adjacent to $J$ in $\Sigma^+$ we stop the process and if not we do the same discussion for ${\phi'}_1^+$ as we did for ${\phi'}_0^+$ in case 2).\\

Since $x$ has finitely many stable cones, the process stops after a finite number of times and we obtain a leaf $\phi'^+$ in the frontier of the attractive basin of $\mathcal{C}$ distinct from $\phi^+$ which is in a stable cone ${\sigma'}^+$ of $x$ adjacent to $J$ in $\Sigma^+$ such that $\alpha({\sigma'}^+)=\alpha(\sigma^+)$ and this ends the proof of Proposition \ref{connexionsaddle}.
\end{proof}

By construction of $\mathcal{B}$, we deduce the following corollary. 

\begin{coro}\label{numberbar}
Let us suppose that the singularities of $\FF$ have distinct action values and consider a saddle point $x$ of $\FF$. There exist exactly $-\ind(f,x)$ bars $J_1,...,J_{-\ind(f,x)}$ of which $A(x)$ is an end point.\\
Moreover, for each source or sink $y$ of $X$ there exists exactly one bar $J$ of which $A(y)$ is an end point.
\end{coro}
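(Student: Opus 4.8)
The plan is to go through the definition of $\mathcal{B}$ category by category and identify, for a fixed value $t\in\im(A)$, exactly which bars of $\bm{\beta}_\FF=\mathcal{B}(G(\FF),A,\ind(\FF,\cdot))$ have $t$ as an endpoint; since the singularities of $\FF$ have pairwise distinct action values, $A^{-1}(t)$ is a single singularity and the graphs $G^\pm_{t^+}$ differ from $G^\pm_{t^-}$ only by the insertion of that one vertex together with its edges. One elementary observation will be used throughout: the vertex of a connected component $C$ of $G^-_s(\FF)$ realizing $L(C)$ is a sink of $\FF$ — an outgoing leaf from it would land at a vertex of smaller action, which would already lie in $C$ — and symmetrically the vertex realizing $D(C)$ is a source. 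Consequently the lower endpoints of Category $1$ bars are values of sinks, the upper endpoints of Category $2$ bars are values of sources, and all remaining endpoints occurring in the construction are $A_m$, $A_M$, or values of saddle points.

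For a saddle point $x$, set $t=A(x)$, so $A^{-1}(t)=\{x\}$ and Proposition \ref{connexionsaddle} applies. Because inserting $x$ into $G^-_{t^-}$ affects only the component $\mathcal{C}_x$ of $G^-_{t^+}$ containing $x$, one has $\#j_t^{-1}(C)=1$ for every other component $C$, so the number of Category $1$ bars associated to $t$ is $\#j_t^{-1}(\mathcal{C}_x)-1$ (this is $\ge 0$ since the saddle $x$ has an outgoing leaf, so $\#j_t^{-1}(\mathcal{C}_x)\ge 1$); symmetrically there are $\#(j')^{-1}_t(\mathcal{C}'_x)-1$ Category $2$ bars associated to $t$. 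As $x$ is the only singularity at level $t$, the Category $3$ parameter is $k=|\ind(\FF,x)|$, hence $k'=|\ind(\FF,x)|+2-\#j_t^{-1}(\mathcal{C}_x)-\#(j')^{-1}_t(\mathcal{C}'_x)$, which is $\ge 0$ precisely by Proposition \ref{connexionsaddle}; so Category $3$ contributes $k'$ more bars. Summing the three counts gives exactly $|\ind(\FF,x)|=-\ind(\FF,x)$ bars associated to $t$. It then remains to check that no Category $0$ bar and no bar associated to a value $t'\ne t$ has $t$ as an endpoint: Category $0$ bars end at $A_m$ or $A_M$; the free endpoint of a Category $1$ (resp. $2$) bar associated to $t'$ is a sink (resp. source) value by the preliminary observation, hence distinct from the saddle value $t$; and every other endpoint of such a bar is $t'\ne t$. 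This proves the first assertion, with $-\ind(\FF,x)$ equal to $-\ind(f,x)$ whenever $\FF$ is positively transverse to a maximal isotopy of a Hamiltonian homeomorphism $f$ (Proposition \ref{gradient-like}).

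For a sink or a source $y$ it suffices, by the symmetry $\FF\leftrightarrow\FF^{\mathrm{rev}}$ (reversing leaf orientations yields a gradient-like foliation with action function $-A$, interchanging sinks with sources and Categories $1$ and $2$), to treat a sink $y$; put $s=A(y)$. If $s=A_m$, distinctness of the action values and Lemma \ref{sink-source-mM} show that the only bar with $s$ as an endpoint is the Category $0$ bar $(A_m,+\infty)$. If $s>A_m$, I would run a persistence-type tracking argument: write $c(s')$ for the component of $G^-_{s'}(\FF)$ containing $y$; then $s'\mapsto L(c(s'))$ is non-increasing, equals $s$ for $s'$ just above $s$, and by Lemma \ref{sink-source} eventually drops strictly below $s$. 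Let $t_y$ be the first (hence a saddle) value at which this happens. Among the components merged at $t_y$, the one containing $y$ is the unique one with $L=s$ (its founding sink being $y$) but not the one of minimal $L$, so the construction produces exactly one Category $1$ bar $(s,t_y]$ from it. This is the only bar with $s$ as an endpoint: $s$ is never an upper endpoint (those being saddle or source values), and at any other saddle $t'$ the component containing $y$ either has minimal $L$ among the components merged there (if $t'<t_y$, since it still has $L=s$ while all the others have strictly larger $L$), or founding sink $\ne y$ (if $t'>t_y$, since then $L(c(t'^-))<s$). Hence exactly one bar, completing the proof.

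The main difficulty I expect is purely bookkeeping: in the saddle case, correctly matching the Category $1,2$ counts with $\#j_t^{-1}(\mathcal{C}_x)$ and $\#(j')^{-1}_t(\mathcal{C}'_x)$ and invoking Proposition \ref{connexionsaddle} to guarantee $k'\ge 0$; in the sink/source case, upgrading the "at least one bar" of Lemma \ref{sink-source} to "exactly one" via the tracking argument, i.e. verifying that $y$ can be the founding sink of a component that dies at one merge only.
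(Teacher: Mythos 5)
Your proposal is correct, and for the saddle case it follows exactly the paper's route: count the Category $1$, $2$ and $3$ bars produced at $t=A(x)$ and invoke Proposition \ref{connexionsaddle} to see that the Category $3$ count $k'$ is nonnegative, so the total is $|\ind(\FF,x)|$. In fact your bookkeeping is the correct reading of the construction: the paper's written proof states that there are $\#j_t^{-1}(\mathcal{C}_x)$ bars of Category $1$ and sets $k=|\ind(\FF,x)|-\#j_t^{-1}(\mathcal{C}_x)-\#j'^{-1}_t(\mathcal{C}'_x)$, dropping the two ``$-1$'' terms, with which the inequality $k\geq 0$ would not follow from Proposition \ref{connexionsaddle}; your version with $k'=|\ind(\FF,x)|+2-\#j_t^{-1}(\mathcal{C}_x)-\#j'^{-1}_t(\mathcal{C}'_x)$ is what makes the argument close. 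Where you genuinely add content is in the parts the paper leaves implicit: the observation that $L$ (resp. $D$) of a component is always realized by a sink (resp. source), which is what guarantees that bars associated to other action values never have the saddle value $t$ as an endpoint; and the tracking argument upgrading Lemma \ref{sink-source} from ``at least one'' to ``exactly one'' bar per sink, the paper's proof stopping after the saddle case. Your uniqueness dichotomy ($t'<t_y$ forces the component containing $y$ to be the minimizer of $L$, $t'>t_y$ forces its $L$ to be below $s$) is sound and, as a bonus, does not even require knowing that merges of $G^-_t(\FF)$-components occur only at saddle values.
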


\begin{proof}
Let us consider a saddle point $x$ of $\FF$ of action $t$. We denote $\mathcal{C}_x$ (resp. $\mathcal{C}'_x$ the connected component of $G_{t^+}^-(\FF)$ (resp. $G_{t^-}^+(\FF)$) which contains $x$. By construction, there are $\#j_t^{-1}(\mathcal{C}_x)$ bars of category $1$ such that $A(x)$ is the maximum and $\#{j'}_t^{-1}(\mathcal{C}'_x)$ bars of category $2$ such that $A(x)$ is the infimum. Finally, by Proposition \ref{connexionsaddle} we have $k=|\ind(\FF,x)|- \# j^{-1}_t(\mathcal{C}_x)-\# j'^{-1}_t(\mathcal{C}'_x)\geq 0$ so there exists $k$ bars $(A_f(x),+\infty)$ of category $3$.
Thus there are exactly $-\ind( \FF,x)$ bars of which $A_f(x)$ is an end.
\end{proof}

Recall that $X_{<}$ is the set of singularities of $X$ of negative index and $X_1$ the set of singularities of $X$ of index $1$. We set $S_{<0}=A(X_{<0})$ and we consider a sink $y_m$ of the foliation $\FF$ such that $A(y_m)=A_m$ and a source $y_M$ of the foliation $\FF$ such that $A(y_M)=A_M$ and an application $\iota: X\backslash\{y_m,y_M\}\ra X_{<0}$ given by \ref{iota}. We will prove the following result about the semi-infinite bars of $\bm{\beta}_\FF$. \\

\begin{lem}\label{barsemi}
There exist exactly $2g+2$ semi-infinite bars in $\bm{\beta}_\FF$, where $g$ is the genus of $\Sigma$.
\end{lem}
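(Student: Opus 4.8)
The plan is to count the semi-infinite bars of $\bm{\beta}_\FF$ by categories and relate the total to the Euler characteristic of $\Sigma$. Recall the map $\mathcal B$ produces semi-infinite bars only from Category $0$ (exactly two bars, $(A_m,+\infty)$ and $(A_M,+\infty)$, since $A_m = L(G)$ and $A_M = D(G)$ for the connected graph $G(\FF)$) and from Category $3$ (one bar $(t,+\infty)$ for each "excess" of saddle index at the value $t$ over the number of Category $1$ and Category $2$ bars ending/starting at $t$). So the first step is to write
\begin{equation*}
\#\{\text{semi-infinite bars}\} = 2 + \sum_{t\in\im(A)} k'(t),
\end{equation*}
where, using Corollary \ref{numberbar} and Proposition \ref{connexionsaddle}, $k'(t) = \sum_{x\ \text{saddle},\,A(x)=t}|\ind(\FF,x)| - (\#\text{Cat.}1\text{ bars at }t) - (\#\text{Cat.}2\text{ bars at }t) \ge 0$. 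Summing over all $t$, $\sum_t k'(t) = \sum_{x\in X_{<0}}|\ind(\FF,x)| - N_1 - N_2$, where $N_1$ (resp. $N_2$) is the total number of finite Category $1$ (resp. Category $2$) bars.

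The second step is to identify $N_1$ and $N_2$ topologically. Each Category $1$ bar corresponds to a value $t$ where a saddle point merges two previously distinct connected components of $G^-_{t^-}(\FF)$; equivalently, by the description of attractive basins in Proposition \ref{Propbord}, to a moment where two attractive basins get connected. Since $G(\FF)$ is connected and every sink is a local minimum of $A$ while the whole surface is swept out as $t\to +\infty$, a standard "sweep out / reduced-homology" argument (exactly as in Morse theory on $\Sigma$, cf.\ the remark after the construction of $\mathcal B$) gives $N_1 = \#\{\text{sinks}\} - 1$: each sink starts its own component and all but one component eventually dies by being absorbed by a lower one, each absorption producing one finite Category $1$ bar (this is the content of Lemmas \ref{sink-source} and \ref{sink-source-mM}). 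Symmetrically, filtering from above, $N_2 = \#\{\text{sources}\} - 1$. I would phrase this carefully: for each $t$ the number of Category $1$ bars at $t$ is $\#\{\text{connected components merged at }t\}$, i.e.\ (number of components of $G^-_{t^-}$ mapping into a given component of $G^-_{t^+}$) minus one, summed over components; telescoping over all $t$ yields $\#\mathcal C^-_{-\infty} - \#\mathcal C^-_{+\infty} = \#\{\text{sinks}\} - 1$.

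The third step is the index count: by Proposition \ref{gradient-like}, $\ind(\FF,x) = 1$ for each sink and source, and the Poincaré–Hopf formula for the oriented foliation $\FF$ on the closed surface $\Sigma$ gives $\sum_{x\in X}\ind(\FF,x) = \chi(\Sigma) = 2 - 2g$. Splitting the sum, $\#\{\text{sinks}\} + \#\{\text{sources}\} - \sum_{x\in X_{<0}}|\ind(\FF,x)| = 2-2g$, hence
\begin{equation*}
\sum_{x\in X_{<0}}|\ind(\FF,x)| = \#\{\text{sinks}\} + \#\{\text{sources}\} - 2 + 2g.
\end{equation*}
Combining with $N_1 + N_2 = \#\{\text{sinks}\} + \#\{\text{sources}\} - 2$ gives $\sum_t k'(t) = 2g$, and therefore the total number of semi-infinite bars is $2 + 2g$.

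The main obstacle I anticipate is making the telescoping argument for $N_1$ (and $N_2$) fully rigorous in the presence of saddle points with $\ind(\FF,x) < -1$, since such a saddle can simultaneously merge more than two components; but the $\mathcal B$ construction already accounts for this by producing $\#j^{-1}_t(C) - 1$ bars per limit component $C$, so the bookkeeping $\sum_t(\text{Cat.}1\text{ at }t) = \sum_t(\#\mathcal C^-_{t^-}\text{-components absorbed}) = \#\text{sinks} - \#\{\text{components of }G(\FF)\} = \#\text{sinks} - 1$ still goes through once one checks, via Proposition \ref{Propbord}, that a component of $G^-_t(\FF)$ contains at least one sink and that $G(\FF)$ is connected (which holds because $\Sigma$ is connected and $\FF$ has no closed leaf, so the union of all attractive basins is all of $\Sigma\setminus X$). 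A secondary subtlety is the edge case where $A_m$ (resp.\ $A_M$) is attained at several sinks (resp.\ sources): then Lemma \ref{sink-source-mM} shows $n-1$ of the extra Category $1$ bars are finite and only one semi-infinite Category $0$ bar $(A_m,+\infty)$ survives, which is exactly consistent with the count above, so no correction is needed.
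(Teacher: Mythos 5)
Your proposal is correct and follows essentially the same route as the paper: both arguments combine the fact that the total number of finite bars equals the number of index-one singularities minus two (which the paper encodes via the map $\iota$ from Lemmas \ref{sink-source} and \ref{sink-source-mM}, and which you recover by telescoping the component merges to get $N_1=\#\{\text{sinks}\}-1$ and $N_2=\#\{\text{sources}\}-1$) with the Poincar\'e--Hopf identity $\sum_{x\in X}\ind(\FF,x)=2-2g$. Your telescoping justification is just a slightly more explicit bookkeeping of the same count.
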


\begin{proof}
By Lemma \ref{sink-source} for each value $t\in S_{<0}$ the number of finite bars of which one end is the action value of $t$ is equal to 
$$\left(\sum_{C\in \mathcal{C}^-_{t^+}}(\#j_{t}^{-1}(\{C\})-1)+\sum_{C'\in \mathcal{C}^+_{t-}} (\# {j'}_t^{-1}(\{C'\})-1)\right).$$

Moreover, the existence of the application $\iota$ asserts that the total number of finite bars is equal to the number of singularities of index $1$ minus two. So we compute
\begin{equation}\label{sumnumberbar}
\begin{split}
\sum_{y\in X_1}1&=2+\sum_{y\in X_1\backslash \{y_m,y_M\}}\ind(\FF,y)\\
&=2+\sum_{t\in S_{<0}}\left(\sum_{C\in \mathcal{C}^-_{t^+}}(\#j_{t}^{-1}(\{C\})-1)+\sum_{C\in \mathcal{C}^+_{t-}} (\# {j'}_t^{-1}(\{C'\})-1)\right).
\end{split}
\end{equation}

Moreover, we have
\begin{align*}
2-2g &=\sum_{x\in X} \ind(\FF,x)\\
&=2+\sum_{x\in X\backslash \{y_m,y_M\}} \ind(\FF,x).\\
&=2+\sum_{x\in X_{<0}}\ind(\FF,x) +\sum_{y\in X_1\backslash \{y_m,y_M\}} \ind(\FF,y).
\end{align*}
Where the last equality is given by separating the fixed points of negative index and the fixed points of index  $1$.\\

 For $t\in S_{<0}$ we define 
$$k_t=-\sum_{\substack{x\in X_{<0} \\ A(x)=t}} \ind(\FF,x).$$

By equation \ref{sumnumberbar}, we get 
\begin{align*}
2-2g &=2+\sum_{t\in S_{<0}}\left(\sum_{\substack{x\in X_{<0} \\ A(x)=t}} \ind(\FF,x)\right)\\
& \ \ \ \ +\sum_{t\in S_{<0}}\left(\sum_{C\in \mathcal{C}^-_{t^+}}(\#j_{t}^{-1}(\{C\})-1)+\sum_{C'\in \mathcal{C}^+_{t-}} (\# {j'}_t^{-1}(\{C'\})-1)\right)\\
&=2- \sum_{t\in S_{<0}}\left( k_t-\sum_{C\in \mathcal{C}^-_{t^+}}(\#j_{t}^{-1}(\{C\})-1)-\sum_{C'\in \mathcal{C}^+_{t-}}( \# ({j'}_t^{-1}(\{C'\})-1)\right)\\
&=2-(\#\{\text{semi-infinite bars}\}-2).
\end{align*}
The last equation is given by the construction of the bars of category $3$ in the construction of $\mathcal{B}$. Indeed, for action value $t$, the number of bars $(t,+\infty)$ in the barcode is equal, by definition, to $k_t-\sum_{C\in \mathcal{C}^-_{t^+}}(\#j_{t}^{-1}(\{C\})-1)-\sum_{C'\in \mathcal{C}^+_{t-}}( \# ({j'}_t^{-1}(\{C'\})-1)$. The $-2$ comes from the two semi-infinite bars $(A_m,+\infty)$ and $(A_M,+\infty)$. So the number of semi-infinite bars is equal to $2g+2$.
\end{proof}

\begin{rem}
Let us suppose that for each value $t$ in the image of $A$, the set $A^{-1}(\{t\})$ is a singleton and each singularity $x$ of $X_{<0}$ has index $-1$. In this case the proof is simpler to understand because $\iota$ is an injection.

The number of semi-infinite bars is equal to $\#(X_{<0}\backslash \im(\iota))+2$ and we compute
\begin{align*}
2-2g &=\sum_{x\in X} \ind(\FF,x)\\
&=2+\sum_{y\in X_1\backslash \{y_m,y_M\}} \ind(\FF,y)+\sum_{x\in X_{<0}}\ind(\FF,x) \\
&=2 +\sum_{y\in X_1\backslash \{y_m,y_M\}}1-\sum_{x\in \im(\iota)}1-(\#\{\text{semi-infinite bars}\}-2)\\
&=4-\#\{\text{semi-infinite bars}\}.
\end{align*}

We obtain the result.
\end{rem}

			 	\section{A barcode with an order on a maximal unlinked set of fixed points}\label{order}
						
We consider a homeomorphism $f$ of a compact surface $\Sigma$ with a finite number of fixed points. We fix a maximal unlinked set  of fixed points $X$ of $f$. We denote by $\D$ the unit disk.\\

For a set $U\subset \D$ we will denote by $\Adh_\D(U)$ its closure in $\D$ and by $\Adh_{\R^2}{U}$ its closure in $\R^2$. A line $\gamma$ will be a proper oriented topological embedding of the interval $(0,1)$. If an oriented line $\gamma: (0,1)\ra \D$ separates $\D$ in two connected components, we will consider its left hand side, denoted $L(\ti\gamma)$, and its right-hand side, denoted $R(\ti\phi)$.  \\

Let us consider a maximal isotopy $I$ from $\id$ to $f$ such that $\Sing(I)=X$. We equipped $\Sigma\backslash X$ with a hyperbolic metric $m$ such that the universal cover $\ti{\Sigma\backslash X}$ of $\Sigma\backslash X$ with the pull back metric is isomorphic to $\D$.\\

We consider a singularity $x\in X$ and a path $\gamma:[0,1]\ra \Sigma$ such that $\lim_{t\ra 1} \gamma(t) =x$ and $\gamma((0,1])\subset \Sigma\backslash X$. We fix a lift $\ti \gamma:(0,1) \ra\D$ of $\gamma|_{(0,1)}$.
We consider a horospherical neighborhood $V$ of $x$ in $(\Sigma\backslash X,m)$. Meaning that $\pi^{-1} (V)$ is a disjoint union of horodisks where a horodisk is a disk internally tangent to the unit circle. Notice that $V\cup \{x\}$ is a topological neighborhood of $x$ in $\Sigma$.\\

Moreover, $\ti\gamma((0,\epsilon])$ is connected so there exists a unique horoball $\ti V\subset \pi^{-1} (V)$ which contains $\ti\gamma((0,\epsilon])$. By definition, the closure of a horoball intersects the boundary $\Ss^1$ of $\D$ in exactly one point and we set $\ti x = \Adh_{\R^2}(V)\cap \Ss^1$. Since the alpha-limit point of $\ti\gamma$ is a point of $\Ss^1$ we obtain that $\lim_{t\ra 0} \ti\gamma(t)=\ti x$. \\

Thus, if we consider a path $\gamma:[0,1]\ra \Sigma$ such that $\alpha(\gamma)=x\in X$, $\omega(\gamma)=y\in X$ and $\gamma((0,1))\subset \Sigma\backslash X$, then for every lift $\ti \gamma$ of $\gamma$ is line in $\D$ such that there are two points $\ti x$ and $\ti y$ of $\Ss^1$ which satisfy
$$\lim_{t\ra 0} \ti \gamma(t)=\ti x, \text{ and } \lim_{t\ra 1} \ti\gamma(t)=\ti y.$$
The point $\ti x$ (resp. $\ti y$) will be denoted $\alpha(\ti \gamma)$ (resp. $\omega(\ti \phi)$). We refer to Ratcliffe's book \cite{RAT}, chapter $9.8$ for more details.\\
			
For two distinct points $\ti x$ and $\ti y$ of $\Ss^1$, we define $[\ti x,\ti y]\subset \Ss^1$ the arc joining $\ti x$ to $\ti y$ for the usual orientation. We set $(\ti x,\ti y)=[\ti x,\ti y]\backslash\{\ti x,\ti y\}$.\\

Let us consider two proper paths $\phi:[0,1]\ra \Sigma\backslash X$ and $\phi':[0,1]\ra \Sigma\backslash X$ whose alpha and omega limit points are in $X$ and satisfy $\phi((0,1))\subset  \Sigma\backslash X$ and $\phi'((0,1))\subset  \Sigma\backslash X$. We say that $\phi$ and $\phi'$ \textit{strongly intersect} if there exist two lifts $\ti \phi$ and $\ti\phi'$ of $\phi$ and $\phi'$ as follows.\\
 
If we denote $\alpha(\ti\phi)=\ti x$, $\omega(\ti\phi)=\ti y$, $\alpha(\ti\phi')=\ti x'$ and $\omega(\ti\phi')=\ti y'$, then we have 
\begin{itemize}
\item $\ti x, \ \ti y, \ \ti x', \ \ti y'$ are distinct,
\item  $\left\{\begin{matrix}
\ti x' \in (\ti x, \ti y)\\
\ti y' \in (\ti y, \ti x)
\end{matrix}\right.$ 
or $\left\{\begin{matrix}
\ti x' \in (\ti y, \ti x)\\
\ti y' \in (\ti x, \ti y)
\end{matrix}\right.$.
\end{itemize}

The homeomorphism $\ti f$ can be extended on the closed unit disk $\Adh_{\R^2}(\D)$. A line $\gamma\subset \D$ is said to be a \textit{Brouwer line} of $\ti f$ if it separates $\D$ in two connected components such that the one on the left-hand side, denoted $L(\gamma)$, contains $\ti f(\gamma)$ and the one on the right-hand side, denoted $R(\gamma)$, contains $\ti f^{-1}(\gamma)$. In particular, we have $\ti f(\Adh_\D(L( \gamma)))\subset L(\gamma)$. We have the following definition.\\

\begin{definition}
For a couple $x,y\in X$, an oriented path $\gamma: [0,1]\ra \Sigma$ such that $\alpha(\gamma)=x\in X$, $\omega(\gamma)=y\in X$ and $\gamma((0,1))\subset \Sigma\backslash X$ is called a \textit{connexion} from $x$ to $y$ if every lift $\ti{\gamma}$ of $\gamma$ is an oriented Brouwer line of $\ti{f}$. 
\end{definition}

\begin{rem}
If we consider a maximal isotopy $I=(f_t)_{\tin}$ from $\id$ to $f$ and a foliation $\FF$ positively transverse to $I$ then each leaf $\phi$ of $\FF$ is a connexion in the previous sense. Indeed, if we consider a leaf $\phi$ of $\FF$, then a lift $\ti \phi$ of $\phi$ is an oriented line which separates $\D$ in two connected components $L(\ti\phi)$ and $R(\ti \phi)$ such that $\ti f(\ti \phi)\subset L(\ti\phi)$ and we deduce that $\ti f(\Adh_\D(L( \ti \phi)))\subset L(\ti\phi))$.
\end{rem}

We will prove the following lemma which will be useful in section \ref{equality}. \\

\begin{lem}\label{intersection-positive}
Let us consider $x,x',y,y'\in X$  fixed points of $f$ such that there exist a connexion $\phi$ from $x$ to $y$ and a connexion $\phi'$ from $x'$ to $y'$.
If $\phi$ and $\phi'$ strongly intersect then there exists a connexion from $x$ to $y'$ and a connexion from $x'$ to $y$.
\end{lem}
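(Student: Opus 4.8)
The strategy is to work entirely in the universal cover $\D$ of $\Sigma\backslash X$ and exploit the Brouwer-line property of the lifts. By hypothesis there are lifts $\ti\phi$ of $\phi$ and $\ti\phi'$ of $\phi'$ with endpoints $\ti x=\alpha(\ti\phi)$, $\ti y=\omega(\ti\phi)$, $\ti x'=\alpha(\ti\phi')$, $\ti y'=\omega(\ti\phi')$ on $\Ss^1$, all distinct, and interleaved on the circle in the pattern of the definition of ``strongly intersect''. First I would record the elementary consequence of this interleaving: the lines $\ti\phi$ and $\ti\phi'$ must actually cross in $\D$ (two properly embedded lines with endpoints linked on the boundary circle of a disk intersect), and more importantly, the four endpoints split $\Ss^1$ into four arcs, so $\ti x$ and $\ti y'$ are ``on the same side'' of $\ti\phi'$ in a suitable sense while $\ti x'$ and $\ti y$ are on the same side of $\ti\phi$. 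The picture to keep in mind: $\ti\phi$ goes from $\ti x$ to $\ti y$ with $\ti f(\ti\phi)\subset L(\ti\phi)$ and $\ti f^{-1}(\ti\phi)\subset R(\ti\phi)$; similarly for $\ti\phi'$.

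The heart of the argument is to produce a new Brouwer line joining $\ti x$ to $\ti y'$ (the other one, from $\ti x'$ to $\ti y$, being symmetric). The natural candidate is obtained by concatenating an initial segment of $\ti\phi$ (from $\ti x$ up to the crossing point $p=\ti\phi\cap\ti\phi'$) with a terminal segment of $\ti\phi'$ (from $p$ to $\ti y'$); call this line $\ti\psi$. By construction $\ti\psi$ is a properly embedded oriented line in $\D$ with $\alpha(\ti\psi)=\ti x$ and $\omega(\ti\psi)=\ti y'$. I would then check that $\ti\psi$ is a Brouwer line for $\ti f$: one needs $\ti f(\Adh_\D(L(\ti\psi)))\subset L(\ti\psi)$. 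The left side $L(\ti\psi)$ is, up to the crossing, bounded by the left side of $\ti\phi$ on the ``$\ti x$ half'' and by the left side of $\ti\phi'$ on the ``$\ti y'$ half''; because of the interleaving of the endpoints one checks that $L(\ti\psi)\subset L(\ti\phi)\cap L(\ti\phi')$ near the relevant arcs, or rather that $\Adh_\D(L(\ti\psi))$ is contained in a region whose image under $\ti f$ is controlled by the two inclusions $\ti f(\Adh_\D(L(\ti\phi)))\subset L(\ti\phi)$ and $\ti f(\Adh_\D(L(\ti\phi')))\subset L(\ti\phi')$. The key point is that $\ti f$ has no fixed point in $\D$ (it is a lift of $f$ to the universal cover of $\Sigma\backslash X$, and the covering automorphism corresponding to a loop around a singularity is fixed-point-free, so by a Brouwer-type argument $\ti f$ is fixed-point-free on $\D$, hence $\ti f$ cannot ``turn around'' — this is exactly the rigidity that makes the concatenation of two Brouwer lines again a Brouwer line). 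Once $\ti\psi$ is a Brouwer line, I would project: $\psi=\pi\circ\ti\psi$ is an oriented path in $\Sigma$ from $x$ to $y'$ with $\psi((0,1))\subset\Sigma\backslash X$, and I must verify that \emph{every} lift of $\psi$ is a Brouwer line. Since any two lifts differ by a deck transformation (an isometry of $\D$ commuting with $\ti f$ up to the chosen normalization), and the Brouwer-line property is equivariant under the deck group once it holds for one lift, this follows — modulo the standard care that $\psi$ is chosen simple (one can always homotope the concatenation rel endpoints to an embedded line, and Brouwer lines can be taken up to such homotopy as in Le Calvez's theory).

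The main obstacle I anticipate is precisely the verification that the concatenated line $\ti\psi$ is again a Brouwer line — i.e. that the ``left region'' of $\ti\psi$ is $\ti f$-invariant. Drawing the four arcs on $\Ss^1$ and the two crossing lines, one has to argue that no part of $\Adh_\D(L(\ti\psi))$ can escape: the portion near $\ti x$ is trapped by $\ti f(\Adh_\D(L(\ti\phi)))\subset L(\ti\phi)$, the portion near $\ti y'$ by the analogous statement for $\ti\phi'$, and the ``wedge'' near the crossing point $p$ needs the no-fixed-point / free homeomorphism input together with Franks-type lemmas on Brouwer homeomorphisms of the plane (a free orientation-preserving homeomorphism maps any Brouwer line to a disjoint one, and finite unions behave well). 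I would cite Le Calvez's foliation paper \cite{CAL4} (and the Brouwer theory recalled there) for the precise statement that allows gluing Brouwer lines at a transverse intersection, which is really the combinatorial engine here. A secondary, purely bookkeeping obstacle is making sure the endpoints of $\ti\psi$ are exactly $\ti x$ and $\ti y'$ and not accidentally a translate; this is handled by the horoball description of $\alpha(\ti\gamma)$ and $\omega(\ti\gamma)$ given just before the statement, which pins down the boundary endpoints from the chosen lifts.
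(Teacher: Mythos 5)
Your overall strategy points in the right direction: the new connexions do come from combining the two lifted Brouwer lines, and the Brouwer property of the new line is inherited from the inclusions $\ti f(\Adh_\D(L(\ti\phi)))\subset L(\ti\phi)$ and $\ti f(\Adh_\D(L(\ti\phi')))\subset L(\ti\phi')$. In the special case where $\ti\phi$ and $\ti\phi'$ cross exactly once and transversally, your concatenated line $\ti\psi$ is precisely the frontier of the connected component of $R(\ti\phi)\cap R(\ti\phi')$ adherent to the arc between $\ti y'$ and $\ti x$, which is what the paper constructs. However, there are two genuine gaps. First, ``the crossing point $p=\ti\phi\cap\ti\phi'$'' is not well defined: these are only topological lines, and their intersection can be a complicated compact set (a Cantor set, a union of arcs, \dots), so the concatenation you describe does not exist in general, and homotoping it rel endpoints to an embedded line is not legitimate since the Brouwer property is not invariant under homotopy. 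The paper avoids this entirely by completing each line into a Jordan curve with an arc of $\Ss^1$ and invoking K\'er\'ekj\'art\'o's theorem: every connected component of the intersection of two Jordan domains is a Jordan domain, so the frontier of the relevant component $U$ of $L(\ti\phi)\cap L(\ti\phi')$ is automatically an embedded line $J$ from $\ti x'$ to $\ti y$, no matter how wild $\ti\phi\cap\ti\phi'$ is.

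Second, your verification that the new line is a Brouwer line rests on the wrong mechanism. Fixed-point-freeness of $\ti f$ and Franks-type lemmas are neither needed nor sufficient here; the real issue is that $\ti f(\Adh_\D(U))$, while clearly contained in $L(\ti\phi)\cap L(\ti\phi')$, could a priori land in a \emph{different} connected component of that intersection. The paper resolves this with a short connectedness argument: it exhibits a ``croissant'' $K\subset U$ accumulating on the boundary arc $(\ti y,\ti x')$ with $\ti f(K)\subset U$, so the connected set $\ti f(\Adh_\D(U))$ meets $U$ and is therefore contained in $U$, which is exactly the statement that $J$ is a Brouwer line. Your ``wedge near the crossing point'' worry dissolves once the argument is phrased in terms of components rather than concatenations. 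You should also note that a connexion requires \emph{every} lift to be a Brouwer line; this follows because any other lift of $\pi(J)$ is $T(J)$ for a deck transformation $T$, and $T(J)$ bounds the component $T(U)$ of $L(T\ti\phi)\cap L(T\ti\phi')$, to which the same argument applies.
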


\begin{rem}\label{rem-intersection}
The result stands for non area-preserving homeomorphisms isotopic to the identity and no foliations are involved in the statement. Moreover, the fixed points $x,y,x',y'$ do not have to be distincts to obtain the result. Nevertheless, if we suppose that $f$ is a Hamiltonian homeomorphism, then for every connexion $\phi$ between two fixed points $x$ and $y$, we have $A_f(x)>A_f(y)$ where $A_f$ is the action function of $f$, see \ref{action-decrease} in the preliminaries. So, if we consider four fixed points satisfying the hypothesis of Lemma \ref{intersection-positive}, then, by hypothesis, we have $x\neq y$ and $x'\neq y'$ and the result implies that $x \neq y'$ and $x'\neq y$. \\
\end{rem}

We will need a result of Kerékj\'{a}rt\'{o} \cite{KER} which asserts that each connected component of the intersection of two Jordan domains is a Jordan domain. A \textit{Jordan domain} is the relatively compact connected component of the complement of a simple closed curve of the plane, called a \textit{Jordan curve}. We refer to \cite{CAL7} for the proof of the following result.

\begin{theorem}\label{Jordan-plan}
Let $U$ and $U'$ be two Jordan domains of the plane. Every connected component of $U\cap U'$ is a Jordan domain.
\end{theorem}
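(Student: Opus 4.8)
The plan is to deduce Theorem \ref{Jordan-plan} from the Riemann mapping theorem together with two classical facts of plane topology: Torhorst's theorem (the boundary of a complementary component of a Peano continuum of the sphere is again a Peano continuum) and Carathéodory's boundary extension theorem; this is, in essence, Kerékjártó's original argument \cite{KER}, and the version we follow is the one written up in \cite{CAL7}. Write $J=\partial U$, $J'=\partial U'$, work in the sphere $\hat{\mathbb{C}}=\mathbb{C}\cup\{\infty\}$, and fix a connected component $W$ of $U\cap U'$. First I would dispose of the case $J\cap J'=\emptyset$: since $J'$ is connected and disjoint from $J$ it lies in $U$ or in $\hat{\mathbb{C}}\setminus\overline{U}$, and a short argument with the Jordan curve theorem shows that $U\cap U'$ is then one of $\emptyset$, $U$, $U'$, so the conclusion is immediate. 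Hence one may assume $J\cap J'\neq\emptyset$, in which case $K:=J\cup J'$ is a Peano continuum (a connected finite union of locally connected continua). One then checks that $W$ is itself a connected component of $\hat{\mathbb{C}}\setminus K$: indeed $W$ is open, connected and disjoint from $K$, and the component of $\hat{\mathbb{C}}\setminus K$ containing $W$, being connected and missing both $J$ and $J'$, is contained in $U$ and in $U'$, hence in $U\cap U'$, hence equals $W$.

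Next I would show that $W$ is simply connected, i.e. that $\hat{\mathbb{C}}\setminus W$ is connected. Put $Z=\hat{\mathbb{C}}\setminus(U\cap U')=(\hat{\mathbb{C}}\setminus U)\cup(\hat{\mathbb{C}}\setminus U')$; this is connected, being the union of two connected sets that both contain $\infty$. Every other component $V$ of the open set $U\cap U'$ is closed in $U\cap U'$, so $\partial V\subseteq Z$, and $\partial V\neq\emptyset$ since $V$ is a nonempty bounded proper subset of $\hat{\mathbb{C}}$; thus each $\overline{V}$ is connected and meets $Z$, so $\hat{\mathbb{C}}\setminus W=Z\cup\bigcup_{V\neq W}\overline{V}$ is connected. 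Therefore $W$ is a nonempty bounded simply connected domain, and it carries a Riemann map $R\colon\D\to W$. By Torhorst's theorem applied to the Peano continuum $K$ and its complementary component $W$, the boundary $\partial W$ is a Peano continuum — in particular locally connected, and nondegenerate because $W$ is bounded — so by Carathéodory's theorem $R$ extends to a continuous surjection $\overline{R}\colon\overline{\D}\to\overline{W}$ carrying $\partial\D$ onto $\partial W$.

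It remains to prove that $\overline{R}$ is injective on $\partial\D$; then $\partial W=\overline{R}(\partial\D)$ is a Jordan curve, $\overline{R}$ is a homeomorphism of compacta, and $W$ is a Jordan domain, as desired. This is the main obstacle. Suppose $\overline{R}(a)=\overline{R}(b)=p$ with $a\neq b$ in $\partial\D$, and let $\beta$ be a circular arc in $\overline{\D}$ from $a$ to $b$ whose interior lies in $\D$. Then $\gamma:=\overline{R}(\beta)=R(\beta\cap\D)\cup\{p\}$ is a Jordan curve through $p$ with $\gamma\setminus\{p\}\subseteq W\subseteq U\cap U'$, so $\gamma\subseteq\overline{U}\cap\overline{U'}$ and $\gamma\cap(J\cup J')=\{p\}$. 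The curve $\gamma$ separates $\hat{\mathbb{C}}$ into two Jordan domains, and $R$ carries the two components of $\D\setminus\beta$ onto the two pieces $W_1,W_2$ into which $\gamma\setminus\{p\}$ splits $W$, lying in the two distinct sides of $\gamma$, with $p\in\partial W_1\cap\partial W_2$. Now $p$ lies on $J$ or on $J'$. If it lies on only one of the two curves, say on $J$ but not on $J'$, then $U'$ contains a full neighbourhood of $p$, so after normalising $J$ to the unit circle and $U$ to the unit disk by the Schoenflies theorem, $W$ agrees with a half-disk near $p$; such a domain has a single prime end at $p$, contradicting $\overline{R}(a)=\overline{R}(b)=p$. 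The remaining case $p\in J\cap J'$ is the crux: here $J$ and $J'$ may meet near $p$ in an arbitrarily complicated (for instance Cantor) set, so no Schoenflies normalisation tames both curves simultaneously, and one must rule out the separating curve $\gamma$ directly, arguing with crosscuts and accessible boundary points and deriving a contradiction with the Jordan curve theorem applied to $J$ and to $J'$. Making this final step rigorous is where the real work lies, and it is the step for which the excerpt defers to \cite{CAL7}.

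As a cross-check, one can instead first treat the case in which $J$ and $J'$ are polygonal and cross transversally in finitely many points — there $U\cap U'$ is a finite union of polygonal regions whose boundaries are visibly Jordan curves formed by arcs of $J$ alternating with arcs of $J'$ — and then reduce the general case to this one by exhausting $U$ and $U'$ from the inside by nested analytic Jordan domains (images of concentric circles under the respective Riemann maps), perturbing them to polygons in general position, and controlling the limits of the components meeting $W$. I would expect this route to present a comparable, if more bookkeeping-heavy, difficulty, again concentrated entirely in the passage to the limit.
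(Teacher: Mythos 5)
There is a genuine gap, and you identify it yourself: the injectivity of the Carath\'eodory extension $\overline{R}$ on $\partial\D$ in the case where the doubled boundary point $p$ lies on $J\cap J'$ is precisely the content of Ker\'ekj\'art\'o's theorem, and your write-up stops exactly there (``one must rule out the separating curve $\gamma$ directly\dots Making this final step rigorous is where the real work lies''). Everything before that point -- reducing to $J\cap J'\neq\emptyset$, identifying $W$ as a complementary component of the Peano continuum $K=J\cup J'$, the connectedness of $\hat{\mathbb{C}}\setminus W$, Torhorst plus Carath\'eodory to get a continuous surjection $\overline{\D}\to\overline{W}$ -- is a correct and standard reduction, but it only restates the theorem in the equivalent form ``$\partial W$ has no cut point'' (equivalently, no crosscut of $W$ can land at a single boundary point). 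Since that is the whole difficulty, what you have is a reduction of the statement to its hardest case, not a proof. The proposed fallback via polygonal approximation has the same status: the passage to the limit, where components can merge or boundary points can be pinched, is exactly where the argument would have to do real work, and it is left unexamined.

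For comparison, the paper itself does not prove this statement at all: it invokes it as Ker\'ekj\'art\'o's theorem and refers to \cite{CAL7} for the proof. So your outline is compatible with the intended source (it follows the classical Riemann-map/Torhorst/Carath\'eodory route), but to count as a proof it would need the missing argument ruling out a Jordan curve $\gamma$ with $\gamma\setminus\{p\}\subset U\cap U'$ and $p\in J\cap J'$ separating two pieces of $W$ -- e.g.\ the crosscut/accessibility argument carried out in \cite{CAL7} -- rather than a pointer to it. One smaller remark: even your ``easy'' case $p\in J\setminus J'$ is only sketched; the claim that $W$ coincides with a topological half-disk near $p$ and therefore admits a single prime end at $p$ is correct but deserves the two lines of justification (a full neighbourhood of $p$ lies in $U'$, so near $p$ the set $U\cap U'$ agrees with $U$, whose germ at $p$ is connected after a Schoenflies normalisation and hence lies in $W$).
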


We now prove Lemma \ref{intersection-positive}.

\begin{proof}[Proof of Lemma \ref{intersection-positive}]
We consider the lifts $\ti\phi$ and $\ti\phi'$ of $\phi$ and $\phi'$ given by the hypothesis. We denote 
$\alpha(\ti\phi)=\ti x$, $\omega(\ti\phi)=\ti y$, $\alpha(\ti\phi')=\ti x'$ and $\omega(\ti\phi')=\ti y'$.\\

By symmetry we can suppose that $\ti x'\in(\ti y, \ti x)$ as in Figure \ref{fig-intersection-positive}.\\

We consider the oriented loops $\Gamma_{\ti\phi}=\ti\phi\cup[\ti y,\ti x]$ and $\Gamma_{\ti \phi'}=\phi'\cup [\ti y', \ti x']$ in $\R^2$. The loops $\Gamma_{\ti\phi}$ and $\Gamma_{\ti \phi'}$ are the frontier of the domains $\Adh_{\R^2}(L(\ti\phi))$ and $\Adh_{\R^2}(L(\ti\phi'))$ and are Jordan curves.  \\

For every $\ti z=\e^{i\theta}\in (\ti y,\ti x)$ there exist $\epsilon>0$ and $\eta>0$ such that 
$$ |\theta'-\theta|<\eta \text{ and }
1-\epsilon<r<1
\Rightarrow r\e^{i\theta'}\in L(\ti\phi).$$
Symmetrically, for every $\ti z=\e^{i\theta}\in (\ti y',\ti x')$ there exist $\epsilon>0$ and $\eta>0$ such that 
$$ |\theta'-\theta|<\eta \text{ and }
1-\epsilon<r<1
\Rightarrow r\e^{i\theta'}\in L(\ti\phi').$$

We denote $\ti x'=e^{i\alpha}$ and $\ti y=e^{i\beta}$ with $\beta<\alpha <\beta+2\pi$. There exists a continuous map $\psi :(\beta,\alpha) \ra [0,1]$ such that for every $ \theta \in(\beta,\alpha)$, we have
$$\psi(\theta)<r<1 \Rightarrow r\e^{i\theta}\in L(\ti\phi)\cap L(\ti\phi').$$
We will consider the small croissant  $K\subset \D$,  as in figure \ref{fig-intersection-positive}, defined by 
$$K=\{z=r\e^{i\theta} | \ \theta \in (\beta,\alpha), \ \psi(\theta)<r<1\}.$$
 
By Theorem \ref{Jordan-plan}, the connected component $U$ of $L(\ti\phi'\cap L(\ti\phi')$ which contains $K$ is a Jordan domain in $\R^2$. Since $K\subset U$, the boundary of $\Adh_{\R^2}(U)$ is the union of an arc in $\Ss^1$ and an oriented curve $J$ in $\D$ from $\ti x'$ to $\ti y$.\\

Since $\ti\phi$ and $\ti\phi'$ are Brouwer lines of $\ti f$, we have that $\ti f(\Adh_\D(U))$ is connected and satisfies $\ti f(\Adh_\D(U))\subset L(\ti\phi)\cap L(\ti\phi')$. Moreover, we have $\ti f(K)\subset U$, so we deduce 
$$\ti f(\Adh_\D(U))\subset U.$$
In other words, the line $J$ is a Brouwer line for $\ti f$. Hence $J$ induces a connexion in $\Sigma$ from $x'$ to $ y$.\\
 
By considering the intersection $R(\ti \phi)\cap R(\ti\phi')$, with the same arguments we obtain a connexion in $\Sigma$ from $x$ to $y'$. 
\begin{figure}[H]
\begin{center}
\begin{tikzpicture}[scale=0.6]

  \draw (0,0) circle (4); 
  \draw (-2.5,-2.5) node{$\D$};

	\draw[green] (-4,0) ..controls +(1,2.5) and +(-2.5,-1).. (0,4);
	\draw[green] (-2.3,2.3) node{$K$};
	\draw[green] (-2,3.46) --(-1.4,3.42);
	\draw[green] (-2.82,2.82) --(-2.6,2.6);
	\draw[green] (-3.46,2) --(-3.42,1.4);
			
  \draw[red] (0,4) -- (0,1.5); 
  \draw[red] (0,1.5) ..controls +(0,-0.5) and +(0,0.5).. (-1,1);
  \draw[red] (-1,1) ..controls +(0,-0.5) and  +(0,-0.5).. (0,1);
  \draw[red] (0,1) .. controls +(0,0.5) and +(-0.5,0.5).. (1,1.5);
  \draw[red] (1,1.5) .. controls +(1,-1) and +(-1,1).. (-1,-1);
  \draw[red] (-1,-1) .. controls +(1,-1) and +(0,2) .. (1.5,-0.2);
  \draw[red,<-] (1.5,-0.2) ..controls +(0,-2) and +(0,4).. (0,-4); 
  \draw (0,-3) node[left,red]{$\ti{\phi}$};

\begin{scope}[rotate = 90]
    \draw[blue,decoration={markings, mark=at position 0.5 with {\arrow{>}}}, postaction={decorate}] (0,4) -- (0,1.5); 
    \draw[blue] (0,1.5) ..controls +(0,-0.5) and +(0,0.5).. (-1,1);
    \draw[blue] (-1,1) ..controls +(0,-0.5) and  +(0,-0.5).. (0,1);
    \draw[blue] (0,1) .. controls +(0,0.5) and +(-0.5,0.5).. (1,1.5);
    \draw[blue] (1,1.5) .. controls +(1,-1) and +(-1,1).. (-1,-1);
    \draw[blue] (-1,-1) .. controls +(1,-1) and +(0,2) .. (1.5,0);
    \draw[blue] (1.5,0) ..controls +(0,-2) and +(0,4).. (0,-4);
    \draw (0,3) node[below,blue]{$\ti{\phi'}$};
  \end{scope}

	\draw[blue,left] (-4,0) node{$\ti x'$};
	\draw[blue,right] (4,0) node{$\ti y'$};
	\draw[red,below] (0,-4) node{$\ti x$};
	\draw[red,above] (0,4) node{$\ti y$};
\end{tikzpicture}
\end{center}
\caption{ }
\label{fig-intersection-positive}
\end{figure}

\end{proof}

For the remainderof the section, we also suppose that $f$ is a Hamiltonian homeomorphism, we will define a barcode associated to $X$.\\

The notion of connexion induces an order on $X$ where, for $x,y\in X$,  we say that $x>y$ if there exists a connexion from $x$ to $y$.\\

We saw in section \ref{foliation} that the index function $\ind(\FF,\cdot)$ defined on $X$ does not depend on the choice of the foliation $\FF$ positively transverse to $I$ and we denote this index function $\ind(I,\cdot)$. We will study the following barcode.\\

\begin{definition}
We define the graph of connexion $G(>)$ whose set of vertices is equal to $X$ and in which there is an edge between two vertices $x$ and $y$ if and only if $x> y$ in $\Sigma$. We denote by $\bm{\beta}_>$ the barcode $\mathcal{B}(G(>),A_f|_X,\ind(I,\cdot)).$
\end{definition}

\begin{rem} 
If we consider a foliation $\FF\in\FF(I)$, the graph $G(\FF)$ is a subgraph of $G(>)$. 
\end{rem}

		\section{Equalities of the previous constructions and independence of the foliation}\label{equality}

In this section, we fix a Hamiltonian homeomorphism $f$ of a closed and oriented surface $\Sigma$ with a finite number of fixed points. Let $X\subset \Fix(f)$ be a maximal unlinked set of fixed points and $I=(f_t)_{t\in[0,1]}$ an isotopy from $\id$ to $f$ such that $\Sing(I)=X$. The action functional of $f$ will be denoted $A_f$.\\
For a foliation $\FF$ positively transverse to the isotopy $I$, we denote by $G(\FF)$ the graph, defined in section \ref{2barcode}, whose set of vertices is the set $X$ such that for every couple of vertices $x$ and $y$ of $G(\FF)$ there exists an edge from $x$ to $y$ if and only if there exists a leaf in $\FF$ from $x$ to $y$. We will also consider the subgraphs $(G^-_t(\FF))_{t\in \R}$ and $(G^+_t(\FF))_{t\in\R}$ given by the natural filtration of $G(\FF)$ by $A_f$. \\

We will use some notation of section \ref{construction}. For $t\in\R$ we denote $\mathcal{C}^-_t$ and $\mathcal{C}^+_t$ the sets of connected components of $G^-_t(\FF)$ and $G^+_t(\FF)$ and for a connected component $\mathcal{C}$ of $G^-_t(\FF)$ we define $L(\mathcal{C})=\min\{A_f(y)\ | \ y \text{ vertex of } \mathcal{C}\}$.\\

We consider the graph of connexion $G(>)$, defined in section \ref{order}, whose set of vertices is equal to $X$ such that there is an edge between two vertices $x$ and $y$ if and only if there exists a connexion between $x$ and $y$ in $\Sigma$.\\

We will consider the barcode $\mathcal{B}(G(\FF),A_f|_X,\ind(\FF,\cdot))$, denoted $\bm{\beta}_\FF$, constructed in section \ref{2barcode} and associated to the foliation $\FF$. Recall that the index function $\ind(\FF,\cdot)$ defined on $X$ does not depend on the choice of $\FF\in\FF(I)$ and we denote this index function $\ind(I,\cdot)$. We will consider the barcode  $\mathcal{B}(G(>),A_f|_X,\ind(I,\cdot))$, denoted $\bm{\beta}_>$, defined in section \ref{order}.\\

In the first section, we compare these two barcodes and in the second section we compare the barcode $\bm{\beta}_\FF$ with the barcode constructed in section \ref{génériqueF} in the generic case.

			\subsection{Equality between the barcode $\bm{\beta}_\FF$ and the barcode $\bm{\beta}_>$}\label{1egality}
			
In this section, we prove the following theorem.

\begin{theorem}\label{egalité-ordre}
The barcode $\bm{\beta}_\FF$ does not depend on the choice of $\FF\in\FF(I)$ and satisfies $\bm{\beta}_\FF=\bm{\beta}_>.$
\end{theorem}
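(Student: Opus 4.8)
The plan is to prove the equality $\bm{\beta}_\FF = \bm{\beta}_>$ by showing that, although the graph $G(\FF)$ is generally a strict subgraph of $G(>)$, the two graphs induce \emph{the same filtered connectivity data}, which is all that the map $\mathcal{B}$ actually sees. Concretely, recall from Section \ref{construction} that $\mathcal{B}(G,A,\ind)$ depends only on: (i) the sets of connected components $\mathcal{C}^-_t, \mathcal{C}^+_t, \mathcal{C}^-_{t^+}, \mathcal{C}^+_{t^-}$ of the sublevel/superlevel subgraphs, (ii) the maps $j_t, j'_t$ between them, (iii) the values $L(\cdot), D(\cdot)$ of $A$ on these components, and (iv) the index function $\ind$. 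Since $A_f|_X$ and $\ind(I,\cdot)$ are literally the same data for both barcodes, it suffices to prove:
\begin{center}
for every $t\in\R$, $G^-_t(\FF)$ and $G^-_t(>)$ have the same partition of $X\cap A_f^{-1}((-\infty,t))$ into connected components, and likewise $G^+_t(\FF)$ and $G^+_t(>)$ have the same partition of $X\cap A_f^{-1}((t,+\infty))$.
\end{center}
Because $G(\FF)\subset G(>)$, the partition coming from $G(\FF)$ refines the one coming from $G(>)$; the content is the reverse inclusion, i.e.\ two vertices joined by an edge of $G(>)$ inside a sublevel set are already in the same component of $G(\FF)$ restricted to that sublevel set.

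First I would reduce to the sublevel case (the superlevel case being symmetric, via the remark that every statement about $G^-_t(\FF)$ has a mirror for $G^+_t(\FF)$). So fix $t$, take $x,y\in X$ with $A_f(x),A_f(y)<t$ and an edge $x>y$ in $G(>)$, i.e.\ a connexion $\phi$ (a path whose lifts are Brouwer lines of $\ti f$) from $x$ to $y$. The goal is to connect $x$ to $y$ within $G^-_t(\FF)$, i.e.\ by a chain of leaves of $\FF$ all of whose endpoints have action $<t$. The key tool is Lemma \ref{intersection-positive}: if a connexion strongly intersects a leaf of $\FF$, one produces new connexions by "surgery", and the action strictly decreases along each connexion (Lemma \ref{action-decrease}, and the remark following Lemma \ref{intersection-positive}), so all intermediate singularities that appear have action strictly below $\max(A_f(x),A_f(y)) < t$. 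The strategy is: lift $\phi$ to a Brouwer line $\ti\phi$ in $\D$; the leaves of $\FF$ that $\ti\phi$ crosses are (lifts of) leaves of $\FF$; using positive transversality of $\FF$ to $I$ and the fact that $\ti\phi$ is a Brouwer line, each such crossing is a strong intersection in the sense of Section \ref{order}; then apply Lemma \ref{intersection-positive} repeatedly to replace $\phi$ by a concatenation of connexions that runs "along" leaves of $\FF$, eventually exhibiting a chain of connexions from $x$ to $y$ each of which is a leaf of $\FF$. Finiteness of $X$ and the strict action decrease guarantee the surgery process terminates and stays in the sublevel set.

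The main obstacle I expect is controlling this surgery process: a priori $\ti\phi$ could cross infinitely many leaves of $\FF$, or the induction on strong intersections might not obviously terminate. To handle this I would use that $\FF$ is gradient-like with finitely many singularities (Proposition \ref{gradient-like}), so there are only finitely many chains of connexions and finitely many "non-separated leaf" classes (the Remark after the local model proposition in Section \ref{geometricsprop}); combined with the geometric description of $W^s(\mathcal{C})$ in Proposition \ref{Propbord} and Lemma \ref{Sigma-}, one can argue that $x$ and $y$ lie in the closure of a common attractive basin $W^s(\mathcal{C})$ of some connected component $\mathcal{C}$ of $G^-_t(\FF)$, forcing $x$ and $y$ into the same component of $G^-_t(\FF)$. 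An alternative and perhaps cleaner route: show directly that if there is a connexion $\phi$ from $x$ to $y$ then either $\phi$ is "unlinked from all leaves of $\FF$" — impossible by positive transversality unless $y$ lies in the forward saturated set of $x$ — or $\phi$ strongly intersects some leaf $\psi$ of $\FF$, in which case the surgery produces a connexion from $x$ to $\omega(\psi)$ and one from $\alpha(\psi)$ to $y$, both with strictly smaller "complexity" (number of leaves of $\FF$ crossed); induct on this complexity. Once the component-partition identity is established for all $t$ (in both the $G^-$ and $G^+$ versions), the maps $j_t, j'_t$ agree as well since they are induced by the inclusions of subgraphs, hence every bar of category $0,1,2$ of $\mathcal{B}(G(>),\cdot,\cdot)$ matches one of $\mathcal{B}(G(\FF),\cdot,\cdot)$; and the category $3$ bars are then forced to agree because Proposition \ref{connexionsaddle} pins down the total count $|\ind(I,x)|$ of bars ending at each value, independently of the graph. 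This yields $\bm{\beta}_> = \bm{\beta}_\FF$ for every $\FF\in\FF(I)$, and in particular independence of $\FF$.
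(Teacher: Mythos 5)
Your reduction is exactly the paper's: the theorem is deduced from the statement that for every $t$ the connected components of $G^-_t(\FF)$ and of $G^-_t(>)$ induce the same partition of $X\cap A_f^{-1}((-\infty,t))$ (the paper's ``Fundamental Lemma''), with the inclusion $G(\FF)\subset G(>)$ giving one direction for free. The problem is the reverse direction, and here your plan has a genuine gap. Your surgery scheme -- lift the connexion $\phi$, find a strong intersection with a leaf of $\FF$, apply Lemma \ref{intersection-positive}, and induct -- can only ever \emph{produce} new connexions; it cannot rule one out. But the case where $x$ is a \emph{sink} of $\FF$ requires precisely a non-existence statement: if $x$ is a sink, no leaf of $\FF$ starts at $x$, so for $t$ slightly above $A_f(x)$ the component of $x$ in $G^-_t(\FF)$ can be the singleton $\{x\}$, and the only way the Fundamental Lemma can hold is if there is \emph{no} connexion from $x$ to any other singularity at all. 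The paper proves this with genuinely dynamical input: a connexion emanating from $x$ forces the local rotation set $\rho_s(I,x)\subset[0,+\infty)$, positive transversality of $\FF$ forces $\rho_s(I,x)\subset(-\infty,0]$, hence $\rho_s(I,x)=\{0\}$; then either $\ind(f,x)\neq 1$ (contradicting Le Roux's index theorem, since $\ind(\FF,x)=1$ for a sink) or $\ind(f,x)=1$ and Yan's theorem produces contractible periodic orbits of nonzero type $(p,q)$ accumulating on $x$, contradicting $\rho_s(I,x)=\{0\}$. Nothing in your combinatorial argument substitutes for this.

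A secondary weakness: your termination argument for the surgery (``induct on the number of leaves of $\FF$ crossed'') is not well founded -- a connexion may cross infinitely many leaves, and the ``complexity'' is not defined. The paper sidesteps any such induction: in the saddle case it takes the attractive basin $W^s(\mathcal{C}^-_x)$ of the component of $x$, uses that its frontier is a finite union of leaves (Proposition \ref{Propbord}) all of whose singularities have action $>A_f(x)$, observes that a lift of $\phi$ exiting a lift of this basin must strongly intersect a single frontier leaf $\ti\psi$, and applies Lemma \ref{intersection-positive} exactly once to get a connexion from $x$ to $\omega(\psi)$ with $A_f(\omega(\psi))>A_f(x)$, contradicting the action decrease along connexions. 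The source case reduces to the saddle case by one more application of the same trick. If you replace your open-ended induction by this basin-frontier argument and add the rotation-set argument for sinks, the proof closes.
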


For the proof, we fix a foliation $\FF\in\FF(I)$ positively transverse to the isotopy $I$. We will need the following lemma.

\begin{lem}[Fundamental]\label{fundamental}
For each $t\in \R$, the set of connected components of $G^-_t(\FF)$ defines the same partition of $X\cap A_f^{-1}((-\infty,t))$ than the set of connected components of $G^-_t(>)$.
\end{lem}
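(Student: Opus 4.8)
The plan is to prove that $x,y \in X\cap A_f^{-1}((-\infty,t))$ lie in the same connected component of $G^-_t(\FF)$ if and only if they lie in the same connected component of $G^-_t(>)$, by showing the two graphs have the same transitive-symmetric closure on each sublevel set. One inclusion is essentially free: by the remark following Definition in section \ref{order}, every leaf $\phi$ of $\FF$ is a connexion, so $G(\FF)$ is a subgraph of $G(>)$ and hence $G^-_t(\FF)$ is a subgraph of $G^-_t(>)$ for every $t$. Therefore any two vertices in the same component of $G^-_t(\FF)$ are automatically in the same component of $G^-_t(>)$. The whole content of the lemma lies in the reverse implication: an edge of $G(>)$ between two points of action $<t$ must be "realized", at the level of connected components, by a chain of leaves of $\FF$ staying in the sublevel set $A_f^{-1}((-\infty,t))$.

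For the reverse inclusion I would argue as follows. Suppose $x>y$ with $A_f(x), A_f(y)<t$, so there is a connexion $\gamma$ from $x$ to $y$ in $\Sigma$. The key point is that the isotopy trajectory structure forces $\gamma$ to interact with the foliation $\FF$: since $\FF$ is positively transverse to $I$ and $\gamma$ is (a lift of) a Brouwer line for $\tilde f$, I expect to show that $\gamma$ must strongly intersect some leaf of $\FF$, or else be homotopic (rel endpoints, in $\mathrm{Dom}(I)$) to a path positively transverse to $\FF$ — in which case, following that transverse path and recording which leaves of $\FF$ it crosses, one reads off a chain of connexions of $\FF$ joining $x$ to $y$. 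Each leaf so encountered has both endpoints with action strictly between $A_f(y)$ and $A_f(x)$ by Lemma \ref{action-decrease} (action strictly decreases along leaves, and along the transverse path), hence all intermediate singularities have action $<t$. Then Lemma \ref{intersection-positive} is the engine: whenever two connexions strongly intersect, we get new connexions between the "crossed" endpoints, and combined with the geometric local-model results of section \ref{geometricsprop} (Proposition \ref{Propbord}, Lemma \ref{Sigma-}), one can iteratively replace strong intersections with honest chains of $\FF$-leaves, each staying below action level $t$. Inducting on the number of intersection points of $\gamma$ with $\FF$ (which is finite by transversality and compactness), one concludes $x$ and $y$ are joined in $G^-_t(\FF)$.

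The main obstacle I anticipate is the bookkeeping that keeps everything inside the sublevel set $A_f^{-1}((-\infty,t))$ while performing the surgery on connexions. A strong intersection of two connexions $\phi$ (from $x$ to $y$) and $\phi'$ (from $x'$ to $y'$) produces connexions from $x$ to $y'$ and from $x'$ to $y$; I must ensure that when I apply this to pieces of $\gamma$ and leaves of $\FF$, the new connexions' endpoints are among the singularities already encountered, so their action values stay bounded by $\max(A_f(x),A_f(y))<t$. This is where the precise description of $W^s(\mathcal{C})$ and its frontier (Proposition \ref{Propbord}) and the fact that an unstable cone lies entirely in one attractive basin (Lemma \ref{Sigma-}) do the real work: they guarantee that "crossing" a leaf from a given component $\mathcal{C}$ of $G^-_t(\FF)$ lands you in the closure of $W^s(\mathcal{C})$, so no new component with large action is created. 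A subtlety is that $\gamma$ need not itself be transverse to $\FF$, only homotopic to a transverse path within $\mathrm{Dom}(I)=\Sigma\setminus X$; I would fix a transverse representative first and check that homotopy rel endpoints in $\Sigma\setminus X$ does not change which connected components of $G^-_t(\FF)$ the endpoints belong to.

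Once Lemma \ref{fundamental} is in hand, Theorem \ref{egalité-ordre} should follow by running the same comparison "from above" — i.e.\ the symmetric statement that $G^+_t(\FF)$ and $G^+_t(>)$ induce the same partition of $X\cap A_f^{-1}((t,+\infty))$, which holds by the mirror argument (using the remark at the end of section \ref{geometricsprop} that every statement about $G^-_t$ has a symmetric version for $G^+_t$) — together with the index functions agreeing ($\ind(\FF,\cdot)=\ind(I,\cdot)$, already noted). Indeed, inspecting the construction of $\mathcal{B}$ in section \ref{construction}: the bars of categories $0,1,2,3$ depend on $G$ only through the sets $\mathcal{C}^-_t, \mathcal{C}^-_{t^+}, \mathcal{C}^+_t, \mathcal{C}^+_{t^-}$, the maps $j_t, j'_t$, and the values of $L$ and $D$ on connected components, all of which are determined by the partitions of the sublevel and superlevel sets; since $A_f$ and $\ind$ are fixed, $\mathcal{B}(G(\FF),A_f|_X,\ind(I,\cdot))=\mathcal{B}(G(>),A_f|_X,\ind(I,\cdot))$, i.e.\ $\bm{\beta}_\FF=\bm{\beta}_>$, and in particular $\bm{\beta}_\FF$ is independent of $\FF\in\FF(I)$.
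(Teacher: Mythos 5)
Your easy inclusion ($G(\FF)\subset G(>)$, hence $G_t^-(\FF)\subset G_t^-(>)$) and your reduction of Theorem \ref{egalité-ordre} to the lemma are both fine and match the paper. The hard direction, however, has a genuine gap at its core. Your plan is to homotope the connexion $\gamma$ from $x$ to $y$ to a path positively transverse to $\FF$ and to ``read off'' a chain of $\FF$-leaves from the leaves it crosses, claiming that every leaf so encountered has endpoints with action strictly between $A_f(y)$ and $A_f(x)$ ``because the action decreases along leaves and along the transverse path''. This claim is unjustified and false in general: the action function is defined only on the fixed points, there is no monotone quantity along the path, and a transverse path can perfectly well cross leaves emanating from singularities of arbitrarily large action. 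Likewise the proposed induction ``on the number of intersection points of $\gamma$ with $\FF$'' is not available: $\gamma$ is only a topological path (not transverse to $\FF$), it can meet leaves in infinitely many points, and the relevant notion in the paper is the boundary-interlacing of lifts (strong intersection), not a count of crossings. The paper avoids all of this by arguing by contradiction in the opposite direction (Lemma \ref{sousfonda}): assuming $y$ is not in the component $\mathcal{C}_x^-$ of $x$, it considers the attractive (or repulsive) basin of $\mathcal{C}_x^-$, uses Lemma \ref{interior} and Proposition \ref{Propbord} to see that $x$ lies in its interior while $y$ lies outside its closure and all frontier singularities have action larger than $A_f(x)$, lifts to $\D$ to find a frontier leaf $\ti\psi$ separating $\ti\gamma$'s endpoints, and then applies Lemma \ref{intersection-positive} to produce a connexion from $x$ to $\omega(\psi)$ — a point of strictly larger action — contradicting the action-decreasing property of connexions (Lemma \ref{action-decrease}). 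So the ``engine'' you name is the right one, but the surgery you describe (keeping everything below level $t$) is not how it can be made to work.

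The second missing ingredient is the sink case, which your outline does not address at all and which cannot be handled by basin/leaf-chasing arguments alone: when $x$ is a sink of $\FF$ the paper proves the stronger statement that no connexion issues from $x$ whatsoever, and this requires genuinely dynamical input — the existence of such a connexion forces the local rotation set of $I$ at $x$ to reduce to $\{0\}$, which is then contradicted either by Le Roux's theorem equating the Lefschetz and foliation indices (when $\ind(f,x)\neq 1$) or by Yan's theorem (Theorem \ref{thmYan}) producing contractible periodic orbits of type $(p,q)$ accumulating on $x$ (when $\ind(f,x)=1$). The source case then reduces to the saddle case via the repulsive basin and Lemma \ref{intersection-positive}. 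Without the case analysis on the nature of $x$ and without the rotation-set/Yan argument for sinks, the proof is incomplete even if the bookkeeping issues above were repaired.
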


If we suppose Lemma \ref{fundamental} true, the proof of Theorem \ref{egalité-ordre} is straigthforward since the action functions and the index functions are equals and the constructions of the barcodes $\bm{\beta}_>$ and $\bm{\beta}_\FF$ in section \ref{construction} depend only on the connected components of the graphs $G(\FF)$ and $G(>)$ and the action and index values of the singularities of $X$.\\

\begin{proof}[Proof of Lemma \ref{fundamental}]
For $t\in\R$ and for each fixed points $x,y\in X$ of action less than $t$, we want to prove the equivalence of the following two properties.

\begin{enumerate}[(i)]
\item The elements $x,y$ are in the same connected component $\mathcal{C_>}$ of $G^-_t(>)$,
\item The elements $x,y$ are in the same connected component $\mathcal{C}_\FF$ of $G^-_t(\FF)$.
\end{enumerate}

\textit{ (ii) $\Rightarrow$ (i).} Since $x$ and $y$ are in the same connected component $\mathcal{C}$ of $G^-_t(>)$, there exists a family $(x_i)_{0\leq i\leq k}$ of singularities of $X$ such that 
\begin{itemize}
\item $x_0=x$ and $x_k=y$.
\item $A_f(x_i)<t$ for every $i\in\{0,...,k\}$
\item For every $i\in\{1,...,k\}$ there exists a leaf $\phi_i$ of $\FF$ either from $x_i$ to $x_{i-1}$ or from $x_{i-1}$ to $x_i$
\end{itemize}

Moreover, each leaf of the foliation $\FF$ is by definition a connexion, so $(\phi_i)_{0\leq i\leq k}$ is a family of connexions and then the singularities $x_i$, $i\in\{0,...,k\}$, are in the same connected component of $G^-_t(>)$ and we obtain the result.\\

\textit{ (i) $\Rightarrow$ (ii).}
We will prove the following lemma.

\begin{lem}\label{sousfonda}
Let us consider $\FF\in\FF(I)$ and $(x,y)\in X^2$. If $x> y$ then $x$ and $y$ are in the same connected component of $G^-_{t}(\FF)$ for every $t>A_f(x)$.
\end{lem}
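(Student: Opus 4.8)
The plan is to realise Lemma~\ref{sousfonda} as a separation statement inside the attractive basin of the connected component of $y$, feeding the description of that basin from Section~\ref{geometricsprop} into Lemma~\ref{intersection-positive}. First I would fix $t>A_f(x)$; since $G^-_s(\FF)\subset G^-_t(\FF)$ for $s\leq t$, I may shrink $t$ so that $(A_f(x),t)$ contains no value of $A_f$, hence the vertices of $G^-_t(\FF)$ are exactly the singularities of action at most $A_f(x)$. As $x>y$ and $f$ is Hamiltonian, $A_f(y)<A_f(x)$ by Remark~\ref{rem-intersection}, so $x$ and $y$ are both vertices of $G^-_t(\FF)$. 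Let $\mathcal{C}$ be the connected component of $y$ in $G^-_t(\FF)$, fix a connexion $\gamma$ from $x$ to $y$, and set $B=W^s(\mathcal{C})$. By Proposition~\ref{Propbord}, $B$ is open and connected and $\Fr(B)$ is the closure of a finite union of leaves of $\FF$ lying in stable or unstable cones of saddle points; by Lemma~\ref{interior}, the singularities of $\mathcal{C}$ lie in $\inte(B)$; and by the proof of Lemma~\ref{fini}, the $\omega$-limit of every leaf in $\Fr(B)$ is a saddle point. I want to show $x\in\mathcal{C}$.

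The heart of the argument is the following claim: \emph{$\Fr(B)$ contains no singularity of action $\leq A_f(x)$.} Such a singularity $p$ would be a saddle point or a source of $\FF$ (singularities in $\Fr(B)$ are $\alpha$- or $\omega$-limits of its leaves, and cannot be sinks), it would be a vertex of $G^-_t(\FF)$, and by Lemma~\ref{interior} it would not lie in $\mathcal{C}$. Since $p\notin\mathcal{C}$, no stable cone of $p$ meets $B$ (its leaves would have $\omega$-limit $p\in\mathcal{C}$), so by Lemma~\ref{bordhyperbolic} the part of a small neighbourhood of $p$ lying in $B$ is a nonempty union of unstable cones and hyperbolic sectors of $p$; a leaf of an unstable cone of $p$ in $B$ is an edge of $G^-_t(\FF)$ from $p$ to a vertex of $\mathcal{C}$, and a hyperbolic sector $U\subset B$ adjacent to an unstable cone along a leaf $\phi$ with $\alpha(\phi)=p$ yields, through the chain $\Gamma^+_\phi$ of the local model of Section~\ref{geometricsprop}, an action-decreasing chain of leaves of $\FF$ from $p$ into $\mathcal{C}$; all singularities involved have action $\leq A_f(p)\leq A_f(x)$, so this path lies in $G^-_t(\FF)$ and forces $p\in\mathcal{C}$ --- a contradiction (for a source $p$ one argues likewise with a single leaf issuing from $p$ into $B$). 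This proves the claim. In particular $x\notin\Fr(B)$, and since a singularity lies in $W^s(z)$ only if it equals $z$ we also have $x\notin B$; hence $x\notin\overline{B}$.

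Now $\gamma$ joins $x\notin\overline{B}$ to $y\in\inte(B)$, so it must meet $\Fr(B)$; as $\gamma((0,1))$ avoids $X$, at the first time $\gamma$ reaches $\overline{B}$ it lies on a leaf $\psi$ of $\Fr(B)$. Lifting to the universal cover $\D$ of $\Sigma\backslash X$, the corresponding lift $\ti\psi$ separates the ideal endpoints $\ti x$ and $\ti y$ of $\ti\gamma$ on $\Ss^1$ --- the side of $\ti\psi$ facing $B$ contains $\ti y$, the other one contains $\ti x$ --- i.e.\ $\gamma$ and $\psi$ strongly intersect; Lemma~\ref{intersection-positive} then produces a connexion from $x$ to $w:=\omega(\psi)$. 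Thus $w$ is a saddle point lying in $\Fr(B)$, and $A_f(w)<A_f(x)$ because $x>w$ and $f$ is Hamiltonian --- contradicting the claim. Therefore $x\in\mathcal{C}$, so $x$ and $y$ lie in the same connected component of $G^-_t(\FF)$. This also completes Lemma~\ref{fundamental}: the implication (ii)$\Rightarrow$(i) is immediate since every leaf is a connexion, and (i)$\Rightarrow$(ii) follows by applying Lemma~\ref{sousfonda} along a chain of comparisons realising~(i).

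I expect the delicate point to be the assertion that $\gamma$ strongly intersects the first leaf $\psi$ of $\Fr(B)$ it meets: one must control the interaction of the connexion $\gamma$ --- which is a priori neither a leaf of $\FF$ nor transverse to $\FF$ --- with $\FF$ near $\Fr(B)$, using that $\psi$ and $\gamma$ are both Brouwer lines of $\ti f$ and that at this first crossing $B$ lies on a single side of $\psi$, so $\psi$ is "adjacent" to, not "surrounded" by, $B$ in the sense of Proposition~\ref{Propbord}. The remaining bookkeeping with the cones and hyperbolic sectors along $\Fr(B)$ is routine given Section~\ref{geometricsprop}.
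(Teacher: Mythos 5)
Your argument is correct, but it is organized quite differently from the paper's. The paper proves Lemma \ref{sousfonda} by a trichotomy on the nature of $x$: when $x$ is a sink it establishes the much stronger fact that no connexion can issue from $x$ at all, and this requires local rotation sets, Le Roux's local index theorem and Yan's theorem on contractible periodic orbits; when $x$ is a saddle it runs a separation argument in the attractive basin of the component of $x$ in $G^-_{A_f(x)^+}(\FF)$ (using, implicitly, that the frontier singularities of that basin have action $>A_f(x)$), and the source case is reduced to the saddle case via the repulsive basin $W^u(x)$. You instead work throughout with the attractive basin $B=W^s(\mathcal{C})$ of the component of $y$, and your claim that no singularity of action $\leq A_f(x)$ lies on $\Fr(B)$ is exactly what lets you treat sinks, saddles and sources of $\FF$ uniformly; your proof of that claim correctly reuses the chain-of-connexions mechanism of Lemmas \ref{Sigma-} and \ref{interior}, and the endgame (a frontier leaf $\psi$ strongly intersected by $\gamma$ yields, via Lemma \ref{intersection-positive}, a connexion from $x$ to $\omega(\psi)\in\Fr(B)$, hence $A_f(\omega(\psi))<A_f(x)$, contradicting the claim) is the same mechanism as the paper's saddle case with the two sides exchanged. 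What you gain is a single, more elementary argument that dispenses entirely with the rotation-set machinery of the sink case; what you lose is the stronger by-product that a sink of $\FF$ admits no outgoing connexion. Two points to tighten. First, strong intersection cannot be read off from ``the first time $\gamma$ meets $\overline{B}$'': it concerns the ideal endpoints of lifts, not actual crossings. Do it as the paper does in its Cases 2 and 3: let $U$ be the connected component of $\pi^{-1}(B\backslash X)$ containing the terminal segment of $\ti{\gamma}$, observe that $\Fr(U)$ is a union of lifted leaves and that $\ti{x}\notin \Adh_{\R^2}(U)$ because $x\notin\overline{B}$, and extract a lifted frontier leaf $\ti{\psi}$ separating $\ti{x}$ from $U$; distinctness of the four ideal endpoints then follows from your claim, since $\alpha(\psi)$ and $\omega(\psi)$ lie in $\Fr(B)$ and so have action $>A_f(x)>A_f(y)$. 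Second, state explicitly that you argue by contradiction under the assumption $x\notin\mathcal{C}$: as written, the assertion ``$x\notin B$'' is only justified under that assumption.
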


Let us assume that Lemma \ref{sousfonda} is true and consider $x,y\in X$ and $t>A_f(x)$ such that $(i)$ is satisfied, we prove that $(ii)$ holds.\\

Since $x$ and $y$ are in the same connected component $\mathcal{C}$ of $G^-_t(>)$, there exists a family $(x_i)_{0\leq i\leq k}$ of singularities of $X$ such that 
\begin{itemize}
\item $x_0=x$ and $x_k=y$.
\item $A_f(x_i)<t$ for every $i\in\{0,...,k\}$
\item For every $i\in\{1,...,k\}$ there exists a connexion $\phi_i$ either from $x_i$ to $x_{i-1}$ or from $x_{i-1}$ to $x_i$
\end{itemize}
Moreover, by Lemma \ref{sousfonda} $x_{i-1}$ and $x_i$ are in the same connected component of $G_t^-(\FF)$ for every $i\in\{1,...,k\}$ so $x$ and $y$ are also in the same connected component of $G_t^-(\FF)$.
\end{proof} 
 
 To complete the proof of Lemma \ref{fundamental}, it remains to prove Lemma \ref{sousfonda}. We introduce the notion of local rotation set and we refer to \cite{ROUX1,ROUX5} for more details.\\
 
Let $\Sigma$ be a connected oriented surface. We write $f:(W,z_0)\ra (W',z_0)$ for an orientation preserving homeomorphism between two neighborhoods $W$ and $W'$ of $z_0\in\Sigma$ such that $f(z_0)=z_0$. Such a local homeomorphism $f$ is called an \textit{orientation preserving local homeomorphism at $z_0$}. We recall the definition of local isotopies of Le Calvez \cite{CAL5}: a \textit{local isotopy} $I=(f_t)_{\tin}$ from $\id$ to $f$ is a continuous family of local homeomorphisms $(f_t)_{\tin}$ fixing $z_0$ such that 
\begin{enumerate}[-]
\item each $f_t$ is a homeomorphism of a neigborhood $V_t\subset W$ of $z$ into a neighborhood $V_t'\subset W'$ of $z$ ;
\item the sets $\{(z,t)\in \Sigma\times[0,1] | z\in V_t\}$ and $\{(z,t)\in \Sigma\times[0,1] \ | \ z\in V_t'\}$ are open in $\Sigma\times[0,1]$ ;
\item the map $(z,t)\mapsto f_t(z)$ is continuous on $\{(z,t)\in \Sigma\times[0,1] \ | \ z\in V_t\}$ ;
\item the map $(z,t)\mapsto f_t^{-1}(z)$ is continuous on $\{(z,t)\in \Sigma\times[0,1] \ | \  z\in V_t'\}$ ;
\item we have $f_0=\id_{V_0}$ and $f_1=f|_{V_1}$ ;
\item for all $t\in[0,1]$, we have $f_t(z_0)=z_0$.
\end{enumerate}

Let us consider a local orientation preserving homeomorphism $f:(W,z_0)\ra (W',z_0)$ and $I=(f_t)_{\tin}$ a local isotopy from $\id$ to $f$. We want to define the local rotation set of the isotopy $I$ at $z_0$. Given two neighborhoods $V\subset U$ of $z_0$ included in $W$ and an integer $n\geq1$ we define 
$$E(U,V,n)=\{z\in U \ | \  z\notin V, f^n(z)\notin V, f^i(z)\in U \text{ for all } 1\leq i \leq n\}.$$
We define the rotation set of $I$ relative to $U$ and $V$ by 
$$\rho_{U,V}(I)=\bigcap_{m\geq 1}\overline{\bigcup_{n\geq m} \{\rho_n(z)\ | \ z\in E(U,V,n)\}} \subset [-\infty,\infty],$$
where $\rho_n(z)$ is the average change of angular coordinate along the trajectory of $z$ during $n$ iterates. We define the local rotation set of $I$ to be 
$$\rho_s(I,z_0)=\bigcap_U\overline{\bigcup_V \rho_{U,V}(I)} \subset [-\infty,\infty],$$
where $V\subset U \subset W$ are neighborhoods of $z_0$.\\

The local rotation set is an invariant of local conjugacy in the following sense: let us say that an isotopy $I'=(f_t')_{\tin}$ is locally conjugated to $I$ if there exists a homeomorphism $\phi: W\ra W''$ between two neighborhood of $z_0$ which preserves the orientation and fixes $z_0$ such that for each $\tin$ we have $f_t'=\phi \circ f_t\circ \phi^{-1}$. For each neighborhoods $V$ and $U$ of $z_0$ such that $V\subset  U\subset W$ we have
$$\rho_{U,V}(I)=\rho_{\phi(U),\phi(V)}(\phi I\phi^{-1}).$$
In particular we deduce that
$$\rho_s(I)=\rho_s(\phi I\phi^{-1}).$$

Let us consider a homeomorphism of the plane $f$ isotopic to the identity which preserves the orientation and fixes the origin and an isotopy $I=(f_t)_{\tin}$ from $\id$ to $f$ which fixes the origin. Recall that $R=(R_t)_{\tin}$ is the isotopy of the rotation of angle $2\pi$ such that $R_t(z)=z\e^{2i\pi t}$ for each $z\in\R^2$ and $\tin$.  For each $p\in \Z$ and $q\in\Z$ we have
$$\rho_s(R^p I^q)=q\rho_s(I)+p.$$

\begin{ex}
Let us consider a fiber rotation $h_\alpha: (r,\theta) \ra (r,\theta+\alpha(r))$ on the plane where $\alpha:(0,\infty)\ra \R$ is continuous and an isotopy $I=(h_t)_{\tin}$ such that $h_t(r,\theta)=(r,\theta+t\alpha(r))$ for $\tin$. The local rotation set  $\rho_s(I)$ of $I$ at the origin is equal to the set of accumulation points of $\alpha$ at $0$.
\end{ex}

In the case where $f$ is area-preserving (or a hamiltonian homeomorphism), Gambaudo and Pécout \cite{GAM1} proved that the local rotation set is not empty. Moreover, if we suppose that $\Fix(f)$ is finite, $0$ is not accumulated by fixed points then if the local rotation set is not empty it does not contain an integer in its interior. Notice that the result holds if we suppose that $f$ satisfies the local intersection property, meaning that for each non contractible loop $\gamma$ of $W\backslash\{z_0\}$ we have $f(\gamma)\cap \gamma\neq \emptyset$. 

The \textit{rotation number} classify the homotopy classes of the isotopies at $z_0$. Let us consider a local orientation preserving homeomorphism $f:(W,z_0)\ra (W',z_0)$ of a surface $\Sigma$ such that $f(z_0)=z_0$ and $I=(f_t)_{\tin}$ a local isotopy from $\id$ to $f$ which fix $z_0$. Let us consider a closed disk $D\subset \Sigma$ containing $z_0$ in its interior. For every point $z\in D\backslash \{z_0\}$ close enough to $z_0$, the trajectory of $z_0$ along $I$ is a loop included in $D\backslash\{z_0\}$. There exists an integer $k\in\Z$ such that this trajectory is freely homotopic in $D\backslash\{z_0\}$ to $(\partial D)^k$. The integer $k$ depends only on the choice of the isotopy $I$, it is the \textit{rotation number} $k=\rho(I,z_0)$ of $I$ at $z_0$. We consider the isotopy $R_\infty=(r_t)_{\tin}$ of the plane where $r_t$ is the rotation of angle $2\pi t$ i.e $r_t(r,\theta)=(r,\theta +2\pi t)$ in radial coordinates. The isotopy $R_\infty$ extends into an isotopy on the sphere $\R^2\sqcup\{\infty\}$ and we have $\rho(R_\infty,\infty)=1$ while $\rho(R_\infty,0)=-1$. We refer to \cite{CAL5} for more details.\\

\begin{proof}[Proof of Lemma \ref{sousfonda}]

We consider two fixed points $x,y\in X$ such that $x> y$. Every attractive or repulsive basin in this proof will be defined relatively to the foliation $\FF$. We will divide the proof in three cases, in the first one $x$ will be a sink of $\FF$, in the second one $x$ will be a saddle and in the last one $x$ will be a source of $\FF$.\\

\textit{First case.} 
We suppose that $x$ is a sink. We will prove that there is no connexion from $x$ to another singularity of $\FF$.\\

We say that the orbit of a $q$-periodic point $z$ of $f$ is \textit{contractible} if the concatenation of the trajectories of the points $f^{k}(z)$, $k\in\{0,...,q-1\}$, along the isotopy $I$ is a contractible loop , denoted $\gamma_z$, in $\Sigma$. The loop $\gamma_z$ is called the trajectory of the periodic orbit of $z$.  We say that a contractible $q$-periodic orbit has \textit{type} $(p,q)$ associated to $I$ at $x\in\Fix(f)$ if its trajectory along $I$ is homotopic to $p\Gamma$ in $\Sigma\backslash\Sing(I)$, where $\Gamma$ is the boundary of a sufficiently small Jordan domain containing $x$.\\

We will use the following version of a result of Yan and we refer to \cite{YAN1}, Theorem $1.1$, for a proof.

\begin{theorem}\label{thmYan}
Let us consider a fixed point $z$ of $f$ of Lefschetz index equal to $1$ fixed by the isotopy $I$, and such that the rotation set $\rho_s(I,z)$ is reduced to $\{0\}$. The point $z$ is accumulated by periodic points. More precisely, the following property holds: there exists $\epsilon>0$, such that, for every neighborhood of $z$, either for every irreducible $p/q\in(0,\epsilon)$, or for every irreducible $p/q\in(-\epsilon,0)$, there exists a contractible periodic orbit $O_{p/q}$ of type $(p,q)$.
\end{theorem}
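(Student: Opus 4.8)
The plan is to reconstruct Yan's argument (\cite{YAN1}), which combines Le Calvez's local foliation theory with the Le Calvez--Tal forcing machinery. \textbf{Step 1: reduction to a local model.} First I would choose a small closed topological disk $D$ with $z\in\inte(D)$, $f(z)=z$ and $\Fix(f)\cap D=\{z\}$, and pass to the universal cover $\ti{D\backslash\{z\}}$, which is a half-open strip carrying a deck transformation $T$ generating the deck group. The restriction of the isotopy $I$ to a neighborhood of $z$ is a local isotopy in the sense of \cite{CAL5}; it lifts to $\ti I=(\ti f_t)_{\tin}$ with $\ti f_0=\id$, and $\ti f:=\ti f_1$ commutes with $T$. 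A contractible $q$-periodic orbit of type $(p,q)$ at $z$ corresponds exactly to a fixed point of $T^{-p}\ti f^{\,q}$ near the end of the strip that covers $z$, and the hypothesis $\rho_s(I,z)=\{0\}$ forces, in particular, $\rho(I,z)=0$, i.e. $\ti f$ has ``rotation $0$'' at that end.

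\textbf{Step 2: local transverse foliation and the index.} After replacing the local isotopy by a locally maximal one (which does not change $\Sing$, since $z$ is an isolated fixed point), I would invoke Le Calvez's local version of the transverse foliation theorem to obtain an oriented topological foliation $\FF$ on a punctured neighborhood of $z$, positively transverse to $I$, having $z$ as its only singularity. Le Calvez's index identity then gives $\ind(\FF,z)=\ind(f,z)=1$.

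\textbf{Step 3: the transverse loop and forcing.} The crux is the local model of $\FF$ at $z$. Using $\ind(\FF,z)=1$, the vanishing of the local rotation set, and the fact that $f$ is a Hamiltonian homeomorphism — hence area preserving, with the local intersection property and, by Gambaudo--P\'ecout \cite{GAM1}, a well-behaved local rotation set — one shows that for every neighborhood $V$ of $z$ there is a simple loop $\gamma\subset V\backslash\{z\}$ around $z$ that is positively transverse to $\FF$; configurations of $\FF$ near $z$ admitting no such loop (foliations with hyperbolic sectors, ``petals'', etc.) are incompatible with the hypotheses. By connectedness the free homotopy class of $\gamma$ in $D\backslash\{z\}$ is $[\partial D]^{\delta}$ for a fixed sign $\delta\in\{+1,-1\}$, and this sign is exactly what produces the alternative ``$p/q\in(0,\epsilon)$'' versus ``$p/q\in(-\epsilon,0)$'' in the statement. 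Finally, applying the Le Calvez--Tal forcing theory for transverse trajectories \cite{TAL1,TAL2} to the transverse loop $\gamma$ — together with Poincar\'e recurrence of the area-preserving map $f$, which guarantees the forced orbits close up inside $D$ rather than escaping it — yields, for every irreducible $p/q$ with $\mathrm{sign}(p/q)=\delta$ and $|p/q|$ small, a contractible periodic orbit of $f$ in $V$ of type $(p,q)$.

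\textbf{Main obstacle.} The delicate part is entirely in Step 3: deducing from $\ind(\FF,z)=1$, $\rho_s(I,z)=\{0\}$ and area preservation that $z$ admits a positively transverse loop of a definite orientation, and then extracting periodic orbits of \emph{every} small $p/q$ of that orientation from the forcing theory. This is where Yan's argument does its real work; Steps 1 and 2 are essentially bookkeeping, and I would present the theorem here with a reference to \cite{YAN1} for the full details.
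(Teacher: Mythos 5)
There is nothing in the paper to compare your argument with: Theorem \ref{thmYan} is not proved here at all, it is imported verbatim as Theorem 1.1 of Yan \cite{YAN1} and used as a black box in the first case of the proof of Lemma \ref{sousfonda}. Your proposal ultimately does the same thing, since your Step 3 --- which, as you say yourself, is the entire content of the theorem --- is not carried out but deferred to \cite{YAN1}. As a self-contained proof it therefore has a genuine gap, and as a reconstruction of Yan's argument it cannot be certified by anything in this paper.

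Two specific assertions in the sketch are also not justified as stated. First, the index identity $\ind(\FF,z)=\ind(f,z)$ of Le Roux (\cite{ROUX1}) is available only when the Lefschetz index is different from $1$ --- that is exactly how it is invoked elsewhere in this paper --- so in the present situation, where $\ind(f,z)=1$, you cannot conclude in Step 2 that the local transverse foliation has an index-$1$ singularity at $z$; excluding the other local configurations (petals, hyperbolic sectors) under the hypothesis $\rho_s(I,z)=\{0\}$ is part of the real work, not a consequence of an index theorem. Second, even granting a positively (or negatively) transverse simple loop around $z$, obtaining a contractible periodic orbit of type $(p,q)$ for \emph{every} irreducible $p/q$ in a one-sided interval $(0,\epsilon)$ or $(-\epsilon,0)$ is precisely the hard statement; invoking the Le Calvez--Tal forcing machinery \cite{TAL1,TAL2} together with Poincar\'e recurrence is a plausible strategy, but it is asserted rather than proved, and nothing here (or in your sketch) shows that this is how \cite{YAN1} actually proceeds. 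The honest treatment is the one the paper adopts: state the theorem and cite \cite{YAN1} for the proof.
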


Let us prove that there is no connexion from $x$ to another singularity of $\FF$. We suppose that there exists a connexion $\phi$ from $x$ to another singularity $y$ of $\FF$, we want to find a contradiction. \\

The singularity $x$ is a sink of the foliation $\FF$ which is positively transverse to the isotopy $I$, so the local rotation set $\rho_{s,I}(x)$ of $x$ is included in $(-\infty,0]$. 
Moreover, it is not difficult to prove that the existence of the connexion $\phi$ implies that the rotation set $\rho_{s,I}(x)$ of $x$ is included in $[0,+\infty)$. Indeed, locally, a connexion whose alpha-limit is $x$ is a \textit{positive arc}, which means that in polar coordinates where $\gamma$ corresponds to the semi-line $\{\theta=0\}$, for every point $z$ close enough to $x$, the variation of $\theta$ along the trajectory is positive.  We refer to Theorem $3.2.4$ and section $2.4$ of \cite{ROUX1} for more details.\\
So the local rotation set of $x$ for the isotopy $I$ is reduced to the integer $\{0\}$. \\

If the Lefschetz index of $x$ is not equal to $1$, by a result of Le Roux, see \cite{ROUX1} Theorem $4.1.1$, the foliation $\FF$ and the homeomorphism $f$ have the same index at $x$ for the  isotopy $I$. But, by hypothesis, $x$ is a sink of the foliation $\FF$ so $\ind(\FF,x)=1$ and we obtain a contradiction.\\

If the Lefschetz index of $x$ is equal to $1$ then we can apply Theorem \ref{thmYan}. More precisely, the singularity $x$ is a sink of $\FF$ so there exists $\epsilon>0$ such that $x$ is accumumated by periodic orbits $O_{p/q}$ of type $(p,q)$ where $p/q\in (0,\epsilon)$. So the rotation number $\rho_{s,I}(x)$ is not reduced to $\{0\}$ and we obtain a contradiction.\\

\textit{Second case.} We suppose that $x$ is a saddle point. We prove the result by contradiction. \\

We suppose that there exists a connexion $\gamma:[0,1] \ra \Sigma$ from $x$ to some $y$ and that $x$ and $y$ are not in the same connected component of $G^-_{A_f(x)^+}(\FF)$.
We denote by $W^s(\mathcal{C}^-_x)\subset \Sigma$ the attractive basin of $\mathcal{C}^-_x$, where $\mathcal{C}^-_x$ is the connected component of $G^-_{A_f(x)^+}(\FF)$ which contains $x$.
By Lemma \ref{interior}, the fixed point $x$ is in the interior of $W^s(\mathcal{C}^-_x)$ and every singularity $z$ of $\FF$ in the frontier of $W^s(\mathcal{C}^-_x)$ satisfies $A_f(z)>A_f(x)$. 
The existence of the connexion $\gamma$ implies that $A_f(x)>A_f(y)$ so the fixed point $y$ is not in the frontier of $W^s(\mathcal{C}^-_x)$ and is in $\Sigma\backslash \overline{W^s(\mathcal{C}^-_x)}$.\\
We consider the universal cover $\ti{\Sigma\backslash X}$ of $\Sigma\backslash X$ which is identified to the unit disk $\D$ and $\pi:\D\ra \Sigma\backslash X$ the universal covering map. Let $U$ be a connected component of $\pi^{-1}(\mathcal{C}^-_x\backslash X)$ and $\ti\gamma$ a lift of the connexion $\gamma$ such that there exists $\epsilon>0$ such that $\ti{\gamma}((0,\epsilon))\subset U$. By hypothesis on $y$, there exists $\epsilon'>0$ such that $\ti{\gamma}((1-\epsilon',1))\notin U$. \\
Recall that $\pi$ naturally extends to $\Ss^1$. Moreover, we saw that $\lim_{t \ra 0} \ti\gamma(t)$ and $\lim_{t \ra 1} \ti\gamma(t)$ are well-defined on $\Ss^1$ and will be denoted $\ti x$ and $\ti y$.\\

The set $U$ is an open connected set of $\D$ whose frontier is a union of lifted leaves. Then there exists a lifted leaf $\ti \psi$ of $\Fr(U)$ which separates $\D$ such that $U$ is on one side and $\ti y$ is in the other. By hypothesis, the points $\omega(\ti\psi)$ and $\alpha(\ti\psi)$ are distinct from $\ti x$ and $\ti y$, indeed, $x$ is in the interior of $W^s(\mathcal{C}_x^-)$ and $y\notin \Fr(W^s(\mathcal{C}_x^-))$. Since $\ti \psi$ is the lift of a connexion, we obtain that $\psi$ and $\gamma$ are two connections which intersect strongly, hence by Lemma \ref{intersection-positive} there exists a connexion from $x$ to $\omega(\psi)$ which is impossible because by definition $\omega(\psi)\in \Fr(W^s(\mathcal{C}_x^-))$ and then $A_f(\omega(\psi))>A_f(x)$.\\
	
\textit{Third case.} Suppose that $x$ is a source of $\FF$. The point $y$ is either in the frontier of the repulsive basin $W^u(x)$ of $x$ or in the complement of $\overline{W^u(x)}$. We separate these two cases.\\

1) We suppose that $y$ is in the frontier of $W^u(x)$. There exists a chain of connexions from $x$ to $y$ and so we deduce the result. Indeed, by definition, the singularity $y$ is accumulated by leaves $(\phi_j)_{j\in J}$ of $W^u(x)$ whose alpha-limit point is $x$. By the local model described in section \ref{geometricsprop}, the closure of these leaves contains a chain of connexions which starts at $x$ and also contains $y$. \\

2) We suppose that $y$ is in the complement of $\overline{W^u(x)}$. We consider $U$ a connected component of $\pi^{-1}(W^u(x)\backslash \{x\})$ where $\pi:\D\ra \Sigma\backslash X$ the covering map defined in the second case. We can consider a lift $\ti \gamma$ of the connexion $\gamma$ such that there exists $\epsilon>0$ such that $\ti\gamma((0,\epsilon))\subset U$. By hypothesis on $y$, there exists $\epsilon'>0$ such that $\ti{\gamma}((1-\epsilon',1))\notin U$. The limits $\lim_{t \ra 0} \ti\gamma(t)$ and $\lim_{t \ra 1} \ti\gamma(t)$ are well-defined on $\Ss^1$ and will be denoted $\ti x$ and $\ti y$.\\

We apply similar arguments as in the second case. The set $U$ is an open connected set of $\D$ whose boundary is a union of lifted leaves. Then there exists a lifted leaf $\ti \psi$ of $\Fr(U)$ separating $\D$ such that $U$ is on one side and $\ti y$ is on the other. By hypothesis, the points $\omega(\ti\psi)$ and $\alpha(\ti\psi)$ are distinct from $\ti x$ and $\ti y$, indeed, $x$ is in the interior of $W^s(\mathcal{C}_x^-)$ and $y\notin \Fr(W^s(\mathcal{C}_x^-))$. Since $\ti \psi$ is the lift of a connexion, we obtain that $\psi$ and $\gamma$ are two connections which intersect strongly, hence by Lemma \ref{intersection-positive} there exists a connexion from $\alpha(\psi)$ to $y$. The singularity $\alpha(\psi)$ can not be a source nor a sink so it is a saddle point of $\FF$, hence we apply the result of the second case which asserts that the existence of a connexion from $\alpha(\psi)$ to $y$ implies that $\alpha(\psi)$ and $y$ are in the same connected component of $G_{A_f(\alpha(\psi))^+}^-(\FF)$. Moreover, by hypothesis, $\alpha(\psi)$ and $x$ are in the same connected component of $G_{A_f(x)^+}^-(\FF)$ thus we deduce that $x$ and $y$ are also in the same connected component of $G_{A_f(x)^+}^-(\FF)$.

\end{proof}

			\subsection{Equality between the barcode $\bm{\beta}_\FF$ and the barcode $B_{\gen}(f,\FF)$ in the simplest case}\label{2egality}

We consider a Hamiltonian homeomorphism $f$ of a closed and oriented surface $\Sigma$ with a finite number of fixed points.
We suppose that $\Fix(f)$ is finite and unlinked, each fixed point $x\in\Fix(f)$ satisfies $\ind(f,x)\in\{-1,1\}$ and that the action function $A_f:\Fix(f)\ra\R$ is injective. Let $I=(f_t)_{t\in[0,1]}$ be a maximal isotopy from $\id$ to $f$ fixing all fixed points of $f$. We denote $A_f$ the action functional of $f$.\\

Recall that a foliation $\FF\in\FF_{\gen}(I)$ does not have connexions between saddle points, and the stable and unstable cones of a saddle point $x$  of $\FF$ are both composed of a unique leaf which will be referred to as the stable and unstable leaves of $x$.\\

Let us consider the graph $G_{\gen}(\FF)$ given by Definition \ref{graphgeneric}. 
Remember that $G_{\gen}(\FF)$ is the graph whose set of vertices is the set $\Fix(f)$ and whose edges correspond to leaves $\phi$ of $\FF$ such that $\ind_{CZ}(f,\alpha(\phi))=\ind_{CZ}(f,\omega(\phi))-1$, where $\ind_{CZ}(f,\cdot)$ is the Conley-Zehnder index, defined in section \ref{génériqueF}, equals to $1$ on sources and sinks and equals to $-1$ on saddle points. Notice that $G_{\gen}$ is distinct from the graph $G(\FF)$ given in the introduction of this section.\\

In this section we want to compare the barcode $B_{\gen}(\FF)$ to the barcode 
$$\mathcal{B}(G(\FF),A_f|_X,\ind(\FF,\cdot)),$$ 
denoted $\bm{\beta}_\FF$,  constructed in section \ref{2barcode}. We will prove the following result.
		
\begin{theorem}\label{egalité-generique}
Let us consider a Hamiltonian homeomorphism $f$ on a compact surface $\Sigma$. We suppose that $\Fix(f)$ is finite and unlinked, each fixed point $x\in\Fix(f)$ satisfies $\ind(f,x)\in\{-1,1\}$ and the action function is injective. We consider a maximal isotopy $I$ such that $\Sing(I)=\Fix(f)$ then for a foliation $\FF\in\FF_{\gen}(I)$ we have
$$B_{\gen}(\FF)=\bm{\beta}_\FF.$$
\end{theorem}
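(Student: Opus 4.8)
The plan is to make the persistence module underlying $B_{\gen}(\FF)$ completely explicit and then to read off its barcode, matching it against the four categories of bars produced by the combinatorial recipe of Section~\ref{construction}. Write $\mathbf{V}=\big(\bigoplus_{i}H_i(C^t_\bullet,\partial^t_\bullet)\big)_{t\in\R}$, so that $B_{\gen}(\FF)=\beta(\mathbf{V})$ and $\bm{\beta}_\FF=\mathcal{B}(G(\FF),A_f|_X,\ind(\FF,\cdot))$; since $\beta$ commutes with direct sums, $\beta(\mathbf{V})=\beta(\mathbf{V}_0)\sqcup\beta(\mathbf{V}_1)\sqcup\beta(\mathbf{V}_2)$ with $\mathbf{V}_i=(H_i)_{t}$. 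Throughout one uses the rigidity of the situation: $A_f$ is injective (each critical level is a single singularity), every singularity has index $\pm1$, every saddle $x$ has exactly two unstable leaves (landing on sinks $z_1^x,z_2^x$, so $\partial_1(x)=z_1^x+z_2^x$) and exactly two stable leaves (issuing from sources $s_1^x,s_2^x$), and there is no connexion between two saddles.

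First I would compute degrees $0$ and $2$, which are clean. Because $A_f$ strictly decreases along leaves, for $t$ above the minimum every vertex of $G^-_t(\FF)$ has an outgoing edge, hence every connected component of $G^-_t(\FF)$ contains a sink. Using the description of repulsive basins in Lemma~\ref{bordbassin} --- the basin of a source is cut by the leaves limiting onto saddles into sectors, two consecutive sectors being separated by a saddle whose two unstable leaves land on the two corresponding sinks --- one sees that a source lies in $G^-_t(\FF)$ in the same component as the sinks and saddles it meets, and that those sinks are already joined to one another through saddles; hence the partition of the sinks of action $<t$ induced by $G^-_t(\FF)$ equals the one induced by the relations $\{z_1^x=z_2^x:A_f(x)<t\}$. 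Consequently $H^t_0=C^t_0/\im\partial^t_1$ is canonically $\Z/2\Z[\mathcal{C}^-_t(\FF)]$, naturally in $t$ via the maps $j_t$, and its barcode is exactly the family of Category~$1$ bars together with the Category~$0$ bar $(L(G),+\infty)$, since the Category~$1$ recipe is precisely the elder-rule merge-tree algorithm. Dually, $H^t_2=\ker\partial^t_2$; since each saddle has exactly two stable leaves, $\sum_{\text{sources }s}s\in\ker\partial_2$, and a rank count (using $\dim H_2(\Sigma;\Z/2\Z)=1$) shows this element spans $\ker\partial_2$. Hence $\ker\partial^t_2$ is trivial until the last source has appeared and $1$-dimensional afterwards, so $\beta(\mathbf{V}_2)=\{(A_M,+\infty)\}=\{(D(G),+\infty)\}$, the remaining Category~$0$ bar; and, for the same reason, every source $s\neq y_M$ has $\partial_2(s)$ not in the span of $\{\partial_2(s'):A_f(s')<A_f(s)\}$.

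Next I would process the critical levels in increasing order and handle degree $1$. A sink always produces a degree-$0$ birth. At a saddle $x$, Proposition~\ref{connexionsaddle} --- which here reads $\#j_t^{-1}(\mathcal{C}_x)+\# j'^{-1}_t(\mathcal{C}'_x)\leq|\ind(\FF,x)|+2=3$ --- forces exactly one of three mutually exclusive cases (by the Euler characteristic $x$ is a merge or a loop for the sublevel filtration and a merge or a loop for the superlevel one, and the inequality excludes ``merge for both''): (i) $z_1^x,z_2^x$ lie in distinct components of $G^-_{t^-}(\FF)$ --- a degree-$0$ death, producing the one Category~$1$ bar with upper end $A_f(x)$, paired by the elder rule with the sink realising the larger value $L$; (ii) $s_1^x,s_2^x$ lie in distinct components of $G^+_{t^-}(\FF)$ --- then $x$ creates a degree-$1$ class, which I claim equals the Category~$2$ bar with lower end $A_f(x)$ and is in particular annihilated by a source; (iii) neither --- $x$ creates a class of $H_1(\Sigma;\Z/2\Z)$ that survives, producing the Category~$3$ bar $(A_f(x),+\infty)$. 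A source $s\neq y_M$ kills a degree-$1$ class at $A_f(s)$, while $s=y_M$ realises the persistent degree-$2$ class. Then, by Corollary~\ref{numberbar}, each critical value is the endpoint of exactly one bar of each barcode; both barcodes have $2g+2$ semi-infinite bars (Lemma~\ref{barsemi} on one side, $\dim H_0=\dim H_2=1$ and $\dim H_1=2g$ on the other); the two Category~$0$ bars and all finite degree-$0$ bars already agree; and what remains is to match the finite and semi-infinite degree-$1$ bars of $\mathbf{V}_1$ with the Category-$2$ and Category-$3$ bars \emph{as intervals}, i.e.\ to see that a case-(ii) saddle's class is killed by a source and a case-(iii) saddle's class survives.

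That last matching is the main obstacle. It amounts to the statement that, at a saddle $x$ in case~(ii), the superlevel component that dies at $A_f(x)$ is rooted precisely at the source which annihilates $[x]\in H_1$ --- a middle-degree Poincar\'e--Lefschetz duality for the filtered $\Z/2\Z$-complex, which I would establish either by invoking a persistence-duality theorem in the self-dual degree $1$, or directly from the two-stable-two-unstable-leaf local model of $\FF$ at $x$ together with Proposition~\ref{Propbord}. Everything else is bookkeeping or already available: Lemma~\ref{bordbassin} and Proposition~\ref{Propbord} for the basin geometry, Proposition~\ref{connexionsaddle} for the trichotomy at a saddle, and Corollary~\ref{numberbar} and Lemma~\ref{barsemi} to close the count. (As a by-product, combined with Theorem~\ref{egalité-ordre} this would also re-prove Proposition~\ref{independance-generique}.)
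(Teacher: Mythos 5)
Your reduction is sound and in fact runs parallel to the paper's: the counting input (Remark \ref{bargeneric}, Corollary \ref{numberbar}, Lemma \ref{barsemi}) shows each action value ends exactly one bar on each side, so everything comes down to matching the finite bars saddle by saddle; your degree-$0$ analysis (sinks identified through saddles, sources never merging sublevel components) is exactly the content of Lemmas \ref{scie} and \ref{homologous}; and your trichotomy at a saddle, extracted from Proposition \ref{connexionsaddle}, is the same case split the paper performs. The problem is the step you yourself flag as ``the main obstacle'': for a saddle $x$ whose two stable leaves issue from distinct superlevel components, you must show that the degree-$1$ class it creates is born exactly at $A_f(x)$ and dies exactly at $D(\mathcal{C})$, where $\mathcal{C}$ is the superlevel component with the smaller value of $D$. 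You do not prove this; you propose either ``a persistence-duality theorem in the self-dual degree $1$'' or a direct argument from the local model, without carrying out either. As written, the theorem is therefore not established.

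The duality route is not available off the shelf: Poincar\'e--Lefschetz duality for filtered complexes presupposes that $(C^t_\bullet,\partial^t_\bullet)$ is the (co)chain complex of a filtered cell structure on $\Sigma$ whose dual filtration is the superlevel one, and that is essentially equivalent to what you are trying to prove --- recall that the map $d$ of Lemma \ref{bordbassin} need not be injective, so the basins do not obviously assemble into a CW structure. What actually closes the gap is an explicit $2$-chain computation: if $Y=\sum_{y\in\mathcal{C},\ y\ \mathrm{source}}y$, then because every saddle interior to the repulsive basin of $\mathcal{C}$ contributes its two stable leaves and hence cancels mod $2$, one gets $\partial_2(Y)=\sum_i x_i$, the sum of the saddles on the frontier of that basin (this is Lemma \ref{sommesource} of the paper). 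This exhibits the death of $\bigl[\sum_i x_i\bigr]$ at $D(\mathcal{C})$ and not before, since any $2$-chain of lower action killing $x$ would have to contain one of the two sources whose basins abut $x$, both of action $\geq D(\mathcal{C})$; that also yields the ``born at $A_f(x)$, not below'' half. You name Proposition \ref{Propbord} and the local model as the right ingredients, but without this computation (and its consequence that a case-(iii) saddle's class is never bounded, so the bar is semi-infinite), the matching of Category $2$ and Category $3$ bars with $\beta(\mathbf{V}_1)$ remains an assertion.
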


We recall the definition of the functor $\beta$. Let $\textbf{V}=(V_s)_{s\in\R}$ be a persistence module. Let us consider the set of $t\in\R$ in the spectrum of $V$ such that $\mathrm{dim}(\mathrm{Ker}(i_{t^-,t^+}))$ is equal to $1$ and label its elements $b_1,...,b_n$. For each $b_j$, there exists a unique $a_j\in\R$ with the following property: Let $x\in V_{b_j^-}$ represents a non-zero element in $\mathrm{Ker}(i_{t^-,t^+})=1$, the element $x$ is in the image of $i_{a^+_j,b^-_j}$ but $x$ is not in the image of $i_{a^-_j,b^-_j}$; if we label the remaining elements of the spectrum of $\textbf{V}$ by $\{c_1,...,c_m\}$ then the barcode $\beta(\textbf{V})$ consists of the list of intervals $(a_j,b_j]$ and $(c_k,+\infty)$, where $1\leq j\leq n$ and $1\leq k\leq m$.\\

We will consider $G_{\gen,t}^-(\FF),G_{\gen,t}^+(\FF)$ the associated filtered graphs and we denote by $(H_*^t)_{t\in\R}$ the persistence module of the chain complex $(C_i^t)_{i\in\{0,1,2\},t\in\R}$ of Definition \ref{complexchain}. Finally, we denote by $B_{\gen}(\FF)$ the barcode $\beta((H_*^t)_{t\in\R})$ where $\beta$ is the functor defined in section \ref{introbarcodes}. \\

To avoid any confusion, we will always refer to the chain complex by $C_i^t$ where $i\in\{0,1,2\}$ and $t\in\R$. We will refer to connected components of the graph $G(\FF)$ by $\mathcal{C}$ or $\mathcal{C}'$ and to a connected component of the graph $G_{\gen}(\FF)$ by $\mathcal{C}_{\gen}$. For $t\in \R$ and a connected component $\mathcal{C}$ of $G_t^-(\FF)$ we denote $L(C)$ the minimum of the action function on the sinks of $\mathcal{C}$ and for $t\in \R$ and a connected component $\mathcal{C}'$ of $G_t^+(\FF)$ we denote $D(C)$ the maximum of the action function on the sources of $\mathcal{C}'$.  Moreover, to simplify the notation, we provide the filtered chain complex $(C^i_t)_{i\in\{0,1,2\},t\in\R}$ with a natural scalar product $\langle .|.\rangle$ associated to the canonical basis. Meaning that we consider the bilinear function $\langle .|.\rangle$ on the space $C^i_t$ such that for every couple of fixed points $x$ and $y$ of $f$ in $C^i_t$, we have $\langle x,y\rangle=1$ if and only if $x=y$ and $\langle x,y\rangle=0$ otherwise.\\

Theorem \ref{egalité-generique} allows us to prove Property \ref{independance-generique} of section \ref{génériqueF} which states that the barcode $B_{\gen}(f,\FF)$ does not depend on the choice of $\FF\in\FF_{\gen}(I)$. We recall that assumptions of Theorem  \ref{egalité-generique} are satisfies in this particular case.\\

\begin{proof}[Proof of Proposition \ref{independance-generique}]
By Theorem \ref{egalité-generique}, for each foliation $\FF\in \FF_{\gen}(I)$ we have that $B_{\gen}(\FF)=\bm{\beta}_\FF$. Moreover, by Theorem \ref{egalité-ordre}, the barcode $\bm{\beta}_\FF$ does not depend on the choice of the foliation $\FF\in \FF_{\gen}(I)$. So we obtain that $B_{\gen}(f,\FF)$ does not depend on the choice of $\FF\in\FF_{\gen}(I)$ which is the result.
\end{proof}

We fix a foliation $\FF\in\FF_{\gen}$ for the remaining of the section.\\

\begin{proof}[Proof of Theorem \ref{egalité-generique}]
By Remark \ref{bargeneric}, each action value of $A_f$ is the end of a unique bar of the barcode $B_{\gen}(\FF)$ and by Corollary \ref{numberbar}, we have the same result for the barcode $\bm{\beta}_\FF$ so it is enough to prove the inclusion $\bm{\beta}_\FF\subset B_{\gen}(\FF)$ to prove that these barcodes are equal.\\

Moreover, Corollary \ref{numberbar} states that exactly one end point of every bar of the barcode $\bm{\beta}_\FF$, except the bars $(\min A_f,+\infty)$ and $(\max A_f,+\infty)$, is the action value of a saddle point of $\FF$. So it is enough to prove that finite bars of the barcode $\bm{\beta}_\FF$ are also bars of the barcode $B_{\gen}$ to prove the inclusion $\bm{\beta}_\FF\subset B_{\gen}(\FF)$. Indeed, the remaining bars of the barcode would be the same semi-infinite bars as they would be associated to the same saddle points.\\

We will prove that for every saddle point $x$ of $f$, if the bar $J$ of $\bm{\beta}_\FF$, of which $A_f(x)$ is an end, is a finite bar, then it is also a bar of the barcode $B_{\gen}(\FF)$. Notice that,  by construction, the bars $(\min A_f,+\infty)$ and $(\max A_f,+\infty)$ of $\bm{\beta}_\FF$ are also bars of the barcode $B_{\gen}(\FF)$.\\

For the remainder of the proof, we consider a saddle point $x$ of $f$, we denote by $t$ its action value and by $\mathcal{C}_x$ the connected component of $G_{t^+}^-(\FF)$ which contains $x$. By Lemma \ref{connexionsaddle} the set of connected components of $G_t^-(\FF)$ which are included in $\mathcal{C}_x$, which were labeled $j^{-1}_t(\mathcal{C}_x)$ in section \ref{construction}, has one or two elements. We separate those two cases.\\

\textit{Case 1.} The set $j^{-1}_t(\mathcal{C}_x)$ consists of two connected components of $G_{t}^-(\FF)$ denoted $\mathcal{C}$ and $\mathcal{C}'$. By symmetry, we can suppose that $L(\mathcal{C})>L(\mathcal{C}')$ and, by the construction described in section \ref{construction}, there is a bar $(L(\mathcal{C}),t]$ in the barcode $\bm{\beta}_\FF$. Let us prove that this bar is also a bar of the barcode $B_{\gen}(\FF)$. It means that there is an element of $\mathrm{Ker}(i_{t^-,t^+})$ which is in the image of $i_{L(\mathcal{C})^+,t^-}$ but not in the image of $i_{L(\mathcal{C})^-,t^-}$.\\

By hypothesis, the omega-limit points of the unstable leaves of $x$ are distinct sinks $y$ and $y'$ of $\FF$ where $y\in \mathcal{C}$ and $y'\in \mathcal{C}'$. We have $\partial_1^{t^+}(x)=y+y'$ so the element $y+y'\in C_0^{t^+}$ satisfies $[y+y']\in \mathrm{Ker}(i_{t^-,t^+})$. It remains to prove that $[y+y']$ is in the image of $i_{L(\mathcal{C})^+,t^-}$ and not in the image of $i_{L(\mathcal{C})^-,t^-}$. For that, we will consider another cycle in $C_0^{t^+}$ representing $[y+y']$ in homology. \\

We will use some geometric lemmas.\\

We will call a \textit{path of leaves} a path $\Gamma$ in $\Sigma$ which is the concatenation of leaves of $\FF$. The singularities of a path of leaves will refer to the alpha-limit points and omega-limit points of those leaves. 

\begin{lem}\label{scie}
Let us consider $s\in \R$, and two sinks $y_1$ and $y_2$ of $\FF$ in the same connected component $\mathcal{C}_{s}$ of $G^-_{s}(\FF)$. There exists a path of leaves from $y_1$ to $y_2$ whose singularities are alternatively sinks and saddle points of $\mathcal{C}_{s}$.
\end{lem}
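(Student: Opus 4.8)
The plan is to work inside the connected subgraph $\mathcal{C}_s$ of $G^-_s(\FF)$ and translate a combinatorial path in the graph into a geometric path of leaves on the surface. First I would recall that $\mathcal{C}_s$ is connected, so there is a finite chain of vertices $y_1 = z_0, z_1, \dots, z_k = y_2$ of $\mathcal{C}_s$ such that for each $i$ there is an edge between $z_{i-1}$ and $z_i$, i.e.\ a leaf $\phi_i$ of $\FF$ joining $z_{i-1}$ and $z_i$ in one direction or the other. Since $\FF$ is gradient-like, each $\phi_i$ has as its two endpoints a pair among \{sink, source, saddle\}, and by Lemma \ref{action-decrease} the action strictly decreases along every leaf; since $z_i \in \mathcal{C}_s$ all these action values are $< s$. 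This already gives a ``path of leaves'' from $y_1$ to $y_2$, but the singularities appearing along it need not be alternately sinks and saddles — sources may occur as endpoints of some $\phi_i$, and two consecutive singularities could both be sinks or both be saddles.

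The key step is to clean up this raw path of leaves. I would proceed in two moves. First, eliminate sources: if a source $w$ appears as an interior singularity of the path, then the two leaves of the path incident to $w$ both emanate from $w$ (since a source is an $\alpha$-limit only); using the local structure of $\sigma^-$ cones of $w$ and Lemma \ref{Sigma-}, the two sinks/saddles on the far ends of those two leaves lie in the same connected component of $G^-_{A(w)}(\FF)$, hence in the same connected component of $G^-_s(\FF)$ since $A(w) < s$; so I can reconnect them by a path of leaves that stays strictly below $A(w)$, removing $w$. Iterating, I get a path whose singularities are only sinks and saddle points, all of action $< s$. Second, enforce alternation: if two consecutive singularities are both sinks $a, a'$, then since the leaf between them forces a saddle as one of its endpoints we actually cannot have this — a leaf joins an $\alpha$-limit to an $\omega$-limit, and a sink is never an $\alpha$-limit, so a leaf cannot join two sinks; thus between two sinks in the path there must already be a saddle. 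The remaining case is two consecutive saddle points $b, b'$ joined by a leaf $\phi$ with, say, $\alpha(\phi) = b$, $\omega(\phi) = b'$: here I would insert a detour through a sink. Concretely, pick an unstable cone $\sigma^-$ of $b$ not containing $\phi$; the leaf of $\sigma^-$ close to its boundary has $\omega$-limit either a sink or (via the local model, chaining through hyperbolic sectors) eventually a sink $c$ with $A(c) < A(b) < s$, and $c$ lies in the same component of $G^-_{A(b)}(\FF)$ as $b'$; so I replace the single leaf $\phi$ by: from $b$ down to $c$, then a path of leaves from $c$ up to $b'$ inside $G^-_s(\FF)$, and repeat the source-elimination cleanup. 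A short bookkeeping argument (each replacement can be chosen to decrease the number of consecutive saddle-saddle adjacencies, or one argues by induction on path length) shows the process terminates.

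The main obstacle I anticipate is the alternation-enforcement step: one must guarantee that inserting detours through sinks and re-applying the source-elimination does not loop forever, and that the detour path of leaves can always be chosen inside $G^-_s(\FF)$. Here the crucial input is Lemma \ref{Sigma-} (every leaf of an unstable cone $\sigma^-$ of a saddle $x$ lies in the attractive basin of a single connected component of $G^-_{A(x)}(\FF)$), which guarantees the reconnections stay below the relevant action thresholds, together with Lemma \ref{interior} and the structure of $W^s(\mathcal{C})$ from Proposition \ref{Propbord} to see that the relevant singularities genuinely lie in one connected component. A clean way to organize the termination is to do induction on the pair (number of source-singularities, number of saddle-saddle adjacencies) in the lexicographic order, showing each elementary move strictly decreases it without creating new obstructions of higher priority. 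Once the path is both source-free and alternating, its $\FF$-singularities are exactly alternating sinks and saddle points, all lying in $\mathcal{C}_s$ since each appears as an endpoint of a leaf with both endpoints of action $< s$ — which is precisely the assertion of Lemma \ref{scie}.
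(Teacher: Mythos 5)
Your overall strategy coincides with the paper's: start from a raw path of leaves supplied by the connectedness of $\mathcal{C}_s$, then eliminate the sources. But your source-elimination step has a gap. You replace the two leaves abutting a source $w$ by \emph{some} path of leaves joining their far endpoints inside a connected component of $G^-_{A(w)}(\FF)$; such a path may itself contain new sources (of smaller action), so the number of source-singularities of the path need not decrease, and your proposed lexicographic termination measure (number of sources, then number of saddle--saddle adjacencies) does not work as stated. The paper avoids this entirely by invoking Lemma \ref{bordbassin}: the frontier of the repulsive basin of $w$ is the image of the boundary of a polygon whose vertices are \emph{alternately} sinks and saddle points and whose edges are closures of leaves, so the two endpoints $x_1,x_2$ of the leaves abutting $w$ can be joined by a replacement path that is already source-free and alternating, and each elimination then strictly decreases the number of sources. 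If you insist on your recursion, you must use a measure that genuinely decreases, e.g.\ the maximal action value of a source occurring in the path, since every new source introduced has action strictly less than $A(w)$. Note also that Lemma \ref{Sigma-} is stated for saddle points, not sources, so citing it for $w$ requires at least an adaptation.

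The second half of your argument, the treatment of consecutive saddle points, is both unnecessary and flawed. Unnecessary because Lemma \ref{scie} is used with a fixed $\FF\in\FF_{\gen}(I)$, and by definition such a foliation has no leaf joining two saddle points; once sources are removed every leaf of the path runs from a saddle to a sink, so the alternation is automatic (as you yourself observe, a leaf can never join two sinks). Flawed because your detour picks an unstable cone of $b$ \emph{not} containing $\phi$ and asserts that its terminal sink $c$ lies in the same component of $G^-_{A(b)}(\FF)$ as $b'=\omega(\phi)$; Lemma \ref{Sigma-} only controls the component attached to each unstable cone separately, and distinct unstable cones of $b$ may feed distinct components of $G^-_{A(b)}(\FF)$, so the detour need not reconnect to $b'$ at all.
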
 

\begin{proof}[Proof of Lemma \ref{scie}]
By definition of the connected component $\mathcal{C}_{s}$ of $G^-_{s}$, there exists a path of leaves $\Gamma$ from $y_1$ to $y_2$ in $\Sigma$. The path $\Gamma$ may contain sources. For a source $z$ in $\Gamma$ we will modify $\Gamma$ into a path which does not contain $z$. \\

If there is a source $z$ in $\Gamma$, there exist two leaves $\phi_1\subset \Gamma$ and $\phi_2\subset \Gamma$ whose alpha-limit points are equal to $z$ and omega-limit points are either saddle points or sinks of $\FF$ that we denote $x_1$ and $x_2$. The singularities $x_1$ and $x_2$ are in the repulsive basin of $z$ for $\FF$ so, by Lemma \ref{bordbassin}, there exists a path $\gamma$ of leaves of $G^-_s(\FF)$ from $x_1$ to $x_2$ whose singularities are alternatively saddle points and sinks of $\FF$.  \\
We cut the union $\phi_1\cup\{z\}\cup\phi_2$ from the path $\Gamma$ and replace this portion by the path $\gamma$ given by Lemma \ref{bordbassin}. We obtain a new path $\Gamma'$ from $y$ to $y'$ such that the source $z$ is not in $\Gamma'$.\\

We do the same process for every source of $\Gamma$ and we finally obtain a path from $y$ to $y'$ as wanted.
\end{proof}

We prove the following lemma.\\

\begin{lem}\label{homologous}
For every $s\in \R$ and every couple of sinks $y_1$ and $y_2$ of $\FF$ in the same connected component $\mathcal{C}_{s}$ of $G^-_{s}(\FF)$ we have $[y_1]=[y_2]$ in $H_0^s$.
\end{lem}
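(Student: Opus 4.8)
\textbf{Proof plan for Lemma \ref{homologous}.}

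The plan is to reduce the claim to the case of two sinks $y_1, y_2$ that are ``adjacent'' in the sense that they are the two omega-limit points of the two unstable leaves of a single saddle point $x$ of $\mathcal{C}_s$, and then to run an induction along the path of leaves provided by Lemma \ref{scie}. First I would apply Lemma \ref{scie} to get a path of leaves from $y_1$ to $y_2$ inside $\mathcal{C}_s$ whose singularities alternate between sinks and saddle points; writing this path as $y_1 = z_0, w_1, z_1, w_2, \ldots, w_k, z_k = y_2$ with the $z_j$ sinks and the $w_j$ saddle points of $\mathcal{C}_s$, it suffices by transitivity of the relation $[z_{j-1}] = [z_j]$ in $H_0^s$ to treat a single step: two sinks $z_{j-1}, z_j$ joined through a saddle $w = w_j$ of action less than $s$ by two leaves $\phi, \phi'$ with $\alpha(\phi) = \alpha(\phi') = w$, $\omega(\phi) = z_{j-1}$, $\omega(\phi') = z_j$.

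For this step I would invoke the generic hypothesis on $\FF \in \FF_{\gen}(I)$: the saddle point $w$ has exactly two unstable cones, each a single leaf, and those two leaves are precisely $\phi$ and $\phi'$ (the path of leaves from Lemma \ref{scie} enters $w$ along a stable leaf and leaves along an unstable leaf, and the alternation forces both leaves incident to $w$ in the path to be the unstable ones). Hence in the chain complex of Definition \ref{complexchain} we have $\partial_1^s(w) = n(w, z_{j-1}) z_{j-1} + n(w, z_j) z_j = z_{j-1} + z_j$, where the count is modulo $2$ and equals $1$ in each case because there is exactly one unstable leaf of $w$ with each of these endpoints. Since $A_f(w) < s$, the element $w$ lies in $C_1^s$, so $z_{j-1} + z_j = \partial_1^s(w)$ is a boundary in $C_*^s$, giving $[z_{j-1}] = [z_j]$ in $H_0^s$. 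Chaining these equalities along $j = 1, \ldots, k$ yields $[y_1] = [y_2]$.

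The main obstacle I anticipate is the bookkeeping needed to guarantee that the path of leaves from Lemma \ref{scie} really does meet each saddle point $w_j$ along its two \emph{unstable} leaves and not along a stable one, so that the mod-$2$ count $n(w_j, z_{j-1}) = n(w_j, z_j) = 1$ is justified; one must also check that no extra unstable leaf of $w_j$ contributes (which is exactly the ``two unstable cones, each one leaf'' hypothesis of $\FF_{\gen}$, used via the lemma at the start of Section \ref{génériqueF}). A secondary point to handle carefully is that Lemma \ref{scie} produces a path whose singularities all lie in $\mathcal{C}_s$, hence all have action $< s$, so that every saddle $w_j$ genuinely sits in $C_1^s$ and the identity $\partial_1^s(w_j) = z_{j-1} + z_j$ takes place in the filtered complex at level $s$ rather than at a higher level. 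Once these are in place the argument is just transitivity of equality in $H_0^s$.
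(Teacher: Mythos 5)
Your proof is correct and follows essentially the same route as the paper: the paper also invokes Lemma \ref{scie} and then observes that $\partial_1^s\bigl(\sum_j w_j\bigr)=y_1+y_2$, which is exactly the telescoping sum of your step-by-step identities $\partial_1^s(w_j)=z_{j-1}+z_j$ (interior sinks cancel mod $2$). Your version merely makes explicit the point the paper leaves as "a simple computation," namely that the two leaves of the path incident to each $w_j$ are its two unstable leaves, so the generic hypothesis gives the boundary formula.
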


\begin{proof}[Proof of Lemma \ref{homologous}]
Let us consider $s\in \R$, and two sinks $y_1$ and $y_2$ of $\FF$ in the same connected component $\mathcal{C}_{s}$ of $G^-_{s}(\FF)$. By Lemma \ref{scie} there exists a path of leaves in $\Sigma$ from $y_1$ to $y_2$ whose singularities are alternatively sinks and saddle points of $\mathcal{C}_s$. We denote by $(x_i)_{0\leq i \leq n]}$ the saddle points of the path $\Gamma$ and by a simple computation we obtain
$$\partial_1^{s}\left(\sum_{i=0}^n x_i\right)=y_1+y_2.$$
So, by definition, $[y_1]=[y_2]$ in $H_0^s$.
\end{proof}

Let us come back to the first case of the proof of Theorem \ref{egalité-generique}. By Lemma \ref{homologous} each sink $z\in \mathcal{C}$ of $\FF$  (resp. each sink $z'\in \mathcal{C}'$ of $\FF$) satisfies $[z]=[y]$ (resp. $[z']=[y']$) in $H_0^{t}$. So for every couple of sinks $z\in\mathcal{C}$ and $z\in\mathcal{C}'$ of $\FF$, the element $z+z'\in C_0^{t^+}$ satisfies $[z+z']=[y+y']\in \mathrm{Ker}(i_{t^-,t^+})$. We denote by $z_\mathcal{C}$ and $z_{\mathcal{C}'}$ the sinks of $\mathcal{C}$ and $\mathcal{C}'$ such that $A_f(z_\mathcal{C})= L(\mathcal{C}), \  A_f(z_{\mathcal{C}'})= L(\mathcal{C}')$.\\
We supposed that $A_f(z_\mathcal{C})=L(\mathcal{C})>A_f(z_{\mathcal{C}'})=L(\mathcal{C}')$ so the sink $z_\mathcal{C}$ is not a cycle in $C_0^{L(\mathcal{C})^-}$ so the element $[z_\mathcal{C}+z_{\mathcal{C}'}]$ is not in the image of $i_{L(\mathcal{C})^-,t^-}$.
Moreover, the sinks $z_\mathcal{C}$ and $z_{\mathcal{C}'}$ are not in the same connected component of $G_{t^-}(\FF)$ and so we deduce that $[z_\mathcal{C}+z_{\mathcal{C}'}]$ is in the image of $i_{L(\mathcal{C})^+,t^-}$.\\

So, by construction, there exists a bar $(L(C),t]$ in the barcode $B_{\gen}(\FF)$.\\

\textit{Case 2.} The set $j^{-1}_t(\mathcal{C}_x)$ is a unique element. We will consider the connected components of the subgraphs $(G^+_t)_{t\in \R}$ instead of connected components of the subgraphs $(G^-_t)_{t\in \R}$.\\
We consider the connected component $\mathcal{C}_x'$ of $G^+_{t^-}(\FF)$ which contains $x$. By Lemma \ref{connexionsaddle} the set of connected components of $G_t^+(\FF)$ included in $\mathcal{C}_x'$, which were labeled $j'^{-1}_t(\mathcal{C}_x^-)$ in section \ref{construction}, is composed of $1$ or $2$ elements. We separate those two cases.\\

1) Suppose that $j'^{-1}_t(\mathcal{C}_x')$ is composed of one connected component, then, by construction, there is no finite bar $J$ in the barcode $\bm{\beta}_\FF$ of which $t$ is an end point. We have nothing to prove in this case.\\

2) Now we suppose that $j'^{-1}_t(\mathcal{C}_x')$ is composed of two connected components of the graph $G_{t^+}^+(\FF)$ denoted $\mathcal{C}$ and $\mathcal{C}'$. By symmetry we can suppose that $D(\mathcal{C})<D(\mathcal{C}')$ and by construction there is a bar $(t,D(\mathcal{C})]$ in the barcode $\bm{\beta}_\FF$. Let us prove that this bar is also a bar of the barcode $B_{\gen}(\FF)$. It means that there is an element in $\mathrm{Ker}(i_{D(\mathcal{C})^-,D(\mathcal{C})^+})$ which is in the image of $i_{t^+,D(\mathcal{C})^-}$ but not in the image of $i_{t^-,D(\mathcal{C})^-}$. We will need the following lemma about the repulsive basin $W^s(\mathcal{C})$ of $\mathcal{C}$.\\

\begin{lem}\label{sommesource}
We label $x_1,...,x_n$ the saddle points in the frontier of $\overline{W^s(\mathcal{C})}$. Then, for every $T>D(\mathcal{C})$, the element $Y=\sum_{\substack{ y\in \mathcal{C} \\ y \text{ source }}} y $ of $C^T_2$ satisfies
$$\partial_2^T(Y)=\sum_{i=1}^n x_i.$$
\end{lem}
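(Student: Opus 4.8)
The plan is to exploit the structure of the repulsive basin $W^s(\mathcal{C})$ established in Proposition \ref{Propbord} together with the genericity assumptions on $\FF\in\FF_{\gen}(I)$, which guarantee that every stable and unstable cone of a saddle point consists of a single leaf and that there are no saddle-to-saddle connexions. First I would identify exactly which saddle points $x_1,\dots,x_n$ lie in the frontier $\Fr(\overline{W^s(\mathcal{C})})$. Each such $x_i$ is a saddle point, hence by the Conley--Zehnder convention has $\ind_{CZ}(f,x_i)=1$, and the sources $y$ in $\mathcal{C}$ have $\ind_{CZ}(f,y)=2$; so the boundary map $\partial_2^T$ does land in the span of the saddle points, and the claimed identity makes sense at the chain level.

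The key computation is to expand $\partial_2^T(Y) = \sum_{y \in \mathcal{C},\ y\text{ source}} \partial_2^T(y)$ using Definition \ref{complexchain}: $\partial_2^T(y)$ counts modulo $2$ the leaves $\phi$ of $\FF$ with $\alpha(\phi)=y$ and $\ind_{CZ}(f,\omega(\phi))=\ind_{CZ}(f,y)-1=1$, i.e.\ the unstable leaves of $y$ whose omega-limit point is a saddle point. So $\partial_2^T(Y)$ counts, mod $2$, the pairs $(y,\phi)$ where $y$ is a source of $\mathcal{C}$ and $\phi$ is a leaf from $y$ to a saddle point. The heart of the argument is a parity count: for each saddle point $z$ of $\FF$, the number of leaves of the foliation entering $z$ (i.e.\ in a stable cone of $z$) whose alpha-limit point is a source of $\mathcal{C}$ is even when $z \notin \Fr(\overline{W^s(\mathcal{C})})$ and odd exactly when $z \in \Fr(\overline{W^s(\mathcal{C})})$. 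To prove this I would use the description of $W^s(\mathcal{C})$ from Proposition \ref{Propbord} and the local model near a source (Lemma \ref{bordbassin}): the repulsive basin of each source in $\mathcal{C}$ is the image of a filled polygon $P_m$, the boundary leaves of which are connexions from the source to saddle points, and two consecutive slices $S_k, S_{k+1}$ share a boundary leaf. This produces a natural pairing of the leaves-into-saddles, and a leaf $\phi$ into a saddle point $z$ fails to be paired precisely when $z$ sits on the frontier of $W^s(\mathcal{C})$ (where the polygon boundaries of the various sources' basins "stop" rather than continue). Summing the contributions over all sources of $\mathcal{C}$, all interior saddle points cancel in pairs and exactly the frontier saddle points $x_1,\dots,x_n$ survive with coefficient $1$.

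I expect the main obstacle to be the careful bookkeeping of the pairing near the frontier: I need to check that for a frontier saddle $x_i$, precisely an odd number of the leaves in its stable cones have alpha-limit a source of $\mathcal{C}$, and this requires analyzing which hyperbolic sectors of $x_i$ are in $W^s(\mathcal{C})$ versus outside, using Lemma \ref{bordhyperbolic} and Lemma \ref{Sigma-}. The genericity hypothesis — single-leaf stable/unstable cones — simplifies this considerably: each stable cone contributes at most one leaf, so "odd" really means "exactly one". I would argue that exactly one stable leaf of $x_i$ lies in the frontier of $W^s(\mathcal{C})$ and that its alpha-limit is indeed a source of $\mathcal{C}$ (not a saddle, since there are no saddle-saddle connexions), while any other stable leaf of $x_i$ either is disjoint from $W^s(\mathcal{C})$ or has its source-basin's polygon boundary continuing past $x_i$ and hence appears an even number of times in the total sum. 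Conversely, for an interior saddle $z$ (one with a neighborhood contained in $W^s(\mathcal{C})$, or more precisely one not on the frontier), its two stable leaves are both reached from sources of $\mathcal{C}$ and the polygon-slice pairing matches them up, so $z$ appears an even number of times. Once this parity dichotomy is established, Lemma \ref{sommesource} follows, and it feeds directly into showing $[\,Y\,]$ generates the required kernel class in $H_2$ so that the bar $(t, D(\mathcal{C})]$ appears in $B_{\gen}(\FF)$, completing Case 2 symmetrically to Case 1.
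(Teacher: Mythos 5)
Your proposal is correct and follows essentially the same route as the paper: expand $\partial_2^T(Y)$ over the sources of $\mathcal{C}$, group the leaves by their target saddle point, and observe that (by genericity each saddle has exactly two single-leaf stable cones and no saddle-to-saddle connexions) a saddle receives both stable leaves from sources of $\mathcal{C}$ precisely when it lies in the interior of $\overline{W^s(\mathcal{C})}$, contributing $2\equiv 0$, while a frontier saddle receives exactly one, contributing $1$. The paper states this dichotomy directly rather than routing it through a polygon-slice pairing, but the underlying mod-$2$ cancellation is identical.
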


\begin{proof}
For each source $y$ of $\mathcal{C}$, by definition, $\partial_2^T(y)$ is equal to the sum of the saddle points in the frontier of the repulsive basin of $y$. These saddle points have either one or both of their stable leaves in $W^s(\mathcal{C})$.  We separate those cases.\\

Firstly, we label $x_1,...,x_n$ the saddle points of $\FF$ of which only one stable leaf have its alpha-limit point in $\mathcal{C}$. For every $i\in[0,n]$ we have $\langle\partial_2^T(Y)|x_i\rangle =1$ for every $i\in[0,n]$. Moreover, the action values of these saddle points is less then or equal to $t$ and it is simple to see that this belong to the frontier of $\overline{W^s(\mathcal{C})}$.\\

Secondly, we label $x'_1,...,x'_m$  the saddle points such that both stable leaves have their alpha-limit points in $\mathcal{C}$. For every $i\in[0,m]$ we have $\langle\partial_2^T(Y)|x'_i\rangle =2$. Those saddle points are nondegenerate saddle points of $\FF$ so that they are in the interior $\overline{W^s(\mathcal{C})}$ and not in its frontier.  Indeed, both stable cones of a saddle point $x$ whose action satisfies $A_f(x)>t$ are leaves of $\FF$ whose alpha-limit points are in the same connected component of $G_t^+(\FF)$. \\\\

Finally, we compute $\partial_2^T(Y)$ as follows.

\begin{align*}
\partial_2^T(Y) &= \sum_{\substack{y\in \mathcal{C}, \\ y \text{ source }}} \partial_2^T(y)\\
&=\sum_{i=1}^n x_i +\sum_{i=1}^m 2x'_i\\
&=\sum_{i=1}^n x_i .
\end{align*}

And we obtain the result.
\end{proof}

Let us denote $c=D(\mathcal{C})$, we will consider the element
$Y=\sum_{\substack{ y\in \mathcal{C} \\ y \text{ source }}} y $ in $C_2^{c^+}$.
By Lemma \ref{sommesource}, $Y$ satisfies
$$\partial_2^{c^+}(Y)=\sum_{i=1}^n x_i,$$
where $x_1,...,x_n$ are the saddle points of the frontier of $\overline{W^s(\mathcal{C})}$. So we have $[\sum_{i=1}^n x_i]\in \mathrm{Ker}(i_{c^-,c^+})$. \\

By hypothesis, the saddle point $x$ is one of the saddle points $(x_i)_{i\in[1,n]}$  and each $x_i$ satisfies $A(x_i)\leq A(x)$ since $\mathcal{C}$ is a connected component of $G_t^+(\FF)$.  So we have that $[\sum_{i=1}^n x_i]$ is in the image of $i_{t^+,c^-}$.\\

Moreover, the singularity $x$ is not homologous in $C_1^{c^-}$ to a chain of singularities of $C_1^{t^-}$. Indeed, if it was the case then, by definition, it would exist $X'\in C_1^{t^-}$ and $Y'\in C_2^{c^-}$ such that $x=X'+\partial_2^{c^-}(Y')$. \\

We set $y_x\in \mathcal{C}$ and $y_x'\in \mathcal{C}'$ the only two sources of $\FF$ such that $x$ is in the frontier of the sets $W^u(y_x)$ and $W^u(y_x')$.
The equality $x=X'+\partial_2^{c^-}(Y')$ would imply that $\langle Y'|y_x\rangle=1$ or $\langle Y' | y_x'\rangle=1$, which is impossible because, by hypothesis, $A_f(y_x')>A_f(y_x)=c$. Indeed, if we have $\partial_2^{c^-}(Y')=x-X'$ then, there exists a source $y$ such that $\langle Y| y\rangle =1$ and $\langle \partial_2^{c^-} (y) |x\rangle=1$ , which means that $x$ is in the frontier of $W^u(y)$. So $y$ is either equal to $y_x$ or $y_x'$.\\

So, we have the same result for $\sum_{i=1}^n x_i$ and so $[\sum_{i=1}^n x_i]\notin \mathrm{Im}(i_{t^-,c^-})$.\\ 

Thus, by construction, there exists a bar $(A(x),D(\mathcal{C})]$ in the barcode $B_{\gen}(\FF)$.\\
\end{proof}

Now we can prove Theorem \ref{C2close} from section \ref{génériqueF} stated as follows.\\

\begin{theorem}\label{C2closeP}
If we consider a Hamiltonian diffeomorphism $f$ with a finite number of fixed points which is $C^2$-close to the identity and generated by an autonomous Hamiltonian function then the barcode $B_{\gen}(\FF)$ is equal to the Floer homology barcode of $f$.
\end{theorem}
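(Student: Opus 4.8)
The plan is to reduce Theorem \ref{C2closeP} to the combination of Theorem \ref{egalité-generique} and the comparison between the barcode $\bm{\beta}_\FF$ (equivalently $B_{\gen}(\FF)$) and the filtered Floer homology barcode. First I would observe that a Hamiltonian diffeomorphism $f$ generated by an autonomous Hamiltonian function $H$ which is $C^2$-close to the identity, with finitely many fixed points, satisfies all the hypotheses of section \ref{génériqueF}: the set of fixed points is the set of critical points of $H$, it is unlinked (the Hamiltonian flow of $H$ is a maximal isotopy $I$ fixing all of them, by the remark following Proposition \ref{gradient-like}), the action function $A_f$ equals $H$ on the critical points, and after a generic perturbation of $H$ (keeping it autonomous and $C^2$-close to the identity, which does not change the Floer barcode by continuity, see \cite{ROUX1,POLT}) we may assume the critical values are distinct and every critical point is nondegenerate with $\ind(f,x)\in\{-1,1\}$ — that is, $H$ is Morse with no center-type behaviour, which for $C^2$-small $H$ is exactly the condition that the Hessian has the right signature. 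The Morse index of $H$ at $x$ is then related to the Conley–Zehnder index by $\ind_{CZ}(f,x)=\ind_{\mathrm{Morse}}(H,x)$ for $f$ $C^2$-close to the identity, so a sink of $\FF$ is a minimum, a saddle is an index-$1$ critical point, and a source is a maximum.

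Next I would invoke the classical fact, due to Floer (and recalled in \cite{AUD}), that for a $C^2$-small autonomous Hamiltonian $H$ on a closed symplectic manifold, the Floer chain complex together with its action filtration is isomorphic — as a filtered complex — to the Morse chain complex of $H$ with its filtration by $H$-values, for a suitable adapted metric; in particular the filtered Floer homology $(\HF_*^t(f))_{t\in\R}$ is isomorphic to the filtered Morse homology $(\HM_*^t(H))_{t\in\R}$, and hence the Floer barcode of $f$ equals the Morse barcode of $H$. So it suffices to prove that $B_{\gen}(\FF)$ equals the Morse homology barcode of $H$. For this I would choose the transverse foliation $\FF$ to be (a positively transverse foliation homotopic to) the negative gradient foliation of $H$ for an adapted metric satisfying the Smale transversality conditions; such a foliation lies in $\FF_{\gen}(I)$ because the Smale condition forbids gradient connexions between critical points of equal index, in particular between two saddles, and gives exactly two unstable and two stable separatrices at each index-$1$ point. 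With this choice, the graph $G_{\gen}(\FF)$ of Definition \ref{graphgeneric} is precisely the graph whose edges are the Morse trajectories between consecutive-index critical points, and the chain complex $(C_i^t,\partial_i^t)$ of Definition \ref{complexchain} is, by construction, the filtered Morse complex of $H$ (counting trajectories mod $2$). Therefore $H_*((C_i^t,\partial_i^t))$ is the filtered Morse homology and $B_{\gen}(\FF)=\beta\big((\HM_*^t(H))_t\big)$ is the Morse barcode, which equals the Floer barcode. Finally, by Proposition \ref{independance-generique} (itself a consequence of Theorem \ref{egalité-generique} and Theorem \ref{egalité-ordre}), $B_{\gen}(\FF)=B_{\gen}(f)$ does not depend on the choice of $\FF\in\FF_{\gen}(I)$, so the equality with the Floer barcode holds for the abstract invariant $B_{\gen}(f)$.

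The main obstacle I expect is the comparison step between the combinatorially-defined $\FF$-barcode and the analytically-defined Floer barcode, i.e.\ justifying cleanly that the positively transverse foliation associated to $I$ can be taken to be an honest negative-gradient foliation of $H$ (or at least yields the same graph $G_{\gen}$), and that the action filtration on the Floer side matches the $H$-filtration under Floer's isomorphism with Morse homology. One has to be careful that the hypotheses of section \ref{génériqueF} (distinct action values, $\ind(f,x)=\pm1$, the genericity conditions on $\FF$) are compatible with staying $C^2$-close to the identity and with not changing the Floer barcode; this is handled by a perturbation argument plus the $C^0$-continuity of the Floer barcode (\cite{ROUX1,POLT}), but it is the part requiring the most care. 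A secondary subtlety is the identification $\ind_{CZ}=\ind_{\mathrm{Morse}}$ and the orientation/sign conventions making $A_f=H$ (up to an additive constant, which does not affect the barcode up to the relevant normalization); these are standard but must be stated. Modulo these identifications, the proof is a direct concatenation: Floer barcode $=$ Morse barcode $=$ $B_{\gen}(\FF)$ for $\FF$ a generic gradient-like foliation $=$ $B_{\gen}(f)$ by Proposition \ref{independance-generique}.

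\begin{proof}[Proof of Theorem \ref{C2closeP}]
Let $H$ be an autonomous Hamiltonian function generating $f$, with $f$ $C^2$-close to the identity and $\Fix(f)$ finite. Since $f$ is $C^2$-close to the identity, $H$ is $C^2$-small and $\Fix(f)=\mathrm{Crit}(H)$. Up to a $C^2$-small autonomous perturbation of $H$, which changes neither the fact that $f$ is $C^2$-close to the identity nor, by the $C^0$-continuity of the barcode (\cite{ROUX1,POLT}), the Floer homology barcode of $f$, we may assume that $H$ is a Morse function with pairwise distinct critical values and that every critical point $x$ satisfies $\ind(f,x)\in\{-1,1\}$; indeed for $C^2$-small $H$ this is exactly the requirement that the Hessian of $H$ be nondegenerate with no critical point of ``center type'', and the Morse index $\ind_{\mathrm{Morse}}(H,x)$ equals $0,1,2$ according as $x$ is a local minimum, an index-one saddle, or a local maximum, matching $\ind_{CZ}(f,x)$ as defined in section \ref{génériqueF}. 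The Hamiltonian flow $I=(f_t)_{\tin}$ of $H$ is a maximal isotopy from $\id$ to $f$ with $\Sing(I)=\mathrm{Crit}(H)=\Fix(f)$, and $A_f=H$ on $\Fix(f)$.

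Choose an adapted Riemannian metric $g$ on $\Sigma$ so that the negative gradient flow of $H$ satisfies the Smale transversality conditions, and let $\FF$ be a foliation positively transverse to $I$ obtained from the oriented negative gradient lines of $(H,g)$. Then $\FF\in\FF_{\gen}(I)$: the Smale condition forbids a gradient connexion between two critical points of the same index, in particular between two saddles, and at each index-one critical point there are exactly two unstable separatrices (each a single leaf ending at a minimum) and two stable separatrices (each a single leaf issued from a maximum). With this choice the graph $G(\FF)$ of Definition \ref{graphgeneric} is precisely the graph whose edges are the negative gradient trajectories joining critical points of consecutive Morse index, so the filtered complex $(C_i^t,\partial_i^t)_{i,t}$ of Definition \ref{complexchain} is the filtered Morse complex of $H$, and its filtered homology is $(\HM_*^t(H))_{t\in\R}$. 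Hence $B_{\gen}(\FF)=\beta\big((\HM_*^t(H))_{t\in\R}\big)$ is the Morse homology barcode of $H$.

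By Floer's theorem for $C^2$-small autonomous Hamiltonians (see \cite{AUD}), the filtered Floer complex of $f$ is isomorphic, as a filtered complex, to the filtered Morse complex of $H$; therefore the filtered Floer homology $(\HF_*^t(f))_{t\in\R}$ is isomorphic to $(\HM_*^t(H))_{t\in\R}$, and the Floer homology barcode of $f$ equals $\beta\big((\HM_*^t(H))_{t\in\R}\big)=B_{\gen}(\FF)$. Finally, by Proposition \ref{independance-generique}, $B_{\gen}(\FF)=B_{\gen}(f)$ does not depend on the choice of $\FF\in\FF_{\gen}(I)$. Combining these equalities, $B_{\gen}(f)$ equals the Floer homology barcode of $f$.
\end{proof}
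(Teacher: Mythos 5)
Your proof is correct and follows the same overall strategy as the paper's: both reduce the statement to the classical identification of the filtered Floer homology of a $C^2$-small autonomous Hamiltonian with its filtered Morse homology, applied via the gradient foliation of $H$. The difference lies in which of the two barcodes gets matched with the Morse barcode. The paper identifies the combinatorial barcode $\bm{\beta}_\FF=\mathcal{B}(G(\FF),A_f,\ind(\FF,\cdot))$ with $\beta\bigl((\HM_*^t(H))_{t\in\R}\bigr)$ — a step it asserts rather than proves — and then invokes Theorem \ref{egalité-generique} to pass to $B_{\gen}(\FF)$. You instead observe that for the gradient foliation the filtered chain complex of Definition \ref{complexchain} literally is the filtered Morse complex of $H$, so $B_{\gen}(\FF)$ is the Morse barcode by construction; Theorem \ref{egalité-generique} then enters only through Proposition \ref{independance-generique}, to extend the equality from the gradient foliation to an arbitrary $\FF\in\FF_{\gen}(I)$. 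Your identification is the more transparent one, since Definition \ref{complexchain} is modelled directly on the Morse differential, whereas the identification of the purely combinatorial map $\mathcal{B}$ with filtered Morse homology is only sketched in a remark of section \ref{construction}.

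One caveat: your opening perturbation argument is not quite right as stated. $C^0$-continuity of the Floer barcode gives that a $C^2$-small perturbation of $H$ changes the Floer barcode by a small amount in bottleneck distance, not that it leaves it unchanged, and no continuity statement is available for $B_{\gen}$, so you cannot pass to the limit on both sides to recover an exact equality for the unperturbed map. The cleaner reading is that the hypotheses of section \ref{génériqueF} (unlinked fixed point set, distinct action values, $\ind(f,x)=\pm1$) are implicitly part of the statement, since $B_{\gen}(\FF)$ is only defined under them; the paper's own proof makes the same tacit assumption. With that reading the perturbation step can simply be deleted and the rest of your argument goes through.
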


\begin{proof}
If we suppose that the autonomous Hamiltonian function $H$ is $C^2$ close to a constant then the Floer homology of $H$ is equal to the Morse homology of $H$, we refer to \cite{AUD} for a proof. We deduce that the Morse Homology barcode $\beta(\HM_*^t(H))_{t\in \R})$ of $H$ is equal to the Floer Homology barcode $\beta(\HF_*^t(H))_{t\in \R})$ of $H$, where $\beta$ is the functor defined in section \ref{introbarcodes} which associate a persistence module to its barcode. \\

The time one map $f_1=f$ of the Hamiltonian flow is $C^1$ close the the identity and its set of fixed points is unlinked.\\

Moreover, the gradient-lines of $H$ provides a $C^1$ foliation $\FF$ positively transverse to the natural Hamiltonian isotopy induces by $H$.  This isotopy is maximal and so fixes every fixed points of $f$. The foliation $\FF$ is gradient-like and there is no cone of leaves at the saddle points of $\FF$. \\

Moreover, the construction of the map $\mathcal{B}$ in section \ref{construction} follows the ideas of the Morse homology theory then we can assert that the barcode $\bm{\beta}_\FF$ is equal to the barcode $\beta(\HM_*^t(H))_{t\in \R}$,\\

Thus, by Theorem \ref{egalité-generique} we have
$$B_{\gen}(f)=\bm{\beta}_\FF=\beta(\HM_*^t(H))_{t\in \R}=\beta(\HF_*^t(H))_{t\in \R}.$$

So we obtain the result.
\end{proof}

\bibliographystyle{plain}

\bibliography{bibliographie.bib}

\end{document}